 \def\qed{\hfill$\Box$\medskip}
\theoremstyle{plain}
\newtheorem{proposition}{Proposition}
\newtheorem{theorem}{Theorem}
\newtheorem{lemma}{Lemma}
\newtheorem{example}{Example}
\theoremstyle{plain}
\newtheorem{theostar}{Theorem}
\newtheorem{lemmastar}{Lemma}
\newtheorem{propstar}{Proposition}
\newcommand{\asp}{\hyperlink{asp}{\color{black}$\mathbb{H}$}}
\newtheorem*{assumption}{Assumption}
\theoremstyle{remark}
\newtheorem{remark}{Remark}
\newcommand{\ddr}{\mathrm{d}}
\title[Conditioning the Logistic CB process to survive from the total progeny]{Conditioning the Logistic continuous-state branching process on non-extinction via its total progeny}
\author{Cl\'ement Foucart}
\address{Cl\'ement Foucart, CMAP, Ecole Polytechnique, IP Paris, Palaiseau, France} 
\email{clement.foucart@polytechnique.edu}
\author{V{\'\i }ctor Rivero}
\address{V{\'\i }ctor Rivero, Centro de
Investigaci\'on en Matem\'aticas (CIMAT A.C.). Calle Jalisco s/n, 36240 Guanajuato, Guanajuato M\'exico. }
\email{rivero@cimat.mx}
\author{Anita Winter}
\address{Anita Winter, Fakult\"at f\"ur Mathematik, Universit\"at Duisburg-Essen, Campus Essen, Universit\"atsstra{\ss}e 2, 45132 Essen, Germany}
\email{anita.winter@uni-due.de}
\begin{document}
\maketitle

\begin{abstract}
The problem of conditioning a continuous-state branching process with quadratic competition (logistic CB process) on non-extinction is  investigated. We first establish that non-extinction is equivalent to the total progeny of the population being infinite. The conditioning we propose is then designed by requiring the total progeny to exceed arbitrarily large exponential random variables. This is related to a Doob's $h$-transform with an explicit excessive function $h$. The $h$-transformed process, i.e. the conditioned process, is shown to have a finite lifetime almost surely (it is either killed or it explodes continuously). 
When starting from positive values, the conditioned process is furthermore characterized, up to its lifetime, as the solution to a certain stochastic equation with jumps. The latter superposes the dynamics of the initial logistic CB process with an additional density-dependent immigration term. Last, it is established that the conditioned process can be starting from zero.   Key tools employed are a representation of the logistic CB process through a time-changed generalized Ornstein-Uhlenbeck process, as well as Laplace and Siegmund duality relationships with auxiliary diffusion processes.
\end{abstract}
{\small{\textbf{Keywords}: Continuous-state branching process, competition, logistic growth, non-extinction, total progeny, conditioning, Doob's $h$-transform, strict local martingale, explosion, Laplace duality, Siegmund duality.}} 
\section{Introduction}
Branching processes conditioned on never becoming extinct are a cornerstone in the theory of branching processes. It is well known that they have the same law as certain branching processes with immigration, which is derived by size-biasing the offspring distribution. Immigration is interpreted as an immortal genealogical line, or spine, on which copies of the initial branching process are grafted, see Lyons et al. \cite{zbMATH00823397}, Duquesne \cite{zbMATH05509502} and Chen and Delmas \cite{zbMATH06111050} for some applications.

There is considerable interest in exploring such conditionings for more general stochastic population models that  account for interactions between individuals. Notably, these interactions typically disrupt the branching property, rendering the analysis of the process more challenging.

The objective of this article is to study a  conditioning for the class of one-dimensional continuous-state branching processes (CB processes) with \textit{quadratic competition}. These processes, referred to as logistic branching processes, were defined by Lambert in \cite{MR2134113} and are of particular interest in the theory of stochastic population models with self-regulation properties. 

They can be seen as random analogues of Verhulst's logistic function, see \cite{verhulst1838notice}, initially designed to model populations with finite carrying capacity, thereby preventing Malthusian growth. Heuristically, in addition to the classical reproduction dynamics, continuously in time and at constant rate, a pair of individuals is picked, and the first kills the second. The process, which tracks the total size of the continuous population, is the so-called logistic continuous-state branching (LCB) process.  

An LCB process $Z$, starting from $z\in (0,\infty)$, can be seen as the unique weak solution, up to the first explosion time, of the following 
stochastic equation  with jumps:
\begin{align}
Z_t=z+\sigma\int_0^t\sqrt{Z_s}\ddr B_s+\gamma\int_0^{t}Z_s\ddr s &+\int_0^t\int_{0}^{Z_{s-}}\!\!\int_{0}^{1}y\bar{\mathcal{M}}(\ddr s,\ddr u, \ddr y)\nonumber \\
&+\int_0^t\int_{0}^{Z_{s-}}\!\!\int_{1}^{\infty}y\mathcal{M}(\ddr s,\ddr u, \ddr y)
-\frac{c}{2}\int_0^{t}Z^2_s\ddr s, \ t\in [0,\infty), 
\label{SDELCB}
\end{align}
where $c,\sigma\geq 0$, $\gamma \in \mathbb{R}$, $B$ is a Brownian motion and $\mathcal{M}$ is an independent Poisson random measure (PRM) with intensity $\ddr s\ddr u\pi(\ddr y)$ with $\pi$ a Lévy measure such that $\int_{0}^{\infty}1\wedge y^2\pi(\ddr y)<\infty$. 
\smallskip

When $c>0$, the negative quadratic drift in \eqref{SDELCB} represents the competition. Heuristically continuously in time at constant rate $c/2$, a pair of individuals is picked and the first kills the second. When $c=0$, there is no competition, and the process $Z$ is a classical CB($\Psi$). We refer the reader to Dawson and Li \cite{DawsonLi} and \cite[Chapter 10]{zbMATH07687769} for the setting of CB processes. The stochastic equation \eqref{SDELCB} with competition has been studied in several works, including Berestycki et al. \cite[Proposition 1.1]{zbMATH06982256} and Palau and Pardo \cite[Theorem 1]{zbMATH06836271}. 
\medskip

The law of an LCB process is characterized by the competition parameter $c/2\geq 0$ and the branching mechanism $\Psi$, which takes the following Lévy-Khintchine form:
\begin{equation}\label{eq:branchingmechanism}\Psi(x):=\frac{\sigma^2}{2}x^2-\gamma x+\int_{0}^{\infty}\left(e^{-xy}-1+xy\mathbbm{1}_{\{y\leq 1\}}\right)\pi(\ddr y).
\end{equation}
%

In this article, we are interested in the regime where the process $Z$ becomes extinct almost surely, i.e., $Z_t\underset{t\rightarrow \infty}{\longrightarrow} 0$, $\mathbb{P}_z$-a.s. for all $z\in \mathbb{R}_+$. We emphasize that the framework of a continuous-state space enables sample paths to be attracted towards $0$, without necessarily reaching it. The population is sometimes said to be \textit{extinguishing}, see Kyprianou's book \cite[Chapter 12, page 345]{MR3155252}.

Our aim is to construct, using a Doob $h$-transform, a conditioning of the process $(Z,\mathbb{P}_z)$ on the singular event of non-extinction \[\mathscr{S}:=\{Z_t\underset{t\rightarrow \infty}{\longrightarrow} 0\}^c.\]
Aside from the pure diffusive case, where general methods can be applied,  this question, to the best of our knowledge, has not been addressed in the literature.
Before discussing logistic CB processes, we will briefly review what is known about classical CB processes. 

\subsubsection*{Without competition}
First of all, in the framework of classical CB processes, the event of extinction $\{Z_t\underset{t\rightarrow \infty}{\longrightarrow} 0\}$ happens almost surely if and only if the process is critical or subcritical, that is to say $\Psi'(0+)=0$ or $\Psi'(0+)>0$. We refer for instance to \cite[Chapter 12]{MR3155252}. Furthermore, a necessary and sufficient condition for the process to hit $0$ is the so-called Grey's condition, $\int^{\infty}\frac{\ddr u}{\Psi(u)}<\infty$. Under this condition, the extinction time $\zeta_0:=\inf\{t>0: Z_t=0\}$, of a (sub)-critical CB process $Z$, is finite almost surely and one has $\mathscr{S}=\{\exists t>0: Z_t=0\}^c$ a.s.. 
\smallskip


In the setting of finite-time extinction, the conditioning of the process to survive forever has been studied in Li \cite{MR1727226} and  Lambert \cite{lambert:tel-00252150,zbMATH05214036}. The conditioned process is built by forcing the extinction time to be larger than any deterministic time. 
This is the notion of $Q$-process. Heuristically, the null-set $\mathscr{S}$ is ``approached" by $\bigcap_{s>0}\{\zeta_0>s\}$ and 
the law of the $Q$-process arises as the following limit:
\[\mathbb{Q}_z(\Lambda):=\underset{s\rightarrow \infty}{\lim}\mathbb{P}_z(\Lambda\,|\zeta_0>t+s), \quad \forall\, t\geq 0, \, \forall\, \Lambda \in \mathcal{F}_t. \]
The law $\mathbb{Q}_z$ is furthermore related to $\mathbb{P}_z$ by the change of measure with respect to the martingale $(e^{\varrho t}Z_t,t\geq 0)$, where $\varrho:=\Psi'(0+)\geq 0$, that is to say:
\[\ddr \mathbb{Q}_z=e^{\varrho t}\frac{Z_t}{z}\ddr \mathbb{P}_z  \text{ on } \mathcal{F}_t,\ \forall\, t\geq 0.\]

As far as we know, the issue of conditioning   a (sub)critical continuous-state space branching process to survive when the process is only extinguishing (and hence Grey's condition does not hold), has not been investigated yet. We briefly explain how such a conditioning can be achieved by choosing another way to ``approximate" the null-set $\mathscr{S}$. The approach we are going to design for LCB processes will follow the same methodology.
\smallskip

The total progeny of the population is the  random variable \begin{equation}\label{progeny}J:=\int_{0}^{\infty}\! Z_s\ddr s.
\end{equation}
As noticed by Bingham \cite[Proposition 2.3 and Remark 2.4]{BINGHAM1976217}, see also \cite[Corollary 12.10]{MR3155252}, for any CB process $Z$, the following events are identical up to a null set:
\begin{equation}\label{identityeventintro}
\{Z_t\underset{t\rightarrow \infty}{\longrightarrow} 0\}=\{J<\infty\}.
\end{equation}
This entails that it is equivalent to condition the process not to get extinct or condition it to have an infinite progeny. We will define the conditioned process by forcing the total progeny to be infinite by exceeding arbitrarily large exponential random variables. This type of conditioning, defined in relation with exponential random variables, is well-known in the theory of L\'evy processes. Notable references are the works by Chaumont and Doney \cite{zbMATH05070601}, and Kyprianou et al. \cite{zbMATH06696066}. This approach relies solely on knowledge of Laplace transforms and proves to be an effective method in many cases. 

Let $\mathbbm{e}$ be an independent standard exponential random variable (i.e. with unit mean). The event of survival  is ``approached" by $\bigcap_{\theta>0}\{J\geq \mathbbm{e}/\theta\}$. We will see that the following limit exists:
\begin{equation}\label{conditioningintro} \mathbb{P}_z^{\uparrow}(\Lambda,t<\zeta)=\underset{\theta \rightarrow 0}{\lim}\,\mathbb{P}_z(\Lambda, t\leq \mathbbm{e}/\theta\,|J>\mathbbm{e}/\theta), \ \forall \Lambda\in \mathcal{F}_t, \forall t\geq 0,
\end{equation}
with $\zeta$ the lifetime of the process under $\mathbb{P}_z^{\uparrow}$. Moreover, the measure  $\mathbb{P}_z^{\uparrow}$ is the Doob's transform of $\mathbb{P}_z$, defined as follows
\[\mathbbm{1}_{\{t<\zeta\}}\ddr \mathbb{P}^{\uparrow}_z=\frac{Z_t}{z}\ddr \mathbb{P}_z  \text{ on } \mathcal{F}_t,\ \forall\, t\geq 0,\]
where $\zeta:=\inf\{t>0:\, Z_t\notin [0,\infty)\}$ is the lifetime of $(Z,\mathbb{P}_z^{\uparrow})$ and $\infty$ is chosen as a \textit{cemetery state}. Background on Doob's transforms is given in the forthcoming Section~\ref{sec:doobtransform}. It is worth noticing that $\mathbb{P}_z^{\uparrow}$ and $\mathbb{Q}_z$ only differ on $\mathcal{F}_t$ by the mass $e^{\rho t}$. Aside from the killing term, they thus both coincide.
\subsubsection*{With competition}
The longterm behaviors of the LCB process have been investigated in \cite{MR3940763}. We gather here the results we need for our purpose. We start by explaining under which conditions, extinction occurs almost-surely.

In contrast with the previous setting, the quadratic competition term ``$-\frac{c}{2}Z_t^2\ddr t$", in general may push the population with supercritical branching dynamics towards extinction. We will therefore also consider branching mechanisms $\Psi$ such that $\Psi'(0+)\in [-\infty,0)$. 
\smallskip


According to \cite[Theorem 3.9-(1)]{MR3940763}, the extinction of the LCB$(\Psi,c/2)$ process happens $\mathbb{P}_z$-almost surely, meaning the event of survival $\mathscr{S}$ is a $\mathbb{P}_z$-null set, if and only if the following condition holds
\begin{center}
\hypertarget{asp}{$\mathbb{H}$:} $\qquad \qquad \mathcal{E}:=\int_0^{x_0}\!\tfrac{1}{u}e^{\int_u^{x_{0}}\frac{2\Psi(v)}{cv}\ddr v}\ddr u=\infty \text{, for some}\, x_0>0, \text{ and } \Psi(\infty):=\underset{x\rightarrow \infty}{\lim} \Psi(x)=\infty.$
\end{center}
\smallskip

The first integral test, $\mathcal{E}=\infty$, guarantees that the LCB process $Z$ cannot explode, i.e. it does not hit $\infty$ in finite time, see \cite[Theorem 3.1]{MR3940763}. The value $x_0>0$ does not play any role in the finiteness of the integral. We shall therefore fix it in $(0,\infty)$. The condition $\mathcal{E}=\infty$ implies moreover that the LCB process has its boundary $\infty$ as an instantaneous entrance, see \cite[Theorem 3.3]{MR3940763}.  Namely, besides that the process cannot explode, it can start from $\infty$ and will leave it instantaneously.
A sufficient condition for $\mathcal{E}=\infty$, is that $\int_{0+}\frac{|\Psi(v)|}{v}\ddr v<\infty$. The latter integral condition is equivalent to the following moment condition $\int^{\infty}\log(y) \pi(\ddr y)<\infty$, see e.g. 
\cite[Proposition 3.13]{MR3940763}. 
\medskip

The second condition, $\Psi(\infty)=\infty$, is equivalent to $\Psi(x)\geq 0$ for sufficiently large $x$. This condition ensures that the pure CB process with mechanism $\Psi$ is not immortal, meaning it is not going to $\infty$ almost surely. This prevents a potential phenomenon of stationarity from occurring in the LCB process, as detailed in \cite[Theorem 3.7]{MR3940763}. To say it in another way, without competition, under the condition $\Psi(\infty)=\infty$, the CB$(\Psi)$ will tend to $0$ with positive probability, see \cite[Theorem 12.5]{MR3155252}. One says that the boundary $0$ is \textit{attracting}. Heuristically, with the additional negative competition drift, this probability becomes one, when the process cannot explode. 
\medskip

As a matter of fact, Grey's condition is also necessary and sufficient for the boundary $0$ of an LCB process to be accessible; see \cite[Theorem 3.9]{MR3940763} and \cite[Theorem 3.5]{MR2134113} (with the finite log-moment assumption). In other words, the competition solely does not cause the population to die out \textit{in finite time}. It prevents however the supercritical growth when $\mathcal{E}=\infty$. 
\medskip

We summarize the classification in Table \ref{classificationZ}.

\begin{table}[h!]
\begin{center}
\begin{tabular}{|c|c|}
\hline
Condition &  Boundary  of $Z$ \\
\hline
$\mathcal{E}=\infty$ & $\infty$  is an entrance  \\
\hline
$\Psi(\infty)=\infty$  & $0$  is attracting with positive probability\\
\hline
$\mathcal{E}=\infty$ and $\Psi(\infty)=\infty$  & $0$  is attracting almost surely\\
\hline
$\int^{\infty}\frac{\ddr x}{\Psi(x)}<\infty (\Longrightarrow \Psi(\infty)=\infty)$ & $0$  is accessible\\
\hline
\end{tabular}
\vspace*{2mm}
\caption{Boundaries of $Z$.}
\label{classificationZ}
\end{center}
\end{table}
\vspace*{-5mm}
%
\begin{example}[Set of examples, with or without a finite log-moment, satisfying \asp .]\
\begin{enumerate}
\item Stable mechanisms. Let $\alpha\in (1,2], \gamma\in \mathbb{R}, a>0$, \[\Psi(x):=ax^{\alpha}-\gamma x,\ \forall x\geq 0.\] It satisfies $\Psi(\infty)=\infty$ and $\Psi'(0+)=-\gamma$. Hence, $\int_{0}\frac{|\Psi(x)|}{x}\ddr x<\infty$ and \asp \ is fulfilled. Notice that Grey's condition is satisfied, so that extinction occurs by absorption at $0$ in finite time.
\smallskip
\item Neveu's mechanism. Let \[\Psi(x):=x\log x,\ \forall x\geq 0.\]  It satisfies $\Psi(\infty)=\infty$ and, albeit $\Psi'(0+)=-\infty$, one has  $\int_{0}\frac{|\Psi(x)|}{x}\ddr x<\infty$, hence \asp \ holds.  Notice that Grey's condition is not satisfied,  the LCB process, with mechanism $\Psi$, becomes thus extinct by converging towards $0$ without reaching it.
\smallskip
\item Let $\alpha\in (0,c/2]$, $a>0, \beta \in [1,2]$. Assume \[\Psi(x)\underset{x\rightarrow 0}{\sim}-\alpha/\log(1/x) \text{ and }  \Psi(x)\underset{x\rightarrow \infty}{\sim} ax^{\beta}.\] Then, $\int_{0}\frac{|\Psi(x)|}{x}\ddr x=\infty$, the log-moment is thus infinite, but the condition  $\alpha\leq c/2$ ensures that $\mathcal{E}=\infty$, see $\mathcal{E}$ in \asp. Furthermore, since $d,\beta>0$, $\Psi(\infty)=\infty$ and \asp \ is fulfilled. We refer to \cite[Example 3.14]{foucart2021local} for the form of the associated Lévy measure $\pi$. See also Duhalde et al. \cite[Section 7.3]{MR3264444} for other examples of Lévy measures, with infinite log-moment, verifying $\mathcal{E}=\infty$. Last, note that Grey's condition holds if and only if $\beta>1$.
\end{enumerate} 
\end{example}

An inherent difficulty in the setting with competition is that many arguments known for the branching world do not apply.  We will see however that the identity  \eqref{identityeventintro}, which relates extinction of CB processes to the event of having a finite total progeny, also holds true for logistic CBs. Next, we identify a positive excessive function $h$, i.e. such that $(h(Z_t),t\geq 0)$ is a positive supermartingale. 
The latter will be related to the conditioning defined by forcing the progeny $J$ to be greater than arbitrarily large exponential random variables, in the same spirit as explained previously for the CB processes in \eqref{conditioningintro}. More precisely, we will build a probability measure $\mathbb{P}^{\uparrow}_z$ such that
\begin{equation}\label{eq:Pupintro}\mathbb{P}^{\uparrow}_z(\Lambda,t< \zeta)=\underset{\theta \rightarrow 0}{\lim}\, \mathbb{P}_z\!\left(\Lambda, t\leq \mathbbm{e}/\theta\,\big\lvert J\geq \mathbbm{e}/\theta\right)=\mathbb{E}_z\left(\frac{h(Z_t)}{h(z)}\mathbbm{1}_{\Lambda}\right),  \ \forall \Lambda\in \mathcal{F}_{t},\ t\geq 0, 
\end{equation} with $\mathbbm{e}$ being an independent standard exponential random variable. It is worth noticing that the function $h$ will have an explicit form in terms of $\Psi$ and $c$. 
\smallskip
\subsubsection*{The role of duality}
The key tool in order to find the excessive function $h$ lies in some duality relationships satisfied by the LCB process. We briefly explain them. More details are given in Section \ref{sec:duality}. 
\smallskip

Any LCB process $Z$ admits a \textit{Laplace dual} process $U:=(U_t,t\geq 0)$. Specifically, for all $x,z\in (0,\infty)$ and $t\geq 0$, \begin{equation}\label{laplacedualintro}
\mathbb{E}_z(e^{-xZ_t})=\mathbb{E}_x(e^{-zU_t}),\end{equation}
where $U$ is the diffusion, weak solution to \[\ddr U_t=\sqrt{cU_t}\ddr B_t-\Psi(U_t), U_0=x,\]
for some Brownian motion $B$. The relationship \eqref{laplacedualintro} has been established in several works, including Hutzenthaler and Wakolbinger \cite{MR2308333}, Hutzenthaler and Alkemper \cite{MR2320823} and Greven et al. \cite{greven2015multitypespatialbranchingmodels} for the setting of \textit{spatial} logistic Feller diffusions, whose branching mechanism is $\Psi(x):=\frac{\sigma^2}{2}x^{2}-\gamma x$. The setting without spatial component and with a general branching mechanism $\Psi$, i.e. of the form \eqref{eq:branchingmechanism}, is studied in \cite{MR3940763}. 
\smallskip

The diffusion $U$, which is stochastically monotone, meaning that for any $y>0$, the map $x\mapsto \mathbb{P}_x(U_t\geq y)$ is non-decreasing, also gives rise to a certain dual process $V$. For a general statement, refer to Siegmund \cite{MR0431386}, and for the framework under consideration, see Foucart \cite[Section 6]{foucart2021local}. Specifically, for all $x,y\in (0,\infty)$ and $t\geq 0$, there is a process $V$, called \textit{Siegmund dual} of $U$, such that
$$\mathbb{P}_x(U_t\geq y)=\mathbb{P}_y(x\geq V_t),$$
where, to alleviate the notation, we keep denoting by $\mathbb{P}_{\mathrm{a}}$ the law of each process starting at $\mathrm{a}$. The process $(V,\mathbb{P}_y)$ is the diffusion, weak solution to
\[\ddr V_t=\sqrt{cV_t}\ddr B_t+\big(c/2+\Psi(V_t)\big)\ddr t, \ V_0=y,\]
with $B$ some Brownian motion.
\smallskip

By combining the two duality relationships, we will see in the forthcoming Proposition \ref{lem:joiningduals1}, that for all $t\geq 0$, $x,z\in (0,\infty)$,
\[\mathbb{E}_z(e^{-xZ_t})=\int_{0}^{\infty}ze^{-zy}\mathbb{P}_{y}(V_t>x)\ddr y.\]
We call $V$ the \textit{bidual} process of $Z$. This relationship will enable us to define the excessive function $h$ for the LCB process $Z$, in an explicit manner, by utilizing the scale function $S$ of $V$.  The diffusion setting provides many tools and will facilitate a deep study of the excessive function of $Z$. Instead of studying the LCB process $Z$ directly, we will first analyze $V$ and its Doob's transform associated to $S$. The information obtained will then be transferred to the Doob's $h$-transform of $Z$.
\smallskip

We will then study the process under the new measure $\mathbb{P}^{\uparrow}_z$, establish the identity in \eqref{eq:Pupintro}, and investigate, among other things, the stochastic equation solved by it, if any. In a reminiscent way as in the CB case, where a dichotomy occurs depending on whether the mechanism is critical or subcritical, two different settings will arise according to whether the branching Lévy measure has a finite log-moment or not.
\section{Main results}
We start by a statement for the pure CB process. This can be seen as a benchmark result in order to understand the next to come, in which competition is considered. Recall that we denote the total progeny  by $J=\int_{0}^{\infty}Z_s\ddr s$. 

As explained in the introduction, extinction occurs if and only if the total progeny is finite, see \eqref{identityeventintro}, and we will condition the CB process to never converge towards $0$ by forcing $J$ to be greater than arbitrarily large exponential random variables, see \eqref{conditioningintro}. By doing so, we recover, \textit{aside from the killing}, the classical $Q$-process of the CB process, as studied in \cite{MR1727226} and \cite{zbMATH05214036}. It is however important to note that, unlike in those works, Grey's condition is not assumed here. The process may thus only be extinguishing.
\begin{theorem}[No competition] \label{thmCSBP} For all $z\in (0,\infty)$, let $\mathbb{P}_z$ be the law of the CB process $Z$ starting from $z$. Assume the branching mechanism $\Psi$ (sub)-critical, i.e. $\varrho=\Psi'(0+)\geq 0$ (equivalently $J<\infty$, $\mathbb{P}_z$-a.s.).   
\begin{enumerate}[label=\roman*)]
\item The process $(Z_t,t\geq 0)$ is a $\mathbb{P}_z$-martingale in the critical case, i.e. $\varrho=0$. It is a $\mathbb{P}_z$-supermartingale in the subcritical case, i.e. $\varrho>0$.
\smallskip 

Define the (sub)-probability measure $\mathbb{P}^{\uparrow}_z$ as follows. Set
\[\mathbb{P}^{\uparrow}_z(\Lambda,t<\zeta):=\mathbb{E}_z\left(\frac{Z_t}{z}\mathbbm{1}_{\Lambda}\right),\quad \forall \, \Lambda \in \mathcal{F}_t,\  \forall\, t\geq 0,
\]
with $\zeta:=\inf\{t>0: Z_t\notin [0,\infty)\}$ the lifetime of $(Z,\mathbb{P}_z^{\uparrow})$.
\medskip

Two possible scenarios arise.

\smallskip

\begin{itemize}
\item In the critical case, $\mathbb{E}_z(J)=\infty$ and the lifetime $\zeta$ is infinite $\mathbb{P}_z^{\uparrow}$-a.s..
\smallskip
\item In the subcritical case, i.e. $\varrho>0$,  $\mathbb{E}_z(J)<\infty$, the lifetime $\zeta$ is finite $\mathbb{P}_z^{\uparrow}$-a.s. and has an exponential law with parameter $\varrho$.
\smallskip
\end{itemize}
\item  For all $z\in (0,\infty)$, the (sub)-probability measure $\mathbb{P}_z^{\uparrow}$ arises as the following conditioning: 
\[\mathbb{P}^{\uparrow}_z(\Lambda,t<\zeta)=\underset{\theta \rightarrow 0}{\lim}\, \mathbb{P}_z(\Lambda, t\leq \mathbbm{e}/\theta\,|J>\mathbbm{e}/\theta), \ \forall \Lambda \in \mathcal{F}_t,\ \forall t\geq 0,\]
where $\mathbbm{e}$ denotes a standard exponential random variable independent from $(Z,\mathbb{P}_z)$.

\smallskip

\item
For any $z\in (0,\infty)$, the process $(Z,\mathbb{P}_z^{\uparrow})$ has the same law as the unique weak solution to the following stochastic equation, killed at an exponential random variable $\zeta$ with parameter $\varrho$:
\begin{align}
Z_t=z&+\sigma\int_0^t\sqrt{Z_s}\ddr B_s-\varrho\int_0^{t}Z_s\ddr s +\int_0^t\int_{0}^{Z_{s-}}\!\!\int_{0}^{\infty}y\bar{\mathcal{M}}(\ddr s,\ddr u, \ddr y)
\label{conditioned partcsbp} \\
&+\sigma^2t+\int_0^t\!\!\int_{(0,\infty)}y\mathcal{N}(\ddr s, \ddr y),\ t<\zeta,\nonumber
\end{align}
with $B$ a Brownian motion, $\mathcal{M}$ and $\mathcal{N}$ two Poisson random measures, respectively on $[0,\infty)^3$ and $[0,\infty)^2$, with intensity $\ddr s\, \ddr u\, \pi(\ddr y)$ and $\ddr s\, y\pi(\ddr y)$ and everything is mutually independent.
\end{enumerate}
\end{theorem}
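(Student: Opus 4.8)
The plan is to treat i)--iii) in turn, the common engine being the Laplace transform of the total progeny $J$. For i), the Feynman--Kac identity for CB processes gives $\mathbb{E}_z[\exp(-q\int_0^tZ_s\,\ddr s)]=e^{-zu_t(q)}$ with $\partial_tu_t=q-\Psi(u_t)$ and $u_0=0$; since $\Psi$ is non-decreasing on $[0,\infty)$ with $\Psi(0)=0$ and $\Psi(\infty)=\infty$, one has $u_t(q)\uparrow\Psi^{-1}(q)$, so $\mathbb{E}_z[e^{-qJ}]=e^{-z\Psi^{-1}(q)}$ by monotone convergence. Differentiating the backward equation in the Laplace variable at $0$ yields $\mathbb{E}_z[Z_t]=ze^{-\varrho t}$, hence $\mathbb{E}_z[Z_{t+s}\mid\mathcal{F}_t]=Z_te^{-\varrho s}$ so that $z\mapsto z$ is excessive; the Doob transform $\mathbbm{1}_{\{t<\zeta\}}\,\ddr\mathbb{P}^{\uparrow}_z=\tfrac{Z_t}{z}\,\ddr\mathbb{P}_z$ on $\mathcal{F}_t$ then defines a consistent family of sub-probabilities with lifetime $\zeta$. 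Taking $\Lambda=\Omega$ gives $\mathbb{P}^{\uparrow}_z(t<\zeta)=e^{-\varrho t}$, whence $\zeta\equiv\infty$ when $\varrho=0$ and $\zeta$ is exponential of parameter $\varrho$ when $\varrho>0$, while $\mathbb{E}_z[J]=\int_0^\infty ze^{-\varrho s}\,\ddr s$ equals $+\infty$ in the critical case and $z/\varrho$ in the subcritical one. This settles i).

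For ii), let $\mathbbm{e}$ be a standard exponential independent of $Z$ and put $E=\mathbbm{e}/\theta$. Integrating out $E$ gives $\mathbb{P}_z(\Lambda,t\le E<J)=\mathbb{E}_z[\mathbbm{1}_\Lambda\mathbbm{1}_{\{J>t\}}(e^{-\theta t}-e^{-\theta J})]$ and $\mathbb{P}_z(E<J)=1-e^{-z\Psi^{-1}(\theta)}$. Splitting $J=\int_0^tZ_s\,\ddr s+\widetilde J$ at time $t$ and using the branching property, $\mathbb{E}_z[e^{-\theta J}\mid\mathcal{F}_t]=\exp(-\theta\int_0^tZ_s\,\ddr s-Z_t\Psi^{-1}(\theta))$, which rewrites the numerator through $\mathcal{F}_t$-measurable data. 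Dividing numerator and denominator by $\Psi^{-1}(\theta)$ and letting $\theta\to 0$ --- using $\Psi^{-1}(\theta)\to 0$ and $\Psi(v)/v\to\varrho$ as $v\downarrow 0$, hence $\theta/\Psi^{-1}(\theta)\to\varrho$, together with dominated convergence, the dominating function being affine in $\int_0^tZ_s\,\ddr s$ and $Z_t$ (both in $L^1(\mathbb{P}_z)$ by the moment formula) --- one identifies the limit with $\tfrac1z\mathbb{E}_z[Z_t\mathbbm{1}_\Lambda]=\mathbb{P}^{\uparrow}_z(\Lambda,t<\zeta)$. In the critical case the term $Z_t\Psi^{-1}(\theta)$ dominates and $\zeta=\infty$; in the subcritical case the $O(\theta)$ corrections carry the exponential killing at rate $\varrho$.

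For iii), recall that CB$(\Psi)$ has generator $\mathcal{L}f(z)=\tfrac{\sigma^2}{2}zf''(z)+\gamma zf'(z)+z\int_0^\infty(f(z+y)-f(z)-yf'(z)\mathbbm{1}_{\{y\le1\}})\,\pi(\ddr y)$, and that $(Z,\mathbb{P}^{\uparrow}_z)$ solves the martingale problem for the $h$-transformed operator $\mathcal{L}^hf(z)=z^{-1}\mathcal{L}(\widetilde f)(z)$ with $\widetilde f(z)=zf(z)$. A direct computation, regrouping the non-compensated and the linear contributions by means of the identity $\varrho=-\gamma-\int_1^\infty y\,\pi(\ddr y)$ (finite in the (sub)critical regime), gives
\begin{align*}
\mathcal{L}^hf(z) &= \tfrac{\sigma^2}{2}zf''(z)+(\sigma^2-\varrho z)f'(z)+z\!\int_0^\infty\!\big(f(z+y)-f(z)-yf'(z)\big)\,\pi(\ddr y)\\
&\quad +\int_0^\infty\! y\big(f(z+y)-f(z)\big)\,\pi(\ddr y)-\varrho f(z),
\end{align*}
so in particular $\mathcal{L}^h\mathbbm{1}=-\varrho$. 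This is exactly the generator, with killing at rate $\varrho$, of \eqref{conditioned partcsbp}: the drifts $-\varrho Z_s$ and $\sigma^2$ and the fully compensated jump integral $\int y\bar{\mathcal{M}}$ come from the first line, the immigration $\int y\mathcal{N}$ with intensity $\ddr s\,y\pi(\ddr y)$ from the last integral, and the killing from $-\varrho f$. To conclude, it remains to prove that \eqref{conditioned partcsbp} has a unique (non-exploding) weak solution: pathwise uniqueness from the square-root diffusion coefficient via a Yamada--Watanabe argument, the affine drift, and the local boundedness on $(0,\infty)$ of the jump and immigration coefficients --- the standard scheme for CB processes with immigration --- after which one appends an independent exponential clock of parameter $\varrho$, which does not interfere with uniqueness, to recover $\mathbb{P}^{\uparrow}_z$.

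The step I expect to be the main obstacle is the limit in ii): one has to pass to $\theta\to 0$ in a ratio featuring both the large-progeny event and the occupation functional $\int_0^tZ_s\,\ddr s$, keeping the domination uniform in $\theta$, and in the subcritical case extracting precisely the killing rate $\varrho$ from the second-order behaviour, which requires a careful comparison of the $\theta$- and $\Psi^{-1}(\theta)$-asymptotics. The identification of $\mathcal{L}^h$ in iii) is mechanical, but packaging a weak-uniqueness statement for \eqref{conditioned partcsbp} that accommodates both the killing and a possibly infinite $\int_0^1 y\,\pi(\ddr y)$ also warrants some care.
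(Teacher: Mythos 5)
Parts i) and iii) of your proposal are essentially sound and follow the paper's route: the excessive function is $h(z)=z$ with $\mathbb{E}_z[Z_t]=ze^{-\varrho t}$, and the $h$-transformed generator you compute agrees with the paper's (which obtains it from a general product formula, Lemma \ref{lem:generalLfh}, and then identifies a CBI$(\Psi,\Psi')$ rather than re-running Yamada--Watanabe). The problem is in ii), precisely at the step you flagged as the main obstacle, and it is not merely technical: your computation of the limit is wrong in the subcritical case. After integrating out $E=\mathbbm{e}/\theta$ you correctly get the numerator $\mathbb{E}_z\big[\mathbbm{1}_\Lambda\mathbbm{1}_{\{J>t\}}(e^{-\theta t}-e^{-\theta J})\big]$, but this quantity cannot be ``rewritten through $\mathcal{F}_t$-measurable data'' by conditioning $e^{-\theta J}$ on $\mathcal{F}_t$: the indicator $\mathbbm{1}_{\{J>t\}}$ is not $\mathcal{F}_t$-measurable, and, more seriously, every manipulation you perform perturbs the numerator at order $\theta$, which is exactly the order of the denominator $1-e^{-z\Psi^{-1}(\theta)}\sim z\theta/\varrho$ when $\varrho>0$. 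Concretely, dominated convergence applied to your numerator gives
\begin{equation*}
\frac{\mathbb{E}_z\big[\mathbbm{1}_\Lambda\mathbbm{1}_{\{J>t\}}(e^{-\theta t}-e^{-\theta J})\big]}{1-e^{-z\Psi^{-1}(\theta)}}\underset{\theta\rightarrow 0}{\longrightarrow}\frac{\varrho}{z}\,\mathbb{E}_z\big[\mathbbm{1}_\Lambda (J-t)^{+}\big],
\end{equation*}
and $\varrho\,\mathbb{E}_z[(J-t)^{+}\mathbbm{1}_\Lambda]\neq\mathbb{E}_z[Z_t\mathbbm{1}_\Lambda]=\varrho\,\mathbb{E}_z[(J-J_t)\mathbbm{1}_\Lambda]$ in general, since $(J-t)^{+}\neq J-J_t$. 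The degenerate mechanism $\Psi(x)=\varrho x$ (so $Z_s=ze^{-\varrho s}$, $J=z/\varrho$, $\Lambda=\Omega$) makes this explicit: your ratio tends to $(1-\varrho t/z)^{+}$, not to $e^{-\varrho t}$. In the critical case $\varrho=0$ all these order-$\theta$ discrepancies are negligible against $z\Psi^{-1}(\theta)\gg\theta$ and your argument goes through; in the subcritical case it does not.

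The paper's mechanism (Lemma \ref{lem:conditioningalongprogeny}, reused verbatim in Section \ref{sec:prooftheoremCSBP}) is different in a way that matters here: it applies the strong Markov property and the lack of memory of $\mathbbm{e}_\theta$ \emph{relative to the accumulated progeny} $J_t=\int_0^tZ_s\,\ddr s$, i.e.\ it splits on $\{J_t<\mathbbm{e}_\theta\}$ and uses that $\mathbbm{e}_\theta-J_t$ is again exponential there, arriving at
\begin{equation*}
\mathbb{E}_z\Big(\mathbbm{1}_{\{\Lambda,\,t<\mathbbm{e}_\theta\}}\,e^{-\theta J_t}\,\frac{\mathbb{E}_{Z_t}(1-e^{-\theta J})}{\mathbb{E}_z(1-e^{-\theta J})}\Big),
\end{equation*}
whose limit is exactly $\mathbb{E}_z[\mathbbm{1}_\Lambda Z_t/z]$ because $\frac{1-e^{-\theta J}}{\mathbb{E}_z(1-e^{-\theta J})}$ is normalised so that the first-order terms in $\theta$ cancel. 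The effective event there is $\{t\vee J_t\le\mathbbm{e}_\theta<J\}$, which differs from your $\{t\le\mathbbm{e}_\theta<J\}$ by a set of probability of order $\theta$. To repair your ii) you must either reproduce this progeny-clock decomposition, or justify why the order-$\theta$ difference between the two events can be discarded against the denominator — which, as the deterministic example shows, it cannot be when $\varrho>0$ without further cancellation. You should therefore redo ii) following the memoryless/strong-Markov route rather than the direct Laplace expansion.
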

\begin{remark}
At the level of the progeny, the distinction between the critical and subcritical cases can be understood through the expected value of the progeny $\mathbb{E}_z(J)$. In the critical case $\rho=0$, $\mathbb{E}_z(J)=\infty$. Conversely, $\mathbb{E}_z(J)<\infty$ when $\rho>0$. In the subcritical case, forcing the progeny to be infinite imposes such a significant distortion on the CB process that roughly, by pulling upwards the path, so to avoid it heading down to zero, we end up ``breaking" it. This is formalized by killing the conditioned process.
In the critical case, the mean of the progeny being infinite, the conditioning is less stringent.
\end{remark}
\begin{remark}
We recognize in \eqref{conditioned partcsbp} the stochastic equation solved by a continuous-state branching process with immigration (CBI) with branching mechanism $\Psi$ and immigration $\Psi'$, see \cite[Chapter 10]{zbMATH07687769}. Specifically, the last line in \eqref{conditioned partcsbp} is a subordinator with Laplace exponent $\Psi'$, representing the immigration term (with a killing term when $\varrho=\Psi'(0+)>0$).
Theorem \ref{thmCSBP} applies to the peculiar case of (sub)critical CB processes whose mechanisms are of finite variation type, i.e. $\Psi'(\infty)<\infty$, equivalently there is no diffusive part, namely $\sigma=0$ in \eqref{eq:branchingmechanism}, and the Lévy measure $\pi$ is such that $\int_0^\infty 1\wedge y\, \pi(\ddr y)<\infty$. The (sub)critical mechanism, in this setting, takes the form
\begin{equation}\label{psifinitevar} \Psi(x)=bx-\int_{0}^{\infty}(1-e^{-xy})\pi(\ddr y),
\end{equation}
with $b=\Psi'(\infty)>0$ and $\Psi'(0+)=b-\int_{0}^{\infty}y\pi(\ddr y)\geq 0$. 
Pakes and Trajtsman \cite{zbMATH03896728} have established some conditional limit theorems in this setting. 
\end{remark}
\begin{proposition}\label{propcbIfrom0} Assume $\Psi'(0+)\geq 0$. The following convergence holds in Skorokhod's sense, $$\mathbb{P}_z^{\uparrow}\underset{z\rightarrow 0+}{\Longrightarrow} \mathbb{P}_0^{\uparrow},$$ for some probability measure $\mathbb{P}_0^{\uparrow}$ characterized as the weak solution to \eqref{conditioned partcsbp} with $z=0$. 
\end{proposition}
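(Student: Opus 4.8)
The plan is to prove the convergence $\mathbb{P}_z^{\uparrow}\Rightarrow\mathbb{P}_0^{\uparrow}$ in Skorokhod's sense by establishing the two standard ingredients: convergence of finite-dimensional distributions and tightness in $D([0,\infty),[0,\infty])$. For the finite-dimensional distributions, the natural tool is the Laplace transform. Since $(Z,\mathbb{P}_z^{\uparrow})$ is, by part (iii) of Theorem \ref{thmCSBP}, a CBI process with branching mechanism $\Psi$ and immigration mechanism $\Psi'$ killed at an independent rate $\varrho=\Psi'(0+)$, its one-dimensional Laplace transform has the explicit affine form
\[
\mathbb{E}_z^{\uparrow}\!\left(e^{-\lambda Z_t}\mathbbm{1}_{\{t<\zeta\}}\right)=\exp\!\left(-z v_t(\lambda)-\int_0^t\Psi'(v_s(\lambda))\,\ddr s\right),
\]
where $v_t(\lambda)$ solves $\partial_t v_t(\lambda)=-\Psi(v_t(\lambda))$, $v_0(\lambda)=\lambda$; the killing is absorbed into the $\Psi'$ term because $\Psi'(v_s)=\varrho+(\Psi'(v_s)-\varrho)$ and $\varrho$ contributes the factor $e^{-\varrho t}=\mathbb{P}(\zeta>t)$. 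As $z\to 0+$ the $z v_t(\lambda)$ term disappears and the right-hand side converges to $\exp(-\int_0^t\Psi'(v_s(\lambda))\,\ddr s)$, which is precisely the Laplace transform one reads off from \eqref{conditioned partcsbp} with $z=0$. Because the CBI is Markov and the flow $v$ is deterministic, the same computation iterated gives convergence of all multi-dimensional Laplace transforms $\mathbb{E}_z^{\uparrow}(\exp(-\sum_i\lambda_i Z_{t_i})\mathbbm{1}_{\{t_n<\zeta\}})$, hence convergence of finite-dimensional distributions on $[0,\infty)$, and one checks separately that no mass escapes to the cemetery state $\infty$ before time $t$ (this uses that $\Psi(\infty)=\infty$, so the immigration subordinator has finite mean rate $\Psi'$ and cannot explode in finite time; equivalently the CBI does not explode, cf. the remarks following \eqref{psifinitevar}).

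For tightness I would use Aldous' criterion, or more simply monotonicity in the starting point. The CBI process enjoys the branching-type coupling: realised on the same noise $(B,\mathcal{M},\mathcal{N})$, the solution of \eqref{conditioned partcsbp} started from $z$ dominates pathwise the one started from $0$, and $z\mapsto Z_t^{(z)}$ is nondecreasing; combined with the stochastic continuity just established this upgrades to tightness. Concretely, one controls the modulus of continuity of $Z^{(z)}$ uniformly in $z\in(0,1]$ by splitting the CBI into its branching part (whose moments are controlled by the flow $v_t$, uniformly bounded for $z\le 1$) and its immigration subordinator part (which does not depend on $z$ at all), and then invokes the standard moment/Aldous estimate for each piece. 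Since the immigration part carries all the ``new'' mass as $z\to 0$ and is $z$-independent, tightness is essentially inherited from the single process ``pure immigration plus branching from $0$''.

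The main obstacle I anticipate is not the Laplace computation but the behaviour near the boundary $0$: one must make sure that the limiting object \eqref{conditioned partcsbp} with $z=0$ is genuinely a well-defined process leaving $0$ (rather than staying at $0$ or being ill-posed), and that the convergence holds in the Skorokhod topology on the one-point-compactified state space $[0,\infty]$ rather than merely in finite-dimensional distributions on $[0,\infty)$. This requires (a) existence and uniqueness of a weak solution to \eqref{conditioned partcsbp} from $z=0$ — which follows from standard CBI theory since the immigration subordinator has Laplace exponent $\Psi'$ with $\Psi'(0+)=\varrho<\infty$, so the immigration part is a genuine (killed) subordinator and the branching part started from $0$ is the trivial null solution, giving $Z^{(0)}$ as ``killed subordinator driving a CB flow'' — and (b) a no-explosion argument to keep the limit in $[0,\infty)$ up to the killing time, again guaranteed by $\Psi(\infty)=\infty$. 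Once existence/uniqueness from $0$ and tightness are in hand, identifying the limit via the Laplace transform computed above closes the proof.
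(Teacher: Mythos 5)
Your proposal is correct and follows essentially the same route as the paper: the paper likewise writes the explicit affine Laplace transform $\mathbb{E}_z^{\uparrow}(e^{-xZ_t})=e^{-zu_t(x)-\int_0^t\Psi'(u_s(x))\,\ddr s}$, lets $z\to 0$ to identify the limiting one-dimensional marginals, and then upgrades to Skorokhod convergence exactly as you do, via finite-dimensional convergence from the Feller/Markov structure plus Aldous-type tightness (the paper delegates this last step to its Lemmas \ref{lem:entrancelaw} and \ref{lem:finalcv} developed for the competition case, while you supply the CBI-specific shortcut that the immigration part is $z$-independent). Your identification of $\mathbb{P}_0^{\uparrow}$ as the CBI$(\Psi,\Psi')$ started from $0$, i.e.\ the weak solution to \eqref{conditioned partcsbp} with $z=0$, matches the paper's conclusion.
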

\begin{remark}\label{rem:holdingcb} The process $(Z,\mathbb{P}^{\uparrow}_0)$ has the same law as a CBI$(\Psi,\Psi')$ starting from $0$. When the measure on $(0,\infty)$, $\nu(\ddr y):=y\, \pi(\ddr y)$ is finite and $\sigma=0$ (i.e. $\Psi$ takes the form \eqref{psifinitevar} and $b=\Psi'(\infty)<\infty$) the subordinator with Laplace exponent $\Psi'$ is a compound Poisson process and we see that $(Z,\mathbb{P}^{\uparrow}_0)$ stays an exponential time at $0$ with parameter $b=\Psi'(\infty)$, i.e. $0$ is an holding point. Conversely, if $\Psi'(\infty)=\infty$ the process leaves $0$ instantaneously since the first jump from the immigration occurs immediately.
\end{remark} 
From now on, we focus on the setting with competition, i.e. $c>0$ and we work with the hypothesis \asp. 
\begin{assumption}[\asp]
\[\mathcal{E}=\infty \text{ and } \Psi(\infty)=\infty.\]
\end{assumption}
\noindent As discussed in the Introduction, and further explained in Section \ref{sec:preliminariesLCB}, condition \asp \ is both  necessary and sufficient to ensure that the extinction event $\{Z_t\underset{t\rightarrow \infty}{\longrightarrow} 0\}$ occurs almost surely.
\smallskip

Our starting point is the following identity between the event of having a population with finite total progeny and extinction. This generalizes Bingham's result, see \cite{BINGHAM1976217}, to the case with competition. 
\begin{proposition}[Total progeny and extinction]\label{prop1} Assume \asp . For any $z\in (0,\infty)$, up to a $\mathbb{P}_z$-null set, one has the identity
\[\{Z_t\underset{t\rightarrow \infty}{\longrightarrow} 0\}=\{J<\infty\}.\]
Furthermore, for all $z\in (0,\infty)$,
\[
\mathbb{E}_z(J)<\infty \text{ if and only if } \int^{\infty}\log y\, \pi(\ddr y)<\infty.
\]
\end{proposition}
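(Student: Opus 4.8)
The plan is to prove the two claims in turn, using the duality machinery just announced and the known long-term behaviour of the LCB process from \cite{MR3940763}.

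For the identity $\{Z_t\to 0\}=\{J<\infty\}$ up to a $\mathbb{P}_z$-null set, one inclusion is elementary: on $\{J<\infty\}$ we have $\int_0^\infty Z_s\,\ddr s<\infty$, and since under \asp\ the process cannot explode and has continuous (indeed c\`adl\`ag) paths, the integrability forces $\liminf_{t\to\infty}Z_t=0$; because $0$ is attracting almost surely under \asp\ (Table~\ref{classificationZ}), the process in fact converges to $0$. Actually the cleaner route is to argue the contrapositive of the reverse inclusion: under \asp, extinction $\{Z_t\to 0\}$ holds $\mathbb{P}_z$-a.s., so $\{Z_t\to 0\}^c$ is a null set, and it suffices to show $\{J=\infty\}\subseteq\{Z_t\to 0\}^c$ up to a null set, i.e. that $Z_t\to 0$ implies $J<\infty$ a.s. For this I would use the Laplace-dual diffusion $U$: from $\mathbb{E}_z(e^{-xZ_t})=\mathbb{E}_x(e^{-zU_t})$ one controls $\mathbb{E}_z\big(1-e^{-x\int_0^t Z_s\,\ddr s}\big)$ by integrating in a suitable way, or — more robustly — one exploits that $e^{-zU_t}\to \mathbb{E}_z(e^{-xZ_\infty})=1$ as $t\to\infty$ forces $U_t\to 0$, and the point $0$ being reached or approached for $U$ in finite/infinite time translates, via the time-change representation of $Z$ through a generalized Ornstein–Uhlenbeck process announced in the abstract, into the finiteness of $J=\int_0^\infty Z_s\,\ddr s$, since that integral is exactly the (deterministic-in-law) clock appearing in the time change. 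So the key step is: \emph{$J$ equals the time-change clock relating $Z$ to an OU-type process, and $J<\infty$ a.s. on extinction because the OU process is absorbed/converges and the inverse clock is finite}. This is the step I expect to be the main obstacle, since it requires the precise form of the time-change representation (not yet displayed in the excerpt) and a careful argument that the total elapsed clock is a.s.\ finite under \asp; the subtlety is exactly the ``extinguishing without hitting $0$'' regime, where one cannot simply say the process is absorbed at a finite time.

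For the moment criterion $\mathbb{E}_z(J)<\infty\iff\int^\infty\log y\,\pi(\ddr y)<\infty$, I would compute $\mathbb{E}_z(J)=\int_0^\infty \mathbb{E}_z(Z_s)\,\ddr s$ by Tonelli. Here the competition term complicates a direct ODE for $\mathbb{E}_z(Z_s)$, so instead I would pass through the dual: from $\mathbb{E}_z(e^{-xZ_s})=\mathbb{E}_x(e^{-zU_s})$, differentiating at $x=0+$ gives $\mathbb{E}_z(Z_s)=z\,\mathbb{E}_x'(e^{-zU_s})|_{x=0}$-type relations, but cleaner is to use the bidual identity $\mathbb{E}_z(e^{-xZ_t})=\int_0^\infty z e^{-zy}\,\mathbb{P}_y(V_t>x)\,\ddr y$ from Proposition~\ref{lem:joiningduals1}: differentiating in $x$ at $0$ expresses $\mathbb{E}_z(Z_t)$ in terms of the law of the bidual diffusion $V$, whose scale function $S$ and speed measure are explicit in $\Psi$ and $c$. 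Then $\int_0^\infty\mathbb{E}_z(Z_t)\,\ddr t$ becomes a Green's-function-type quantity for $V$, finite precisely when an integral test on $S$ near the relevant boundary holds; and that integral test is, after the change of variables already appearing in $\mathcal{E}$, equivalent to $\int_{0+}\frac{|\Psi(v)|}{v}\,\ddr v<\infty$, which the excerpt has already identified with $\int^\infty\log y\,\pi(\ddr y)<\infty$. An alternative, perhaps shorter, is to note that $\mathbb{E}_z(J)\le \mathbb{E}_z(J^{\mathrm{CB}})$ where $J^{\mathrm{CB}}$ is the progeny of the CB process without competition obtained by dropping $-\tfrac{c}{2}Z_s^2\,\ddr s$ — so a comparison/coupling bounds $\mathbb{E}_z(J)$ above by the no-competition mean progeny, giving finiteness of $\mathbb{E}_z(J)$ as soon as the log-moment holds; for the converse one needs a lower bound, which the competition only weakens, so one must argue that on any fixed finite horizon (before competition has ``kicked in'') the first-moment already diverges when the log-moment fails — this uses that the divergence of $\int_{0+}|\Psi(v)|/v\,\ddr v$ is a small-time/small-mass phenomenon insensitive to the quadratic term. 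I would carry this out via the dual $V$ to keep the two directions symmetric.

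\textbf{Summary of steps.} (1) Recall from \asp\ that $\{Z_t\to 0\}$ is $\mathbb{P}_z$-a.s.; reduce the set identity to showing $J<\infty$ a.s. (2) Invoke the time-change / generalized OU representation of $Z$ and identify $J$ with the total clock; show it is a.s.\ finite under \asp. (3) For the moment statement, write $\mathbb{E}_z(J)=\int_0^\infty\mathbb{E}_z(Z_s)\,\ddr s$ and evaluate via the bidual identity, reducing finiteness to a Green-function/scale-function integral test for $V$. (4) Translate that test through the substitution in $\mathcal{E}$ to $\int_{0+}|\Psi(v)|/v\,\ddr v<\infty$, and cite the equivalence with $\int^\infty\log y\,\pi(\ddr y)<\infty$ recorded earlier. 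The main obstacle is step (2): making the ``$J$ = finite clock'' argument rigorous in the extinguishing-without-absorption regime.
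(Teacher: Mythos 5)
Your treatment of the set identity follows essentially the paper's route, and the ``main obstacle'' you flag in step (2) is resolved exactly as you suspect: in the Lamperti construction $Z_t=R_{C_t}$ with $C_t=\int_0^t Z_s\,\ddr s$, one has $C_\infty=J=\sigma_0:=\inf\{s>0:R_s\le 0\}$ pathwise, and $\sigma_0<\infty$ a.s.\ because $\mathcal{E}=\infty$ makes the generalized OU process $R$ recurrent (Shiga) while $\Psi(\infty)=\infty$ lets it reach every level; there is no difficulty from the extinguishing-without-absorption regime, since the identification $J=\sigma_0$ is at the level of $R$, not of $Z$ hitting $0$. So the first half of your proposal is correct and matches the paper.

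The gap is in the moment criterion. The identification $J=\sigma_0$ that you already have in hand settles it in one line: $\mathbb{E}_z(J)=\mathbb{E}_z(\sigma_0)$, and the mean first-passage time of $R$ below $0$ is finite iff $R$ is positive recurrent, which holds iff $\int^{\infty}\log y\,\pi(\ddr y)<\infty$ (this is the characterization of positive recurrence for generalized OU processes cited in the paper). Instead you propose two routes, both problematic. The comparison route is actually false as stated: for a critical CB process $\mathbb{E}_z(Z^{\mathrm{CB}}_t)=z$ for all $t$, so $\mathbb{E}_z(J^{\mathrm{CB}})=\infty$ regardless of any moment of $\pi$ (and likewise in the supercritical case, which \asp\ permits, e.g.\ Neveu's mechanism), so the upper bound $\mathbb{E}_z(J)\le\mathbb{E}_z(J^{\mathrm{CB}})$ is vacuous precisely in the cases of interest. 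The Green-function route via the bidual $V$ is only asserted, not carried out: differentiating $\int_0^\infty ze^{-zy}\mathbb{P}_y(V_t>x)\,\ddr y$ at $x=0$ and integrating over $t$ would require controlling the occupation density of $V$ at its entrance boundary $0$, and it is not clear the resulting integral test reduces to $\int_{0+}|\Psi(v)|/v\,\ddr v<\infty$ rather than to one of the (always finite, under \asp) quantities such as $\int_{0+}S(x)m(\ddr x)$. You should discard both and use the $\sigma_0$ identification you already established for the first half.
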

We now identify a specific positive excessive function for the LCB process. It will have Bernstein's form, see e.g. Schilling et al.'s book \cite[Theorem 3.2]{zbMATH06059089}, and we shall see that the supermartingale it induces, is a local martingale if and only if $J$ has an infinite first moment, $\mathbb{E}_z(J)=\infty$.
\begin{theorem}\label{thm1}
Assume \asp .
\begin{enumerate}[label=\roman*)]
\item 
Fix an $x_0>0$ and define the measure on $(0,\infty)$, \begin{equation}\label{eq:s}s(\ddr x):=\frac{1}{x}e^{-\int_{x_0}^{x}\frac{2\Psi(u)}{cu}\ddr u}\ddr x.
\end{equation} The following function, of Bernstein form, is well-defined, \begin{equation}\label{hintro}
\forall z\in [0,\infty),\ h(z):=\int_0^{\infty}(1-e^{-xz})s(\ddr x).
\end{equation} Furthermore, $h$ is a positive increasing  function, continuously twice differentiable on $(0,\infty)$,  such that
\begin{center}
$h(0)=0$, $\underset{z\rightarrow \infty}{\lim} h(z)=\infty$, $h'(0)<\infty$ and $\int^{\infty}h(y)\pi(\ddr y)<\infty$. 
\end{center}
\item Under $\mathbb{P}_z$, the process 
$\big(h(Z_t),t\geq 0\big)$
is a supermartingale, moreover it is
\smallskip
\begin{itemize}
\item a strict supermartingale (i.e. it is not a local martingale) when $\int^{\infty}\log y\, \pi(\ddr y)<\infty$,
\item a strict local martingale (i.e. it is not a martingale) when $\int^{\infty}\log y\, \pi(\ddr y)=\infty$.
\end{itemize} 
\end{enumerate}
\end{theorem}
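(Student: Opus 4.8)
The plan is to prove (i) by direct analysis of the Bernstein measure $s$ in \eqref{eq:s}, and (ii) by combining an explicit computation of $\mathcal{L}h$ with the bidual representation of Proposition~\ref{lem:joiningduals1}. Throughout set $\rho(x):=e^{-\int_{x_0}^{x}\frac{2\Psi(u)}{cu}\ddr u}$, so that $s(\ddr x)=\tfrac{\rho(x)}{x}\ddr x$ and $\rho'(x)=-\tfrac{2\Psi(x)}{cx}\rho(x)$. Since $\Psi$ is convex with $\Psi(0)=0$, the map $x\mapsto\Psi(x)/x$ is non-decreasing, so $\Psi$ has a constant sign near $0$, $\rho$ is eventually monotone as $x\downarrow0$, $\rho(0+)\in[0,\infty)$ exists, and $\rho(0+)>0$ iff $\int_{0}\tfrac{|\Psi(u)|}{u}\ddr u<\infty$, equivalently (by \cite{MR3940763}) iff $\int^{\infty}\log y\,\pi(\ddr y)<\infty$. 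Because $\Psi(\infty)=\infty$, for each $K$ there is $M$ with $\Psi\ge K$ on $[M,\infty)$, so $\rho(x)=O(x^{-2K/c})$ at $\infty$; hence $\rho$ is bounded on $(0,\infty)$, $\int^{\infty}\rho<\infty$ and $\int^{\infty}\tfrac{\rho(x)}{x}\ddr x<\infty$, while $\int_{0}\tfrac{\rho(x)}{x}\ddr x=\mathcal E=\infty$ by \asp.

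\emph{Part (i).} From $1-e^{-xz}\le 1\wedge(xz)$ and the above, $h$ in \eqref{hintro} is finite, with $h(0)=0$, $h$ increasing, and (differentiating under the integral on compacts of $(0,\infty)$) $h\in C^{\infty}(0,\infty)$ with $h'(z)=\int_0^\infty e^{-xz}\rho(x)\ddr x$, so $h'(0)=\int_0^\infty\rho(x)\ddr x<\infty$; moreover $h(z)\ge\int_0^{x_0}(1-e^{-xz})s(\ddr x)\uparrow\mathcal E=\infty$ as $z\to\infty$ by monotone convergence. For the tail bound, Tonelli gives $\int^{\infty}h(y)\pi(\ddr y)=\int_0^\infty s(\ddr x)\int^{\infty}(1-e^{-xy})\pi(\ddr y)$; the Lévy–Khintchine form \eqref{eq:branchingmechanism} yields $\int^{\infty}(1-e^{-xy})\pi(\ddr y)\le C_1x^2+C_2x-\Psi(x)$, which is bounded in $x$ (as $\Psi(\infty)=\infty$) and is therefore harmless against $\int_{x_0}^{\infty}s$; near $0$ it is controlled by $\int_{0}\big[(C_1x+C_2)\rho(x)+\tfrac{|\Psi(x)|}{x}\rho(x)\big]\ddr x$, finite because $\rho$ is bounded near $0$ and $\tfrac{|\Psi(x)|}{x}\rho(x)=\tfrac{c}{2}|\rho'(x)|$ is integrable near $0$ ($\rho$ monotone there with $\rho(0+)$ finite). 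This also shows $h$ is $h'(0)$-Lipschitz and, by concavity, $h(z+y)\le h(y)+h'(0)z$, which makes Itô's formula below licit.

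\emph{Part (ii).} From \eqref{SDELCB} the generator $\mathcal L$ satisfies $\mathcal L e^{-x\cdot}(z)=z\Psi(x)e^{-xz}+\tfrac{c}{2}xz^2e^{-xz}$. Integrating $\mathcal L(1-e^{-x\cdot})(z)$ against $s(\ddr x)$ (the interchange is justified for fixed $z>0$ since $|\Psi(x)|e^{-xz}s(\ddr x)/\ddr x=\tfrac{c}{2}e^{-xz}|\rho'(x)|$ and $xe^{-xz}s(\ddr x)/\ddr x=e^{-xz}\rho(x)$ are integrable), and using $\Psi(x)\tfrac{\rho(x)}{x}=-\tfrac{c}{2}\rho'(x)$ with one integration by parts, one gets $\int_0^\infty e^{-xz}\Psi(x)s(\ddr x)=\tfrac{c}{2}\rho(0+)-\tfrac{c}{2}z\,h'(z)$ and hence the key identity
\[\mathcal L h(z)=-\tfrac{c}{2}\,\rho(0+)\,z,\qquad z\ge 0.\]
Itô's formula along \eqref{SDELCB} then gives $h(Z_t)=h(z)+M_t-\tfrac{c}{2}\rho(0+)\int_0^tZ_s\ddr s$ with $M$ a local martingale; since $h(z)+M_t=h(Z_t)+\tfrac{c}{2}\rho(0+)\int_0^tZ_s\ddr s\ge0$, it is a supermartingale, so $\mathbb E_z[M_t]\le0$ and $(h(Z_t))$ is a supermartingale. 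If $\rho(0+)>0$ (finite log-moment), then $\mathbb E_z[h(Z_t)]\le h(z)-\tfrac{c}{2}\rho(0+)\int_0^t\mathbb E_z[Z_s]\ddr s<h(z)$ for $t>0$ (the integral is positive and, by Proposition~\ref{prop1}, finite), so $(h(Z_t))$ is a strict supermartingale, and it is not a local martingale since otherwise $\tfrac{c}{2}\rho(0+)\int_0^tZ_s\ddr s$ would be a continuous finite-variation local martingale, hence null.

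\emph{The strict local martingale case.} If $\rho(0+)=0$ (infinite log-moment) then $\mathcal Lh\equiv0$ and $(h(Z_t))$ is a local martingale, so one must show it is not a true martingale, i.e. $\mathbb E_z[h(Z_t)]<h(z)$ for some $t>0$; here I would invoke the bidual diffusion $V$. Put $\bar S(y):=s([y,\infty))$; as $s$ is proportional to the scale measure of $V$, $\bar S$ is a (decreasing) scale function of $V$, with $\bar S(0+)=\mathcal E=\infty$, $\bar S(\infty)=0$, and $h(z)=\int_0^\infty ze^{-zy}\bar S(y)\ddr y$. Combining this with $\mathbb E_z(e^{-xZ_t})=\int_0^\infty ze^{-zy}\mathbb P_y(V_t>x)\ddr y$ from Proposition~\ref{lem:joiningduals1} and Tonelli yields
\[\mathbb E_z[h(Z_t)]=\int_0^\infty ze^{-zy}\,\mathbb E_y\!\big[\bar S(V_t)\mathbbm 1_{\{t<\zeta^V\}}\big]\,\ddr y,\]
$\zeta^V$ the explosion time of $V$ (absorbed at $\infty$, where $\bar S=0$). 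Thus it suffices to show $Y_t:=\bar S(V_{t\wedge\zeta^V})$ is a strict local martingale. Since $\bar S$ is a scale function, $\ddr Y_t=\bar S'(V_t)\sqrt{cV_t}\,\ddr B_t=\tilde\sigma(Y_t)\ddr B_t$ with $\tilde\sigma(y)^2=c\,\rho(v)^2/v$ at $v=\bar S^{-1}(y)$, and $Y$ cannot reach $+\infty$ ($=\{V=0\}$, inaccessible since $\mathcal E=\infty$); by the standard martingale criterion for one-dimensional local martingales in natural scale, $Y$ is a true martingale iff $\int^{\infty}y\,\tilde\sigma(y)^{-2}\,\ddr y=\infty$. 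Changing variables $y=\bar S(v)$, this integral equals $\tfrac1c\int_{0}\tfrac{\bar S(v)}{\rho(v)}\ddr v$, and one checks it is \emph{finite} using the behaviour of $\rho,\bar S$ near $0$ forced by \asp (here $\rho(0+)<\infty$ and $\mathcal E=\infty$ give a lower bound on $\rho$ making $\bar S(v)/\rho(v)$ at most of order $(\log 1/v)^2$ near $0$). Hence $Y$ is a strict local martingale, $\mathbb E_z[h(Z_t)]<h(z)$ for all $t>0$, and $(h(Z_t))$ — a local martingale of strictly decreasing expectation — is a strict local martingale.

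\emph{Main obstacle.} The delicate point is exactly the last one: a nonnegative local martingale converging a.s. to $0$ (which $h(Z_t)$ does, since $Z_t\to0$ under \asp) need not be a strict local martingale, so one cannot avoid the diffusion-theoretic input for $V$. Setting up the bidual identity above for $\mathbb E_z[h(Z_t)]$ (being careful about the absorbed/exploded paths of $V$), identifying $\bar S$ with a scale function, and then verifying the integral test $\int^{\infty}y\,\tilde\sigma^{-2}<\infty$ in full generality under \asp — not merely in the illustrative examples — is where the real work lies.
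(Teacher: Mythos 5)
Your overall architecture tracks the paper's closely: part (i) rests on the super-polynomial decay of $\rho=1/m$ at infinity forced by $\Psi(\infty)=\infty$ and on the existence of the finite limit $\rho(0+)=\ell$; the identity $\mathscr{L}h(z)=-\frac{c\ell}{2}z$ is the paper's Lemma on $\mathscr{L}h$ (you compute it by a direct integration by parts, the paper routes it through $\mathscr{L}e_x(z)=\mathscr{A}e_z(x)$ and $\mathscr{G}S=0$ --- the same calculation); and your finite-variation argument for ``not a local martingale when $\ell>0$'' is exactly the right completion of what the paper leaves implicit. For the delicate case $\ell=0$ you also use the same reduction as the paper, via $\mathbb{E}_z[h(Z_t)]=\int_0^\infty ze^{-zy}\mathbb{E}_y[\bar S(V_t)]\,\ddr y$, to the strictness of the local martingale $\bar S(V)$; the one genuinely different ingredient is that you invoke the Kotani/Delbaen--Shirakawa integral test for the natural-scale diffusion $Y=\bar S(V)$, where the paper instead passes to the Doob transform $V^{\downarrow}$ and applies Feller's accessibility test at $0$. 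Both criteria reduce to the finiteness of the \emph{same} integral, $\int_{0}\bar S(v)/\rho(v)\,\ddr v=\int_0 S(v)m(v)\,\ddr v$, so this is a legitimate (and arguably more direct) alternative. (A minor slip in (i): $C_1x^2+C_2x-\Psi(x)$ is not bounded in $x$; what you actually use for large $x$ is the trivial bound $\int_1^\infty(1-e^{-xy})\pi(\ddr y)\le\pi([1,\infty))$, which is fine.)

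The genuine gap is precisely at the integral test you defer. Your stated reason for $\int_{0}\bar S(v)/\rho(v)\,\ddr v<\infty$ --- that ``$\rho(0+)<\infty$ and $\mathcal E=\infty$ give a lower bound on $\rho$ making $\bar S(v)/\rho(v)$ at most of order $(\log 1/v)^2$'' --- does not work: in the very case you need it ($\ell=0$, infinite log-moment) one has $\rho(0+)=0$, so there is \emph{no} positive lower bound on $\rho$ near $0$, and $1/\rho$ can grow faster than any power of $\log(1/v)$ (take $\rho(v)=e^{-\sqrt{\log(1/v)}}$, i.e.\ $\Psi(v)\sim -\tfrac{c}{4}(\log(1/v))^{-1/2}$, which is an admissible mechanism under \asp). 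The correct argument, supplied in the paper's Lemma on the scale function and speed measure, uses only $\Psi(0+)=0$: by Karamata's representation theorem both $m=1/\rho$ and hence $S$ are slowly varying at $0$, so $S(v)m(v)=o(v^{-\varepsilon})$ for every $\varepsilon>0$ and the integral converges. Without this (or an equivalent) estimate your proof of the strict local martingale case --- which, as you note, is the heart of the theorem when the log-moment is infinite --- is incomplete, although you have correctly isolated both the right reduction and the exact quantity that must be controlled.
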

We introduce the following Doob's transform induced by the supermartingale $\big(\frac{h(Z_t)}{h(z)},t\geq 0\big)$, $$\mathbbm{1}_{\{t<\zeta\}}\ddr \mathbb{P}^{\uparrow}_z:=\frac{h(Z_{t})}{h(z)}\ddr \mathbb{P}_z,\  \text{ on } \mathcal{F}_{t},\ \forall t\geq 0\text{ and } z>0,$$ 
where $\zeta:=\inf\{t>0:\, Z_t\notin [0,\infty)\}$ is the lifetime of $(Z,\mathbb{P}_z^{\uparrow})$, $\infty$ is taken as the cemetery state and on $\{\zeta<\infty\}$, we set $Z_{\zeta+t}=\infty$  for all $t\geq 0$, $\mathbb{P}_z^{\uparrow}$-a.s..
\medskip

We study below the new measure $\mathbb{P}_z^{\uparrow}$ and start by the global behavior of the paths of $(Z,\mathbb{P}_z^{\uparrow})$.
\begin{proposition}\label{prop:infimum} Assume \asp . For any $z>a\geq 0$,
\[\mathbb{P}_z^{\uparrow}\big(\underset{0\leq s<\zeta}{\inf}Z_s\leq a\big)=\frac{h(a)}{h(z)}.\]
In particular, $\underset{0\leq t<\zeta}{\inf} Z_t>0$, $\mathbb{P}_z^{\uparrow}$-a.s. for all $z>0$.
\end{proposition}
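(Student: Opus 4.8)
The identity to establish is a classical first-passage formula for Doob transforms, and the natural strategy is to exploit the strong Markov property together with the fact that $h$ is a positive excessive function that is harmonic away from $0$ in the appropriate sense. First I would fix $0 \le a < z$ and introduce the stopping time $T_a := \inf\{t \ge 0 : Z_t \le a\}$ (with the convention $T_a = \infty$ if the infimum is never attained), working under $\mathbb{P}_z$. The key structural input is that, up to the explosion/killing time, $(h(Z_t), t\ge 0)$ is a supermartingale, and in fact — this is the crucial point — it should be a \emph{true martingale} when stopped at $T_a$, because on the event $\{t < T_a\}$ the process stays in $[a,\infty)$ where it is bounded below, and more importantly the only source of the strict supermartingale/strict local martingale behaviour in Theorem \ref{thm1} comes from the large values of $Z$ (the explosion at $\infty$ and the loss of mass through large jumps). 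Restricting to paths that have visited a level $\le a$ removes nothing of that, but the stopped process $h(Z_{t\wedge T_a})$ is still a supermartingale; I would argue it is uniformly integrable by a suitable bound, so optional stopping applies. Alternatively — and this is probably the cleanest route — I would not stop at $T_a$ directly but instead use the definition of $\mathbb{P}^\uparrow_z$ on $\mathcal{F}_t$ together with the known scale-function machinery of the bidual diffusion $V$.

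The route via duality looks the most robust. Recall from the discussion preceding Proposition \ref{lem:joiningduals1} that $h$ is built from the scale function $S$ of the Siegmund-bidual diffusion $V$, via $s(\ddr x)$ being essentially $S'(\ddr x)$ (up to the normalisation $x_0$); indeed $s$ in \eqref{eq:s} is exactly the speed-measure-free form of the derivative of a scale function for the generator $\frac{c}{2} x \partial_x^2 + (c/2 + \Psi(x))\partial_x$ of $V$. For a one-dimensional diffusion, the Doob transform by its scale function is the diffusion conditioned to drift to the right boundary, and first-passage probabilities for such conditioned diffusions are given by ratios of scale functions — precisely the shape $h(a)/h(z)$. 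So the plan is: (1) transfer the computation of $\mathbb{P}^\uparrow_z(\inf_{s<\zeta} Z_s \le a)$ to an analogous first-passage event for the $S$-transform of $V$ using the bidual identity; (2) invoke the classical formula for scale-function Doob transforms of diffusions, which gives the answer $S$-ratio $= h$-ratio after accounting for the normalisation; (3) read off that when $a=0$ the right-hand side is $h(0)/h(z) = 0$, giving the last assertion. Since $h(0)=0$ is part of Theorem \ref{thm1}i), the ``in particular'' follows immediately once the main identity is proved for all $a>0$ and one takes $a \downarrow 0$ using monotone continuity of $h$ (continuity at $0$ follows from $h$ being Bernstein with $h(0)=0$).

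If instead one prefers to stay at the level of $Z$ without passing through $V$, the argument runs as follows. By the definition of the $h$-transform on $\mathcal{F}_{t\wedge T_a}$ and the optional stopping theorem applied to the supermartingale $h(Z_{\cdot})$ stopped at $T_a$, one gets for $t\ge 0$
\[
\mathbb{P}^\uparrow_z(T_a \le t) = \mathbb{E}_z\!\left(\frac{h(Z_{t\wedge T_a})}{h(z)}\,\mathbbm{1}_{\{T_a \le t\}}\right) = \frac{h(a)}{h(z)}\,\mathbb{P}_z(T_a \le t) + (\text{error terms from the jump overshoot and the supermartingale defect}),
\]
and then one lets $t\to\infty$. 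The delicate points are: controlling the overshoot — under $\mathbb{P}_z$ the level $a$ may be crossed by a downward jump, so $Z_{T_a}$ need not equal $a$; here one uses that the only negative jumps of $Z$ come from the competition drift, which is absolutely continuous, so in fact $Z$ cannot jump \emph{downward} at all — the negative motion of $Z$ is purely continuous, hence $Z_{T_a} = a$ on $\{T_a < \infty\}$, which kills the overshoot term exactly. The remaining subtlety is the supermartingale defect: one must show $\mathbb{E}_z[h(Z_{t\wedge T_a})] \to \mathbb{E}_z[h(Z_{T_a})\mathbbm{1}_{\{T_a<\infty\}}]$, i.e. that no $h$-mass escapes to $\infty$ along paths killed by reaching $a$; since $\mathbb{P}_z$-a.s. extinction holds under \asp\ (Proposition \ref{prop1} / condition \asp), $Z_t \to 0$ a.s., so $T_a < \infty$ $\mathbb{P}_z$-a.s. for every $a>0$, and the stopped process $h(Z_{t\wedge T_a})$ converges a.s. to $h(a)$; uniform integrability of $\{h(Z_{t\wedge T_a})\}_{t\ge 0}$ then needs a domination argument, for which the cleanest tool is again that $h(Z_{\cdot\wedge T_a})$ is a bona fide martingale (no escape of mass below $\infty$ because the dangerous behaviour is only at $+\infty$ and $T_a$ does not interact with it). Granting that martingale property, optional stopping gives $\mathbb{E}_z[h(Z_{T_a})] = h(z)$, but $Z_{T_a} = a$ a.s. forces $h(a) = h(z)$, which is absurd for $a<z$ — so the martingale property must \emph{fail} at $T_a$ in exactly the right way, confirming that this direct route requires the more careful accounting and that the duality argument is preferable.

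The step I expect to be the main obstacle is precisely the justification that the relevant stopped process has no loss of mass other than through the level $a$ — equivalently, pinning down that the Doob-transform first-passage identity holds with no defect term. The duality approach sidesteps this by importing the exact diffusion result, where scale functions handle first passage cleanly; so in the write-up I would lead with the bidual/scale-function argument and relegate the direct $Z$-level computation to a remark.
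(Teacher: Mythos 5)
There is a genuine gap: neither of your two routes is actually carried to completion, and the one you designate as primary cannot work as described. The biduality relation \eqref{joiningduals} (and its finite-dimensional extension) is a statement about marginal laws of $Z$ expressed through the law of $V$ started from an independent exponential point; it provides no pathwise coupling between $(Z,\mathbb{P}^{\uparrow}_z)$ and the $S$-transform of $V$, so there is no mechanism by which the running-infimum event $\{\inf_{s<\zeta}Z_s\le a\}$ would ``transfer'' to a first-passage event for $V^{\downarrow}$. Step (1) of your preferred plan is therefore not a proof step but a wish, and the scale-function formula for conditioned diffusions cannot simply be ``imported.'' Meanwhile you abandon the direct route exactly where it could be closed. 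The apparent absurdity $h(a)=h(z)$ arises only because you apply optional stopping to $h(Z_{\cdot\wedge T_a})$ over the whole space; the correct move is to use the Doob-transform first-passage formula \eqref{equationdoob1} with $T=t\wedge T_a$ and $A=\{T_a\le t\}\in\mathcal{F}_{T}$, which gives $\mathbb{P}^{\uparrow}_z(T_a\le t,\,T_a<\zeta)=\frac{1}{h(z)}\mathbb{E}_z\big(h(Z_{T_a})\mathbbm{1}_{\{T_a\le t\}}\big)=\frac{h(a)}{h(z)}\mathbb{P}_z(T_a\le t)$, using (as you correctly note) that $Z$ has no negative jumps so $Z_{T_a}=a$. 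Letting $t\to\infty$ and using that $T_a<\infty$ $\mathbb{P}_z$-a.s.\ under \asp\ yields the identity with \emph{no} defect term: the supermartingale defect you worry about is already accounted for by the restriction to $\{T<\zeta\}$ in \eqref{equationdoob1}, and no further uniform-integrability argument for $h(Z_{\cdot\wedge T_a})$ under $\mathbb{P}_z$ is needed.

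For comparison, the paper's proof works entirely under $\mathbb{P}^{\uparrow}_z$ with the \emph{reciprocal} function $g=1/h$: since $P^{\uparrow}_tg=g$, the process $(g(Z_t),t\ge 0)$ is a $\mathbb{P}^{\uparrow}_z$-martingale, and stopped at $\zeta_a^-\wedge\zeta$ it is bounded by $g(a)<\infty$ (the process stays above $a$ before $\zeta_a^-$ and $g$ is decreasing), so optional stopping is immediate; one then uses $g(Z_\zeta)=g(\infty)=0$ and the a.s.\ finiteness of $\zeta$ (Lemma \ref{lem:finitelifetime}) to conclude. Your ``in particular'' step (let $a\downarrow 0$, use $h(0)=0$ and continuity of the Bernstein function $h$) is fine, but it rests on the main identity, which your write-up does not establish.
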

\begin{theorem} \label{mainthm}  Assume \asp . 
\begin{enumerate}[label=\roman*)]
\item For all $z>0$, the process $Z$ under $\mathbb{P}^{\uparrow}_z$ is a $(0,\infty]$-valued Feller process. Moreover, its lifetime $\zeta$ is finite $\mathbb{P}_z^{\uparrow}$-almost surely and
\smallskip
\begin{enumerate}
\item If $\int^{\infty}\log(y)\pi(\ddr y)<\infty$, the process $Z$ under $\mathbb{P}_z^{\uparrow}$  is killed (it is sent to the cemetery point $\infty$) with positive probability, that is to say $\mathbb{P}^{\uparrow}_z(Z_{\zeta-}<\infty)>0$ for all $z>0$.
\smallskip
\item If $\int^{\infty}\log(y)\pi(\ddr y)=\infty$, the process $Z$ under $\mathbb{P}_z^{\uparrow}$ explodes continuously at its lifetime almost surely, that is to say $Z_{\zeta-}=\infty$, $\mathbb{P}^{\uparrow}_z$-a.s.
\end{enumerate}
\smallskip
\item 
In any case, for all $z>0$, the law $\mathbb{P}^{\uparrow}_z$ arises as the following limit,
\[\mathbb{P}^{\uparrow}_z(\Lambda,t< \zeta)=\underset{\theta \rightarrow 0}{\lim}\, \mathbb{P}_z\!\left(\Lambda, t\leq \mathbbm{e}/\theta\,\big\lvert J\geq \mathbbm{e}/\theta\right),\quad \forall \, \Lambda\in \mathcal{F}_{t},\ \forall \, t\geq 0, \] 
where $\mathbbm{e}$ is an independent standard exponential random variable.
\end{enumerate}
\end{theorem}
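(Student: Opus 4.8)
The plan is to first handle part (i), the Feller property and the almost sure finiteness of the lifetime, and then part (ii), the conditioning limit. For the Feller property, I would transport it from the bidual diffusion $V$. Recall from Proposition \ref{lem:joiningduals1} and the discussion around it that the excessive function $h$ of $Z$ is built from the scale function $S$ of $V$ via $h(z)=\int_0^\infty(1-e^{-xz})s(\ddr x)$, where $s$ is (up to normalization) the speed measure appearing in $S$. Under the Doob transform of $V$ by its scale function, $V$ becomes a nice diffusion; the Laplace/Siegmund duality relationship $\mathbb{E}_z(e^{-xZ_t})=\int_0^\infty z e^{-zy}\mathbb{P}_y(V_t>x)\,\ddr y$ should transfer, after the $h$-transform, into a duality between $(Z,\mathbb{P}_z^\uparrow)$ and the $S$-transformed $V$. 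Since the transformed $V$ is a Feller diffusion on a suitable interval and the duality function $(z,y)\mapsto z e^{-zy}$ (or $1-e^{-zy}$) is continuous and generates enough functions to separate points, the Feller property of $(Z,\mathbb{P}_z^\uparrow)$ on $(0,\infty]$ follows — here $\infty$ enters as the cemetery point, consistent with $h(\infty)=\infty$ forcing killing when the path escapes. The fact that $\inf_{0\le t<\zeta}Z_t>0$ from Proposition \ref{prop:infimum} means $0$ is not in the state space, which is why the state space is $(0,\infty]$.

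For the finiteness of $\zeta$: by the $h$-transform, $\mathbb{P}_z^\uparrow(\zeta>t)=\frac{1}{h(z)}\mathbb{E}_z(h(Z_t)\mathbbm{1}_{\{t<\zeta\}})$. Under \asp, $Z_t\to 0$ $\mathbb{P}_z$-a.s., so $h(Z_t)\to h(0)=0$ $\mathbb{P}_z$-a.s.; if one can justify uniform integrability (or use the supermartingale property together with a Scheffé/dominated-convergence argument on the dual side), then $\mathbb{E}_z(h(Z_t)\mathbbm{1}_{\{t<\zeta\}})\to 0$, giving $\mathbb{P}_z^\uparrow(\zeta=\infty)=0$. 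I expect this uniform integrability to be the main obstacle, because $h$ is unbounded and $Z$ may explore large values; the cleanest route is again via $V$, where $\mathbb{P}_z^\uparrow(\zeta>t)$ should equal a probability that the $S$-transformed diffusion $V$ has not yet reached a boundary that it reaches in finite time a.s. (so that the classification of the boundary behavior of $V$ — accessible versus inaccessible — translates directly into whether the limiting configuration of $Z$ is killing or continuous explosion). The dichotomy in (a) versus (b) should then match the dichotomy in Theorem \ref{thm1}(ii): $\int^\infty\log y\,\pi(\ddr y)<\infty$ (strict supermartingale, $h'(0)<\infty$ and $\mathbb{E}_z(J)<\infty$) corresponds to genuine loss of mass, i.e. killing with positive probability, detected by $h(Z_{\zeta-})<\infty$; the infinite-log-moment case (strict local martingale) corresponds to the escape happening by continuous growth to $\infty$, i.e. $Z_{\zeta-}=\infty$ a.s. Concretely, for (a) I would compute $\mathbb{P}_z^\uparrow(Z_{\zeta-}<\infty)$ as a limit of $\frac{1}{h(z)}\mathbb{E}_z(h(Z_t)\mathbbm{1}_{\{Z_t\le N,\,t<\zeta\}})$ type quantities and show it is positive using $h'(0)<\infty$; for (b) I would show $h(Z_t)\to\infty$ along $t\uparrow\zeta$ using that $h(Z_t)$ is a strict local martingale (hence, after the $h$-transform, $1/h$ is related to a bounded martingale that converges, forcing $h(Z_t)$ to blow up on the escape set), together with the absence of a killing term which in turn follows from $h'(0)=\infty$.

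For part (ii), the conditioning limit, I would follow the same scheme as in Theorem \ref{thmCSBP}(ii) and in \eqref{conditioningintro}. Fix $\Lambda\in\mathcal{F}_t$ and $t\ge 0$. By the Markov property at time $t$ and independence of $\mathbbm{e}$, write
\[
\mathbb{P}_z\!\left(\Lambda,\ t\le \mathbbm{e}/\theta\ \big\lvert\ J\ge \mathbbm{e}/\theta\right)
=\frac{\mathbb{E}_z\!\left(\mathbbm{1}_\Lambda\,\mathbbm{1}_{\{t<\zeta\}}\,e^{-\theta t}\,g_\theta(Z_t)\right)}{\mathbb{E}_z\!\left(\mathbbm{1}_{\{J\ge\mathbbm{e}/\theta\}}\right)},
\]
where $g_\theta(y):=\mathbb{P}_y(J+\ell\ge \mathbbm{e}/\theta)$ with $\ell$ the past progeny $\int_0^t Z_s\,\ddr s$ absorbed into a shift — more cleanly, using $\mathbb{P}(\mathbbm{e}\le \theta a)=1-e^{-\theta a}$, the numerator is $\mathbb{E}_z(\mathbbm{1}_\Lambda e^{-\theta\int_0^t Z_s\ddr s}\mathbb{E}_{Z_t}(1-e^{-\theta J}))$ and the denominator is $\mathbb{E}_z(1-e^{-\theta J})$. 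Now $\mathbb{E}_w(1-e^{-\theta J})$ is exactly the object controlled by duality: I expect $\mathbb{E}_w(1-e^{-\theta J})=h_\theta(w)$ for an explicit family with $h_\theta(w)/\theta\to h(w)/(\text{const})$ as $\theta\to 0$ — indeed $1-e^{-\theta J}\approx \theta J$ heuristically, and the excessive function $h$ was designed precisely so that $h(w)=\lim_{\theta\to0}\theta^{-1}\mathbb{E}_w(1-e^{-\theta J})$ up to normalization (this is where Proposition \ref{prop1} and the duality identity enter). Dividing numerator and denominator by $\theta$, letting $\theta\to 0$, and using dominated convergence (with $e^{-\theta\int_0^tZ_s\ddr s}\to 1$ and $\theta^{-1}\mathbb{E}_{Z_t}(1-e^{-\theta J})\to h(Z_t)$, bounded appropriately via concavity $1-e^{-x}\le x$ giving $\theta^{-1}\mathbb{E}_w(1-e^{-\theta J})\le \mathbb{E}_w(J)$ in the finite-mean case, and a truncation argument otherwise), the ratio converges to $\mathbb{E}_z(\mathbbm{1}_\Lambda\mathbbm{1}_{\{t<\zeta\}}h(Z_t))/h(z)=\mathbb{P}_z^\uparrow(\Lambda,t<\zeta)$, which is the claim. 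The delicate point here is the justification of the limit $\theta^{-1}\mathbb{E}_w(1-e^{-\theta J})\to h(w)$ uniformly enough to pass it inside the expectation over $Z_t$ when $\mathbb{E}_z(J)=\infty$; I would handle this by first establishing the pointwise convergence from the explicit duality formula for the Laplace transform of $J$ (monotone in $\theta$, by concavity of $x\mapsto 1-e^{-x}$, so that $\theta\mapsto\theta^{-1}\mathbb{E}_w(1-e^{-\theta J})$ is monotone and one can invoke monotone convergence), and then using that $h(Z_t)$ is $\mathbb{P}_z$-integrable on $\{t<\zeta\}$ (from Theorem \ref{thm1}(ii), $h(Z_t)$ being a supermartingale) to dominate.
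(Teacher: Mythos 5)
Your overall strategy matches the paper's: part (i) is handled through the representation of the semigroup in terms of the bidual diffusion $V$ and its scale-function Doob transform, and part (ii) through the exponential-conditioning computation driven by the Laplace transform of $J$. However, part (ii) contains a genuine gap. You normalize numerator and denominator by $\theta$ and claim $\theta^{-1}\mathbb{E}_w(1-e^{-\theta J})\to h(w)$. This is false: by your own monotonicity observation ($(1-e^{-\theta x})/\theta\uparrow x$ as $\theta\downarrow 0$), the limit is $\mathbb{E}_w(J)$, which equals $\tfrac{2}{c\ell}h(w)$ when $\int^{\infty}\log y\,\pi(\ddr y)<\infty$ but is $+\infty$ when the log-moment is infinite --- precisely one of the two regimes the theorem must cover. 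Since the pointwise limit itself is infinite there, no truncation or domination argument can repair the normalization; it must be changed. The correct route (Lemma \ref{lem:convergenceL1}) uses the explicit formula $\mathbb{E}_w(e^{-\theta J})=f_\theta(w)/f_\theta(0)$ with $f_\theta(w)=\int_0^\infty x^{2\theta/c}e^{-xw}(-S)'(x)\ddr x$, for which $f_\theta(0)-f_\theta(w)\to h(w)$ by dominated convergence (majorant $h+f_{\theta_0}$, integrable because $(h(Z_t))$ is a positive supermartingale and $f_{\theta_0}$ is bounded). In the ratio $\mathbb{E}_{Z_t}(1-e^{-\theta J})/\mathbb{E}_z(1-e^{-\theta J})=\big(f_\theta(0)-f_\theta(Z_t)\big)/\big(f_\theta(0)-f_\theta(z)\big)$ the divergent factor $f_\theta(0)$ cancels, and the limit $h(Z_t)/h(z)$ holds in $L^1(\mathbb{P}_z)$ in both regimes.

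There is also a factual slip in your treatment of the dichotomy in part (i): you attribute the absence of killing in case (b) to ``$h'(0)=\infty$''. In fact $h'(0)=\int_0^\infty S(v)\,\ddr v<\infty$ in all cases under \asp\ (Lemma \ref{lem:defh}); the quantity that separates the two regimes is $\ell=\lim_{x\to 0}x(-S)'(x)$, through $\mathscr{L}h=-\tfrac{c\ell}{2}\,z$ and the killing rate $k(z)=\tfrac{c\ell}{2}z/h(z)$, so $h$ is harmonic (no killing, strict local martingale, continuous explosion) if and only if $\ell=0$, i.e.\ the log-moment is infinite. With that correction, your plan --- strict supermartingale $\Rightarrow$ killing with positive probability, strict local martingale $\Rightarrow$ continuous explosion --- is exactly Lemma \ref{lem:strictlocalmartingale}. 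For the finiteness of $\zeta$, note that uniform integrability of $(h(Z_t))_{t\ge 0}$ under $\mathbb{P}_z$ is equivalent to the conclusion (given $h(Z_t)\to 0$ a.s.), so it cannot serve as a hypothesis; the dual route you correctly identify as ``cleanest'' --- writing $\mathbb{P}_z^{\uparrow}(\zeta>t)=\tfrac{1}{h(z)}\int_0^\infty ze^{-zy}S(y)\mathbb{P}_y^{\downarrow}(T_0>t)\,\ddr y$ and using that $(V,\mathbb{P}_y^{\downarrow})$ hits $0$ in finite time a.s.\ (Lemma \ref{lem:localmartingale}) --- is the paper's argument and does need to be carried out.
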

\begin{remark} In a similar fashion to subcritical CB processes, we see that when there is a log-moment, conditioning the progeny to be large  is so demanding that it ``breaks the sample paths of the process $(Z,\mathbb{P}^{\uparrow})$", causing it to be killed.
\end{remark}
\smallskip
We now link the process $(Z,\mathbb{P}_z^{\uparrow})$ for $z>0$ to a certain stochastic equation in which two dynamics occur: that of the initial LCB process and a certain density-dependent immigration.   

\medskip

Recall the function $h$ in \eqref{hintro}. Set $\ell:=e^{\int_{0}^{x_0}\frac{2\Psi(u)}{cu}\ddr u}\in [0,\infty)$ and define, for all $z>0$ and $y>0$,
\begin{equation}\label{eq:bphik}
b(z):=z\frac{h'(z)}{h(z)},\quad q(z,y):=
\frac{z}{h(z)}\big(h(z+y)-h(z)\big) \text{ and } k(z):=\frac{c\ell}{2}\frac{z}{h(z)}.
\end{equation}
Note that $\ell=0$ if and only if $\int_0\frac{\Psi(x)}{x}\ddr x=-\infty$, which is equivalent to $\int^{\infty}\log y\, \pi(\ddr y)=\infty$. 


\begin{theorem}\label{thm:SDEZup}
Assume \asp . For any $z\in (0,\infty)$, the process $(Z,\mathbb{P}^{\uparrow}_z)$ has the same law as the weak solution to  Equation \eqref{SDEPuparrow} below, killed at time
$\zeta:=\inf\{t>0: \int_{0}^{t}k(Z_s)\ddr s\geq \mathbbm{e}\}$.
\begin{align}\label{SDEPuparrow}
Z_t=z&+\sigma\int_0^t\sqrt{Z_s}\ddr B_s+\gamma\int_0^{t}Z_s\ddr s+\int_0^t\int_{0}^{Z_{s-}}\!\!\int_{0}^{1}y\bar{\mathcal{M}}(\ddr s,\ddr u, \ddr y) \\
&+\int_0^t\int_{0}^{Z_{s-}}\!\!\int_{1}^{\infty}y\mathcal{M}(\ddr s,\ddr u, \ddr y) -\frac{c}{2}\int_0^{t}Z^2_s\ddr s \nonumber\\
&+\sigma^2\int_{0}^{t}b(Z_s)\ddr s+\int_0^t\int_{0}^{q(Z_{s-},y)}\!\!\int_0^{\infty}y\mathcal{N}(\ddr s,\ddr u, \ddr y), \ t<\zeta, \nonumber               
\end{align}
where
$\mathbbm{e}$ is a standard exponential random variable, $B$ is a Brownian motion, $\mathcal{M}$ and $\mathcal{N}$ are two Poisson random measures with same intensity $\ddr s\,\ddr u\, \pi(\ddr y)$, everything being mutually independent.
\end{theorem}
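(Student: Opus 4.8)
The plan is to identify the law of $(Z,\mathbb{P}_z^\uparrow)$ with that of the killed weak solution to \eqref{SDEPuparrow} by comparing generators (equivalently, martingale problems) and then invoking a uniqueness statement for the stochastic equation. First I would compute the extended generator $\mathcal{L}$ of the LCB process $Z$ acting on a suitable core (say $C^2_c$ functions, or exponentials $e_x(z)=e^{-xz}$), reading it off from \eqref{SDELCB}: it has the diffusive part $\tfrac{\sigma^2}{2}z f''(z)$, the linear drift $\gamma z f'(z)$, the competition drift $-\tfrac c2 z^2 f'(z)$, and the jump part $z\int_0^\infty\big(f(z+y)-f(z)-y f'(z)\mathbbm 1_{\{y\le1\}}\big)\pi(\ddr y)$. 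Then the generator of the $h$-transformed process on $(0,\infty)$ is the classical Doob conjugation $\mathcal{L}^\uparrow f = \tfrac{1}{h}\mathcal{L}(hf)$, valid on the part of the state space where $h$ is smooth and positive, which by Theorem \ref{thm1}(i) is all of $(0,\infty)$.

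The heart of the computation is to expand $\tfrac1h\mathcal{L}(hf)$ and recognize the extra terms beyond $\mathcal{L}f$. For the diffusive part one gets $\tfrac{\sigma^2}{2} z (hf)'' /h = \tfrac{\sigma^2}{2} z f'' + \sigma^2 z\frac{h'}{h} f' + \tfrac{\sigma^2}{2} z \frac{h''}{h} f$, so the first-order correction is exactly the drift $\sigma^2 b(z) f'(z)$ appearing in \eqref{SDEPuparrow} with $b(z)=z h'(z)/h(z)$; the zeroth-order term $\tfrac{\sigma^2}{2} z h''(z)/h(z)\, f$ must be absorbed. For the jump part, $\tfrac{z}{h(z)}\int_0^\infty\big(h(z+y)f(z+y)-h(z)f(z)-y(hf)'(z)\mathbbm 1_{\{y\le1\}}\big)\pi(\ddr y)$ splits, after adding and subtracting $h(z)f(z+y)$, into $\mathcal{L}_{\mathrm{jump}}f(z)$ plus $\int_0^\infty\big(f(z+y)-f(z)\big)\frac{z(h(z+y)-h(z))}{h(z)}\pi(\ddr y)$ plus again a zeroth-order remainder; the middle term is precisely the density-dependent immigration jump term encoded by $q(z,y)=\frac{z}{h(z)}(h(z+y)-h(z))$ and the PRM $\mathcal{N}$ with intensity $\ddr s\,\ddr u\,\pi(\ddr y)$, since $\int_0^{q(z,y)}\ddr u = q(z,y)$. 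The linear and competition drifts conjugate to themselves plus a zeroth-order piece $\big(\gamma z - \tfrac c2 z^2\big)\frac{h'(z)}{h(z)} f$. Collecting all zeroth-order remainders, I must show their sum equals $-k(z) f(z)$ with $k(z)=\tfrac{c\ell}{2}\frac{z}{h(z)}$; this is exactly the statement that $h$ solves the ODE $\mathcal{L}h = -\tfrac{c\ell}{2} z\, (\text{something})$, i.e. that $\mathcal{L}h(z) = -\tfrac{c\ell}{2} z$ — which should follow from the explicit Bernstein representation $h(z)=\int_0^\infty(1-e^{-xz})s(\ddr x)$ together with the defining property of $s(\ddr x)$ in \eqref{eq:s} as (essentially) the speed/scale data of the bidual diffusion $V$, using the Laplace–Siegmund identity $\mathbb{E}_z(e^{-xZ_t})=\int_0^\infty z e^{-zy}\mathbb{P}_y(V_t>x)\ddr y$ from Proposition \ref{lem:joiningduals1}. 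Concretely, applying $\mathcal{L}$ to $z\mapsto 1-e^{-xz}$ and integrating against $s(\ddr x)$, the competition term produces a boundary term at $x_0$ in the integration by parts defining $s$, whose residue is $\tfrac{c\ell}{2} z$; this pins down $k$. The negative sign of $k$-free remainder is consistent with $h$ being excessive (Theorem \ref{thm1}(ii)), and the multiplicative functional $\exp\big(-\int_0^t k(Z_s)\ddr s\big)$ is the killing that turns the supermartingale $h(Z_t)/h(z)$ into the true martingale of the Doob transform — so the killed equation \eqref{SDEPuparrow} has generator $\mathcal{L}^\uparrow - k$ acting as $\tfrac1h\mathcal{L}(hf)$ on $(0,\infty)$, matching $\mathbb{P}_z^\uparrow$.

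Once the generator identity is in place, the remaining steps are: (a) existence of a weak solution to \eqref{SDEPuparrow} up to explosion, which follows by the same fixed-point/localization scheme as for \eqref{SDELCB} (cf. \cite{zbMATH06982256,zbMATH06836271}), since on compact subsets of $(0,\infty)$ the coefficients $b,q$ are locally Lipschitz (being ratios of smooth positive functions); (b) weak uniqueness, obtained by transferring uniqueness for $(Z,\mathbb{P}_z^\uparrow)$ — which is a Doob transform of the LCB process whose martingale problem is well-posed — back to the equation via the generator correspondence, or alternatively by the duality method already available for $Z$; and (c) checking that the lifetime $\zeta$ of the solution, namely the first time $\int_0^t k(Z_s)\ddr s\ge \mathbbm e$ or $Z$ leaves $(0,\infty)$, coincides in law with the lifetime of $\mathbb{P}_z^\uparrow$, which is where Theorem \ref{mainthm}(i) (finiteness of $\zeta$, and the explosion-versus-killing dichotomy according to $\ell>0$ or $\ell=0$, equivalently the log-moment) is used: when $\ell=0$ the killing rate $k\equiv 0$ and $\zeta$ is the explosion time, when $\ell>0$ killing can occur. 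The main obstacle I anticipate is step (b) combined with the zeroth-order bookkeeping: verifying rigorously that all the lower-order remainders from the generator conjugation collapse exactly to $-k(z)f(z)$ requires the precise ODE satisfied by $h$, and making that computation clean is likely easiest by working with the exponential functions $e_x$ and exploiting the bidual diffusion's scale function rather than manipulating $h$ directly; handling the behaviour near the boundaries $0$ and $\infty$ (where $h$ or its derivatives degenerate, and where Proposition \ref{prop:infimum} guarantees the path stays in $(0,\infty)$ strictly before $\zeta$) also needs care so that the martingale problem is posed on the correct state space $(0,\infty]$.
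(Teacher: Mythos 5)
Your plan is correct and follows essentially the same route as the paper: conjugate the generator via $\mathscr{L}^{\uparrow}f=\frac{1}{h}\mathscr{L}(hf)=\mathscr{L}f+\mathscr{B}f$ to read off $b$, $q$ and the killing rate $k$, establish $\mathscr{L}h(z)=-\frac{c\ell}{2}z$ by applying the dual operator to $1-e^{-xz}$ and integrating against $-S'(x)\,\ddr x$ (the paper's boundary term arises at $x=0$, where $x(-S)'(x)\to\ell$, rather than at $x_0$, but the mechanism you describe is the right one), and then transfer well-posedness of the LCB martingale problem through the inverse Doob transform $g=1/h$ before invoking Kurtz's equivalence between martingale-problem solutions and weak solutions of the jump SDE. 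The only divergence is your separate existence step via a fixed-point scheme, which the paper bypasses since $(Z,\mathbb{P}_z^{\uparrow})$ itself already solves the martingale problem.
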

\begin{remark}
Note that $h$ is concave and since $h(0)=0$, one has $h'(z)\leq h(z)/z$. We then get $b(z)\leq 1$ and by applying the mean value theorem, we have 
\begin{center}
$q(z,y)\leq \frac{z}{h(z)}h'(z)y\leq y$ for any $y,z>0$.
\end{center}
The upper-bound above ensures that there is no need to compensate the Poisson random measure $\mathcal{N}$ in the last stochastic integral in \eqref{SDEPuparrow}, since  $$\int_0^1 yq(z,y)\pi(\ddr y)\leq \int_0^{1}y^2\pi(\ddr y)<\infty.$$ 
\end{remark}
\begin{remark}
The density-dependent immigration term in the last line of \eqref{SDEPuparrow} should be compared to the immigration subordinator in the setting without competition. Roughly speaking, replacing the excessive function $h$ by the identity, we get $b(z)=z$ and $q(z,y)=y$. Observe then that $\int \mathbbm{1}_{(0,y]}(u)\mathcal{N}(\ddr s,\ddr u, \ddr y)$ is  a Poisson random measure with intensity $\ddr s\,\ddr u\, y\pi(\ddr y)$, so that we recover here at a heuristic level the subordinator with drift $\sigma^2$ and Lévy measure $y\pi(\ddr y)$ as in the setting without competition, see Theorem \ref{thmCSBP}. 
\end{remark}
\begin{remark} Li has explored stochastic equations with state-dependent immigration similar to \eqref{SDEPuparrow}, see \cite{zbMATH07076056} and \cite[Chapter 12]{zbMATH07687769}. These works discuss conditions for the existence and uniqueness of strong solutions and provide a Poissonian cluster decomposition of the solution, akin to that for CBIs, see for instance \cite[Chapter 9.5]{zbMATH07687769} for the classical case without competition. 
\end{remark}
The last theorem establishes that the conditioned LCB process can start from $0$. \begin{theorem}\label{thm:entrance} Assume \asp . The following weak convergence holds in Skorokhod's sense 
\[\mathbb{P}^{\uparrow}_z\underset{z\rightarrow 0+}{\Longrightarrow}\mathbb{P}^{\uparrow}_0,\]
with $\mathbb{P}^{\uparrow}_0$ a probability measure such that \[\mathbb{P}^{\uparrow}_0\big(Z_0=0, \exists t>0: \forall s\geq t, Z_s>0\big)=1.\]
The process $(Z,\mathbb{P}^{\uparrow}_0)$ is moreover a weak solution to \eqref{SDEPuparrow} with $z=0$, where the coefficients $b,q,k$ given by \eqref{eq:bphik} are extended at $z=0$ as follows:
\begin{equation}
b(0):=1, \quad \forall y>0,\ q(0,y):=\frac{h(y)}{h'(0)}, \text{ and } k(0):=\frac{c\ell}{2}\frac{1}{h'(0)}.
\end{equation}
%
\end{theorem}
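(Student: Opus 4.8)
The plan is to establish the entrance law at $0$ for $(Z,\mathbb{P}^\uparrow_z)$ by combining a tightness argument with identification of the limit through its relation to the bidual diffusion $V$ and the Doob transform of its scale function. First I would note that by Proposition~\ref{prop:infimum}, for $z>0$ we have $\mathbb{P}_z^\uparrow(\inf_{0\le s<\zeta}Z_s\le a)=h(a)/h(z)$, and since $h(z)\to 0$ as $z\to 0+$ with $h(a)>0$ fixed, this probability tends to $1$; this is the key mechanism forcing the limiting process to start from $0$ and immediately enter $(0,\infty)$. To make this rigorous I would first prove tightness of the family $\{\mathbb{P}^\uparrow_z : z\in(0,1]\}$ in Skorokhod space. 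The most transparent route is to use the stochastic equation \eqref{SDEPuparrow} from Theorem~\ref{thm:SDEZup}: one controls moments of $Z_t$ under $\mathbb{P}^\uparrow_z$ uniformly in $z$ (the branching and competition terms are as before, and the new density-dependent immigration is dominated using $b(z)\le 1$, $q(z,y)\le y$ as in the remark following Theorem~\ref{thm:SDEZup}, giving uniform bounds), then apply Aldous' criterion for tightness of the solutions, noting that the coefficients extend continuously to $z=0$ with the prescribed values $b(0)=1$, $q(0,y)=h(y)/h'(0)$, $k(0)=\tfrac{c\ell}{2}\tfrac{1}{h'(0)}$ (all finite since $h'(0)<\infty$ by Theorem~\ref{thm1}).

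Next I would identify any subsequential limit $\mathbb{P}^\uparrow_0$. The cleanest identification uses the duality chain: for $z>0$ and $t\ge 0$, $\mathbb{E}^\uparrow_z(e^{-xZ_t}\mathbbm{1}_{t<\zeta})=\mathbb{E}_z\big(\tfrac{h(Z_t)}{h(z)}e^{-xZ_t}\big)$, and using the Bernstein representation of $h$ together with the bidual relation $\mathbb{E}_z(e^{-xZ_t})=\int_0^\infty z e^{-zy}\mathbb{P}_y(V_t>x)\,\ddr y$ (Proposition~\ref{lem:joiningduals1}), one can express the Laplace functional of $Z_t$ under $\mathbb{P}^\uparrow_z$ entirely in terms of $V$ and the scale function $S$; passing $z\to 0+$ in this explicit formula (dominated convergence, using $\tfrac{1-e^{-xz}}{h(z)}\to \tfrac{x}{h'(0)}$ as $z\to 0+$) yields a well-defined limit that does not depend on the chosen subsequence, hence the full convergence $\mathbb{P}^\uparrow_z\Rightarrow\mathbb{P}^\uparrow_0$. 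To confirm $\mathbb{P}^\uparrow_0$ solves \eqref{SDEPuparrow} with $z=0$, I would pass to the limit in the martingale problem associated with the generator of \eqref{SDEPuparrow}: the generator applied to test functions $f\in C_c^2$ depends on the coefficients $(b,q,k)$ which are continuous up to $0$, so weak convergence of $\mathbb{P}^\uparrow_z$ transfers the martingale problem, and uniqueness of its solution (inherited from Theorem~\ref{thm:SDEZup}, extended to $z=0$) pins down $\mathbb{P}^\uparrow_0$.

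Finally, to prove $\mathbb{P}^\uparrow_0(Z_0=0,\ \exists t>0:\forall s\ge t,\ Z_s>0)=1$, the statement $Z_0=0$ is immediate from the convergence $z\to 0+$. For the immediate entrance into $(0,\infty)$ one argues as follows: by Proposition~\ref{prop:infimum} applied with $a$ fixed and $z\downarrow 0$, the limiting process must hit levels above any $a$ before $\zeta$ with probability one; more precisely, for the solution of \eqref{SDEPuparrow} started at $0$, either the immigration term drives it off $0$ (when $\ell>0$, i.e. finite log-moment) or, when $\ell=0$, one must show $0$ is not absorbing for the conditioned dynamics — here the relevant fact is that $(Z,\mathbb{P}^\uparrow_0)$ is the $h$-transform-in-the-limit and the infimum formula degenerates, so $Z$ cannot stay at $0$. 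Equivalently, I would compare with the entrance behavior of the bidual $V$ from its boundary via the scale function $S$ and transfer. The hardest part is expected to be this last point: controlling the behavior of the conditioned process near $0$ in the infinite-log-moment case $\ell=0$, where there is no immigration drift to push the process up and one must instead exploit the singular nature of the $h$-transform (via $h(z)\to 0$) to rule out absorption at $0$; making the ``entrance law'' argument precise likely requires either a careful scaling/comparison argument on \eqref{SDEPuparrow} near $0$ or a direct computation with the entrance law of $V$ and the duality formulas.
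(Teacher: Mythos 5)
Your overall architecture matches the paper's: convergence of the Laplace transforms $\mathbb{E}_z^{\uparrow}(e^{-xZ_t})$ as $z\to 0+$ via the bidual representation \eqref{semigroupPtuparrow1} and $z/h(z)\to 1/h'(0)$, tightness via Aldous' criterion, and transfer of the martingale problem using continuity of the coefficients $b,q,k$ at $0$. However, there is a genuine gap at exactly the point you flag as ``the hardest part,'' and it is not a technicality: you must prove that the limiting one-dimensional law $\eta_t:=\lim_{z\to 0+}\mathbb{P}_z^{\uparrow}(Z_t\in\cdot)$ is not $\delta_0$, for otherwise nothing rules out $\mathbb{P}_0^{\uparrow}$ being the process stuck at $0$, and the claim $\exists t>0:\forall s\ge t,\ Z_s>0$ fails. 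The paper's proof of this (Lemma \ref{lem:cvLT}-iii) together with Lemma \ref{lem:comparisonU}) is a genuine computation: using Siegmund duality one rewrites $\int_0^\infty\mathbb{E}_y\bigl(\mathbbm{1}_{\{x\le V_t\}}S(V_t-x)\bigr)\ddr y=h'(0)-\int_0^\infty(-S'(v))\int_0^t\mathbb{E}\bigl(\Psi(U_s(v+x))-\Psi(U_s(x))\bigr)\ddr s\,\ddr v$ via the martingale identity $\mathbb{E}_x(U_t)+\int_0^t\mathbb{E}_x(\Psi(U_s))\ddr s=x$ (itself requiring a comparison with a supercritical CB and Doob's inequality to upgrade the local martingale to a true one), and then shows the inner time integral is strictly positive for $x$ large and $t$ small by a Fatou argument that must treat separately the region where $\Psi$ is negative in the supercritical case. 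None of your three suggested substitutes (immigration drift, ``degeneration of the infimum formula,'' comparison with the entrance behavior of $V$) supplies this. Once $\eta_t\neq\delta_0$ is in hand, the ``leaves $0$ and never returns'' claim follows from the iteration $\mathbb{P}_0^{\uparrow}(Z_{jt}=0,\ j\le n)=\eta_t(\{0\})^n\to 0$ plus the strong Markov property at the first passage above a level $r>0$ and Proposition \ref{prop:infimum}; your proposal is silent on this step as well.

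Two smaller but real errors. First, your opening use of Proposition \ref{prop:infimum} is wrong: the identity $\mathbb{P}_z^{\uparrow}(\inf_{s<\zeta}Z_s\le a)=h(a)/h(z)$ holds only for $z>a$, and for fixed $a>0$ the ratio $h(a)/h(z)$ tends to $+\infty$, not $1$, as $z\to 0+$ (since $h(0)=0$ and $h$ is increasing); it cannot serve as the mechanism for immediate entrance. Second, you tie the exit from $0$ to the sign of $\ell$ (the log-moment of $\pi$ at infinity), but whether $0$ is instantaneous for $(Z,\mathbb{P}_0^{\uparrow})$ is governed by $\sigma$ and the small-jump intensity $\int_0 h(y)\pi(\ddr y)$, i.e.\ by $\Psi'(\infty)$, as the remark after Theorem \ref{thm:entrance} makes clear; $\ell$ only controls the killing rate $k$. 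In any case, even when $0$ is a holding point the process must still leave it eventually, and that is precisely what the missing computation establishes.
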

\begin{remark} In a similar fashion as for CB processes, the process $(Z,\mathbb{P}_0^{\uparrow})$ can be seen as encoding the lineage of an immortal individual. The point $0$ of $(Z,\mathbb{P}_0^{\uparrow})$ is instantaneous if in the density-dependent immigration, see the last line of \eqref{SDEPuparrow}, there is a positive drift, i.e. $\sigma>0$, or if the first jump occurs immediately. According to the jump intensity, this happens if and only if $\int_{0}h(y)\pi(\ddr y)=\infty$.  Since $h(y)\sim h'(0)y$ as $y\to 0$, we see that this is equivalent to $\Psi'(\infty)=\infty$. Hence similarly as for the CB process, see Remark \ref{rem:holdingcb}, $0$ is instantaneous (respectively an holding point) if and only if  $\Psi'(\infty)=\infty$ (respectively $\Psi'(\infty)<\infty$).
\end{remark}
\noindent \textbf{Organization of the paper}. In the preliminaries, we first give some general background on Doob's $h$-transforms. We then briefly review LCBs, and show Proposition \ref{prop1}. Next, we study the auxiliary diffusion $V$.  The proof of  Theorem \ref{thmCSBP} is deferred to Section \ref{sec:prooftheoremCSBP}. In Section \ref{sec:prooftheorem1}, we establish Theorem \ref{thm1}, particularly focussing on how to find the excessive function $h$. In Section \ref{sec:prooftheorem2}, we derive a representation of the semigroup of the conditioned process $Z^{\uparrow}$ with the help of the diffusion process $V$ conditioned to not tend towards $\infty$. We then establish Proposition \ref{prop:infimum} and Theorem \ref{mainthm}. The stochastic equation solved by the conditioned process $(Z,\mathbb{P}_z^{\uparrow})$ is addressed in Section \ref{sec:prooftheorem34}. In Section \ref{sec:proofentrance}, we establish Theorem \ref{thm:entrance}. 
\section{Preliminaries}\label{sec:preliminaries}

Denote by $[0,\infty]$, the extended positive half-line, endowed with the compact metric $d(x,y)=|e^{-x}-e^{-y}|$, for all $x,y\in [0,\infty]$, with the convention $e^{-\infty}=0$. We denote by $\Omega$ the canonical probability space, i.e. the space of functions $\omega:\mathbb{R}_+\rightarrow [0,\infty]$ which are right-continuous, admit a lifetime $\zeta:=\inf\{t>0: Z_{t}\notin [0,\infty)\}$, possibly infinite. The canonical process $(Z_t,t\geq 0)$ is defined on this probability space by $Z_t(\omega)=\omega(t)$, $\omega\in \Omega$. This space is equipped with the shift operator $\theta_t$, namely $\theta_t\omega(s)=\omega(s+t)$. The natural filtration on $\Omega$ is defined for any $t\geq 0$, by $\mathcal{F}^0_t:=\sigma(\{Z_s,0\leq s\leq t\})$. We denote by $(\mathcal{F}_t)$ the augmented right-continuous filtration. 
\subsection{Doob's $h$-transforms via supermartingales and local martingales}\label{sec:doobtransform}
We provide here background on the construction of (sub)Markovian probability laws with the help of supermartingales.  We refer the reader e.g. to Chung and Walsh's book \cite[Section 11.3, page 324]{zbMATH02208909} for background on Doob's $h$-transforms. 
\medskip

Let $(P_t)_{t\geq 0}$ be a semigroup on $[0,\infty]$. We assume that for each $z\in [0,\infty]$, there exists a right-continuous Markov process $Z:=(Z_t,t\geq 0)$ with state space $[0,\infty]$ and transition semigroup $(P_t)_{t\geq 0}$, satisfying
\begin{enumerate}[label=\roman*)]
\item For all $z\in [0,\infty]$, $\mathbb{P}_z(Z_0=z)=1$,
\item $Z$ is a strong Markov process,
\item $Z$ has an infinite lifetime, $\zeta=\infty$, $\mathbb{P}_z\text{-a.s. for all }z\in [0,\infty]$.
\end{enumerate}
\medskip

Recall that a positive function $h$ is said to be excessive for $Z$, if the process $(h(Z_t),t\geq 0)$ is a positive supermartingale, i.e. $\mathbb{E}_z(h(Z_t))\leq h(z)$ for all $t\geq 0$ and all $z\in [0,\infty)$.

\begin{theostar}[Change of measure with a supermartingale] \label{doobtransform1} Assume $h$ is an excessive function,  $0<h(z)<\infty$ for all $z\in (0,\infty)$ and $h(0)=0$, $h(\infty)=\infty$. Then,
\begin{enumerate}[label=\roman*)]
\item The family of operators defined on the $[0,\infty)$-valued bounded Borel functions by:
\[P^{\uparrow}_tf(z):=\begin{cases}
\frac{1}{h(z)}P_t(hf)(z), &\text{ if } z\in (0,\infty),\\
0, &\text{ if } z\in \{0,\infty\},
\end{cases}
\]
is a semigroup on $[0,\infty)$, possibly sub-Markovian, i.e. one has $P_t^{\uparrow}1\leq 1$ for all $t\geq 0$.
\item The process with semigroup $(P^{\uparrow}_t)_{t\geq 0}$ admits a right-continuous version and we denote by $\mathbb{P}^{\uparrow}_z$ its law on $(\Omega,\mathcal{F}^{0})$, when issued from $z>0$.  Moreover, the process $(Z,\mathbb{P}^{\uparrow}_z)$ is strongly Markovian with respect to $(\mathcal{F}_t)_{t\geq 0}$ and if $T$ is an $(\mathcal{F}_t)_{t\geq 0}$-stopping time, then for any $A\in \mathcal{F}_T$ and $z\in (0,\infty)$, 
\begin{equation}\label{equationdoob1}\mathbb{P}_z^{\uparrow}(A,T<\zeta)=\frac{1}{h(z)}\mathbbm{E}_z\left(h(Z_T)\mathbbm{1}_A\right), \text{ with } \zeta \text{ the lifetime of } (Z,\mathbb{P}_z^{\uparrow}).
\end{equation}
\end{enumerate}
\end{theostar}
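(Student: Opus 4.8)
The plan is to prove the two parts separately: (i) by a direct computation with the excessive function and the simple Markov property of $Z$, and (ii) by invoking the classical construction of the $h$-process for right Markov processes, supplemented by an optional-stopping argument to pass from deterministic to random times. For (i) I would first record a dichotomy forced by excessiveness. Fix $z\in(0,\infty)$ and $t\ge0$: since $\mathbb{E}_z(h(Z_t))\le h(z)<\infty$ while $h(\infty)=\infty$, necessarily $\mathbb{P}_z(Z_t=\infty)=0$; and if $h(w)=0$ then $0\le\mathbb{E}_w(h(Z_t))\le h(w)=0$, so $h(Z_t)=0$ $\mathbb{P}_w$-a.s. Since moreover $h(0)=0$, the state $0$ lies in $\{h=0\}$, which therefore behaves like a trap under $\mathbb{P}_z$ up to null sets. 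Consequently, for $z\in(0,\infty)$ and $f$ bounded Borel on $[0,\infty)$, the quantity $P_t(hf)(z)=\mathbb{E}_z\big(h(Z_t)f(Z_t)\big)$ is finite (at most $\|f\|_\infty h(z)$ in modulus), measurable in $z$, and charges only $\{Z_t\in(0,\infty)\}$; hence $P^{\uparrow}_tf(z):=P_t(hf)(z)/h(z)$ is a well-defined bounded Borel function with $P^{\uparrow}_t1(z)=P_th(z)/h(z)\le1$, giving sub-Markovianity. For the semigroup identity, for $z\in(0,\infty)$ I would write
\[
h(z)\,P^{\uparrow}_s(P^{\uparrow}_tf)(z)=\mathbb{E}_z\big(\mathbbm{1}_{\{Z_s\in(0,\infty)\}}\,h(Z_s)\,P^{\uparrow}_tf(Z_s)\big)=\mathbb{E}_z\big(\mathbbm{1}_{\{Z_s\in(0,\infty)\}}\,\mathbb{E}_{Z_s}(h(Z_t)f(Z_t))\big)
\]
and apply the Markov property at time $s$: on $\{Z_s\notin(0,\infty)\}$, which up to the null event $\{Z_s=\infty\}$ equals $\{Z_s=0\}\subseteq\{h(Z_s)=0\}$, the dichotomy above gives $h(Z_{s+t})=0$ $\mathbb{P}_z$-a.s., so reinserting the indicator is harmless and one recovers $h(z)P^{\uparrow}_s(P^{\uparrow}_tf)(z)=\mathbb{E}_z(h(Z_{s+t})f(Z_{s+t}))=h(z)P^{\uparrow}_{s+t}f(z)$. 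Extending by $P^{\uparrow}_tf=0$ on $\{0,\infty\}$ is consistent, and (i) follows.

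For (ii) I would invoke the classical $h$-transform construction for right Markov processes (Chung--Walsh, Section~11.3): since $Z$ is a right-continuous strong Markov process on $[0,\infty]$ with Borel semigroup and $h$ is excessive with $\{0<h<\infty\}=(0,\infty)$, the sub-Markovian semigroup $(P^{\uparrow}_t)$ admits a right-continuous strong Markov realization on $(0,\infty)$ with a possibly finite lifetime $\zeta$, the path being sent to the cemetery $\infty$ at $\zeta$; call its law $\mathbb{P}^{\uparrow}_z$, $z>0$. By construction $\mathbb{P}^{\uparrow}_z(Z_{t_1}\in B_1,\dots,Z_{t_n}\in B_n,\,t_n<\zeta)=h(z)^{-1}\mathbb{E}_z\big(h(Z_{t_n})\mathbbm{1}_{\{Z_{t_1}\in B_1,\dots,Z_{t_n}\in B_n\}}\big)$; a monotone-class argument upgrades this to $\mathbb{P}^{\uparrow}_z(A,t<\zeta)=h(z)^{-1}\mathbb{E}_z(h(Z_t)\mathbbm{1}_A)$ for all $A\in\mathcal{F}^0_t$, and since $h(Z_t)<\infty$ $\mathbb{P}_z$-a.s. the resulting sub-measure is absolutely continuous with respect to $\mathbb{P}_z$ on $\mathcal{F}^0_t$, so it extends to the $\mathbb{P}_z$-augmentation $\mathcal{F}_t$ --- this is \eqref{equationdoob1} for deterministic times, strong Markovianity with respect to $(\mathcal{F}_t)$ being part of the same construction. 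To reach a general $(\mathcal{F}_t)$-stopping time $T$, I would approximate by the dyadic stopping times $T_n=2^{-n}\lceil2^nT\rceil\downarrow T$, apply the deterministic-time identity on each atom $\{T_n=k2^{-n}\}\cap A$ (legitimate as $A\in\mathcal{F}_T\subseteq\mathcal{F}_{T_n}$), sum over $k$ to get $\mathbb{P}^{\uparrow}_z(A,T_n<\zeta)=h(z)^{-1}\mathbb{E}_z(h(Z_{T_n})\mathbbm{1}_A)$, and let $n\to\infty$: the left side increases to $\mathbb{P}^{\uparrow}_z(A,T<\zeta)$ because $\{T_n<\zeta\}\uparrow\{T<\zeta\}$ and $Z$ is right-continuous under $\mathbb{P}^{\uparrow}_z$, while on the right one uses that $(h(Z_t))$ is a right-continuous $\mathbb{P}_z$-supermartingale and invokes optional stopping on each window $[0,m]$ --- where $\{h(Z_\tau):\tau\le m\}$ is uniformly integrable by the Doob--Meyer decomposition --- followed by $m\to\infty$.

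The main obstacle is conceptual rather than computational, and it lies in part (ii): because $h$ is merely excessive and not harmonic, the ratio $h(Z_t)/h(z)$ is in general a strict supermartingale, so it cannot serve as a consistent Radon--Nikodym density on $\mathcal{F}_\infty$; the missing mass $1-P^{\uparrow}_t1(z)$ must be absorbed by the cemetery state $\infty$ and a genuinely finite lifetime $\zeta$, which is precisely why the change-of-measure identity carries the truncation $\{t<\zeta\}$ and why its extension to stopping times rests on optional stopping for supermartingales rather than a naive change of measure. The same phenomenon, in disguise, is also the only delicate point in part (i): one must extract cleanly from excessiveness that $\{h=0\}$ is a trap up to null sets before discarding the indicator $\mathbbm{1}_{\{Z_s\in(0,\infty)\}}$ in the semigroup computation.
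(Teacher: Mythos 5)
The paper does not actually prove Theorem \ref{doobtransform1}: it is quoted as standard background, with the construction deferred to Chung and Walsh, so there is no in-paper argument to compare yours against line by line. Your proposal reconstructs exactly that standard route. Part (i) is the usual direct computation, and you correctly isolate the two points that must be checked before dropping the indicator in the Chapman--Kolmogorov step, namely that $\mathbb{P}_z(Z_t=\infty)=0$ (forced by $\mathbb{E}_z h(Z_t)\le h(z)<\infty$ and $h(\infty)=\infty$) and that $\{h=0\}$ is absorbing for the $h$-weighted dynamics. Part (ii) correctly delegates the existence of a right-continuous strong Markov realization of $(P_t^{\uparrow})$ to the literature and supplies the extension from deterministic times to general stopping times by dyadic approximation. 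Both parts are sound in outline.

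One justification in part (ii) is wrong as stated, and it fails in the very setting of this paper. You claim that $\{h(Z_\tau):\tau\le m\}$ is uniformly integrable ``by the Doob--Meyer decomposition''. A nonnegative right-continuous supermartingale need not be of class (D): Theorem \ref{thm1} shows that when $\int^{\infty}\log y\,\pi(\ddr y)=\infty$ the process $(h(Z_t),t\ge0)$ is a strict local martingale, and a nonnegative strict local martingale is never of class (D) (a local martingale of class (D) is a true martingale). What you actually need is weaker and true: for the fixed decreasing sequence $T_n\downarrow T$ with $T$ bounded by $m$, optional sampling at bounded stopping times gives $\mathbb{E}_z\big(h(Z_{T_n})\big)\le\mathbb{E}_z\big(h(Z_{T})\big)$, while right-continuity and Fatou give $\mathbb{E}_z\big(h(Z_{T})\big)\le\liminf_n\mathbb{E}_z\big(h(Z_{T_n})\big)$; hence the expectations converge, and Scheff\'e's lemma upgrades the almost sure convergence $h(Z_{T_n})\to h(Z_T)$ to convergence in $L^1(\mathbb{P}_z)$, which is all that is needed to pass to the limit in $\mathbb{E}_z\big(h(Z_{T_n})\mathbbm{1}_A\big)$. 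Separately, your final passage $m\to\infty$ for unbounded $T$ is glossed over and is genuinely delicate outside class (D) (on $\{T=\infty\}$ the quantity $h(Z_T)$ must be read as the a.s.\ supermartingale limit, and $\mathbb{E}_z\big(h(Z_{T\wedge m})\big)$ need not converge to $\mathbb{E}_z\big(h(Z_T)\big)$ in general); note, however, that every application of \eqref{equationdoob1} in the paper is at a bounded stopping time ($t$, $\tau_n=\zeta_n^+\wedge n$, $t\wedge\zeta^-_a\wedge\zeta$, \dots), so the bounded case you do establish suffices for all the paper's uses.
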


The $h$-transform covers three different settings:
\begin{enumerate}[label=\roman*)]
\item If $(h(Z_t),t\geq 0)$ is a $\mathbb{P}_z$-martingale, then the process $(Z,\mathbb{P}_z^{\uparrow})$ has an infinite lifetime (it is not killed and does not explode).
\smallskip
\item If $(h(Z_t),t\geq 0)$ is a $\mathbb{P}_z$-supermartingale but not a local martingale, we say that it is a \textit{strict} supermartingale. In this case, the process $(Z,\mathbb{P}_z^{\uparrow})$ has a finite lifetime, it is moreover killed, meaning the process is sent at $\infty$ by a single jump with positive probability. Provided that there exists\footnote{this will be the case in our setting} a.s. a left limit at $\zeta$, one has $\mathbb{P}_z^{\uparrow}(Z_{\zeta-}<\infty)>0$.
\smallskip
\item If $(h(Z_t),t\geq 0)$ is a local martingale but not a martingale, we say that it is a \textit{strict} local martingale. In this case, the process has a finite lifetime, but is not killed. Explosion occurs here ``continuously", that is to say $Z_{\zeta-}=\infty$, $\mathbb{P}_z^{\uparrow}$-a.s..  
\end{enumerate}
We now provide further details on the second and third cases, as both will occur in our framework.  
\begin{lemmastar}[Strict local martingale and continuous explosion]\label{lem:strictlocalmartingale} Under the same assumptions as Theorem \ref{doobtransform1}. If $(h(Z_t),t\geq 0)$ is a supermartingale and not a martingale then $\mathbb{P}^{\uparrow}_z(\zeta<\infty)>0$. For all $n\in \mathbb{N}$, set $\zeta_n^{+}:=\inf\{t>0: Z_t>n\}\in [0,\infty]$.
\smallskip
\begin{itemize}
\item If $(h(Z_t),t\geq 0)$ is a strict local martingale, then on $\{\zeta<\infty\}$, $\zeta=\underset{n\rightarrow \infty}{\lim} \zeta_n^+$, $\mathbb{P}^{\uparrow}_z$-a.s..
\item  If $(h(Z_t),t\geq 0)$ is a strict supermartingale, then, one has for some $n\geq 1$, $\mathbb{P}_z^{\uparrow}(\zeta_n^{+}\geq \zeta)>0$. Thus, there exists an $n$ such that with positive probability, the process is killed before getting above level $n$.
\end{itemize}
\end{lemmastar}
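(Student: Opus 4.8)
\textbf{Plan of proof for Lemma~\ref{lem:strictlocalmartingale}.}

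The starting point is the following standard fact about Doob transforms by a supermartingale: if $h$ is excessive but $(h(Z_t))_{t\ge 0}$ is not a martingale, then $\mathbb{E}_z(h(Z_t))<h(z)$ for some $z>0$ and $t>0$, hence by \eqref{equationdoob1} applied with $T=t$ and $A=\Omega$ one gets $\mathbb{P}_z^{\uparrow}(t<\zeta)=\mathbb{E}_z(h(Z_t))/h(z)<1$, so $\mathbb{P}_z^{\uparrow}(\zeta<\infty)>0$. (For the remaining $z$ one can invoke irreducibility of the process, or simply note that the claim is only needed where it is non-trivial.) This disposes of the first assertion.

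For the dichotomy I would decompose the lifetime according to whether the process blows up before being killed. Introduce $\zeta^+:=\lim_{n\to\infty}\zeta_n^+$, the (monotone) first explosion time, and note $\zeta^+\le\zeta$ always, with $Z_{\zeta-}=\infty$ on $\{\zeta^+=\zeta<\infty\}$ and (since a left limit exists at $\zeta$ by assumption) $Z_{\zeta-}<\infty$ on $\{\zeta^+>\zeta\}\cup\{\zeta^+=\zeta=\infty\}$; the point of the lemma is to identify which of these alternatives carries all the mass of $\{\zeta<\infty\}$. The key computation is to evaluate $\mathbb{P}_z^{\uparrow}(\zeta_n^+\le t)$ via the optional stopping / change-of-measure identity \eqref{equationdoob1} at the stopping time $T=\zeta_n^+\wedge t$: on $\{\zeta_n^+\le t\}$ one has $Z_{\zeta_n^+}\ge n$ (for a continuous-from-above modification one has $Z_{\zeta_n^+}=n$, but $\ge n$ suffices), so
\[
\mathbb{P}_z^{\uparrow}(\zeta_n^+\le t,\ \zeta_n^+<\zeta)=\frac{1}{h(z)}\mathbb{E}_z\!\big(h(Z_{\zeta_n^+\wedge t})\mathbbm{1}_{\{\zeta_n^+\le t\}}\big)\ \ge\ \frac{h(n)}{h(z)}\,\mathbb{P}_z(\zeta_n^+\le t).
\]
Letting $n\to\infty$ and using $h(n)\to h(\infty)=\infty$, the left-hand side stays bounded by $1$, which forces $\mathbb{P}_z(\zeta_n^+\le t)\to 0$; but more importantly, if $(h(Z_t))$ is a \emph{local} martingale one can run the identity the other way: using the localizing sequence $\zeta_n^+$ one shows $\mathbb{E}_z(h(Z_{\zeta_n^+\wedge t}))=h(z)$ exactly, and the same optional-stopping identity gives $\mathbb{P}_z^{\uparrow}(\zeta_n^+\le t)=h(n)\mathbb{P}_z(\zeta_n^+\le t)/h(z)$ up to an error term $\frac{1}{h(z)}\mathbb{E}_z(h(Z_t)\mathbbm{1}_{\{\zeta_n^+>t\}})$ that vanishes as $n\to\infty$ (dominated convergence, since $h(Z_t)$ is integrable under $\mathbb{P}_z$ and $\{\zeta_n^+>t\}\downarrow\{\zeta^+>t\}$ with $\zeta^+=\infty$ $\mathbb{P}_z$-a.s.). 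Passing to the limit, $\mathbb{P}_z^{\uparrow}(\zeta^+\le t)=\lim_n h(n)\mathbb{P}_z(\zeta_n^+\le t)/h(z)$; comparing with $\mathbb{P}_z^{\uparrow}(\zeta\le t)=1-\mathbb{E}_z(h(Z_t))/h(z)$ and the fact that the defect of the martingale property of $h(Z_t)$ comes \emph{entirely} from the explosion set (no killing in the local martingale case, because a single killing jump would make $h(Z_t)$ a strict supermartingale, not merely a strict local martingale), one concludes $\mathbb{P}_z^{\uparrow}(\zeta^+=\zeta)=1$ on $\{\zeta<\infty\}$, i.e. $\zeta=\lim_n\zeta_n^+$ $\mathbb{P}_z^{\uparrow}$-a.s. on $\{\zeta<\infty\}$.

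For the strict supermartingale case, the point is instead that the mass lost cannot all be accounted for by continuous explosion: if it were, i.e. if $\mathbb{P}_z^{\uparrow}(\zeta^+=\zeta)=1$ on $\{\zeta<\infty\}$, then running the same optional-stopping argument at $\zeta_n^+\wedge t$ would produce a \emph{local} martingale property (the defect at each level $n$ being exactly $h(n)\mathbb{P}_z(\zeta_n^+\le t)$), contradicting the assumption that $(h(Z_t))$ is not a local martingale. Hence $\mathbb{P}_z^{\uparrow}(\zeta^+>\zeta,\ \zeta<\infty)>0$; on this event the pre-lifetime values $\zeta_n^+$ stabilize, so there exists $n$ with $\mathbb{P}_z^{\uparrow}(\zeta_n^+\ge\zeta,\ \zeta<\infty)>0$, which is the stated conclusion (the process is killed before ever exceeding level $n$). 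The main obstacle is the careful justification of the passage to the limit in the optional-stopping identities — specifically, controlling the boundary term $\mathbb{E}_z(h(Z_T)\mathbbm{1}_{\{\zeta_n^+>t\}})$ uniformly, which requires knowing that $h(Z_t)$ is genuinely integrable under $\mathbb{P}_z$ (true since $h$ is excessive) and that $\zeta^+=\infty$ $\mathbb{P}_z$-a.s. (true since, by hypothesis of the ambient theorem, $Z$ under $\mathbb{P}_z$ has infinite lifetime and does not explode); and, on the conceptual side, cleanly separating the ``killing'' defect from the ``explosion'' defect of the supermartingale, which is exactly the content of the local-martingale-versus-strict-supermartingale distinction.
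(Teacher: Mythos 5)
Your overall strategy is the same as the paper's: apply the change-of-measure identity \eqref{equationdoob1} at the first-passage times $\zeta_n^+$ and read off the dichotomy from whether these times localize $(h(Z_t),t\geq 0)$. The first assertion is fine, and the strict-supermartingale case is essentially the contrapositive of the paper's direct argument: if the processes stopped at $\zeta_n^+\wedge t$ were all martingales, $(h(Z_t),t\geq 0)$ would be a local martingale, so some stopped process must have a strict expectation drop, which via \eqref{equationdoob1} gives $\mathbb{P}_z^{\uparrow}(\zeta\leq \zeta_n^+)>0$ for some $n$.

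The one genuine gap is in the local-martingale case, where you write ``using the localizing sequence $\zeta_n^+$ one shows $\mathbb{E}_z(h(Z_{\zeta_n^+\wedge t}))=h(z)$ exactly''. That $\zeta_n^+$ localizes is precisely what has to be proved, and it is not automatic here: $Z$ has unbounded upward jumps, so $Z_{\zeta_n^+}$ overshoots level $n$ and $h(Z_{t\wedge\zeta_n^+})$ is \emph{not} bounded by $h(n)$. The paper closes this by the domination $\sup_{t\geq 0}h(Z_{t\wedge\tau_n})\leq h(Z_{\tau_n})\vee h(n)$ with $\tau_n=\zeta_n^+\wedge n$, where $h(Z_{\tau_n})$ is integrable by optional stopping for the nonnegative supermartingale $(h(Z_t),t\geq 0)$; a local martingale dominated by an integrable random variable is a true martingale. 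Without this step the identity $\mathbb{E}_z(h(Z_{\zeta_n^+\wedge t}))=h(z)$ --- and hence your whole limit computation --- is unsupported. A secondary, non-fatal inaccuracy: because of the overshoot, $\mathbb{E}_z\big(h(Z_{\zeta_n^+})\mathbbm{1}_{\{\zeta_n^+\leq t\}}\big)$ equals $h(n)\mathbb{P}_z(\zeta_n^+\leq t)$ only up to the overshoot contribution, so your ``equality up to an error term'' should instead be the exact identity $\mathbb{E}_z\big(h(Z_{\zeta_n^+})\mathbbm{1}_{\{\zeta_n^+\leq t\}}\big)=h(z)-\mathbb{E}_z\big(h(Z_t)\mathbbm{1}_{\{\zeta_n^+>t\}}\big)$, which is all the limit argument needs. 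Finally, the parenthetical ``a single killing jump would make $h(Z_t)$ a strict supermartingale'' is circular --- it restates the conclusion rather than proving it --- but it is dispensable, since once the stopped processes are known to be true martingales one gets $\mathbb{P}_z^{\uparrow}(\tau_n<\zeta)=1$ for every $n$, and hence continuous explosion on $\{\zeta<\infty\}$ directly, as in the paper.
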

\begin{proof}
First, by taking $A=\Omega$ and $T=t$ a deterministic time, in \eqref{equationdoob1}, we get
$\mathbb{P}_z^{\uparrow}(t<\zeta)=\mathbb{E}_z\left(\frac{h(Z_t)}{h(z)}\right)<1$,
hence, $\mathbb{P}_z^{\uparrow}(\zeta<\infty)>0$. 
\smallskip

Suppose that $(h(Z_t),t\geq 0)$ is a strict local martingale. We explain why the explosion is continuous on the event $\{\zeta<\infty\}$. We start by checking that the sequence of stopping times, $(\tau_n):=(\zeta_n^+\wedge n)$, localizes $(h(Z_t),t\geq 0)$. First, by assumption, $\zeta_n^+\rightarrow \infty$,  $\mathbb{P}_z$-a.s. as $n$ goes to $\infty$, and the same holds for $(\tau_n)$. Observe next that since $(h(Z_t),t\geq 0)$ is a supermartingale and $\tau_n$ is a bounded stopping time, one has by the optimal stopping theorem $\mathbb{E}_z\big(h(Z_{\tau_n})\big)\leq h(z)<\infty$. This entails that $h(Z_{\tau_n})\vee h(n)$ is integrable. Finally, notice that
\[\sup_{t\geq 0}h(Z_{t\wedge \tau_n})\leq h(Z_{\tau_n})\vee h(n)\text{ a.s..}\]
Since the stopped process $(h(Z_{t\wedge \tau_n}),t\geq 0)$ is a local martingale, bounded by an integrable random variable, it is actually a true martingale. By taking now $A=\Omega$ and $T=\tau_n$ in \eqref{equationdoob1}, we get for any $n\in \mathbb{N}$, 
\begin{center}
$\mathbb{P}_z^{\uparrow}(\zeta_n^+\wedge n<\zeta)=\frac{1}{h(z)}\mathbbm{E}_z\left(h(Z_{\zeta_n^+\wedge n})\right)=1.$ \end{center}
Hence, 
\begin{align*}
\mathbb{P}_z^{\uparrow}(\zeta<\infty)&=\mathbb{P}_z^{\uparrow}(\zeta_n^+\wedge n<\zeta<\infty)\\
&=\mathbb{P}_z^{\uparrow}(\zeta_n^+<n,\zeta_n^+<\zeta<\infty)+\mathbb{P}_z^{\uparrow}(\zeta_n^+\geq n,n<\zeta<\infty)\underset{n\rightarrow \infty}{\longrightarrow} \mathbb{P}_z^{\uparrow}(\zeta_m^+<\zeta<\infty, \forall m\geq 1).
\end{align*}
Therefore, denoting by $\zeta_\infty:=\underset{m\rightarrow \infty}{\lim}\! \uparrow \zeta_m^+=\inf\{t>0: Z_t=\infty\}$, we have, conditionally on the event $\{\zeta<\infty\}$,  $\zeta_\infty\leq \zeta$, $\mathbb{P}_z^{\uparrow}$-almost surely, hence $\zeta_\infty=\zeta$, $\zeta$ is thus predictable, and $\mathbb{P}_z^{\uparrow}(Z_{\zeta-}=\infty)=1$.
\smallskip

Suppose now that $(h(Z_t),t\geq 0)$ is a strict supermartingale. Then, necessarily $(\zeta_n^+\wedge n)$ is not a localizing sequence of stopping times (otherwise the process would be a local martingale). Therefore, for some $s>0,n\geq 1$, 
\begin{center}
$\mathbb{P}_z^{\uparrow}(s\wedge \zeta_n^+\wedge n<\zeta)=\frac{1}{h(z)}\mathbbm{E}_z\left(h(Z_{s\wedge \zeta_n^+\wedge n})\right)<1,$ 
\end{center}
and we finally see that $\mathbb{P}_z^{\uparrow}(\zeta_n^+\geq \zeta)>0$.\qed
\end{proof}
\begin{remark}[Almost sure killing]\label{rem:killing} In the case ii) of a strict supermartingale, we have seen that the process $(Z,\mathbb{P}_z^{\uparrow})$ is killed before explosion with positive probability. This occurs almost surely if and only if the supermartingale $\big(h(Z_{\zeta_n^+\wedge n}),n\geq 0\big)$ is $\mathbb{P}_z$-uniformly integrable. Indeed, in this scenario, recalling $h(0)=0$, and that $\mathbb{P}_z$-almost surely, $\zeta_n^+\wedge n \rightarrow \infty$ as $n$ goes to $\infty$ and $Z_t\rightarrow 0$ as $t$ goes to $\infty$, we see that $(h(Z_{\zeta_n^+\wedge n}),n\geq 0)$ converges towards $0$ in $L^1(\mathbb{P}_z)$. Therefore, we have 
\[\underset{n\rightarrow \infty}{\lim} \mathbb{P}_z^{\uparrow}(\zeta_n^+\wedge n<\zeta)=\frac{1}{h(z)}\underset{n\rightarrow \infty}{\lim}\mathbbm{E}_z\left(h(Z_{\zeta_n^+\wedge n})\right)=\frac{1}{h(z)}\mathbbm{E}_z\left(\underset{n\rightarrow \infty}{\lim}h(Z_{\zeta_n^+\wedge n})\right)=0,\]
implying that $\mathbb{P}_z^{\uparrow}(\exists n\geq 1: \zeta_n^{+}\geq \zeta)=1$.
\end{remark}
\begin{remark}[F\"ollmer measure]\label{rem:Follmermeasure} When $(h(Z_t),t\geq 0)$ is a local martingale, the measure $\mathbb{P}^{\uparrow}_z$ is called in the literature, the F\"ollmer measure. We refer the reader to F\"ollmer \cite{zbMATH03366235}, Meyer \cite{zbMATH03366236}, see also the work of It\^o and Watanabe \cite{zbMATH03227547}. Recall the stopping times $\tau_n=\zeta_n^+\wedge n$ for $n\geq 1$, and the fact that 
$(h(Z_{t\wedge \tau_n}),t\geq 0)$ is a martingale, see the proof of Lemma \ref{lem:strictlocalmartingale}. The law
$\mathbb{P}^{\uparrow}_z$ is the unique measure on $(\Omega,\mathcal{F}^0)$ with lifetime $\zeta=\underset{n\rightarrow \infty}{\lim}\tau_n$, such that $$\ddr \mathbb{P}^{\uparrow}_z=\frac{h(Z_{t\wedge \tau_n})}{h(z)}\ddr \mathbb{P}_z,\  \text{ on } \mathcal{F}_{t\wedge \tau_n},\ \forall n\geq 1, \quad \forall t\geq 0\text{ and } z>0.$$
\end{remark}
\begin{remark}[Local absolute continuity]\label{rem:nonegativejumps}\  
\begin{itemize}
\item[i)]  For any $z\in (0,\infty)$, the measure $\mathbb{P}^{\uparrow}_z$ defined by \eqref{equationdoob1}, is \textit{locally} absolutely continuous with respect to  $\mathbb{P}_z$: $\forall t\geq 0, \mathbb{P}^{\uparrow}_z|_{\mathcal{F}_t}\ll\mathbb{P}_z|_{\mathcal{F}_t}$. As a consequence, if $(Z,\mathbb{P}_z)$ is c\`adl\`ag with no negative jumps, then $(Z,\mathbb{P}^{\uparrow}_z)$ has also no negative jumps. Indeed,  for all $r>0$, set $\tau^{r}:=\inf\{t>0: \Delta Z_t:=Z_{t}-Z_{t-}<-r\}.$ This is an $(\mathcal{F}_t)$-stopping time, see e.g. Jacod and Shiryaev's book \cite[Page 7]{jacod1987limit}, and since $\{\tau^{r}<t\}$ is a $\mathbb{P}_z$-null set, one has
\begin{center} $\mathbb{P}_z^{\uparrow}(\tau^{r}<t,t<\zeta)=\frac{1}{h(z)}\mathbb{E}_z\big(h(Z_t)\mathbbm{1}_{\{\tau^{r}<t\}}\big)=0$.\end{center}
Hence, the event $\{\exists t>0: \Delta Z_t<0\}=\cup_{r\in \mathbb{Q}_+^{\star}}\{\tau^{r}<\infty\}$ is a $\mathbb{P}_z^{\uparrow}$-null set. 
\item[ii)] Notice that $\mathbb{P}^{\uparrow}_z$ may charge $\mathbb{P}_z$-null sets. For instance, by assumption here, $\mathbb{P}_z(\zeta<\infty)=0$, and according to Lemma \ref{lem:strictlocalmartingale}, $\mathbb{P}^{\uparrow}_z(\zeta<\infty)>0$ when $(h(Z_t),t\geq 0)$ is not a true martingale.
\end{itemize}
\end{remark}
\subsection{Background on LCB processes}
We gather here some fundamental properties of LCB processes. Most of those results can be found in \cite{MR3940763}. Proposition \ref{prop1} is established in forthcoming Section \ref{sec:preliminariesLCB}. We also state some basic facts on the martingale problem solved by the LCB. Those are mostly direct consequences of the stochastic equation \eqref{SDELCB} solved by the LCB process. They will play an important role in several places in the proofs.
\subsubsection{Martingale problem and Lamperti's time change for the LCB process}\label{sec:martingaleproblemLCB}
Denote by $C^2([0,\infty))$ the space of functions that are twice differentiable on $[0,\infty)$. Let $\hat{C}$ be the space of continuous functions on $[0,\infty]$, hence with a finite limit at $\infty$. Set
\begin{equation}\label{DZ}\mathcal{D}_Z:=\left\{f\in C^2([0,\infty))\cap \hat{C}: \underset{z\rightarrow \infty}{\lim} \big(z|f(z)-f(\infty)|+z|f''(z)|+(z+cz^2)|f'(z)|\big)=0\right\}.
\end{equation}
Denote by $C_c^2(0,\infty)$ the space of $C^2$ functions vanishing outside a compact set included in $(0,\infty)$. Note that $C_c^2(0,\infty)\subset \mathcal{D}_Z$.\\ 

\noindent \textbf{Martingale problem}. Let $Z$ be the solution to the stochastic equation \eqref{SDELCB}. For any $f\in C^2([0,\infty))$ and $z\in [0,\infty)$, set
\begin{equation}\label{genLCB}\mathscr{L}f(z):=z\mathrm{L}^{\Psi}f(z)-\frac{c}{2}z^2f'(z),
\end{equation}
with $\mathrm{L}^{\Psi}$ the generator of a spectrally positive Lévy process with Laplace exponent $\Psi$, viz.
\[\mathrm{L}^{\Psi}f(z):=\int_{0}^{\infty}\left(f(z+y)-f(z)-yf'(z)\mathbbm{1}_{\{y<1\}}\right)\pi(\ddr y)-\gamma f'(z)+\frac{\sigma^2}{2}f''(z).\]
\begin{lemma}\label{lem:MPZ} 
The LCB process $Z$, stopped at its first explosion time, is the unique solution to the following martingale problem:\\ 

$\mathrm{(MP)}_Z$: for any $z>0$, and any $f\in C_c^2([0,\infty))$, the process
\[M:=(M_t,t\geq 0):=\left(f(Z_t)-\int_0^{t}\mathscr{L}f(Z_s)\ddr s,t\geq 0\right) \text{ is a } \mathbb{P}_z\text{-martingale}.\]
Moreover, $M$ is a true martingale for any $f\in \mathcal{D}_Z$ and is a local one for any $f\in C^2([0,\infty))$. \end{lemma}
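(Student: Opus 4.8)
The plan is to derive the martingale problem directly from the stochastic equation \eqref{SDELCB} by applying It\^o's formula, and to obtain the well-posedness from the already-established weak existence and uniqueness for \eqref{SDELCB}. First I would take $f\in C^2([0,\infty))$ and apply It\^o's formula for semimartingales with jumps to $f(Z_{t\wedge\tau_n})$, where $\tau_n:=\inf\{t>0:Z_t\geq n\}$ is the sequence of stopping times localising before explosion. The continuous martingale part contributes $\sigma\int_0^t\sqrt{Z_s}f'(Z_s)\,\ddr B_s$, the finite-variation drift terms contribute $\int_0^t(\gamma Z_s f'(Z_s)-\tfrac{c}{2}Z_s^2 f'(Z_s))\,\ddr s$ plus the $\tfrac{\sigma^2}{2}\int_0^t Z_s f''(Z_s)\,\ddr s$ from the quadratic variation, and the two Poisson integrals contribute, after compensating the small-jump part, $\int_0^t\int_0^{Z_{s-}}\int_0^\infty\big(f(Z_{s-}+y)-f(Z_{s-})-yf'(Z_{s-})\mathbbm{1}_{\{y<1\}}\big)\pi(\ddr y)\,\ddr u\,\ddr s$ plus a local-martingale term. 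Collecting the drift, one reads off exactly $\int_0^t Z_s\mathrm{L}^\Psi f(Z_s)\,\ddr s-\tfrac{c}{2}\int_0^t Z_s^2 f'(Z_s)\,\ddr s=\int_0^t\mathscr{L}f(Z_s)\,\ddr s$, so $M_{t\wedge\tau_n}$ is a local martingale; this proves the last assertion for general $f\in C^2([0,\infty))$.

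Next I would upgrade to a true martingale under the stronger hypotheses. For $f\in C_c^2([0,\infty))$ both $f$ and $\mathscr{L}f$ are bounded (the latter because $f$ has compact support in $[0,\infty)$, so the integrand of $\mathrm{L}^\Psi f$ is controlled by $\|f''\|_\infty(1\wedge y^2)$ near $0$ and by $2\|f\|_\infty$ for large $y$, and the $z^2 f'(z)$ term vanishes off the support), hence $M$ is dominated on bounded time intervals and the local martingale is a genuine martingale. For $f\in\mathcal{D}_Z$ the point of the growth conditions in \eqref{DZ} is precisely to make $z|f(z)-f(\infty)|$, $z|f''(z)|$ and $(z+cz^2)|f'(z)|$ bounded (they tend to $0$ at $\infty$ and are continuous on $[0,\infty)$), which bounds $|\mathscr{L}f(z)|$ uniformly — here one splits $\mathrm{L}^\Psi f$ into the large-jump part $\int_{\{y\geq 1\}}(f(z+y)-f(z))\pi(\ddr y)$, bounded by $2\|f-f(\infty)\|_\infty\pi(\{y\geq1\})$ after using the finite limit at $\infty$, and the small-jump part bounded by $\tfrac12\|f''\|_\infty\int_{\{y<1\}}y^2\pi(\ddr y)$, then multiplies by $z$ and uses that $z$ times each of these quantities stays bounded. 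Uniform integrability over each $[0,T]$ then promotes the local martingale to a martingale.

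Finally, for uniqueness of the martingale problem I would invoke the equivalence between the stochastic equation \eqref{SDELCB} and the martingale problem $\mathrm{(MP)}_Z$: any solution of $\mathrm{(MP)}_Z$ (stopped at explosion) can be realised, via a standard martingale representation argument as in Dawson--Li \cite{DawsonLi} or \cite[Chapter 9]{zbMATH07687769}, as a weak solution of \eqref{SDELCB}, and conversely; since \eqref{SDELCB} has a unique weak solution up to explosion by the cited results of Berestycki et al. \cite{zbMATH06982256} and Palau--Pardo \cite{zbMATH06836271}, the martingale problem is well-posed. The main obstacle is the bookkeeping in verifying that the conditions defining $\mathcal{D}_Z$ really do force $\mathscr{L}f$ to be bounded — in particular keeping track of the interplay between the factor $z$ in \eqref{genLCB}, the decay required of $f''$ and $f'$, and the Lévy integral near both $0$ and $\infty$; everything else is a routine application of It\^o's formula and the transfer between stochastic equation and martingale problem.
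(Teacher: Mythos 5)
Your proposal is correct and follows essentially the same route as the paper: It\^o's formula applied to the solution of \eqref{SDELCB} yields the local martingale property for $f\in C^2([0,\infty))$, boundedness of $\mathscr{L}f$ (hence of $M$ on bounded time intervals) upgrades it to a true martingale for $f\in\mathcal{D}_Z$ and in particular for $f\in C_c^2$, and uniqueness is delegated to cited results. The only cosmetic difference is that the paper quotes uniqueness of the martingale problem directly from \cite{MR3940763}, whereas you reroute it through the equivalence with \eqref{SDELCB} and weak uniqueness of that equation; both are acceptable, and your bookkeeping for why the conditions in \eqref{DZ} bound $\mathscr{L}f$ (e.g.\ $z|f(z+y)-f(\infty)|\leq (z+y)|f(z+y)-f(\infty)|$ for the large-jump part) is exactly the verification the paper leaves implicit.
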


\begin{proof}
Recall that the LCB process $Z$ is solution of the stochastic equation \eqref{SDELCB}. The claim that $M$ is a local martingale for any $f\in C^{2}([0,\infty))$ is a consequence of the application of It\^o's lemma on $f(Z_t)$, see e.g. Meyer \cite[Theorem 3, Chapter III and Theorem 21, Chapter IV]{zbMATH03583004}.  When $f\in \mathcal{D}_Z$, we have $\sup |\mathscr{L}f(z)|<\infty$. The local martingale $M$, being bounded on any interval $[0,t]$, is a true martingale. In particular, this is the case when $f\in C^2_c(0,\infty)$ and $Z$ satisfies the martingale problem $\mathrm{(MP)}_Z$. The fact that there is a unique solution of the latter under the condition of non-explosion $\mathcal{E}=\infty$ has been shown in \cite{MR3940763}.  \qed
\end{proof}
\noindent \textbf{Lamperti's time change}. Following an idea already present in \cite{MR2134113}, and further detailed in \cite[Section 4]{MR3940763}, the LCB process $Z$, starting from $z>0$, can be constructed as follows. Let $Y$ be a spectrally positive Lévy process with Laplace exponent $\Psi$, starting from $z>0$. Set 
\[R_t:=Y_t-\frac{c}{2}\int_{0}^{t}R_s\ddr s, \text{ for any }t\geq 0.\]
Set $\sigma_0:=\inf\{s>0: R_s\leq 0\}$ and define \[[0,\infty]\ni u\mapsto \theta_u:=\int_{0}^{u\wedge \sigma_0}\frac{\ddr s}{R_s}\in [0,\infty].\]
 The process $R$ is a generalized Ornstein-Uhlenbeck (GOU) process, see e.g. Sato's book \cite[Chapter 3.17]{MR3185174}. Notice that its generator is of the form $\mathscr{L}^{R}f(z):=\mathrm{L}^{\Psi}f(z)-\frac{c}{2}zf'(z)$, where $\mathrm{L}^{\Psi}$ is the generator of $Y$. The identity $z\mathscr{L}^{R}f(z)=\mathscr{L}f(z)$, together with Volkonski's theorem on time-changes of Markov processes, see e.g. Ethier-Kurtz's book \cite[Theorem 1.4 page 309]{ethier}, ensures that the process $(Z_t,t\geq 0)$, defined by 
\begin{align}\label{eq:timechange}
Z_t&=
\begin{cases}
R_{C_t} &  0\leq t<\theta_\infty,\\
0& t\geq \theta_\infty \text{ and }\sigma_0<\infty,\\
\infty& t\geq \theta_\infty   \text{ and } \sigma_0=\infty,
\end{cases}
\end{align}
with $C_t:=\inf\{u>0: \theta_u>t\}=\int_{0}^{t}Z_s\ddr s$, is a solution to $(\mathrm{MP})_Z$, and thus a weak solution to \eqref{SDELCB}, see e.g. Kurtz \cite[Theorem 2.3]{zbMATH05919793}.  In the case without competition, we recover Lamperti's time-change of CBs, see e.g. \cite[Theorem 12.2]{MR3155252}.

\subsubsection{Extinction and total progeny: proof of Proposition \ref{prop1}}\label{sec:preliminariesLCB}
The possible behaviors of the LCB process at the boundaries and in its long-term have been classified in \cite{MR3940763}, see the Introduction. 

Fix $x_0$ in $(0,\infty)$ and recall \begin{equation}\label{testnonexplosion}\mathcal{E}=\int_0^{x_0}\!\tfrac{1}{u}e^{\int_u^{x_{0}}\frac{2\Psi(v)}{cv}\ddr v}\ddr u.\end{equation}
The condition $\mathcal{E}=\infty$ is known to be necessary and sufficient for the GOU process $R$ to be recurrent, see Shiga \cite{MR1061937}. The latter is furthermore positive recurrent if and only if $\int^{\infty}\log(y)\pi(\ddr y)<\infty$.

When $R$ is recurrent, the random variable $\theta_\infty$ is necessarily infinite on the event $\{\sigma_0=\infty\}$. This explains that the LCB process does not explode. On the other hand, the assumption $\Psi(\infty)=\infty$ entails that $R$ can hit any point in $\mathbb{R}$, so that $\sigma_0$ is finite with positive probability. We can now proceed to show Proposition \ref{prop1}. This follows the same argument as Bingham \cite{BINGHAM1976217} who dealt with the pure CB case in which $c=0$.
\smallskip

\noindent \textbf{Proof of Proposition \ref{prop1}}. Recall that the condition \asp \ encompasses $\mathcal{E}=\infty$ and $\Psi(\infty)=\infty$. It ensures therefore that $\{\sigma_0<\infty\}$ is almost sure. By the time-change relationship \eqref{eq:timechange}, $Z$ goes to $0$ a.s. on the event $\{\sigma_0<\infty\}$. Moreover $\sigma_0$ is nothing but the limit $C_\infty=\underset{t\rightarrow \infty}{\lim} C_t$. Therefore, up to a null set \[\{Z_t\underset{t\rightarrow \infty}{\longrightarrow} 0\}=\{\sigma_0<\infty\}=\Big\{C_\infty=\int_{0}^{\infty}\!Z_s\ddr s<\infty\Big\},\]
and we have $J=C_\infty$. One also sees that $\mathbb{E}_z(\sigma_0)=\mathbb{E}_z(J)<\infty$ if and only if $\int^{\infty}\log(y)\pi(\ddr y)<\infty$, since the log-moment is necessary and sufficient for $R$ to be positive recurrent, see \cite[Chapter 3.17]{MR3185174}. \qed


\subsection{Study of auxiliary diffusions}\label{sec:extincprogeny}
A striking feature of the LCBs is that they satisfy some duality relationships with certain \textit{one-dimensional diffusions}. They have been established in the articles \cite{MR3940763}, \cite{foucart2021local} and \cite{FOUCART2024104230}. 
We will briefly reexplain them in the setting of a non-exploding LCB which gets extinct a.s., see the forthcoming section \ref{sec:duality}. This will not cause any confusions. We introduce two positive diffusions $U$ and $V$. They will help us in our study of the LCB process in several ways.  The process $V$ lies in a certain \textit{biduality} relationship with the LCB process $Z$. 
\subsubsection{Laplace and Siegmund dualities}\label{sec:duality}
Define $e_x(z)=e_z(x)=e^{-xz}$ for any $x,z\in (0,\infty)$.
\begin{lemma}[Lemma 5.1 in \cite{MR3940763}]\label{lemmadualityLA}  One has
\begin{equation}\label{dualityLA}
\mathscr{L}e_x(z)=\mathscr{A}e_z(x),\end{equation}
with $\mathscr{A}$ the operator defined on $C^2([0,\infty))$ as follows: for any $g\in C^2([0,\infty))$ and $x\in (0,\infty)$,
\begin{equation}\label{generatorU0}\mathscr{A}g(x):=\frac{c}{2}xg''(x)-\Psi(x)g'(x).
\end{equation}
Moreover, under \asp, for all $z\in [0,\infty]$, $x\in (0,\infty)$  and $t\geq 0$, \[\mathbb{E}_z[e^{-xZ_t}]=\mathbb{E}_x[e^{-zU_t}],\]
with $U$ the diffusion, unique weak solution to 
\begin{equation}\label{sdeU}
\ddr U_t=\sqrt{cU_t}\ddr B_t-\Psi(U_t)\ddr t,\ U_0=x.
\end{equation}
\end{lemma}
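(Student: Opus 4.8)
The plan is to first verify the pointwise generator identity \eqref{dualityLA} by a direct computation, then bootstrap it to the semigroup-level identity via a standard duality argument. For the algebraic identity, I would simply apply $\mathscr{L}$ to $z \mapsto e^{-xz}$ and $\mathscr{A}$ to $x \mapsto e^{-xz}$ and compare. Recalling $\mathscr{L}f(z) = z\mathrm{L}^{\Psi}f(z) - \tfrac{c}{2}z^2 f'(z)$, and that for the exponential function $z \mapsto e_x(z) = e^{-xz}$ one has $\mathrm{L}^{\Psi}e_x(z) = \Psi(x)e^{-xz}$ (this is the defining property of the Lévy generator and the Laplace exponent), we get $\mathscr{L}e_x(z) = z\Psi(x)e^{-xz} - \tfrac{c}{2}z^2(-x)e^{-xz} = \big(z\Psi(x) + \tfrac{c}{2}xz^2\big)e^{-xz}$. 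On the other side, $\mathscr{A}g(x) = \tfrac{c}{2}x g''(x) - \Psi(x)g'(x)$ applied to $g = e_z$ gives $\tfrac{c}{2}x z^2 e^{-xz} - \Psi(x)(-z)e^{-xz} = \big(\tfrac{c}{2}xz^2 + z\Psi(x)\big)e^{-xz}$, which matches. This is the content of \eqref{dualityLA}, and it is routine.

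The main work is passing from the generator identity to $\mathbb{E}_z[e^{-xZ_t}] = \mathbb{E}_x[e^{-zU_t}]$. The standard route is: fix $t>0$, and consider the function $(s,z,x) \mapsto \mathbb{E}_z\big[\,\mathbb{E}_x[e^{-zU_{t-s}}]\,\big]$ — or more precisely, apply the Markov property and differentiate $s \mapsto P^Z_s\big(P^U_{t-s}e_{\cdot}\big)(z,x)$, showing it is constant in $s \in [0,t]$, so its values at $s=0$ and $s=t$ agree. To make this rigorous one needs the exponentials $e_x$ to lie in the domain (or a core) of both generators, or at least that the relevant martingales are true martingales. Here Lemma \ref{lem:MPZ} helps on the $Z$-side: one must check $e_x \in \mathcal{D}_Z$ by verifying the decay conditions in \eqref{DZ}, namely that $z|e^{-xz}| + z|x^2 e^{-xz}| + (z+cz^2)|{-x}e^{-xz}| \to 0$ as $z\to\infty$, which holds since exponential decay beats polynomial growth. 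On the $U$-side one uses that $U$ solves \eqref{sdeU} with a well-posed martingale problem (uniqueness of the weak solution is part of the hypothesis, and existence/well-posedness of this diffusion under \asp\ is established in the cited references), so $g(U_t) - g(U_0) - \int_0^t \mathscr{A}g(U_s)\,\ddr s$ is a martingale for suitable $g$; the exponentials are bounded, hence this is controllable.

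The key obstacle, and the point requiring care, is the behaviour at the boundaries $0$ and $\infty$ — in particular justifying that no boundary terms or mass defects appear. Under \asp, the LCB process does not explode and $\infty$ is an entrance boundary, so $Z$ stays in $[0,\infty)$ and the state $\infty$ carries no mass for $t>0$; similarly one needs the diffusion $U$ to be conservative (or its boundary behaviour to be such that $\mathbb{E}_x[e^{-zU_t}]$ is still the relevant quantity, with $e^{-z\cdot 0}=1$ and $e^{-z\cdot\infty}=0$ handled consistently). This is exactly where Lemma 5.1 of \cite{MR3940763} does the work, and I would invoke it: the claim is literally quoted as ``Lemma 5.1 in \cite{MR3940763}'', so the cleanest proof is to cite that reference for the duality identity after recording the generator computation above, noting that \asp\ guarantees non-explosion of $Z$ (via $\mathcal{E}=\infty$) and the required regularity of $U$, so that the formal duality argument is legitimate. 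The remaining statement — that $U$ is the \emph{unique weak solution} to \eqref{sdeU} — follows from the well-posedness of the one-dimensional SDE with the given (locally Lipschitz away from $0$, Yamada–Watanabe-type) coefficients together with the boundary classification under \asp, again as in \cite{MR3940763, foucart2021local}.
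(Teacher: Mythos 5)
Your proposal is correct and matches the paper's treatment: the paper states this lemma as an imported result (Lemma 5.1 of the cited reference) and gives no proof, so your explicit verification of the generator identity $\mathscr{L}e_x(z)=\mathscr{A}e_z(x)$ via $\mathrm{L}^{\Psi}e_x(z)=\Psi(x)e^{-xz}$, followed by the standard duality bootstrap and a citation for the boundary/well-posedness issues, is exactly the right reconstruction. The algebra checks out, and deferring the semigroup-level identity and the uniqueness of the weak solution of \eqref{sdeU} to \cite{MR3940763} is precisely what the paper itself does.
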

\begin{lemma}[Lemma 5.2 in \cite{MR3940763}]\label{lem:Uexit} Assume $\mathcal{E}=\infty$. Then, the boundary $0$ of $U$ is an exit (i.e. $U$ hits $0$ with positive probability and stays there).
\end{lemma}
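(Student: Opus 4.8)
The plan is to run Feller's boundary classification for the one‑dimensional diffusion $U$ on $(0,\infty)$ with generator $\mathscr{A}g(x)=\tfrac{c}{2}xg''(x)-\Psi(x)g'(x)$. Its scale density is $\mathfrak{s}(x):=\exp\!\big(-\int_x^{x_0}\tfrac{2\Psi(v)}{cv}\,\ddr v\big)$ (so that $\mathfrak{s}'/\mathfrak{s}=2\Psi(x)/(cx)$, i.e.\ $\mathscr{A}$ annihilates any primitive of $\mathfrak{s}$) and its speed measure is $\mathfrak{m}(\ddr x):=\tfrac{2}{cx\,\mathfrak{s}(x)}\,\ddr x$ on $(0,\infty)$. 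The starting point is the identity
\[
\mathfrak{m}\big((0,x_0]\big)=\tfrac{2}{c}\int_0^{x_0}\frac{\ddr x}{x\,\mathfrak{s}(x)}=\tfrac{2}{c}\int_0^{x_0}\tfrac{1}{x}\,e^{\int_x^{x_0}\frac{2\Psi(v)}{cv}\,\ddr v}\,\ddr x=\tfrac{2}{c}\,\mathcal{E},
\]
so the hypothesis $\mathcal{E}=\infty$ says exactly that $U$ has infinite speed measure in every neighbourhood of $0$.

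First I would check that $0$ is \emph{always} accessible for $U$ (reached from the interior with positive probability), i.e.\ $\Sigma(0):=\int_0^{x_0}\mathfrak{s}(x)\,\mathfrak{m}\big((x,x_0]\big)\,\ddr x<\infty$; by Fubini, $\Sigma(0)=\tfrac{2}{c}\int_0^{x_0}\frac{1}{\xi\,\mathfrak{s}(\xi)}\big(\int_0^{\xi}\mathfrak{s}(x)\,\ddr x\big)\,\ddr\xi$. The key input is $\Psi(0+)=0$: from $\log\mathfrak{s}(x)=\int_x^{x_0}\frac{-2\Psi(v)/c}{v}\,\ddr v$ with $-2\Psi(v)/c\to 0$ as $v\to 0$ one reads off the Karamata representation of a function slowly varying at $0$, so $\mathfrak{s}$ is slowly varying at $0$; in particular it is integrable near $0$, and Karamata's theorem gives $\int_0^{\xi}\mathfrak{s}(x)\,\ddr x\sim\xi\,\mathfrak{s}(\xi)$ as $\xi\to 0$. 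Hence the integrand of $\Sigma(0)$ stays bounded near $0$ and $\Sigma(0)<\infty$, with no sign condition on $\Psi$ needed. On the other hand, when $\mathcal{E}=\infty$ one has $N(0):=\int_0^{x_0}\mathfrak{m}(\ddr x)\,\big(\int_x^{x_0}\mathfrak{s}(y)\,\ddr y\big)=\infty$: for $x<x_0/2$, $\int_x^{x_0}\mathfrak{s}\ge\int_{x_0/2}^{x_0}\mathfrak{s}=:\delta>0$, while $\mathfrak{m}\big((0,x_0/2]\big)=\infty$ since $\mathfrak{m}$ has a continuous density on $(0,\infty)$, so $\mathfrak{m}((x_0/2,x_0])<\infty$ whereas $\mathfrak{m}((0,x_0])=\tfrac{2}{c}\mathcal{E}=\infty$. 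By Feller's classification, $\Sigma(0)<\infty$ makes $0$ accessible, while $N(0)=\infty$ excludes $0$ being regular or an entrance; hence $0$ is an exit boundary. Being an exit boundary, $0$ is absorbing; this is also transparent from \eqref{sdeU}, whose coefficients $\sqrt{cx}$ and $-\Psi(x)$ both vanish at $x=0$ (recall $\Psi(0)=0$), so that $U$ remains at $0$ once it reaches it.

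I expect the accessibility step to be the only delicate point, and specifically in the regime $\int^{\infty}\log y\,\pi(\ddr y)=\infty$, equivalently $\int_{0+}\frac{|\Psi(v)|}{v}\,\ddr v=\infty$: there $\mathfrak{s}$ is unbounded at $0$, and one has to exploit that it blows up only slowly (slower than $x^{-\varepsilon}$ for every $\varepsilon>0$) so that $\tfrac{1}{\xi\mathfrak{s}(\xi)}\int_0^{\xi}\mathfrak{s}$ stays bounded — which is precisely what the slow‑variation/Karamata step delivers, using only $\Psi(0+)=0$. A shortcut for this step, avoiding the estimate altogether, is to invoke the Laplace duality of Lemma~\ref{lemmadualityLA}, $\mathbb{E}_x[e^{-zU_t}]=\mathbb{E}_z[e^{-xZ_t}]$ for $x,z>0$: since $\mathcal{E}=\infty$ the LCB process $Z$ does not explode and has $\infty$ as an instantaneous entrance, so letting $z\to\infty$ gives, by Feller continuity of $Z$ at $\infty$ on the right and dominated convergence ($e^{-zU_t}\downarrow\mathbbm{1}_{\{U_t=0\}}$) on the left, $\mathbb{P}_x(U_t=0)=\mathbb{E}_{\infty}[e^{-xZ_t}]>0$ for all $t>0$, $x>0$, which shows in one stroke that $0$ is hit with positive probability and is absorbing.
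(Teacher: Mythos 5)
Your proposal is correct. Note that the paper itself does not prove this lemma at all — it is imported verbatim as Lemma~5.2 of \cite{MR3940763} — so what you have written is a self-contained substitute for a citation. Your application of Feller's test is sound: the scale density $\mathfrak{s}$ and speed density $\tfrac{2}{cx\mathfrak{s}(x)}$ are the right ones for $\mathscr{A}$, the identification $\mathfrak{m}((0,x_0])=\tfrac{2}{c}\mathcal{E}$ is exactly how $\mathcal{E}$ enters, and the two Feller quantities are assigned correctly ($\Sigma(0)<\infty$ for accessibility, $N(0)=\infty$ to rule out regular/entrance). The accessibility step is the genuinely nontrivial one, and your Karamata argument — $\Psi(0+)=0$ forces $\mathfrak{s}$ to be slowly varying at $0$, whence $\int_0^\xi\mathfrak{s}\sim\xi\,\mathfrak{s}(\xi)$ and the integrand of $\Sigma(0)$ is bounded near $0$ — is precisely the technique the paper deploys elsewhere for the bidual $V$ (Lemma~\ref{lem:estimates}(i)), so it fits the paper's toolbox and correctly needs no sign condition on $\Psi$. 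Two minor remarks: the aside that absorption is ``transparent'' from the vanishing SDE coefficients is not by itself a proof (non-Lipschitz coefficients vanishing at a boundary do not preclude re-entry in general — witness $\sqrt{cU}\,\ddr B_t$ with an added constant drift), but this is harmless since the exit classification already yields absorption; and the duality shortcut, while elegant, trades one imported fact for another (it relies on $\infty$ being an instantaneous entrance for $Z$ under $\mathcal{E}=\infty$, itself a cited theorem of \cite{MR3940763}), so it is best viewed as the consistency check you present it as rather than an independent proof of accessibility.
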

The process $U$, being stochastically monotone, admits a Siegmund dual process $V$.  We shall refer to it as the \textit{bidual} process of $Z$. 
\begin{lemma}[Proposition 2.1 in \cite{foucart2021local}]\label{lem:siegmund} Assume \asp . The Siegmund dual process $V$ of $U$, i.e. the process satisfying for all $x,y\in (0,\infty)$ and $t\geq 0$, \[\mathbb{P}_x(U_t<y)=\mathbb{P}_y(x<V_t),\]
is the diffusion, unique weak solution to
\begin{equation}\label{sdeV}
\ddr V_t=\sqrt{cV_t}\ddr B_t+\big(c/2+\Psi(V_t)\big)\ddr t, \quad V_0=y.
\end{equation}
The generator of $V$ acts on $C^2_c(0,\infty)$ as follows \begin{equation}\label{generatorG} \mathscr{G}f(x):=\frac{c}{2}xf''(x)+(c/2+\Psi(x))f'(x).
\end{equation}
\end{lemma}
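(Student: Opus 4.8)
The plan is to produce $V$ in three steps: first invoke the general Siegmund duality theorem to get \emph{some} Markov dual of $U$; then pin down its law by transporting the generator $\mathscr{A}$ of $U$ through the duality kernel $H(x,y):=\mathbbm{1}_{\{x\ge y\}}$, which forces the generator to be $\mathscr{G}$; and finally check that, under \asp, the stochastic equation \eqref{sdeV} is well-posed, so that this candidate diffusion is genuinely unique in law and carries a bona fide dual semigroup. For the first step, recall that $U$ is stochastically monotone: this is visible from \eqref{sdeU} through the comparison principle for one-dimensional stochastic equations with H\"older-$1/2$ diffusion coefficient --- a larger initial condition yields a pathwise larger solution, up to the exit time at $0$, which is indeed an exit by Lemma \ref{lem:Uexit}. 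Siegmund's theorem \cite{MR0431386} then yields a Markov process $V$, living on the order-reversed, one-point compactified state space, such that $\mathbb{P}_x(U_t\ge y)=\mathbb{P}_y(V_t\le x)$ for all $x,y\in(0,\infty)$ and $t\ge 0$; taking complements gives precisely $\mathbb{P}_x(U_t<y)=\mathbb{P}_y(x<V_t)$.

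To identify the dynamics of $V$, I would compute the Siegmund transform of $\mathscr{A}$. At the infinitesimal level the duality is equivalent to the distributional identity $\mathscr{A}_xH(\cdot,y)=\mathscr{G}_yH(x,\cdot)$ on $(0,\infty)^2$, where $\mathscr{G}$ is the a priori unknown generator of $V$. Reading $H(x,y)=\mathbbm{1}_{\{x\ge y\}}$ as a function of $x$ gives $\partial_xH=\delta_y$ and $\partial_x^2H=\delta_y'$, so, using $x\,\delta_y'(x)=y\,\delta_y'(x)-\delta_y(x)$ and $\Psi(x)\delta_y(x)=\Psi(y)\delta_y(x)$,
\[
\mathscr{A}_xH(\cdot,y)=\tfrac c2\,x\,\delta_y'(x)-\Psi(x)\,\delta_y(x)=\tfrac c2\,y\,\delta_y'(x)-\big(\tfrac c2+\Psi(y)\big)\delta_y(x).
\]
Reading $H$ as a function of $y$ gives $\partial_yH=-\delta_x$, $\partial_y^2H=-\delta_x'$; since $\delta_y(x)=\delta_x(y)$ and $\delta_y'(x)=-\delta_x'(y)$, the right-hand side above is exactly $\mathscr{G}_yH(x,\cdot)$ with $\mathscr{G}f(x)=\tfrac c2xf''(x)+\big(\tfrac c2+\Psi(x)\big)f'(x)$. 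This is \eqref{generatorG}, and reading its coefficients off identifies $V$ with the weak solution of \eqref{sdeV}. To upgrade the infinitesimal identity to the semigroup identity --- the obstruction being that $H$ is discontinuous, hence outside the operator domains --- I would either mollify $H$ and pass to the limit in the standard duality argument, or use Siegmund's monotone-coupling construction, which produces the relation for every $x,y$ rather than almost everywhere.

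It remains to see that \eqref{sdeV} is well-posed under \asp. Its coefficients $\sqrt{cx}$ and $c/2+\Psi(x)$ are locally Lipschitz on $(0,\infty)$ ($\Psi$ being smooth there), so standard one-dimensional SDE theory gives a unique strong solution --- hence uniqueness in law --- up to the first exit from $(0,\infty)$. The boundaries are then classified by Feller's test with scale density $S_V'(x)=\tfrac1x\exp\!\big(-\!\int_{x_0}^x\tfrac{2\Psi(u)}{cu}\,\ddr u\big)$ and speed density $m_V'(x)=\tfrac2c\exp\!\big(\int_{x_0}^x\tfrac{2\Psi(u)}{cu}\,\ddr u\big)$. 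Since $S_V'(x)\,\ddr x$ is the measure $s(\ddr x)$ of \eqref{eq:s}, the condition $\mathcal{E}=\int_0^{x_0}S_V'=\infty$ means $S_V(0+)=-\infty$, i.e. $0$ is inaccessible for $V$ and imposes no boundary condition, while $\Psi(\infty)=\infty$ fixes the nature of $\infty$ (it is reached in scale, of exit type, which is exactly what the exit character of $0$ for $U$ forces via the duality). Hence $V$ is a well-defined diffusion, unique in law; the generator statement \eqref{generatorG} also follows directly from \eqref{sdeV} by It\^o's formula.

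The step I expect to be the main obstacle is the boundary bookkeeping tying the two halves together: verifying that $\mathbb{P}_x(U_t<y)=\mathbb{P}_y(x<V_t)$ holds for \emph{every} pair $x,y\in(0,\infty)$, with the right strict/non-strict conventions, given that $0$ is an exit for $U$ --- so that $U_t$ carries an atom at $0$ --- while the matching phenomenon for $V$ is absorption at, or escape to, $\infty$. Hypothesis \asp\ is precisely what renders the two boundary pictures compatible --- $\mathcal{E}=\infty$ removing the lower boundary of $V$ from consideration and $\Psi(\infty)=\infty$ pinning down the upper one --- and for this part I would follow the detailed analysis of \cite[Section 6]{foucart2021local} and \cite{MR3940763}.
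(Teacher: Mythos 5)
The paper does not actually prove this lemma: it is imported verbatim from \cite[Proposition 2.1]{foucart2021local}, so there is no internal proof to compare against. Your reconstruction follows the standard route (stochastic monotonicity of $U$ via comparison, Siegmund's existence theorem, transport of the generator through the kernel $H(x,y)=\mathbbm{1}_{\{x\ge y\}}$, then well-posedness of \eqref{sdeV}), and the distributional computation is correct: the extra $c/2$ in the drift of $\mathscr{G}$ is exactly the term produced by commuting the coefficient $x$ past $\delta_y'$, and your identities $x\delta_y'(x)=y\delta_y'(x)-\delta_y(x)$, $\delta_x'(y)=-\delta_y'(x)$ do yield $\mathscr{A}_xH(\cdot,y)=\mathscr{G}_yH(x,\cdot)$. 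One presentational remark: the rigorous version of this argument in the cited literature is usually run in the opposite direction --- one \emph{defines} $V$ as the (well-posed) weak solution of \eqref{sdeV}, verifies the generator identity on mollifications of $H$, and deduces the semigroup duality via the Ethier--Kurtz scheme; extracting $\mathscr{G}$ from an abstract Siegmund dual, as you do, requires knowing a priori that the dual is a Feller diffusion with a second-order generator, which is an extra step you would still have to supply.

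The one genuine inaccuracy is your boundary bookkeeping at $\infty$: under \asp\ alone, $\infty$ is \emph{not} of exit type for $V$. The condition $\Psi(\infty)=\infty$ (together with $\mathcal{E}=\infty$) makes $\infty$ an almost surely attracting boundary, i.e.\ the scale function is finite at $\infty$ and $V_t\to\infty$ a.s., but $\infty$ is accessible in finite time (exit) if and only if Grey's condition $\int^{\infty}\ddr x/\Psi(x)<\infty$ holds --- see the discussion following the lemma and Table \ref{correspondanceVZ}. The correct match with the Laplace dual is: $0$ being an exit for $U$ (an atom of $U_t$ at $0$) corresponds to $\infty$ being attracting and absorbing for $V$, not necessarily to $\infty$ being reached in finite time. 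This does not break your argument --- the duality identity and the uniqueness of the weak solution are unaffected --- but the parenthetical claim as written is false for, e.g., Neveu's mechanism $\Psi(x)=x\log x$, where Grey's condition fails.
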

The classification of the boundaries of $V$ has been done in \cite{foucart2021local}. The assumption $\mathcal{E}=\infty$ entails in particular that the boundary $0$ is an entrance boundary (i.e. is inaccessible and non-absorbing). The assumption $\Psi(\infty)=\infty$, together with $\mathcal{E}=\infty$, ensures that $\infty$ is an almost sure attracting boundary of $V$, that is to say $V_t\underset{t\rightarrow \infty}{\longrightarrow} \infty$ a.s.. Grey's condition $\int^{\infty}\frac{\ddr x}{\Psi(x)}<\infty$ (which entails $\Psi(\infty)=\infty$) is necessary and sufficient for $\infty$ to be an exit point of $V$, namely the process hits $\infty$ in finite time and stays there for good almost surely. 
\medskip

The link between $Z$ and $V$ is given by the following proposition. Its proof is in Section \ref{sec:proofpropbidual} of the Appendix.
\begin{proposition}\label{lem:joiningduals1}  
For all $t\geq 0$ and all $z,x\in (0,\infty)$,
\begin{equation}\label{joiningduals}\mathbb{E}_z(e^{-xZ_t})=\int_{0}^{\infty}ze^{-zy}\mathbb{P}_{y}(V_t>x)\ddr y.
\end{equation}
More generally, let $n\geq 2$ and $x_1,\cdots,x_n\geq 0$, $0\leq t_1<t_2<\cdots<t_n$ and $z\in (0,\infty)$, 
\begin{align}\label{bidualfinitedimfromz}
&\mathbb{E}_z\left[e^{-x_1Z_{t_1}}\cdots e^{-x_{n-1}Z_{t_{n-1}}}(1-e^{-x_nZ_{t_n}})\right]\nonumber \\
&\qquad =\int_{\mathbb{R}_+}ze^{-zy}\mathbb{P}_{y}\big(V_{t_1}\geq x_1,\cdots, V_{t_{n-1}}\geq x_{n-1},V_{t_n}\leq x_n\big)\ddr y.
\end{align}
\end{proposition}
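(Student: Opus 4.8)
The strategy is to combine the Laplace duality (Lemma~\ref{lemmadualityLA}) between $Z$ and $U$ with the Siegmund duality (Lemma~\ref{lem:siegmund}) between $U$ and $V$, and then to iterate this composition along a time grid using the (strong) Markov property of the processes involved. For the one-time-step identity~\eqref{joiningduals}, I would start from $\mathbb{E}_z(e^{-xZ_t})=\mathbb{E}_x(e^{-zU_t})$ and rewrite the right-hand side as a Lebesgue--Stieltjes integral against the law of $U_t$ under $\mathbb{P}_x$. The point is that $ze^{-zy}\,\ddr y$ is (a multiple of) the density of an exponential variable, and $\int_0^\infty ze^{-zy}\mathbbm{1}_{\{y\geq w\}}\,\ddr y=e^{-zw}$; so $\mathbb{E}_x(e^{-zU_t})=\int_0^\infty ze^{-zy}\,\mathbb{P}_x(U_t\leq y)\,\ddr y$ after an integration by parts (or Fubini applied to $e^{-zU_t}=\int_0^\infty z e^{-zy}\mathbbm{1}_{\{U_t\leq y\}}\ddr y$). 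Then invoking the Siegmund relation $\mathbb{P}_x(U_t\leq y)=\mathbb{P}_x(U_t<y)+\mathbb{P}_x(U_t=y)=\mathbb{P}_y(V_t>x)$ — using that $U$ is a diffusion so $\mathbb{P}_x(U_t=y)=0$ for Lebesgue-a.e.\ $y$, which suffices under the $\ddr y$ integral — yields~\eqref{joiningduals}.

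For the multi-time identity~\eqref{bidualfinitedimfromz}, the plan is an induction on $n$ peeling off time steps from the right, or equivalently from the left, using the Markov property. Write $\Phi(z):=\mathbb{E}_z[e^{-x_1 Z_{t_1}}\cdots e^{-x_{n-1}Z_{t_{n-1}}}(1-e^{-x_n Z_{t_n}})]$. Conditioning on $\mathcal{F}_{t_1}$ and using the Markov property at time $t_1$ expresses $\Phi(z)$ as $\mathbb{E}_z[e^{-x_1 Z_{t_1}}\, g(Z_{t_1})]$ where $g$ is the analogous functional of the shifted process with $n-1$ times; one then wants to turn the outer $\mathbb{E}_z[e^{-x_1 Z_{t_1}} g(Z_{t_1})]$ into an expression involving $V$. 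The cleanest route is to first establish~\eqref{bidualfinitedimfromz} for functionals that are finite linear combinations of exponentials $e^{-\lambda z}$, since products $e^{-x_1 z_1}\cdots(1-e^{-x_n z_n})$ expand into such combinations, and then to run the whole computation at the level of the Laplace-dual process $U$, where the time steps compose by the semigroup property, before translating each marginal event of $U$ back to $V$ via Siegmund duality. Concretely: the Laplace duality propagated through the semigroup gives a chain $\mathbb{E}_z[\prod e^{-x_i Z_{t_i}}]= \mathbb{E}[\text{functional of }U\text{ at times }t_i]$, one expands $(1-e^{-x_n Z_{t_n}})$ as above, and finally one applies the Siegmund identity coordinate-by-coordinate; the monotone structure of Siegmund duality is exactly what makes the one-sided events $\{V_{t_i}\geq x_i\}$, $\{V_{t_n}\leq x_n\}$ appear with the correct inequality directions (the last coordinate flips because of the complementation $1-e^{-x_n z}$).

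The main obstacle I anticipate is the careful bookkeeping when composing the two dualities along several times: Laplace duality swaps the roles of the spatial variable and the starting point, so after one step the "current state" of $Z$ becomes the "starting point" of $U$, and one must make sure that the intermediate integrations against $ze^{-zy}\,\ddr y$ land on the correct variable at each stage and that the resulting object is still a probability of an ordered event for $V$, not something else. A second, more technical point is handling the boundary cases $x_i=0$ (which trivialize a factor) and the possibility that $V$ reaches $\infty$ — but under \asp\ the process $Z$ does not explode and $\infty$ is only an attracting boundary of $V$ (not an exit, unless Grey's condition holds), and in any case the events $\{V_{t_i}\geq x_i\}$ with $x_i$ finite are well-defined on $[0,\infty]$; one should simply remark that all identities are first proved for $z,x_i\in(0,\infty)$ and extended where needed by monotone convergence. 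I would also note that the exchange of expectation and the $\ddr y$-integral is justified throughout by Tonelli, since every integrand is nonnegative.
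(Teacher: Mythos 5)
Your argument for the one-time identity \eqref{joiningduals} is correct and is essentially the paper's proof: Laplace duality, the representation $e^{-zw}=\int_0^\infty ze^{-zy}\mathbbm{1}_{\{w\le y\}}\,\ddr y$ (equivalently, comparison with an independent exponential variable of parameter $z$), Tonelli, and Siegmund duality under the $\ddr y$-integral, where the distinction between $\le$ and $<$ is indeed harmless.

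Your plan for the finite-dimensional identity \eqref{bidualfinitedimfromz}, however, has a genuine gap. After conditioning on $\mathcal{F}_{t_{n-1}}$ and applying \eqref{joiningduals} to the innermost factor, the function of $w=Z_{t_{n-1}}$ that you must integrate is not a linear combination of exponentials: it is $w\mapsto \int_0^\infty w\, e^{-(x_{n-1}+y)w}\,\mathbb{P}_y(V_{t_n-t_{n-1}}\le x_n)\,\ddr y$, which contains the \emph{size-biased} exponential $w\,e^{-\lambda w}$ coming from the density $we^{-wy}$. Neither the Laplace duality nor \eqref{joiningduals} applies to $\mathbb{E}_z[Z_t e^{-\lambda Z_t}]$ as they stand, so ``expand into exponentials and compose by the semigroup property'' does not close the induction. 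The missing ingredient, which is the engine of the paper's iteration, is the differentiated duality
\begin{equation*}
\mathbb{E}_z\big[Z_t e^{-\lambda Z_t}\big]=\int_0^\infty z e^{-zv}\, f_t(v,\lambda)\,\ddr v,\qquad f_t(v,\lambda):=\tfrac{\ddr}{\ddr \lambda}\,\mathbb{P}_v(V_t\le \lambda),
\end{equation*}
obtained by differentiating \eqref{joiningduals} in the spatial variable (this is \eqref{derivative} in the Appendix); it converts the size-biased Laplace transform of $Z_t$ into the transition \emph{density} of $V$, and iterating it is what assembles the resulting product of densities into the law of a single $V$-path via its Markov property. Your alternative route --- ``run everything at the level of $U$ and apply Siegmund duality coordinate-by-coordinate'' --- also does not work as described: iterating the Laplace duality over the time grid produces a \emph{nested} composition of independent copies of $U$ run over the increments $t_{i+1}-t_i$, each started from $x_i$ plus the endpoint of the previous copy, not the marginals of one $U$-path at times $t_1<\dots<t_n$; there is therefore no single ``coordinate'' to which the (one-time) Siegmund duality can be applied, and unwinding that nested object into an ordered event for one $V$-path is precisely the nontrivial bookkeeping that the density identity above is designed to carry out.
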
 
\begin{remark}
The identity  \eqref{joiningduals} is a simple combination of the Laplace duality and Siegmund duality (in this order). This observation allows us to relate an excessive function of $V$ to one of $Z$. Similar techniques can be found in \cite{foucart2021local}. The identity for the finite-dimensional laws will not be used later in the proofs. 
\end{remark}
The relationship \eqref{joiningduals}  actually exchanges the  role of the boundaries $\infty$ of $V$ and $0$ of $Z$, see \cite[Table 6]{foucart2021local}.
\begin{table}[h!]
\begin{center}
\begin{tabular}{|c|c|c|}
\hline
Integral condition & Boundary of $V$ &  Boundary  of $Z$ \\
\hline
$\mathcal{E}=\infty$ & $0$ entrance &  $\infty$  entrance  \\
\hline
$\Psi(\infty)=\infty \text{ and }\int^{\infty}\frac{\ddr x}{\Psi(x)}=\infty$ & $\infty$ attracting almost surely &  $0$  attracting almost surely\\
\hline
$\int^{\infty}\frac{\ddr x}{\Psi(x)}<\infty (\Longrightarrow \Psi(\infty)=\infty)$ & $\infty$ exit &  $0$  exit\\
\hline
\end{tabular}
\vspace*{3mm}
\caption{Boundaries of $V,Z$.}
\label{correspondanceVZ}
\end{center}
\end{table}

We emphasize that, in all cases (whether or not Grey's condition is met), the boundary at $\infty$ for $V$ and the boundary at $0$ for $Z$ are absorbing. This means that if the process starts at the boundary, it remains there indefinitely.

\subsubsection{Study of the scale function and speed measure of $V$}
The speed measure of $V$ is given by 
\begin{align}\label{speedmeasureV}
m(\ddr y)&=m(y)\ddr y
:=e^{\int_{x_0}^{y}\frac{2\Psi(u)}{cu}\ddr u} \ddr y.
\end{align}
Since under the assumption \asp, $\infty$ is an attracting boundary for $V$, any scale function of $V$ is finite at $\infty$. Denote by $S$  the scale function of $V$ such that $S(\infty)=0$. Recall that this is the unique positive solution to $\mathscr{G}S=0$, vanishing at $\infty$, up to a multiplicative constant. 

One has \begin{equation}\label{scalefunctionV}
S(x):=\int_{x}^{\infty}\frac{1}{y}e^{-\int_{x_0}^{y}\frac{2\Psi(u)}{cu}\ddr u}\ddr y=\int_{x}^{\infty}\frac{1}{ym(y)}\ddr y,\quad \forall x>0.
\end{equation}
We stress that $\mathcal{E}=\infty$ is equivalent to $S(0)=\infty$. Recall the measure $s$ defined in \eqref{eq:s}. We have $s(\ddr x)=-S'(x)\ddr x=\frac{1}{x}e^{-\int_{x_0}^{x}\frac{2\Psi(u)}{cu}\ddr u}\ddr x$. Moreover, the tail of $s$ is $S$, i.e.  $\int_{x}^{\infty}s(\ddr y)=S(x)$, $\forall x\geq 0$.
\medskip

We gather in the next technical lemma some properties of the scale function and speed measure that will be useful for our study.

\begin{lemma}\label{lem:estimates} Assume \asp . The following holds: 
\begin{enumerate}[label=\roman*)]
\item \begin{align}
&\int_{0+} S(x)m(x)\ddr x<\infty,\label{eq1}
\end{align}
\item \begin{equation}\label{vartheta}
x(-S)'(x)\underset{x\rightarrow 0}{\longrightarrow} \ell :=\begin{cases}
e^{\int_0^{x_0}\frac{2\Psi(u)}{cu}\ddr u}\in (0,\infty) &\text{ if } \int^{\infty}\log y\,  \pi(\ddr y)<\infty,\\
0\, &\text{ if } \int^{\infty}\log y\,  \pi(\ddr y)=\infty,\end{cases}
\end{equation}
\item 
\begin{equation}\label{eq2}
\int_0^{\infty}x(-S)'(x)\ddr x<\infty. 
\end{equation}
\end{enumerate}
\end{lemma}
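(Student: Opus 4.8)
The plan is to establish the three assertions of Lemma \ref{lem:estimates} by direct analysis of the explicit formulas \eqref{speedmeasureV} and \eqref{scalefunctionV}, exploiting the structure of $\Psi$ near the boundaries. Throughout, write $F(y):=\int_{x_0}^{y}\frac{2\Psi(u)}{cu}\,\ddr u$, so that $m(y)=e^{F(y)}$ and $(-S)'(x)=\frac{1}{x}e^{-F(x)}$, hence $S(x)=\int_x^\infty \frac{1}{y}e^{-F(y)}\,\ddr y$ and $x(-S)'(x)=e^{-F(x)}$.

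\textbf{Proof of ii).} Since $x(-S)'(x)=e^{-F(x)}$, the behaviour of $x(-S)'(x)$ as $x\to 0$ is governed by $\lim_{x\to 0}F(x)=-\int_{0}^{x_0}\frac{2\Psi(u)}{cu}\,\ddr u$, interpreted as an element of $[-\infty,\infty)$. By the remark following Table \ref{classificationZ} (and \cite[Proposition 3.13]{MR3940763}), $\int_{0+}\frac{|\Psi(u)|}{u}\,\ddr u<\infty$ is equivalent to $\int^\infty \log y\,\pi(\ddr y)<\infty$; note also that under \asp\ one has $\Psi(u)\geq 0$ for $u$ large but the sign of $\Psi$ near $0$ can be negative, so $F$ is monotone near $0$ once $\Psi$ has a fixed sign there, and the improper integral $\int_{0+}\frac{\Psi(u)}{u}\ddr u$ exists in $[-\infty,\infty)$. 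Thus when the log-moment is finite, $F(0+)$ is a finite real number and $e^{-F(x)}\to \ell:=e^{F(0+)}=e^{\int_0^{x_0}\frac{2\Psi(u)}{cu}\ddr u}\in(0,\infty)$; when the log-moment is infinite, $\int_{0+}\frac{\Psi(u)}{u}\ddr u=-\infty$ (since $\mathcal{E}=\infty$ forces $S(0)=\infty$, which rules out $+\infty$), so $F(0+)=+\infty$ and $e^{-F(x)}\to 0$. This is exactly \eqref{vartheta}.

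\textbf{Proof of i).} Write $\int_{0}^{\epsilon} S(x)m(x)\,\ddr x$ and integrate by parts. Since $m(x)=e^{F(x)}$ and a primitive of $m$ can be compared with $x m(x)$ up to lower-order terms (using $m'(x)/m(x)=\frac{2\Psi(x)}{cx}$), the key observation is the identity $-\big(S(x)\,\textstyle\int_x^{\cdot}\big)' $ ... more cleanly: use $S'(x)=-\frac1x e^{-F(x)}=-\frac{1}{x m(x)}$, so $S(x)m(x) = -x m(x)^2 S'(x)\cdot\frac{1}{x m(x)}\cdot m(x)$; better, integrate $\int_0^\epsilon S(x)m(x)\ddr x$ by parts with $u=S(x)$, $\ddr v=m(x)\ddr x$, $v(x)=\int_0^x m$. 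Near $0$, $\int_0^x m(r)\,\ddr r \sim x m(x)/(1+\frac{2\Psi(0+)}{c\cdot\ldots})$ when this is finite, or more robustly $\int_0^x m \le C x m(x)$ when $\Psi$ is eventually of one sign near $0$; combined with $S(x)\,x m(x)\cdot\frac1{x m(x)} = S(x)$ integrable-or-not issues, the cleanest route is: $S(x)m(x)\,\ddr x = -m(x)\,\ddr(S(x))\cdot x m(x) / (x m(x))$, i.e. note $\int_0^\epsilon S(x) m(x)\,\ddr x = \int_0^\epsilon S(x)\,\ddr\!\big(\int_0^x m\big)$ and the boundary term $S(x)\int_0^x m \to 0$ as $x\to 0$ (because $\int_0^x m(r)\ddr r \le x m(x)\cdot C$ and $x m(x) S(x) = e^{-F(x)}\int_x^\infty \frac{e^{-F(y)}}{y}\ddr y \cdot$ ... ) while $\int_0^\epsilon (\int_0^x m)\,(-S'(x))\,\ddr x = \int_0^\epsilon \frac{1}{x m(x)}\big(\int_0^x m\big)\,\ddr x \le C\epsilon<\infty$. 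So the finiteness in \eqref{eq1} reduces to the elementary bound $\int_0^x m(r)\,\ddr r \le C\, x\, m(x)$ for small $x$, which holds since $m$ is monotone near $0$ (as $\Psi$ has a fixed sign there) — if $m$ is increasing this is immediate, if decreasing then $\int_0^x m \le$ ... this case needs $\int_{0+}\frac{\Psi}{u} > -\infty$? No: even when $m\to\infty$ at $0$ we have $\mathcal{E}=S(0)=\infty$, and one shows $\int_0^x m \le C x m(x)$ fails only if $m$ blows up too fast, but then $F(0+)=+\infty$; here a more careful split using $\int_0^x m(r)\ddr r = \int_0^x e^{F(r)}\ddr r$ with $F(r)\to+\infty$ shows the integral converges and is $o(1)$, and dividing appropriately still gives \eqref{eq1}. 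I would present this via the substitution $r = $ scale variable to make $m$ monotone, keeping the argument short.

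\textbf{Proof of iii).} Split $\int_0^\infty x(-S)'(x)\,\ddr x = \int_0^1 + \int_1^\infty$. On $[0,1]$, $x(-S)'(x)=e^{-F(x)}$ is bounded (it tends to $\ell<\infty$ or to $0$ by part ii), and is continuous on $(0,1]$, hence integrable. On $[1,\infty)$, $x(-S)'(x) = e^{-F(x)}$ with $F(x)=\int_{x_0}^x \frac{2\Psi(u)}{cu}\ddr u$; since $\Psi(\infty)=\infty$, for $x$ large $\Psi(u)\ge \delta>0$ and in fact $\Psi(u)/u$ is bounded below eventually, but more to the point $\int^\infty \frac{e^{-F(x)}}{1}\,\ddr x$ — wait, we need $\int_1^\infty e^{-F(x)}\ddr x<\infty$. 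Note $e^{-F(x)} = x(-S'(x))$ and $\int_1^\infty \frac{e^{-F(x)}}{x}\ddr x = S(1)<\infty$ trivially. For the version without the $1/x$: since $\Psi(\infty)=\infty$, one has $\frac{2\Psi(u)}{cu}\ge \frac{1}{u}\cdot(1+\eta)$ for some $\eta>0$ and all $u$ large (indeed $\Psi(u)/u\to\infty$ when $\int^\infty \frac{du}{\Psi(u)}<\infty$, and more generally $\liminf \Psi(u)/u \cdot \frac{2}{c}$ may be finite — so this needs care). The clean statement: $F(x)\ge (1+\eta)\log x - C$ would give $e^{-F(x)}\le C' x^{-1-\eta}$, integrable at $\infty$. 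This holds provided $\liminf_{u\to\infty}\frac{2\Psi(u)}{cu}>1$; if only $\liminf \ge 1$ or $\Psi(u)=o(u)$ growth, one instead uses that $e^{-F(x)}=x(-S)'(x)$ and $\int_1^\infty x(-S)'(x)\ddr x = [{-x S(x)}]_1^\infty + \int_1^\infty S(x)\ddr x = S(1) + \int_1^\infty S(x)\ddr x$ after integrating by parts (using $xS(x)\to 0$ at $\infty$, which follows from $S(\infty)=0$ and $xS(x)=x\int_x^\infty\frac{e^{-F(y)}}{y}\ddr y\le \int_x^\infty e^{-F(y)}\ddr y \to 0$ if the latter converges — circular). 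To break the circularity I would argue directly: $x S(x) = x\int_x^\infty \frac{1}{y m(y)}\ddr y \le x \int_x^\infty \frac{1}{y^2}\cdot\frac{y}{m(y)}\ddr y$ and since $y/m(y)=y e^{-F(y)}$ is eventually decreasing (as $m(y)$ grows faster than $y$ because $\Psi(\infty)=\infty$ makes $F(y)-\log y\to\infty$), we get $xS(x)\le \frac{x}{m(x)}\to 0$; then integration by parts yields $\int_1^\infty x(-S)'(x)\ddr x = S(1)+\int_1^\infty S(x)\ddr x$, and $\int_1^\infty S(x)\ddr x<\infty$ because $S(x)\le \frac{1}{m(x)}$ eventually (same monotonicity of $y/m(y)$: $S(x)=\int_x^\infty\frac{\ddr y}{y m(y)}\le \frac{1}{x m(x)}\cdot x\cdot$ ... ), hence $\int_1^\infty S(x)\ddr x \le \int_1^\infty \frac{\ddr x}{m(x)}<\infty$ since $m$ grows super-linearly. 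Assembling the two pieces gives \eqref{eq2}.

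\textbf{Main obstacle.} The delicate point is handling the regime where $\Psi$ does not grow strictly faster than linearly at $\infty$ (so Grey's condition may fail) while still extracting enough decay of $S$ and $1/m$ to get the \emph{unweighted} integrability in \eqref{eq2}, and symmetrically, at $0$, justifying $\int_0^x m \le C x m(x)$ (equivalently controlling the boundary term in the integration by parts for \eqref{eq1}) when $m$ is unbounded at $0$ yet $\mathcal{E}=S(0)=\infty$. In both cases the resolution is the same qualitative fact, which I would isolate as a preliminary observation: under \asp, the function $y\mapsto y/m(y) = y\,e^{-F(y)}$ is eventually decreasing at $\infty$ and $y\mapsto y\, m(y)$ behaves regularly at $0$, because $\frac{d}{dy}\log\big(y/m(y)\big) = \frac1y - \frac{2\Psi(y)}{cy} = \frac{1}{y}\big(1 - \frac{2\Psi(y)}{c}\big)$, which is eventually negative since $\Psi(\infty)=\infty$. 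This monotonicity is the engine behind all the comparisons $S(x)\le 1/m(x)$, $xS(x)\le x/m(x)$, and the integration-by-parts boundary terms vanishing, and once stated it makes all three parts short.
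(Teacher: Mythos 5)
Part ii) of your proposal is essentially the paper's argument and is correct (modulo a sign slip: $\ell=e^{-F(0+)}$, not $e^{F(0+)}$, though your final formula is right). Parts i) and iii), however, each contain a genuine gap.

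For i), your whole integration-by-parts scheme rests on the bound $\int_0^x m(r)\,\ddr r\leq C\,x\,m(x)$ near $0$, together with the vanishing of the boundary term $S(x)\int_0^x m$. You never prove either. The monotonicity you isolate in your ``main obstacle'' paragraph does not deliver them: near $0$ one has $\tfrac{\ddr}{\ddr y}\log\big(y\,m(y)\big)=\tfrac1y\big(1+\tfrac{2\Psi(y)}{c}\big)>0$, so $y\,m(y)$ is increasing, which only gives $m(r)\leq \tfrac{x\,m(x)}{r}$ for $r\leq x$ — and $\int_0^x\tfrac{\ddr r}{r}=\infty$. The missing ingredient is that $\Psi(0)=0$ makes $m$, $1/m$ and hence $S$ \emph{slowly varying} at $0$ (Karamata's representation theorem, then \cite[Prop.\ 1.5.9a]{regularvariation} for $S$); this is what the paper uses. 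Slow variation gives $x^{\eta}S(x)\to 0$ and $x^{\delta}m(x)\to 0$ for every $\eta,\delta>0$, and choosing $\eta+\delta<1$ yields $\int_{0+}S(x)m(x)\,\ddr x=\int_{0+}x^{-(\eta+\delta)}\big(x^{\eta}S(x)\big)\big(x^{\delta}m(x)\big)\ddr x<\infty$ with no integration by parts at all. Your ``substitution to make $m$ monotone'' is not an argument and cannot replace this.

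For iii) at infinity, you had the right idea and then talked yourself out of it. Since $\Psi(\infty)=\infty$ and $\Psi$ is convex with $\Psi(0)=0$, the map $u\mapsto\Psi(u)/u$ is non-decreasing, so $\Psi(u)/u\geq b:=\Psi(x_0)/x_0>0$ for $u\geq x_0$ (with $x_0$ chosen so that $\Psi(x_0)>0$); hence $x(-S)'(x)=e^{-F(x)}\leq Ce^{-\frac{2b}{c}x}$, which is \emph{exponentially} integrable — there is no need for $\liminf\frac{2\Psi(u)}{cu}>1$ (you conflated $\frac{2\Psi(u)}{cu}\geq\frac{1+\eta}{u}$, which only requires $\Psi(u)\geq\frac{c(1+\eta)}{2}$ and is automatic, with a condition on $\Psi(u)/u$). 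The fallback you then construct is circular: after integrating by parts you bound $\int_1^\infty S(x)\,\ddr x\leq\int_1^\infty\frac{\ddr x}{m(x)}$, but $\frac{1}{m(x)}=x(-S)'(x)$ is exactly the integrand you are trying to control, so you obtain $I\leq S(1)+I$, which is vacuous. The direct exponential bound closes both this and the boundary term $xS(x)\to 0$ immediately.
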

\begin{proof}
\begin{enumerate}[label=\roman*)]
\item Notice that $\Psi(x)\longrightarrow \Psi(0)=0$, as $x\rightarrow 0$, by continuity of $\Psi$. Hence, by Karamata's representation theorem, see \cite[Theorem 1.3.1, page 13]{regularvariation}, both functions $m:y\mapsto e^{-\int_y^{x_0}\frac{2}{c}\frac{\Psi(u)}{u}\ddr u}$ and $1/m$ are slowly varying at $0$. By \cite[Proposition 1.5.9a, page 26]{regularvariation}, this is also the case for $S:x\mapsto \int_{x}^{x_0}\frac{\ddr y}{y}\frac{1}{m(y)}$. The slow variation property of $S$ ensures that for any $\eta>0$,  $x^{\eta}S(x)\underset{x\rightarrow 0}{\rightarrow} 0$, see \cite[Proposition 1.3.6, page 16]{regularvariation}. Let $\delta\in (0,1)$. By choosing $\eta>0$ such that $\eta+\delta<1$, we see that $\int_{0+}x^{-\delta}S(x)\ddr x=\int_{0+}x^{-(\delta+\eta)}x^{\eta}S(x)\ddr x<\infty$. Similarly, the slow variation of $m$ ensures that $x^{\delta}m(x)\underset{x\rightarrow 0}{\longrightarrow} 0$ and we get \begin{center}$\int_{0+} S(x)m(x)\ddr x=\int_{0+} S(x)x^{-\delta}x^{\delta}m(x)\ddr x<\infty$,\end{center} which establishes \eqref{eq1}.
Notice that the assumption $\mathcal{E}=\infty$ was not used here, only the fact that $\Psi(0)=0$.
\item We now establish \eqref{vartheta}. One has $-S'(x)=\frac{1}{x}\exp\left(-\int_{x_0}^{x}\frac{2\Psi(u)}{cu}\ddr u\right)$, thus \begin{equation}\label{eq:xS'x}
x(-S)'(x)=\exp\left(\int_{x}^{x_0}\frac{2\Psi(u)}{cu}\ddr u\right)=1/m(x).
\end{equation}
Since, as $x$ goes to $0$, $\int_x^{x_0}\frac{\Psi(u)}{u}\ddr u$ either tends to $-\infty$ or has a finite limit, the function $x\mapsto x(-S)'(x)$ converges  as $x$ goes to $0$ towards the finite nonnegative limit $\exp\left(\int_{0}^{x_0}\frac{2\Psi(u)}{cu}\ddr u\right)$, the latter is $0$ if and only if $\int_{0}^{x_0}\frac{\Psi(u)}{u}\ddr u=-\infty$ which is equivalent to $\int^{\infty} \log y\, \pi(\ddr y)=\infty$. 
\item The fact that $x(-S)'(x)$ admits a finite limit at $0$ ensures that  $\int_{0+} x(-S)'(x)\ddr x<\infty$. We now check that $\int^{\infty} x(-S)'(x)\ddr x<\infty$. Recall that by assumption $\Psi(\infty)=\infty$. Without loss of generality, assume that $x_0$ is large enough so that $\Psi(x_0)>0$. The Lévy-Khintchine form of $\Psi$, see \eqref{eq:branchingmechanism}, and the convexity it induces, ensures that $\Psi(u)/u$ is non-decreasing and positive on $[x_0,\infty)$. Hence $0<b:=\Psi(x_0)/x_0\leq \Psi(u)/u$, for all $u\geq x_0$, and by \eqref{eq:xS'x}, one has, for some constant $C>0$, the bound $x(-S)'(x)\leq Ce^{-\frac{2b}{c}x}$, which is integrable near $\infty$.
%
\qed
\end{enumerate}
\end{proof}

\subsubsection{The diffusion $V$ conditioned on not being attracted by $\infty$}
We now introduce a certain Doob's transform of the process $V$. We refer the reader to Borodin and Salminen's book \cite[Chapter II, Sections 31 and 32]{MR1912205}, see also Salminen \cite{zbMATH03888652} and Evans and Hening \cite{zbMATH07074435}, for a comprehensive account on Doob's $h$-transforms for diffusions. 

\medskip

Recall $S$, the scale function of $V$, see \eqref{scalefunctionV}. The process $(S(V_t),t\geq 0)$ is a positive $\mathbb{P}_x$-local martingale, see e.g. \cite[Proposition 3.4]{MR1725357}. Denote by $\mathbb{P}_x^{\downarrow}$ the Doob's transform of $(V,\mathbb{P}_x)$ based on the positive excessive function $S$ (notice that contrary to
the setting of Theorem \ref{doobtransform1}, here  $S(0)=\infty$,  $S(\infty)=0$, by Assumption \asp. Denote by $\sigma$  the lifetime of $(V,\mathbb{P}_x^{\downarrow})$. For all stopping time $T$ and $A\in \mathcal{F}^V_T$ (the filtration of $V$), we have 
\begin{equation}\label{doobVt}\mathbb{P}_x^{\downarrow}\big(A,T<\sigma\big)=\frac{1}{S(x)}\mathbb{E}_x\big(S(V_{T})\mathbbm{1}_A\big).
\end{equation}
\begin{lemmastar}\label{lem:lifetimeVdown} For all $x\in (0,\infty)$, $\sigma=T_0:=\inf\{t>0: V_t=0\}$,  $\mathbb{P}_x^{\downarrow}$-almost surely.
\end{lemmastar}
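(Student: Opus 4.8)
The plan is to prove the two inequalities $\sigma\le T_0$ and $\sigma\ge T_0$, $\mathbb{P}_x^{\downarrow}$-almost surely.

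The bound $\sigma\le T_0$ is essentially built into the construction of the transform. Under \asp, the function $S$ is strictly positive and finite exactly on $(0,\infty)$, with $S(0)=\infty$ and $S(\infty)=0$, so both $0$ and $\infty$ are cemetery states for the $S$-transform; hence $(V,\mathbb{P}_x^{\downarrow})$ takes values in $(0,\infty)$ on $[0,\sigma)$, in particular $V_t\neq 0$ for every $t<\sigma$, whence $T_0\ge\sigma$.

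For the reverse inequality the plan is to exhibit a nondecreasing sequence of bounded stopping times increasing to $T_0$ along which the change of measure \eqref{doobVt} loses no mass. Since $S$ is a continuous strictly decreasing bijection of $(0,\infty)$ onto $(0,\infty)$, for each integer $n>S(x)$ put $a_n:=S^{-1}(n)\in(0,x)$, so that $a_n\downarrow 0$; let $T_{a_n}:=\inf\{t>0:\,V_t=a_n\}$ and $\tau_n:=T_{a_n}\wedge n$. The first step is to check that, for $n>S(x)$, the stopped process $\big(S(V_{t\wedge\tau_n})\big)_{t\ge0}$ stays in $[0,n]$: before $T_{a_n}$ the (continuous) path of $V$ has not dropped below $a_n$, so $S(V_t)<S(a_n)=n$ there, while $S(\infty)=0$ takes care of the values near $\infty$ --- a point that matters when Grey's condition holds and $V$ can reach $\infty$ under $\mathbb{P}_x$. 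Being a $\mathbb{P}_x$-local martingale stopped at a stopping time, and bounded, it is a true $\mathbb{P}_x$-martingale, so $\mathbb{E}_x\big(S(V_{\tau_n})\big)=S(x)$. Feeding $T=\tau_n$ and $A=\Omega$ into \eqref{doobVt} then gives $\mathbb{P}_x^{\downarrow}(\tau_n<\sigma)=1$ for every $n>S(x)$.

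It then remains to pass to the limit. As $(\tau_n)$ is nondecreasing, on the $\mathbb{P}_x^{\downarrow}$-almost sure event $\bigcap_{n>S(x)}\{\tau_n<\sigma\}$ one has $\lim_n\tau_n\le\sigma$; and a routine path-continuity argument ($a_n\downarrow0$ and $V$ continuous give $T_{a_n}\uparrow\inf\{t>0:\,V_t=0\}=T_0$, the truncation at $n$ being irrelevant in the limit) yields $\lim_n\tau_n=T_0$. Hence $\sigma\ge T_0$, and together with the first inequality this proves $\sigma=T_0$, $\mathbb{P}_x^{\downarrow}$-a.s. The only delicate point is the choice of the localizing sequence: it must be bounded (so that \eqref{doobVt} and optional stopping apply directly), it must localize $\big(S(V_t)\big)$ by a \emph{bounded} martingale --- which is why one stops at the level sets of $S$ rather than at fixed levels of $V$, and where the monotonicity of $S$ and the value $S(\infty)=0$ are used --- and it must increase to $T_0$. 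I do not expect to need the boundary classification of $(V,\mathbb{P}_x^{\downarrow})$ or the integral estimates of Lemma \ref{lem:estimates} for this statement; those would enter only for the sharper assertion that $T_0<\infty$ $\mathbb{P}_x^{\downarrow}$-almost surely.
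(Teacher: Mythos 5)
Your proof is correct and follows essentially the same route as the paper's: both localize the positive local martingale $(S(V_t),t\ge0)$ by bounded stopping times of the form (first passage below a small level) $\wedge\, n$, use that $S$ is decreasing with $S(\infty)=0$ to get a bounded, hence true, martingale, feed this into \eqref{doobVt} to obtain $\mathbb{P}_x^{\downarrow}(\tau_n<\sigma)=1$, and let $n\to\infty$. The only cosmetic difference is your choice of levels $a_n=S^{-1}(n)$ in place of the paper's $1/n$ (with bound $S(1/n)$); both work for the same reason.
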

\begin{proof} This can be seen in a similar way as in the proof of Lemma \ref{lem:strictlocalmartingale}, see also Remark \ref{rem:Follmermeasure}. Define the sequence of stopping times $\sigma_n:=T^{-}_{1/n}\wedge n$ and $T^{-}_{1/n}=\inf\{t>0: V_t\leq 1/n\}$. Since under \asp, $V_t\underset{t\rightarrow \infty}{\longrightarrow} \infty$, $\mathbb{P}_x$-a.s. one has $\sigma_n\underset{n\rightarrow \infty}{\longrightarrow} \infty$, $\mathbb{P}_x$-a.s. Furthermore, the map $S$ is decreasing, thus $S(V_{t\wedge \sigma_n})\leq S(1/n)<\infty$ a.s. and $(\sigma_n)$ actually localizes $(S(V_t),t\geq 0)$. 
By the same arguments as in Lemma \ref{lem:strictlocalmartingale}, the lifetime of the process under $\mathbb{P}_x^{\downarrow}$ is thus either infinite or $\sigma=\underset{n\rightarrow \infty}{\lim} \sigma_n=T_0$. \qed
\end{proof}

The next result, which holds for general one-dimensional diffusions, see for instance \cite{zbMATH03888652} and Perkowski and Ruf \cite{zbMATH06098198}, characterizes the law of the process $(V,\mathbb{P}_x^{\downarrow})$.

\begin{propstar}\label{propVdown}  
Assume \asp . The process $(V,\mathbb{P}_x^{\downarrow})$
is a diffusion with speed measure and scale function given by
\begin{align}
m^{\downarrow}(\ddr y)&=S(y)^2 m(\ddr y), \label{speedVdown}\\
S^{\downarrow}(y)&=\frac{1}{S(y)}. \label{scaleVdown}
\end{align} 
\end{propstar}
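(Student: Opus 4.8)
\textbf{Proof plan for Proposition~\ref{propVdown}.}
The plan is to derive the scale function and speed measure of the $h$-transformed diffusion directly from its generator, using the classical correspondence between a one-dimensional diffusion generator in divergence form and the pair $(S,m)$. First I would compute the generator $\mathscr{G}^{\downarrow}$ of $(V,\mathbb{P}_x^{\downarrow})$ on a suitable core, say $C_c^2(0,\infty)$. By the defining relation \eqref{doobVt} and the strong Markov property, for $f\in C_c^2(0,\infty)$ the process
\[
S(V_t)f(V_t)-\int_0^t S(V_s)\,\mathscr{G}f(V_s)\,\ddr s
\]
is a $\mathbb{P}_x$-local martingale (from $(\mathrm{MP})$ for $V$ applied to $Sf$, using $\mathscr{G}S=0$ and the product rule $\mathscr{G}(Sf)=S\mathscr{G}f+2\cdot\tfrac{c}{2}y S'f' = S\mathscr{G}f + cyS'f'$; one must be a little careful and actually use the divergence-form identity below). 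Transferring this to $\mathbb{P}_x^{\downarrow}$ via the $h$-transform shows that $(V,\mathbb{P}_x^{\downarrow})$ solves the martingale problem with generator
\[
\mathscr{G}^{\downarrow}f(y)=\frac{1}{S(y)}\,\mathscr{G}(Sf)(y)=\frac{c}{2}\,y f''(y)+\Bigl(\frac{c}{2}+\Psi(y)+cy\frac{S'(y)}{S(y)}\Bigr)f'(y),\qquad f\in C_c^2(0,\infty).
\]

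The cleanest route is then to put $\mathscr{G}$ (and $\mathscr{G}^{\downarrow}$) into divergence form. Recall that with $s(\ddr x)=-S'(x)\ddr x$ and speed measure $m$, one has $\mathscr{G}f=\frac{1}{2}\,\frac{\ddr}{m(\ddr y)}\frac{\ddr f}{\ddr S}$, i.e. (up to the conventional factor absorbed into $m$) $\mathscr{G}f(y)=\frac{1}{m(y)}\bigl(\frac{f'}{S'}\bigr)'\!(y)\cdot\frac{1}{S'(y)}$ — more precisely, using $S'(y)=-1/(ym(y))$ from \eqref{scalefunctionV}, a direct check gives $\mathscr{G}f(y)=\frac{c}{2}\cdot\frac{1}{m(y)}\frac{\ddr}{\ddr y}\Bigl(\frac{f'(y)}{(-S'(y))}\Bigr)$, which one verifies by expanding the right-hand side and matching with \eqref{generatorG} (this uses $m'(y)/m(y)=2\Psi(y)/(cy)$ and $(-S')'/(-S')=-1/y-2\Psi(y)/(cy)$). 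Then for the $h$-transform the standard fact is that replacing $S\mapsto \widehat{S}$ with $\widehat{S}{}'=S'/S^2=(1/S)'$ — equivalently $S^{\downarrow}=1/S$ — and $m\mapsto m^{\downarrow}=S^2 m$ produces exactly the generator $\frac{c}{2}\cdot\frac{1}{m^{\downarrow}(y)}\frac{\ddr}{\ddr y}\bigl(f'(y)/(-S^{\downarrow}{}'(y))\bigr)$, and a direct expansion of this expression reproduces $\mathscr{G}^{\downarrow}$ computed above, noting $-S^{\downarrow}{}'(y)=-(1/S)'(y)=S'(y)/S(y)^2 = (-S'(y))/S(y)^2$ so that $f'/(-S^{\downarrow}{}')=S^2 f'/(-S')$. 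This pins down $(S^{\downarrow},m^{\downarrow})$ up to the usual normalizations, giving \eqref{speedVdown}–\eqref{scaleVdown}.

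Finally I would address the regularity/identification issues: one must know that a one-dimensional diffusion is determined by its scale function and speed measure together with boundary behaviour, and that the martingale problem on $C_c^2(0,\infty)$ identifies the process up to its lifetime; this is exactly the content of the cited references \cite{zbMATH03888652} and \cite{zbMATH06098198}, which I would invoke rather than reprove. It remains to note that $(V,\mathbb{P}_x^{\downarrow})$ indeed has the stated lifetime $\sigma=T_0$ by Lemma~\ref{lem:lifetimeVdown}, consistent with the boundary behaviour read off from $(S^{\downarrow},m^{\downarrow})$: since $S^{\downarrow}(0+)=1/S(0+)=0$ (as $\mathcal{E}=\infty$ gives $S(0)=\infty$) and $S^{\downarrow}(\infty)=1/S(\infty)=\infty$, the boundary $0$ has become accessible while $\infty$ has become inaccessible, matching the heuristic that the conditioning reverses the roles of the two boundaries. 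The main obstacle is purely computational bookkeeping — carefully converting between the three equivalent ways of writing a 1D diffusion generator (Lévy-type form \eqref{generatorG}, divergence form, and the $S,m$ pair) and checking the $h$-transform identities without sign errors, since $S$ is \emph{decreasing} here and several of the standard formulas are stated for increasing scale functions.
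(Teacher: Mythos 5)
Your plan is correct, but it takes a genuinely different route from the paper. The paper's proof is essentially a citation: it invokes the general $h$-transform formulas for one-dimensional diffusions from Borodin--Salminen (Chapter II, Section 31, with $h=S$ and $\alpha\equiv 0$), which give \eqref{speedVdown} directly, and then computes $S^{\downarrow}(y)=\int_{0}^{y}\frac{-S'(x)}{S(x)^2}\,\ddr x=\frac{1}{S(y)}-\frac{1}{S(0)}$, using $S(0)=\infty$ (i.e.\ $\mathcal{E}=\infty$) to drop the constant. You instead reprove the transform formulas from scratch: you compute the generator $\mathscr{G}^{\downarrow}f=\frac{1}{S}\mathscr{G}(Sf)=\mathscr{G}f+cy\frac{S'}{S}f'$ via the product rule and $\mathscr{G}S=0$, put $\mathscr{G}$ in divergence form using $S'(y)=-1/(ym(y))$, and verify that the pair $(1/S,\,S^2m)$ reproduces $\mathscr{G}^{\downarrow}$. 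I checked your computations: the product-rule identity, the divergence-form rewriting of \eqref{generatorG}, and the final matching of $\frac{c}{2}\frac{1}{m^{\downarrow}}\frac{\ddr}{\ddr y}\bigl(S^2f'/(-S')\bigr)$ with $\mathscr{G}^{\downarrow}f$ all work out. Your approach is more self-contained and makes the mechanism transparent (it also explains \emph{why} the formulas hold rather than where they are stated), at the cost of length and of having to invoke well-posedness of the martingale problem to conclude that the generator identification determines the law; the paper's approach is shorter but leans entirely on the cited reference. One point to be explicit about, which the paper handles by the $S(0)=\infty$ computation and which you handle via the normalization $S^{\downarrow}(0+)=0$: a scale function is only determined up to an affine map, so the exact identity $S^{\downarrow}=1/S$ (rather than $1/S+\mathrm{const}$) does require Assumption \asp.

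One small correction: in the line identifying $-S^{\downarrow}{}'$ you write $-(1/S)'(y)=S'(y)/S(y)^2=(-S'(y))/S(y)^2$; the last equality is a sign error, since $S'<0$. The quantity you actually want in the divergence form for the \emph{increasing} scale function $S^{\downarrow}=1/S$ is $S^{\downarrow}{}'=-S'/S^2>0$, and with that reading your subsequent identity $f'/S^{\downarrow}{}'=S^2f'/(-S')$ and the final expansion are correct. This is exactly the decreasing-versus-increasing convention trap you flag yourself; it does not affect the conclusion.
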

\begin{proof} 
The speed measure and the scale function of the process $(V,\mathbb{P}^{\downarrow}_x)$ are identified by applying the results in 
\cite[Chapter II. Section 31]{MR1912205} (with in their notation $h=S$ and $\alpha\equiv 0$). The identity \eqref{speedVdown} is a direct application. For the scale function, one has
\[S^{\downarrow}(y)=\int_{0}^{y}\frac{-S'(x)}{S(x)^2}\ddr x=\frac{1}{S(y)}-\frac{1}{S(0)}.\]
By assumption $\mathcal{E}=\infty$, therefore $S(0)=\infty$ and one gets \eqref{scaleVdown}.\qed
\end{proof}
\begin{remark}
The process $(V,\mathbb{P}_y^{\downarrow})$ can be seen as the diffusion $V$ conditioned on not being attracted by $\infty$. More precisely, the process if forced to go below any arbitrarily small positive levels, see for instance \cite[Corollary 3.4]{zbMATH06098198}. We shall however not need the definition of this conditioning later on.
\end{remark}
The following lemma will be crucial in the study of the LCB process.
\begin{lemma}\label{lem:localmartingale} Assume \asp . For all $x\in (0,\infty)$, the process $(V, \mathbb{P}_x^{\downarrow})$ has almost surely a finite lifetime, namely it hits $0$, $\mathbb{P}_x^{\downarrow}$-a.s.. Moreover, $\mathbb{P}^{\downarrow}_x(T_0>t)=\mathbb{E}_x\left(\frac{S(V_t)}{S(x)}\right)$ for any $t\geq 0$, and the process $(S(V_t),t\geq 0)$, under $\mathbb{P}_x$, is a strict local martingale.
\end{lemma}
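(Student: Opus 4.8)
The plan is to establish the three assertions of Lemma~\ref{lem:localmartingale} in the following order: first the almost sure finiteness of the lifetime of $(V,\mathbb{P}_x^{\downarrow})$, then the identity for $\mathbb{P}_x^{\downarrow}(T_0>t)$, and finally the strict local martingale property of $(S(V_t),t\geq 0)$. By Lemma~\ref{lem:lifetimeVdown} we already know that the lifetime $\sigma$ of $(V,\mathbb{P}_x^{\downarrow})$ is $T_0$ whenever it is finite, so the first task reduces to showing $\mathbb{P}_x^{\downarrow}(T_0<\infty)=1$. I would use Proposition~\ref{propVdown}: the transformed diffusion has scale function $S^{\downarrow}=1/S$ and speed measure $m^{\downarrow}(\ddr y)=S(y)^2 m(\ddr y)$. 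Since $\mathcal{E}=\infty$ gives $S(0)=\infty$, we get $S^{\downarrow}(0+)=0$, so $0$ is accessible in scale for $(V,\mathbb{P}_x^{\downarrow})$; and since $S(\infty)=0$, we get $S^{\downarrow}(\infty)=\infty$, so $\infty$ is inaccessible in scale. Hence the scale-changed process is a local martingale that cannot escape to $+\infty$, which forces it to converge to its lower boundary value $0$, i.e. $(V,\mathbb{P}_x^{\downarrow})$ is attracted to $0$. It then remains to check that $0$ is reached in \emph{finite} time; for this I would invoke Feller's boundary classification applied to $(m^{\downarrow},S^{\downarrow})$, checking the integrability condition $\int_{0+} S^{\downarrow}(\ddr y)\, m^{\downarrow}((y,x_0]) < \infty$ (or the equivalent test), which near $0$ amounts to a statement like $\int_{0+} \frac{-S'(y)}{S(y)^2}\,\big(\text{mass of } m^{\downarrow}\text{ near }0\big)\,\ddr y<\infty$; the slow-variation estimates from Lemma~\ref{lem:estimates}\,(i), namely $\int_{0+}S(x)m(x)\ddr x<\infty$, together with $S(x)m(x)=x(-S)'(x)S(x)^2 m(x)/(x(-S)'(x)S(x))$-type rearrangements, should deliver exactly the needed finiteness so that $0$ is a regular or exit (in any case accessible in finite time) boundary for $(V,\mathbb{P}_x^{\downarrow})$. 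This boundary computation is where I expect the real work to be.

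The identity $\mathbb{P}_x^{\downarrow}(T_0>t)=\mathbb{E}_x\big(S(V_t)/S(x)\big)$ is then immediate: since $\sigma=T_0$ $\mathbb{P}_x^{\downarrow}$-a.s. (Lemma~\ref{lem:lifetimeVdown}), we apply the Doob-transform formula \eqref{doobVt} with $T=t$ a deterministic time and $A=\Omega$, giving $\mathbb{P}_x^{\downarrow}(t<\sigma)=\frac{1}{S(x)}\mathbb{E}_x(S(V_t))$, and $\{t<\sigma\}=\{T_0>t\}$.

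Finally, for the strict local martingale assertion: $(S(V_t),t\geq 0)$ is a $\mathbb{P}_x$-local martingale by the cited fact (it is the scale function composed with the diffusion, cf.\ \cite[Proposition 3.4]{MR1725357}), so it only remains to rule out that it is a true martingale. If it were a martingale, then $\mathbb{E}_x(S(V_t))=S(x)$ for all $t$, which by the identity just proved would give $\mathbb{P}_x^{\downarrow}(T_0>t)=1$ for all $t$, contradicting $\mathbb{P}_x^{\downarrow}(T_0<\infty)=1$ established in the first step. (Equivalently: under \asp, $V_t\to\infty$ $\mathbb{P}_x$-a.s., so $S(V_t)\to S(\infty)=0$ a.s., and if the convergence held in $L^1$ the martingale property would force $S(x)=\mathbb{E}_x(S(V_t))\to 0$, absurd; hence the local martingale is not uniformly integrable and not a true martingale — either formulation works.) The main obstacle is genuinely the boundary-behaviour computation for $(V,\mathbb{P}_x^{\downarrow})$ at $0$: one must translate the slow-variation estimates on $S$ and $m$ near $0$ from Lemma~\ref{lem:estimates} into the precise Feller integrability test for $m^{\downarrow}$ and $S^{\downarrow}$, making sure that "attracted to $0$" is upgraded to "hits $0$ in finite time a.s.". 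Once that is in place, the remaining two points follow formally from the $h$-transform identity.
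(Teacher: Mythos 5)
Your proposal is correct and follows essentially the same route as the paper: attraction to $0$ from the boundary values $S^{\downarrow}(0)=0$, $S^{\downarrow}(\infty)=\infty$, then Feller's accessibility test, then the Doob-transform identity, then the contradiction argument for strictness. The only step you leave implicit — reducing the Feller integral to Lemma~\ref{lem:estimates}(i) — is in fact the one-line cancellation $S^{\downarrow}(x)\,m^{\downarrow}(\ddr x)=\frac{1}{S(x)}\,S(x)^2 m(\ddr x)=S(x)\,m(\ddr x)$, so no real work remains there.
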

\begin{proof}
Since $S^{\downarrow}(\infty)=1/S(\infty)=1/0=\infty$ and $S^{\downarrow}(0)=1/S(0)=1/\infty=0$, we see by applying standard results on diffusions, see e.g.   Karlin and Taylor's book  \cite[Proposition 5.22, page 345]{zbMATH03736679}, that the process $(V,\mathbb{P}^{\downarrow})$ has its boundary $0$ almost surely attracting, i.e.
\[\mathbb{P}^{\downarrow}_v(V_t\underset{t\rightarrow \infty}{\longrightarrow} 0)=1.\]
We now apply Feller's tests, see e.g. \cite[Chapter 15, Section 6, page 231]{zbMATH03736679} to see whether the boundary $0$ is accessible. The process $(V,\mathbb{P}^{\downarrow})$ hits $0$ if and only if
\[I(0)=\int_0^{x_0}S^{\downarrow}(x)m^{\downarrow}(\ddr x)<\infty.\]
Using \eqref{scaleVdown} and \eqref{speedVdown}, it turns out that
$I(0)=\int_0^{x_0}\frac{1}{S(x)}S(x)^2 m(\ddr x)=\int_0^{x_0} S(x)m(\ddr x)$, which is finite by Lemma \ref{lem:estimates}.

By applying \eqref{doobVt} with $A=\Omega$ and $T=t$, and Lemma \ref{lem:lifetimeVdown}, one obtains $\mathbb{P}^{\downarrow}_x(T_0>t)=\mathbb{E}_x\left(\frac{S(V_t)}{S(x)}\right)$ for any $t\geq 0$. If $(S(V_t),t\geq 0)$ is a martingale, it has then constant expectation and we see that $T_0=\infty$, $\mathbb{P}^{\downarrow}_x$-almost surely, which contradicts the fact that $0$ is accessible for the process under $\mathbb{P}_x^{\downarrow}$. Thus $(S(V_t),t\geq 0)$ must be a strict local martingale.
\qed
\end{proof}
\begin{remark} The method used in the proof of Lemma \ref{lem:localmartingale} to show that the process $(S(V_t),t\geq 0)$ is a strict local martingale can be found in various forms in the literature, we refer e.g. to the articles by Pal and Protter \cite[Proposition 1]{zbMATH05763682}, see also Mijatovic and Urusov \cite[Theorem 2.1]{zbMATH06010474}.
\end{remark}

\section{Proof of Theorem \ref{thm1}}\label{sec:prooftheorem1}
Recall that the function $S$ is defined in \eqref{scalefunctionV}.
\begin{lemma}\label{lem:defh}
Assume \asp . The function $h$ given in \eqref{hintro} is well-defined on $[0,\infty)$. Moreover, for any $z\geq 0$, \begin{equation}\label{hwithS} h(z)=\int_{0}^{\infty}(1-e^{-xz})(-S)'(x)\ddr x=\int_0^{\infty}ze^{-zy}S(y)\ddr y.
\end{equation}
One has $h(0)=0$, $h(z)/z\leq h'(0)<\infty$. Under the assumption $\mathcal{E}=\infty$, $S(0)=\infty$ and $h(z)\underset{z\rightarrow \infty}{\longrightarrow} \infty$. When $\int^{\infty}\log u \pi(\ddr u)<\infty$, that is to say $\ell>0$, see \eqref{vartheta}, one has
\begin{equation}\label{equivlogcase}
h(z)\underset{z\rightarrow \infty}{\sim} \ell \log z.
\end{equation} 
Lastly, we have in both cases $\ell=0$ and $\ell>0$, 
\begin{equation}\label{hmoment}
\int^{\infty}h(y)\pi(\ddr y)<\infty.
\end{equation}
\end{lemma}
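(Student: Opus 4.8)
The statement packages several elementary facts about the Bernstein function $h(z)=\int_0^\infty(1-e^{-xz})s(\ddr x)$ together with the asymptotic \eqref{equivlogcase} and the moment bound \eqref{hmoment}. My plan is to treat them in the order: (a) well-posedness and the two integral representations in \eqref{hwithS}; (b) the elementary properties $h(0)=0$, concavity/increasingness, $h(z)/z\le h'(0)<\infty$; (c) $h(\infty)=\infty$ under $\mathcal{E}=\infty$; (d) the logarithmic asymptotic \eqref{equivlogcase} when $\ell>0$; (e) the moment bound \eqref{hmoment} in both regimes.

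\emph{Steps (a)--(c).} Since $\int_x^\infty s(\ddr y)=S(x)$ and $s(\ddr x)=-S'(x)\,\ddr x$, the finiteness of $h(z)$ for every $z\ge 0$ is equivalent to $\int_0^\infty (1\wedge x)\,s(\ddr x)<\infty$; near $0$ this follows from $x(-S)'(x)\to \ell<\infty$ (Lemma \ref{lem:estimates}(ii)), and near $\infty$ from the exponential decay of $x(-S)'(x)$ obtained in the proof of Lemma \ref{lem:estimates}(iii), so in fact $h'(0)=\int_0^\infty x(-S)'(x)\,\ddr x<\infty$ by \eqref{eq2}. This already gives $h(0)=0$ and, since $1-e^{-xz}\le xz$, the bound $h(z)\le z\,h'(0)$, i.e. $h(z)/z\le h'(0)$. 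Differentiating under the integral sign (justified by the same domination) shows $h$ is $C^\infty$ on $(0,\infty)$, with $h'(z)=\int_0^\infty x e^{-xz}s(\ddr x)>0$ and $h''(z)=-\int_0^\infty x^2 e^{-xz}s(\ddr x)<0$, hence $h$ is strictly increasing and concave. The second representation in \eqref{hwithS} is Fubini/integration by parts: $\int_0^\infty(1-e^{-xz})(-S)'(x)\,\ddr x=[-(1-e^{-xz})S(x)]_0^\infty+\int_0^\infty z e^{-xz}S(x)\,\ddr x$, where the boundary term at $0$ vanishes since $S(x)(1-e^{-xz})\sim z\,xS(x)\to 0$ (slow variation of $S$) and at $\infty$ since $S(\infty)=0$ and $S$ is integrable at $\infty$ (again from the exponential bound on $x(-S)'(x)$, or directly). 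For $h(\infty)=\infty$: under $\mathcal{E}=\infty$ we have $S(0)=\infty$, and by monotone convergence $h(z)=\int_0^\infty z e^{-xz}S(x)\,\ddr x\uparrow \int_0^\infty S(\ddr\text{-tail})\cdots$; more cleanly, $h(z)\ge \int_0^{1/z} z e^{-xz}S(x)\,\ddr x \ge e^{-1} z\int_0^{1/z}S(x)\,\ddr x$, and since $S(x)\to\infty$ as $x\to 0$ this lower bound tends to $\infty$.

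\emph{Step (d): the asymptotic \eqref{equivlogcase}.} When $\ell>0$, write $h(z)=\int_0^\infty(1-e^{-xz})\,\varphi(x)\,\ddr x$ with $\varphi(x)=x(-S)'(x)/x=(-S)'(x)$, and use $x(-S)'(x)\to\ell$. I would substitute $u=xz$ and split: $h(z)=\int_0^\infty (1-e^{-u})\,\frac{1}{u}\cdot \big(\tfrac{u}{z}(-S)'(u/z)\big)\,\ddr u$. Since $\tfrac{u}{z}(-S)'(u/z)\to\ell$ as $z\to\infty$ for each fixed $u$, and the factor $(1-e^{-u})/u$ behaves like $1/u$ at infinity and is bounded near $0$, the integral is essentially $\ell\int_{\cdot}^{\cdot}\frac{\ddr u}{u}$, which produces the $\log z$. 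The rigorous way is the standard Tauberian-type argument: $h(z)=\int_0^z \frac{h'(v)}{1}\,\ddr v$ is awkward, so instead use $h(z)=\int_0^\infty z e^{-zy}S(y)\,\ddr y$ together with $S(y)\sim \ell\log(1/y)$ as $y\to 0$ (which follows by integrating $-S'(y)\sim \ell/y$), and then a de Bruijn / Abelian argument: $\int_0^\infty z e^{-zy}\log(1/y)\,\ddr y=\log z+\gamma_{\mathrm{Euler}}\sim\log z$. Controlling the contribution of $y$ away from $0$ is routine since $S$ is bounded there and $ze^{-zy}$ is exponentially small uniformly.

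\emph{Step (e): the moment bound \eqref{hmoment}, which I expect to be the main obstacle.} We need $\int^\infty h(y)\,\pi(\ddr y)<\infty$. Since $h$ is concave with $h(0)=0$, $h$ is subadditive and $h(y)\le h'(0)y$ for small $y$, so the contribution of $\pi$ near $0$ is controlled by $\int_0^1 y^2\pi(\ddr y)<\infty$ (using $h(y)\le h'(0)y\le h'(0)$, but more carefully $h(y)=O(y)$, and $\int_0^1 y\,\pi(\ddr y)$ need not be finite — so one must instead note $h(y)\le h'(0)y$ and $\int_{0}^{1}y\wedge y^{2}\pi(\ddr y)$... ); the real point is the tail. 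In the case $\ell>0$ we have $h(y)\sim\ell\log y$, so $\int^\infty h(y)\pi(\ddr y)<\infty$ is exactly $\int^\infty\log y\,\pi(\ddr y)<\infty$, which holds precisely in this regime (Lemma \ref{lem:estimates}(ii) ties $\ell>0$ to the finite log-moment). In the case $\ell=0$, the log-moment may be infinite, so one needs a genuinely better bound than $h(y)\lesssim\log y$: here $x(-S)'(x)\to 0$ as $x\to 0$ forces $S$ to grow slower than $\log(1/x)$, hence $h(y)=o(\log y)$, and one must show this decay is fast enough to compensate. The clean route is to relate $\int^\infty h(y)\pi(\ddr y)$ directly to known finite quantities via the identity $\int_0^\infty h(y)\pi(\ddr y)=\int_0^\infty\Big(\int_0^\infty(1-e^{-xy})\pi(\ddr y)\Big)s(\ddr x)=\int_0^\infty \psi_0(x)\,s(\ddr x)$ by Tonelli, where $\psi_0(x):=\int_0^\infty(1-e^{-xy})\pi(\ddr y)$ is (up to the drift/diffusion and compensation terms) comparable to $|\Psi(x)|$; more precisely $\psi_0(x)\le \Psi(x)+\gamma x + \tfrac{\sigma^2}{2}x^2+Cx$ for $x$ large and $\psi_0(x)=O(x)$ as $x\to\infty$ only if $\int y\pi<\infty$, so in general $\psi_0(x)\le C\max(x,\Psi(x))$. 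Then $\int_0^\infty\psi_0(x)s(\ddr x)$ splits: near $0$, $\psi_0(x)\le Cx$ and $\int_0 x\,s(\ddr x)=\int_0 x(-S)'(x)\ddr x<\infty$ by \eqref{eq2}; near $\infty$, $s(\ddr x)=\tfrac1x e^{-\int_{x_0}^x 2\Psi(u)/(cu)\ddr u}\ddr x$ decays exponentially (since $\Psi(u)/u\ge b>0$ eventually, as in Lemma \ref{lem:estimates}(iii)) while $\psi_0(x)$ grows at most polynomially (at most like $x^2$), so the tail integral converges. This swap-and-split argument handles both regimes uniformly and is, I believe, exactly how the authors proceed; the only delicate bookkeeping is the precise comparison $\psi_0(x)\asymp |\Psi(x)|$ for large $x$ and the integrability of $x\,s(\ddr x)$ near $0$, both of which are already in hand from \eqref{eq2} and the structure of $\Psi$.
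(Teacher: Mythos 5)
Steps (a)--(d) of your plan are correct and essentially identical to the paper's proof: well-posedness via $1-e^{-xz}\le xz$ and \eqref{eq2}, integration by parts with the boundary term at $0$ killed by $yS(y)\to 0$, $h(\infty)=\infty$ from $S(0)=\infty$, and an Abelian/Karamata argument for $h(z)\sim\ell\log z$ (the paper applies the Tauberian theorem to $h'(z)=\int_0^\infty e^{-xz}\,x(-S)'(x)\ddr x$ rather than to $\int_0^\infty ze^{-zy}S(y)\ddr y$, but the two routes are equivalent).

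Step (e) adopts the right strategy — Tonelli swap and a split of the $x$-integral, which is exactly what the paper does — but your treatment of the region $x\to 0$ has a genuine gap, and that region is where the only nontrivial content of \eqref{hmoment} sits. Two problems. First, $\psi_0(x):=\int_0^\infty(1-e^{-xy})\pi(\ddr y)$ is identically $+\infty$ whenever $\int_0^1 y\,\pi(\ddr y)=\infty$, which is permitted here; since \eqref{hmoment} only concerns the tail of $\pi$, one must work with $\int_1^\infty(1-e^{-xy})\pi(\ddr y)$, for which the paper derives the bound $\le -\Psi(x)+Cx^2$ directly from \eqref{eq:branchingmechanism}. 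Second, and more seriously, your claim that this quantity is $O(x)$ near $0$ is false in general: it equals $-\Psi(x)$ up to $O(x^2)$, and $-\Psi(x)/x\to -\Psi'(0+)$, which is $+\infty$ for Neveu's mechanism and throughout the infinite-log-moment regime. The small-$x$ contribution is therefore $\int_0^1\frac{-\Psi(x)}{x}e^{-\int_{x_0}^{x}\frac{2\Psi(u)}{cu}\ddr u}\ddr x$, the product of a possibly non-integrable factor with a weight tending to $\ell$, which may be $0$; crude bounds do not decide its finiteness. The paper's resolution is the exact identity
\[\frac{-\Psi(x)}{x}\,e^{-\int_{x_0}^{x}\frac{2\Psi(u)}{cu}\ddr u}=\frac{c}{2}\,\frac{\ddr}{\ddr x}\,e^{-\int_{x_0}^{x}\frac{2\Psi(u)}{cu}\ddr u},\]
so that the integral over $(0,1)$ evaluates to $\frac{c}{2}\bigl(e^{-\int_{x_0}^{1}\frac{2\Psi(u)}{cu}\ddr u}-\ell\bigr)$, finite because $\ell\in[0,\infty)$ by \eqref{vartheta}. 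Without this observation (or an equivalent asymptotic analysis of the cancellation between $-\Psi(x)/x$ and the vanishing weight), your argument does not close when $\Psi'(0+)=-\infty$. Your aside about controlling $\pi$ near $0$ is moot, as \eqref{hmoment} is only an assertion about $\int_1^\infty h(y)\pi(\ddr y)$.
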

\begin{proof}
The first identity in \eqref{hwithS} holds by definition of $h$ in \eqref{hintro}. The function $h$ is well-defined by using the bound $1-e^{-xz}\leq xz$ and the integrability \eqref{eq2}. One has indeed for any $z\geq 0$,
\[h(z)\leq z\int_{0}^{\infty}x(-S)'(x)\ddr x<\infty.\]
For any $z>0$, by integration by parts.
\begin{align*}
\int_0^{\infty}ze^{-zy}S(y)\ddr y&=-\int_0^{\infty}(1-e^{-zy})S'(y)\ddr y+[(1-e^{-yz})S(y)]_{y=0}^{y=\infty}. \label{integrationbypartsinh}
\end{align*}
It remains to study the bracket terms. It vanishes at $y=\infty$, since $S(\infty)=0$. For $y=0$, using the bound $1-e^{-yz}\leq zy$, and cutting the integral form of $S$ into two parts, one from $x_1$ to $\infty$, and one from $y$ to $x_1$, one gets
\[yS(y)=yS(x_1)+y\int_y^{x_1}\frac{\ddr x}{x}e^{\int_x^{x_0}\frac{2\Psi(u)}{cu}\ddr u}\ddr x\leq o(1)+\int_0^{x_1}e^{\int_x^{x_0}\frac{2\Psi(u)}{cu}\ddr u}\ddr x.\]
The integral in the upper bound above matches with $\int_0^{x_1}xS'(x)\ddr x$ which is finite by \eqref{eq2}. The latter is arbitrarily small for $x_1$ close enough to $0$ and we have $\underset{y\rightarrow 0}{\lim}(1-e^{-yz})S(y)=0$. The fact that $h(0)=0$ is obvious and we have $h(\infty)=\infty$ since $S(0)=\infty$. Notice that $h$ is a Bernstein function, so in particular it is twice continuously differentiable on $(0,\infty)$ and one has $$h'(z)=\int_{0}^{\infty}xe^{-xz}(-S)'(x)\ddr x.$$
By \eqref{eq2}, $\int^{\infty}x(-S)'(x)\ddr x<\infty$, hence $h'(0)<\infty$. We check now \eqref{equivlogcase}. Assume $\int^{\infty}\log u \pi(\ddr u)<\infty$. Recall \eqref{vartheta} and $\underset{x\rightarrow 0}{\lim} x(-S)'(x)=\ell>0$. One has $\int_0^{x}y(-S)'(y)\ddr y \underset{x\rightarrow 0}{\sim} \ell x$ and by Karamata's Tauberian's theorem, see e.g. \cite{regularvariation}, $h'(z)\underset{z\rightarrow \infty}{\sim} \ell/z$, which together with the fact that $h(0)=0$ entails that
$$h(z)=\int_0^{z}h'(y)\ddr y \underset{z\rightarrow \infty }{\sim} \ell \log z.$$
It remains to check \eqref{hmoment}. Recall $\Psi$ in \eqref{eq:branchingmechanism}. One has for all $x\geq 0$, \begin{equation}\label{boundd}\int_{1}^{\infty}(1-e^{-xz})\pi(\ddr z)=-\Psi(x)+\int_{0}^{1}(e^{-xz}-1+xz)\pi(\ddr z)\leq -\Psi(x)+Cx^2,
\end{equation}
for some constant $C>0$. Recall the expression of $h$, $$h(z)=\int_0^\infty(1-e^{-xz})\frac{1}{x}e^{-\int_{x_0}^x\frac{2}{c}\frac{\Psi(u)}{u}\ddr u}\ddr x,$$ one gets by \eqref{boundd} and Fubini-Tonelli's theorem
\begin{equation}\label{eq:upperbound}\int_1^\infty h(z)\pi(\ddr z)\leq \int_{0}^{\infty}\left(-\frac{\Psi(x)}{x}+Cx\right)e^{-\int_{x_0}^{x}\frac{2}{c}\frac{\Psi(u)}{u}\ddr u}\ddr x.
\end{equation}

Recall that $\Psi(\infty)=\infty$ entails that $\frac{\Psi(u)}{u}\underset{u\rightarrow \infty}{\longrightarrow} \infty$. Hence for any $b>0$ fixed, one has $\frac{\Psi(u)}{u}\geq b$ for large enough $u$ and  $e^{-\int_{x_0}^{x}\frac{2}{c}\frac{\Psi(u)}{u}\ddr u}\leq C'e^{-bx}$ for some constant $C'>0$. This implies that
\[\int_1^{\infty}Cxe^{-\int_{x_0}^{x}\frac{2}{c}\frac{\Psi(u)}{u}\ddr u}\ddr x<\infty,\]
which entails the integrability near $\infty$ of the upper bound in \eqref{eq:upperbound}. On the other hand, since, $\ell=e^{\int_{0}^{x_0}\frac{2}{c}\frac{\Psi(u)}{u}\ddr u}\in [0,\infty)$, one has
$\int_0^1 xe^{\int_{x}^{x_0}\frac{2}{c}\frac{\Psi(u)}{u}\ddr u}\ddr x<\infty$ and
 \[\int_{0}^{1}\frac{-\Psi(x)}{x}e^{-\int_{x_0}^{x}\frac{2}{c}\frac{\Psi(u)}{u}\ddr u}\ddr x=\frac{c}{2}\left[e^{-\int_{x_0}^{x}\frac{2}{c}\frac{\Psi(u)}{u}\ddr u}\right]_{x=0}^{x=1}<\infty.\] \qed
\end{proof}
\begin{lemma}\label{lem:Lh} The function $h$ satisfies for all $z\geq 0$,
$\mathscr{L}h(z)=-\frac{c\ell}{2}z$, with $\ell$ defined in \eqref{vartheta}. It is harmonic, i.e. $\mathscr{L}h=0$ if and only if $\ell=0$, or equivalently $\int^{\infty} \log y\,\pi(\ddr y)=\infty$.
\end{lemma}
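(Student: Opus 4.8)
The plan is to compute $\mathscr{L}h$ pointwise from the Bernstein representation $h(z)=\int_{0}^{\infty}(1-e^{-xz})(-S)'(x)\,\ddr x$ of \eqref{hwithS}: I would transport the operator $\mathscr{L}$ inside the integral, rewrite $\mathscr{L}e_x$ on the dual side using the Laplace duality \eqref{dualityLA}, and then exploit that $S$, being a scale function of $V$, solves $\mathscr{G}S=0$.

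\emph{Transporting $\mathscr{L}$ inside the integral.} Since $\mathscr{L}1=0$ and $\mathscr{L}$ is linear in its argument, $\mathscr{L}(1-e_x)=-\mathscr{L}e_x$, so one expects $\mathscr{L}h(z)=-\int_{0}^{\infty}\mathscr{L}e_x(z)\,(-S)'(x)\,\ddr x=\int_{0}^{\infty}\mathscr{L}e_x(z)\,S'(x)\,\ddr x$. Justifying this interchange is the main technical point. For fixed $z>0$ it splits into: (i) differentiating under the integral for the $f''$ and $f'$ parts of $\mathrm{L}^{\Psi}$ together with the $-\tfrac c2 z^{2}f'(z)$ term of \eqref{genLCB}, which is licit because $\int_{0}^{\infty}x(-S)'(x)\,\ddr x<\infty$ by \eqref{eq2} and $\int_{0}^{\infty}x^{2}e^{-xz}(-S)'(x)\,\ddr x<\infty$ for $z>0$ (write the integrand as $xe^{-xz}\cdot x(-S)'(x)$ and use that $x(-S)'(x)$ tends to the finite limit $\ell$ at $0$ by \eqref{vartheta} and decays exponentially at $\infty$ by the estimate obtained in the proof of Lemma \ref{lem:estimates}); and (ii) a Fubini--Tonelli swap of the $\pi(\ddr y)$-integral in $\mathrm{L}^{\Psi}$ with the $(-S)'(x)\,\ddr x$-integral, controlled by the elementary bound $0\le 1-e^{-xy}-xy\mathbbm{1}_{\{y<1\}}\le\tfrac12(xy)^{2}\mathbbm{1}_{\{y<1\}}+(1\wedge xy)\mathbbm{1}_{\{y\ge1\}}$, by $\int_{0}^{1}y^{2}\pi(\ddr y)<\infty$, by the subadditivity $h(z+y)\le h(z)+h(y)$ of the Bernstein function $h$, and crucially by the moment bound $\int^{\infty}h(y)\,\pi(\ddr y)<\infty$ from \eqref{hmoment}.

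\emph{Duality and cancellation.} By \eqref{dualityLA} and the form \eqref{generatorU0} of $\mathscr{A}$, $\mathscr{L}e_x(z)=\mathscr{A}e_z(x)=\tfrac c2 x(e_z)''(x)-\Psi(x)(e_z)'(x)$ with $(e_z)'(x)=-ze^{-zx}$, $(e_z)''(x)=z^{2}e^{-zx}$, so
\[
\mathscr{L}h(z)=\int_{0}^{\infty}\Big(\tfrac c2\, x\,S'(x)\,(e_z)''(x)-\Psi(x)\,S'(x)\,(e_z)'(x)\Big)\,\ddr x .
\]
I would integrate the first term by parts (the integrand is locally bounded near $0$ and exponentially small near $\infty$ by Lemma \ref{lem:estimates}, so all boundary terms and integrals below are finite) and substitute the identity $\tfrac c2 S'(x)+\tfrac c2 xS''(x)=-\Psi(x)S'(x)$, which is precisely $\mathscr{G}S=0$. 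The integral produced by the integration by parts then cancels the $-\Psi(x)S'(x)(e_z)'(x)$ term exactly, leaving only the boundary contribution
\[
\mathscr{L}h(z)=\Big[\tfrac c2\, x\,S'(x)\,(e_z)'(x)\Big]_{x=0}^{x=\infty}
=\lim_{x\to\infty}\tfrac c2\, z\,x(-S)'(x)\,e^{-xz}-\lim_{x\to 0+}\tfrac c2\, z\,x(-S)'(x)\,e^{-xz}.
\]

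\emph{Conclusion.} The limit at $\infty$ vanishes ($x(-S)'(x)$ decays exponentially and $e^{-xz}\to0$), and by \eqref{vartheta} the limit at $0$ equals $\tfrac c2 z\,\ell$; hence $\mathscr{L}h(z)=-\tfrac{c\ell}{2}z$ for $z>0$, while $\mathscr{L}h(0)=0$ is immediate from \eqref{genLCB}. Since $c>0$, $\mathscr{L}h\equiv0$ on $[0,\infty)$ if and only if $\ell=0$, which by \eqref{vartheta} is equivalent to $\int^{\infty}\log y\,\pi(\ddr y)=\infty$. The only genuinely delicate step is the interchange in the second paragraph (in particular the Fubini argument for the jump part); the algebraic cancellation via $\mathscr{G}S=0$ and the boundary evaluation via \eqref{vartheta} are routine once they are set up.
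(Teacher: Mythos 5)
Your proof is correct and follows essentially the same route as the paper's: transport $\mathscr{L}$ inside the Bernstein integral, convert to $\mathscr{A}e_z(x)$ via the Laplace duality \eqref{dualityLA}, integrate the $\frac c2 x(e_z)''(x)S'(x)$ term by parts so that $\mathscr{G}S=0$ kills the remaining integral, and read off $-\frac{c\ell}{2}z$ from the boundary term via \eqref{vartheta}. You in fact supply more detail than the paper on the interchange of $\mathscr{L}$ with the $(-S)'(x)\,\ddr x$-integral (the paper takes this for granted); the only blemish is the sign in your displayed bound, which should read $|1-e^{-xy}-xy\mathbbm{1}_{\{y<1\}}|\le\tfrac12(xy)^2\mathbbm{1}_{\{y<1\}}+(1\wedge xy)\mathbbm{1}_{\{y\ge1\}}$, and this does not affect the domination argument.
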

\begin{proof}
Let $z\in (0,\infty)$. Recall $e_x(z)=e_z(x)=e^{-xz}$ for all $x\in (0,\infty)$ and $\mathscr{L}e_x(z)=\mathscr{A}e_z(x)$ for all $x,z\in (0,\infty)$, see Lemma \ref{lemmadualityLA}. Note that $\mathscr{L}1=0=\mathscr{A}1$, therefore $\mathscr{L}(1-e_x)(z)=\mathscr{A}(1-e_z)(x)$ for any $x,z\in (0,\infty)$. For any $\theta>0$ and $z>0$,
\begin{align}
\mathscr{L}h(z)&=\int_{0}^{\infty}\mathscr{L}(1-e_x)(z)(-S)'(x)\ddr x=\int_0^{\infty}\mathscr{A}(1-e_z)(x)(-S)'(x)\ddr x \nonumber\\
&=\int_0^{\infty}\frac{c}{2}x(1-e_z)''(x)(-S)'(x)\ddr x-\int_0^{\infty}\Psi(x)(1-e_z)'(x)(-S)'(x))\ddr x \label{eqtoplug1}.
\end{align}
By integration by parts
\begin{align}
\int_0^{\infty}&\frac{c}{2}x(1-e_z)''(x)(-S)'(x)\ddr x \nonumber\\
&=\left[(1-e_z)'(x)\frac{c}{2}x (-S)'(x)\right]_{x=0}^{x=\infty}-\int_0^{\infty}(1-e_z)'(x)\frac{c}{2} \left((-S)'(x)+x(-S)''(x)\right)\ddr x \nonumber.
\end{align}
We study now the bracket terms. One has
\begin{align*}
\left[(1-e_z)'(x)\frac{c}{2}x (-S)'(x)\right]_{x=0}^{x=\infty}&=
-\underset{x\rightarrow \infty}{\lim} ze^{-zx}\frac{c}{2}x S'(x)+\underset{x\rightarrow 0}{\lim} ze^{-zx}\frac{c}{2}xS'(x)=-\frac{c\ell}{2}z,
\end{align*}
with $\ell\geq 0$ defined in \eqref{vartheta}.
Hence
\begin{equation}
\int_0^{\infty}\frac{c}{2}x(1-e_z)''(x)(-S)'(x)\ddr x =-\frac{c\ell}{2}z+\int_0^{\infty}(1-e_z)'(x)\frac{c}{2}\left(S'(x)+xS''(x)\right)\ddr x,\label{eqtoplug2}
\end{equation}
and by plugging \eqref{eqtoplug2} in \eqref{eqtoplug1} and recalling the generator $\mathscr{G}$ of $V$, see \eqref{generatorG}, we have
\begin{align*}
\mathscr{L}h(z)&=-\frac{c\ell}{2}z+\int_0^{\infty}(1-e_z)'(x)\left(\frac{c}{2} \big(S'(x)+xS''(x)\big)+\Psi(x) S'(x)\right)\ddr x\\
&=-\frac{c\ell}{2}z+\int_0^{\infty}ze^{-zx}\mathscr{G}S(x)\ddr x=-\frac{c\ell}{2}z.
\end{align*}
\end{proof}

\begin{lemma}\label{lem:hexcessive}
The process $\left(h(Z_t)+\int_0^{t}\frac{c\ell}{2}Z_s\ddr s,t\geq 0\right)$ is a local martingale.  Moreover, 
\begin{enumerate}[label=\roman*)]
\item $(h(Z_t),t\geq 0)$ is a strict local martingale if $\ell=0$, namely when $\int^{\infty} \log y\,\pi(\ddr y)=\infty$,
\item $(h(Z_t),t\geq 0)$ is a strict supermartingale when $\ell>0$, namely when $\int^{\infty} \log y\,\pi(\ddr y)<\infty$.
\end{enumerate}
\end{lemma}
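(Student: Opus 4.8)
The plan is to obtain the first assertion directly from the martingale problem for $Z$ together with the computation of $\mathscr{L}h$ in Lemma \ref{lem:Lh}, and then to treat the regimes $\ell>0$ and $\ell=0$ separately. First I would record that $h\in C^2([0,\infty))$: it is the Bernstein function with L\'evy measure $s$, one has $h'(0)<\infty$ by \eqref{eq2}, and $\int_0^\infty x^2\,s(\ddr x)<\infty$ because $x(-S)'(x)$ stays bounded as $x\to 0$ (it tends to $\ell$ by \eqref{vartheta}) and decays exponentially as $x\to\infty$ (as established in the proof of Lemma \ref{lem:estimates}), so that $h''$ extends continuously to $0$. Applying Lemma \ref{lem:MPZ} with $f=h$ and using $\mathscr{L}h(z)=-\tfrac{c\ell}{2}z$, the process $M_t:=h(Z_t)-\int_0^t\mathscr{L}h(Z_s)\,\ddr s=h(Z_t)+\tfrac{c\ell}{2}\int_0^tZ_s\,\ddr s$ is a $\mathbb{P}_z$-local martingale; this is the first claim.

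For $\ell>0$ I would argue quickly. Since $M$ is a nonnegative local martingale it is a supermartingale, and $h(Z_t)=M_t-\tfrac{c\ell}{2}\int_0^tZ_s\,\ddr s$ is a supermartingale minus a continuous adapted non-decreasing process, hence itself a supermartingale. If it were moreover a local martingale, then $A_t:=\tfrac{c\ell}{2}\int_0^tZ_s\,\ddr s=M_t-h(Z_t)$ would be a continuous, finite-variation local martingale started at $0$, hence identically zero; but $A_t>0$ for every $t>0$ $\mathbb{P}_z$-almost surely, because $Z$ is right-continuous with $Z_0=z>0$. Thus $(h(Z_t))$ is a strict supermartingale. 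This case is routine.

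The case $\ell=0$ is the crux: here $M_t=h(Z_t)$ is a local martingale, and it remains to show it is not a true martingale. I would establish, via the biduality \eqref{joiningduals}, Fubini--Tonelli, and the fact that $s$ has tail $S$ (i.e. $\int_{[w,\infty)}s(\ddr x)=S(w)$), the identity
\[
\mathbb{E}_z\big(h(Z_t)\big)=\int_0^\infty\mathbb{E}_z\big(1-e^{-xZ_t}\big)\,s(\ddr x)=\int_0^\infty ze^{-zy}\,\mathbb{E}_y\big(S(V_t)\big)\,\ddr y.
\]
Subtracting this from $h(z)=\int_0^\infty ze^{-zy}S(y)\,\ddr y$ (the second expression in \eqref{hwithS}) and using $\mathbb{E}_y(S(V_t))=S(y)\,\mathbb{P}_y^{\downarrow}(T_0>t)$ from Lemma \ref{lem:localmartingale}, one gets
\[
h(z)-\mathbb{E}_z\big(h(Z_t)\big)=\int_0^\infty ze^{-zy}\,S(y)\,\mathbb{P}_y^{\downarrow}(T_0\le t)\,\ddr y\ \ge\ 0.
\]
By Lemma \ref{lem:localmartingale}, $(V,\mathbb{P}_y^{\downarrow})$ hits $0$ almost surely, so $\mathbb{P}_y^{\downarrow}(T_0\le t_0)>0$ for suitable $y_0>0$ and $t_0>0$; a monotonicity-in-the-starting-point argument (the continuous path of $(V,\mathbb{P}^{\downarrow})$ issued from $y_0$ must cross every level in $(0,y_0)$ before hitting $0$, so $\mathbb{P}_y^{\downarrow}(T_0\le t_0)\ge\mathbb{P}_{y_0}^{\downarrow}(T_0\le t_0)$ for $y\in(0,y_0]$) shows the integrand is strictly positive on a set of positive Lebesgue measure at $t=t_0$ (recall $S>0$). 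Hence $\mathbb{E}_z(h(Z_{t_0}))<h(z)$, so $(h(Z_t))$ is a strict local martingale.

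The main obstacle is this last step: transferring the strict-local-martingale property of the scale-function process $(S(V_t))$ --- which is already available from Lemma \ref{lem:localmartingale} --- to $(h(Z_t))$ through the biduality. This rests on the representation $h(z)=\int_0^\infty ze^{-zy}S(y)\,\ddr y$ and on the (routine but delicate) Fubini bookkeeping turning $\int_0^\infty\mathbb{P}_y(V_t\le x)\,s(\ddr x)$ into $\mathbb{E}_y(S(V_t))$; everything else, including the $\ell>0$ case, is elementary.
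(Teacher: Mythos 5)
Your proof is correct and follows essentially the same route as the paper: the local-martingale claim comes from the martingale problem (It\^o's formula) together with $\mathscr{L}h(z)=-\frac{c\ell}{2}z$, and the strictness in the case $\ell=0$ is transferred from the strict local martingale $(S(V_t),t\geq 0)$ through the biduality \eqref{joiningduals} exactly as in the paper. Your explicit argument in the case $\ell>0$ --- that if $(h(Z_t))$ were a local martingale then $\frac{c\ell}{2}\int_0^tZ_s\,\ddr s$ would be a continuous finite-variation local martingale started at $0$, hence identically zero --- is a welcome refinement, since the paper's proof only records that $(h(Z_t))$ fails to be a \emph{martingale}, whereas the statement of ii) requires that it fail to be a \emph{local} martingale.
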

\begin{remark}
Note that in both cases $\ell=0$ and $\ell>0$, the process $(h(Z_t),t\geq 0)$ is a positive supermartingale. 
\end{remark}
\begin{proof}
Recall that the function $h$ is $C^2$. By applying Itô's lemma, to the semimartingale $Z$ solution to \eqref{SDELCB}, we see that 
$$\left(h(Z_t)-\int_0^{t}\mathscr{L}h(Z_s)\ddr s,t\geq 0\right)$$ is a local martingale. By Lemma \ref{lem:Lh}, $\mathscr{L}h(z)=-\frac{c\ell}{2}z$. This provides the targeted local martingale. 

It remains to verify that when $\ell=0$, it is a strict local martingale. Let $z\geq 0$ and $t\geq 0$, by Proposition~\ref{lem:joiningduals1}, see \eqref{joiningduals}, and by applying Fubini-Tonelli's theorem, one gets
\begin{align*}
\mathbb{E}_z[h(Z_t)]&=-\int_0^{\infty}\mathbb{E}_z(1-e^{-xZ_t})S'(x)\ddr x=-\int_0^{\infty}\int_{0}^{\infty}ye^{-zy}\mathbb{P}_y(V_t\leq x)S'(x)\ddr x\ddr y\\
&=\int_0^{\infty}ye^{-zy}\mathbb{E}_y[S(V_t)]\ddr y.
\end{align*}
By Lemma \ref{lem:localmartingale}, $\mathbb{E}_y\big(\frac{S(V_t)}{S(y)}\big)=\mathbb{P}_y^{\downarrow}(t<T_0)\in (0,1)$ for any $t\geq 0$ and $y\geq 0$. Therefore by the identity \eqref{hwithS}, we get for any $t\geq 0$ and $z\geq 0$
$$h(z)-\mathbb{E}_z[h(Z_t)]=\int_{0}^{\infty}ye^{-zy}\big(S(y)-\mathbb{E}_y[S(V_t)]\big)\ddr y>0.$$  The process $(h(Z_t),t\geq 0)$ is a supermartingale and not a martingale. In particular in the case $\ell=0$, we deduce that $(h(Z_t),t\geq 0)$ is a strict local martingale. 
\qed
\end{proof}

\noindent \textbf{Proof of Theorem \ref{thm1}}. Lemma \ref{lem:defh} yields the first statement. The second is given by Lemma~\ref{lem:hexcessive}. \qed
\section{Proofs of Proposition \ref{prop:infimum} and Theorem \ref{mainthm}}\label{sec:prooftheorem2}
For all $z\in (0,\infty)$, define the measure $\mathbb{P}_z^{\uparrow}$ as the law of the Markov process with semigroup given by
\begin{equation}\label{defhtransform}P_t^{\uparrow}f(z):=\mathbb{E}_z^{\uparrow}\big(f(Z^{\uparrow}_t),t<\zeta\big):=\frac{1}{h(z)}\mathbb{E}_z[h(Z_t)f(Z_t)].\end{equation}
The process $(Z, \mathbb{P}_z^{\uparrow})$ is the $h$-transform of $Z$, see Section \ref{sec:doobtransform}. Recall that $\zeta$ denotes its lifetime. We choose $\infty$ as the cemetery state, so that $\zeta=\inf\{t>0:Z_t=\infty\}$, $\mathbb{P}_z^{\uparrow}$-a.s.. This convention is harmless, we shall indeed see that, in our context, $\infty$ is always absorbing\footnote{Recall however that when \asp \ holds, $\infty$ is an entrance boundary point for the LCB process} for $(Z,\mathbb{P}^{\uparrow}_z)$, meaning that when $f(\infty)=0$, $\underset{z\rightarrow \infty}{\lim} P^{\uparrow}_tf(z)=0$.

Recall that under $\mathbb{P}_z^{\uparrow}$, we extend the process $Z$ on $[\zeta,\infty)$ by setting $Z_{\zeta+t}=\infty$ for any $t\geq 0$, $\mathbb{P}_z^{\uparrow}$-a.s. In particular, one has for any continuous function $f$ vanishing at $\infty$, all $t\geq 0$ and $z\in (0,\infty)$, \[P_t^{\uparrow}f(z)=\mathbb{E}_z^{\uparrow}\big(f(Z_t),t<\zeta\big)
=\mathbb{E}_z^{\uparrow}\big(f(Z_t)\big).\] 
The auxiliary process $V^{\downarrow}$ and the identity \eqref{joiningduals} allow us to get a representation of the semigroup of $Z^{\uparrow}$ on the exponential functions. \begin{lemma}[Representation of the semigroup $P^{\uparrow}_t$]\label{semigroupofPuparrow} Assume \asp . For all $x,z\in (0,\infty)$ and $t\geq 0$, we have
\begin{align}
\mathbb{E}_z^{\uparrow}[e^{-xZ_t}]&=\frac{1}{h(z)}\int_{0}^{\infty}ze^{-zy}\mathbb{E}_y\left[\mathbbm{1}_{\{x\leq V_t\}}S(V_t-x)\right]\ddr y \label{semigroupPtuparrow1}\\
&=\frac{1}{h(z)}\int_{0}^{\infty}ze^{-zy}S(y)\mathbb{E}^{\downarrow}_y\left[\mathbbm{1}_{\{x\leq V_t\}}\frac{S(V_t-x)}{S(V_t)},t<T_0\right]\ddr y, \label{semigroupPtuparrow2}
\end{align}
where we recall $T_0:=\inf\{t>0: V_t=0\}$.
\end{lemma}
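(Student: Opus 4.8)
The plan is to combine the biduality identity \eqref{joiningduals} from Proposition \ref{lem:joiningduals1} with the Siegmund-duality identity for $V$ and the definition of the $h$-transformed semigroup in \eqref{defhtransform}, and then to rewrite the resulting expectation in terms of $\mathbb{P}^{\downarrow}_y$ via the Doob transform \eqref{doobVt}.

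\textbf{Step 1: Apply the Doob $h$-transform definition.} By \eqref{defhtransform} applied to $f=e_x$, we have $\mathbb{E}_z^{\uparrow}[e^{-xZ_t}]=\frac{1}{h(z)}\mathbb{E}_z[h(Z_t)e^{-xZ_t}]$. The function $h$ has the Bernstein representation $h(w)=\int_0^\infty(1-e^{-ww'})(-S)'(w')\,\ddr w'$, so $h(Z_t)e^{-xZ_t}=\int_0^\infty(e^{-xZ_t}-e^{-(x+w')Z_t})(-S)'(w')\,\ddr w'$. Taking $\mathbb{E}_z$ and using Fubini (justified since $h$ has finite expectation under $\mathbb{P}_z$, by Lemma \ref{lem:hexcessive} and the supermartingale property, together with $|e^{-xZ_t}-e^{-(x+w')Z_t}|\le 1-e^{-w'Z_t}$), we get
\[
\mathbb{E}_z[h(Z_t)e^{-xZ_t}]=\int_0^\infty\mathbb{E}_z\!\left[e^{-xZ_t}-e^{-(x+w')Z_t}\right](-S)'(w')\,\ddr w'.
\]

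\textbf{Step 2: Insert the biduality identity.} For each fixed $w'>0$, apply \eqref{joiningduals} to both Laplace exponents $x$ and $x+w'$: $\mathbb{E}_z[e^{-xZ_t}]=\int_0^\infty ze^{-zy}\mathbb{P}_y(V_t>x)\,\ddr y$ and similarly with $x+w'$. Subtracting, $\mathbb{E}_z[e^{-xZ_t}-e^{-(x+w')Z_t}]=\int_0^\infty ze^{-zy}\,\mathbb{P}_y(x<V_t\le x+w')\,\ddr y$. Substituting into Step 1 and exchanging the order of integration in $y$ and $w'$ (everything is nonnegative), the inner integral over $w'$ becomes, writing $\mathbb{P}_y(x<V_t\le x+w')=\mathbb{E}_y[\mathbbm{1}_{\{x<V_t\}}\mathbbm{1}_{\{V_t-x\le w'\}}]$,
\[
\int_0^\infty\mathbb{E}_y\!\left[\mathbbm{1}_{\{x<V_t\}}\mathbbm{1}_{\{V_t-x\le w'\}}\right](-S)'(w')\,\ddr w'=\mathbb{E}_y\!\left[\mathbbm{1}_{\{x<V_t\}}\int_{V_t-x}^{\infty}(-S)'(w')\,\ddr w'\right]=\mathbb{E}_y\!\left[\mathbbm{1}_{\{x\le V_t\}}S(V_t-x)\right],
\]
using that the tail of $s(\ddr w')=(-S)'(w')\,\ddr w'$ equals $S$, i.e. $\int_v^\infty(-S)'(w')\,\ddr w'=S(v)$. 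This yields \eqref{semigroupPtuparrow1} after dividing by $h(z)$.

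\textbf{Step 3: Pass to $\mathbb{P}^{\downarrow}_y$.} Recall from Proposition \ref{propVdown} and Lemma \ref{lem:lifetimeVdown} that $\mathbb{P}^{\downarrow}_y$ is the Doob transform of $(V,\mathbb{P}_y)$ with excessive function $S$, with lifetime $T_0$, so that \eqref{doobVt} gives for any bounded measurable $F\ge 0$ and any $t\ge 0$, $\mathbb{E}_y[S(V_t)F(V_t)]=S(y)\,\mathbb{E}^{\downarrow}_y[F(V_t)\mathbbm{1}_{\{t<T_0\}}]$ — more precisely, applying \eqref{doobVt} with $A=\{F(V_t)\in\cdot\}$-type sets and a monotone class argument. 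Choosing $F(v)=\mathbbm{1}_{\{x\le v\}}S(v-x)/S(v)$ (bounded by $1$ since $S$ is decreasing, hence $S(v-x)\le S(v-x)$... actually $S(v-x)\ge S(v)$ so $F$ is bounded by... wait) — here one must be slightly careful: since $S$ is \emph{decreasing}, $S(v-x)\ge S(v)$ for $x\ge 0$, so $S(v-x)/S(v)\ge 1$; nevertheless the product $S(V_t)F(V_t)=\mathbbm{1}_{\{x\le V_t\}}S(V_t-x)$ is itself $\le S(0-)$-free but genuinely integrable because it equals exactly the quantity already shown finite in Step 2. So the cleanest route is to write $\mathbb{E}_y[\mathbbm{1}_{\{x\le V_t\}}S(V_t-x)]=\mathbb{E}_y\big[S(V_t)\cdot\mathbbm{1}_{\{x\le V_t\}}\tfrac{S(V_t-x)}{S(V_t)}\big]$ and invoke \eqref{doobVt} in the form $\mathbb{E}_y[S(V_t)G(V_t)]=S(y)\mathbb{E}^{\downarrow}_y[G(V_t),t<T_0]$ valid for nonnegative measurable $G$ (by monotone convergence from the indicator case in \eqref{doobVt}). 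This gives \eqref{semigroupPtuparrow2}.

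\textbf{Main obstacle.} The only real subtlety is the repeated use of Fubini--Tonelli and the justification that the absolutely continuous change of measure \eqref{doobVt} extends from events to unbounded nonnegative functionals of $V_t$; since all integrands are nonnegative and the final quantity was already shown finite in Step 2, Tonelli handles everything, so this is routine rather than genuinely hard. One should also note that the identity is stated for $x,z\in(0,\infty)$, matching the domain of validity of \eqref{joiningduals}, so no boundary case analysis is needed here.
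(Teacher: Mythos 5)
Your proposal is correct and follows essentially the same route as the paper's proof: Bernstein representation of $h$ plus the biduality identity \eqref{joiningduals} and Tonelli to get \eqref{semigroupPtuparrow1}, then the Doob transform \eqref{doobVt} (extended to nonnegative functionals, with the contribution of $\{V_t=\infty\}$ vanishing because $S(\infty)=0$) to get \eqref{semigroupPtuparrow2}. The hesitation in Step 3 is resolved correctly — the ratio $S(V_t-x)/S(V_t)$ need not be bounded, but only nonnegativity is required for the change of measure, exactly as in the paper.
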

\begin{proof}
Recall $S(\infty)=0$, the expression of $h$ in \eqref{lem:defh} and the relationship \eqref{joiningduals} between the semigroups of $Z$ and $V$. For any $z,x>0$ and $t\geq 0$,
\begin{align*}
h(z)\mathbb{E}_z^{\uparrow}[e^{-xZ_t}]&=\int_{0}^{\infty}\mathbb{E}_z[e^{-xZ_t}-e^{-(x+v)Z_t}](-S'(v))\ddr v\\
&=\int_{0}^{\infty}\int_{0}^{\infty}ze^{-zy}\big(\mathbb{P}_y(V_t\geq x)-\mathbb{P}_y(V_t\geq x+v)\big)(-S'(v))\ddr v \ddr y\\
&=\int_{0}^{\infty}\int_{0}^{\infty}ze^{-zy}\mathbb{P}_y(x\leq V_t<x+v)\big(-S'(v)\big)\ddr v \ddr y\\
&=\int_{0}^{\infty}ze^{-zy}\mathbb{E}_y\big(\mathbbm{1}_{\{x\leq V_t\}}S(V_t-x)\big)\ddr y\\
&=\int_{0}^{\infty}ze^{-zy}S(y)\mathbb{E}_y\left(\mathbbm{1}_{\{x\leq V_t\}}\frac{S(V_t-x)}{S(V_t)}\frac{S(V_t)}{S(y)},t<T_\infty\right)\ddr y\\
&=\int_{0}^{\infty}ze^{-zy}S(y)\mathbb{E}^{\downarrow}_y\left(\mathbbm{1}_{\{x\leq V_t\}}\frac{S(V_t-x)}{S(V_t)},t<T_0\right)\ddr y.
\end{align*}
The fourth line above is obtained by applying Fubini-Tonelli's theorem. It provides \eqref{semigroupPtuparrow1}. The penultimate equality holds true since $(V,\mathbb{P}_y)$ has its boundary $\infty$ absorbing and $S(\infty)=0$. The last line, which gives \eqref{semigroupPtuparrow2}, is deduced from \eqref{doobVt}, where we recall that $\sigma=T_0$, $\mathbb{P}^{\downarrow}_y$-a.s., see Lemma \ref{lem:lifetimeVdown}, and from the fact that $(V,\mathbb{P}_y^{\downarrow})$ cannot hit $\infty$, i.e. $\mathbb{P}_y^{\downarrow}(t<T_\infty)=1$ for all $t\geq 0$. \qed
\end{proof}
\begin{lemma}\label{lem:finitelifetime}
For any $z>0$, recall $\zeta=\inf\{t>0:Z_{t}=\infty\}$, the lifetime of the process under $\mathbb{P}_z^{\uparrow}$. 
For all  $t>0$,
\[\mathbb{P}^{\uparrow}_z(\zeta>t)=\frac{1}{h(z)}\int_{0}^{\infty}ze^{-zy}S(y)\mathbb{P}_y^{\downarrow}(T_0>t)\ddr y\underset{t\rightarrow \infty}{\longrightarrow} 0,\]
and thus $(Z,\mathbb{P}_z^{\uparrow})$ has a finite lifetime a.s.. 
\end{lemma}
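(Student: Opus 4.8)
\textbf{Proof proposal for Lemma \ref{lem:finitelifetime}.}

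The plan is to apply the semigroup representation \eqref{semigroupPtuparrow2} with $x\to 0$ and then invoke Lemma \ref{lem:localmartingale}. First I would take $f\equiv 1$ in the definition \eqref{defhtransform} of the $h$-transform, or equivalently let $x\downarrow 0$ in the identity \eqref{semigroupPtuparrow2} of Lemma \ref{semigroupofPuparrow}. Since $\mathbb{E}_z^{\uparrow}[e^{-xZ_t}]\to \mathbb{E}_z^{\uparrow}[\mathbbm{1}_{\{Z_t<\infty\}}]=\mathbb{P}_z^{\uparrow}(t<\zeta)$ as $x\downarrow 0$ by monotone (or dominated) convergence on the left, and on the right-hand side $\mathbbm{1}_{\{x\le V_t\}}\frac{S(V_t-x)}{S(V_t)}\to 1$ pointwise on $\{t<T_0\}$ (using that $V_t\in(0,\infty)$ there and $S$ is continuous on $(0,\infty)$), another application of dominated convergence—justified because $\mathbbm{1}_{\{x\le V_t\}}\frac{S(V_t-x)}{S(V_t)}$ is bounded by $1$ since $S$ is decreasing, and $\int_0^\infty z e^{-zy}S(y)\,\ddr y = h(z)<\infty$ by \eqref{hwithS}—yields
\[
\mathbb{P}_z^{\uparrow}(\zeta>t)=\frac{1}{h(z)}\int_0^\infty z e^{-zy}S(y)\,\mathbb{P}_y^{\downarrow}(t<T_0)\,\ddr y.
\]

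Next I would let $t\to\infty$. By Lemma \ref{lem:localmartingale}, for every $y\in(0,\infty)$ the diffusion $(V,\mathbb{P}_y^{\downarrow})$ hits $0$ almost surely, i.e. $T_0<\infty$, $\mathbb{P}_y^{\downarrow}$-a.s., hence $\mathbb{P}_y^{\downarrow}(t<T_0)\downarrow 0$ as $t\to\infty$ for each fixed $y$. Since the integrand is dominated by $z e^{-zy}S(y)$, which is integrable in $y$ with integral $h(z)$, dominated convergence gives $\mathbb{P}_z^{\uparrow}(\zeta>t)\to 0$, so $\zeta<\infty$, $\mathbb{P}_z^{\uparrow}$-a.s.

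The only delicate point is the interchange of the limit $x\downarrow 0$ with the $\ddr y$-integral in the first step: one must make sure the factor $S(V_t-x)/S(V_t)$ stays controlled as $x\downarrow 0$. This is immediate from monotonicity of $S$ (it is non-increasing, so $S(V_t-x)\ge S(V_t)$, but also $S(V_t-x)/S(V_t)$ need not be bounded a priori near the event $\{V_t\text{ small}\}$)—however, on $\{x\le V_t\le 1\}$ one still has $S(V_t-x)\le S(V_t-x)$ and the ratio can blow up; the clean fix is to note that under $\mathbb{P}_y^{\downarrow}$ we only integrate against $S(y)\,\mathbb{P}_y^{\downarrow}(\cdot)$ and use the unnormalized form \eqref{semigroupPtuparrow1} instead, where the integrand $\mathbbm{1}_{\{x\le V_t\}}S(V_t-x)$ increases to $\mathbbm{1}_{\{V_t>0\}}S(V_t)=S(V_t)$ as $x\downarrow 0$ (since $(V,\mathbb{P}_y)$ has $\infty$ absorbing and $S(\infty)=0$, the value $S(V_t)$ is finite a.s. and $\mathbb{E}_y[S(V_t)]\le S(y)<\infty$ by the supermartingale property of $S(V)$), so monotone convergence applies directly and then Tonelli lets us swap with $\int_0^\infty z e^{-zy}(\cdot)\,\ddr y$. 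I expect this bookkeeping around \eqref{semigroupPtuparrow1} versus \eqref{semigroupPtuparrow2} to be the main—though entirely routine—obstacle; everything else is a direct appeal to Lemma \ref{lem:localmartingale}.
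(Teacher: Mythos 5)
Your overall strategy coincides with the paper's: let $x\downarrow 0$ in the semigroup representation of Lemma \ref{semigroupofPuparrow} and then send $t\to\infty$ using Lemma \ref{lem:localmartingale}. The $t\to\infty$ step is correct exactly as you wrote it (pointwise decay of $\mathbb{P}_y^{\downarrow}(T_0>t)$ plus domination by $ze^{-zy}S(y)$, whose integral is $h(z)<\infty$). The problem is your ``clean fix'' for the $x\downarrow 0$ interchange. You claim that $\mathbbm{1}_{\{x\leq V_t\}}S(V_t-x)$ \emph{increases} to $S(V_t)$ as $x\downarrow 0$, so that monotone convergence applies. This is backwards: $S$ is \emph{decreasing}, so on $\{V_t=v\in(0,\infty)\}$ one has $S(v-x)\geq S(v)$ for $0<x<v$, and $S(v-x)$ \emph{decreases} to $S(v)$ as $x\downarrow 0$; worse, for $x$ just below $v$ the value $S(v-x)$ is close to $S(0+)=\infty$ (recall $S(0)=\infty$ under $\mathcal{E}=\infty$), so the family is not monotone in either direction across the threshold $x=V_t$, and the decreasing form of monotone convergence has no integrable majorant at a fixed $x_1$ (on $\{V_t<x_1\}$ the integrand jumps from $0$ to arbitrarily large values as $x$ decreases further). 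So the interchange is not justified as written; the same reversed-monotonicity slip also undermines your first attempt with the ratio $S(V_t-x)/S(V_t)$, which is $\geq 1$, not $\leq 1$.

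The painless route is the one you mention in your first sentence and then abandon: take $f\equiv 1$ directly in the definition \eqref{defhtransform}, which gives $\mathbb{P}_z^{\uparrow}(\zeta>t)=\frac{1}{h(z)}\mathbb{E}_z[h(Z_t)]$ with no limit in $x$ at all. Then the computation already carried out in the proof of Lemma \ref{lem:hexcessive} — which only uses Fubini--Tonelli for nonnegative integrands together with \eqref{joiningduals} and \eqref{doobVt} — yields
\begin{equation*}
\mathbb{E}_z[h(Z_t)]=\int_0^{\infty}ze^{-zy}\,\mathbb{E}_y[S(V_t)]\,\ddr y=\int_0^{\infty}ze^{-zy}S(y)\,\mathbb{P}_y^{\downarrow}(T_0>t)\,\ddr y,
\end{equation*}
which is exactly the claimed identity. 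With that substitution your argument closes; as it stands, the monotone-convergence step is a genuine gap.
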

\begin{proof}
By letting $x$ go to $0$ in the identity \eqref{semigroupPtuparrow2}; and by  continuity under expectation, we see that
\[\mathbb{P}^{\uparrow}_z(\zeta>t)=\underset{x\rightarrow 0}{\lim}\mathbb{E}^{\uparrow}_z(e^{-xZ_t})=\frac{1}{h(z)}\int_{0}^{\infty}ze^{-zy}S(y)\mathbb{P}^{\downarrow}_y(T_0>t)\ddr y.\]
By Lemma \ref{lem:localmartingale}, for all $y\in (0,\infty)$, $\mathbb{P}_y^{\downarrow}(T_0=\infty)=0$. Therefore, by Lebesgue's theorem, we see that $\mathbb{P}^{\uparrow}_z(\zeta=\infty)=0$. \qed
\end{proof}

\subsection{Proof of Proposition \ref{prop:infimum}}\label{sec:corollary}
We study the infimum of the process $(Z,\mathbb{P}_z^{\uparrow})$ for $z>0$.
Set $g(z):=1/h(z)$ for any $z\in (0,\infty)$ and note that $g(0)=\underset{z\rightarrow 0}{\lim}g(z)=\infty$ and $g(\infty)=\underset{z\rightarrow \infty}{\lim}g(z)=0$. By definition of $\mathbb{P}^{\uparrow}_z$, see \eqref{defhtransform}, $$\mathbb{E}^{\uparrow}_z\big(g(Z_t)\big)=g(z)\mathbb{E}_z\big(h(Z_t)g(Z_t)\big)=g(z).$$
Hence the function is invariant for the semigroup $(P_t)$ and  $(g(Z_t),t\geq 0)$ is a martingale under $\mathbb{P}_z^{\uparrow}$. Denote by $\zeta^{-}_{a}$ the first passage time below level $a$.  We get by the optional stopping theorem applied to $t\wedge \zeta^{-}_{a}\wedge \zeta$ with $a\leq z$:
\[\mathbb{E}^{\uparrow}_z\big(g(Z_{t\wedge \zeta^{-}_{a}\wedge \zeta})\big)=g(z).\]
By the absence of negative jumps in $Z$ under $\mathbb{P}^{\uparrow}_z$, see Remark \ref{rem:nonegativejumps}-i), we have $Z_{\zeta_a^-}=a$ a.s. on $\{\zeta_a^-<\zeta\}$. Recall that by Lemma \ref{lem:finitelifetime}, $\zeta$ is finite almost surely. By letting $t$ go to $\infty$, we get 
\begin{center} 
$\mathbb{E}^{\uparrow}_z\big(g(Z_{\zeta_a^{-}\wedge \zeta})\big)=
g(a)\mathbb{P}^{\uparrow}_z(\zeta_a^{-}<\zeta)+\mathbb{E}^{\uparrow}_{z}\big(g(Z_{\zeta})(1-\mathbbm{1}_{\{\zeta_a^{-}<\zeta\}})\big)= g(z)$.\end{center}
Since $g(Z_\zeta)=g(\infty)=0$ a.s. and $\mathbb{P}_z^{\uparrow}\big(\zeta_a^{-}<\zeta\big)=\mathbb{P}_z^{\uparrow}\big(\underset{0\leq t<\zeta}{\inf} Z_t\leq a\big)$,  the proof is achieved. \qed

\subsection{Proof of Theorem \ref{mainthm}}
\begin{lemma}\label{lem:Feller}
The process $(Z,\mathbb{P}^{\uparrow}_z)$ is Feller in the sense that its semigroup maps $C_0$, the space of continuous functions defined on $(0,\infty)$, vanishing at $\infty$, into itself.
Moreover,
if $\int^{\infty}\log y\, \pi(\ddr y)<\infty$ then $Z^{\uparrow}_{\zeta-}<\infty$ with positive probability and 
if $\int^{\infty}\log y\, \pi(\ddr y)=\infty$ then $Z^{\uparrow}_{\zeta-}=\infty$ a.s. 
\end{lemma}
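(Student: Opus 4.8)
The plan is to treat the two assertions separately, relying on the explicit description of $P^{\uparrow}_t$ and on the general Doob-transform facts of Section~\ref{sec:doobtransform}.

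\emph{Feller property.} Fix $t\geq 0$ and $f\in C_0$. I would not work with $f$ directly — since $h$ is unbounded, $hf$ need not extend continuously to $[0,\infty]$ — but truncate: for $N\geq 1$ pick a continuous $\chi_N\colon[0,\infty)\to[0,1]$ with $\chi_N\equiv 1$ on $[0,N]$ and $\chi_N\equiv 0$ on $[N+1,\infty)$. Then $h\,(f\chi_N)$ lies in $C_c([0,\infty))\subset\hat C$; since the LCB process $Z$ is Feller on $[0,\infty]$ (under \asp, see \cite{MR3940763}), $z\mapsto \mathbb{E}_z[h(Z_t)(f\chi_N)(Z_t)]$ is bounded and continuous on $[0,\infty]$, and because $h$ is continuous, positive on $(0,\infty)$ with $h(\infty)=\infty$ (Lemma~\ref{lem:defh}), the ratio $P^{\uparrow}_t(f\chi_N)=\mathbb{E}_\cdot[h(Z_t)(f\chi_N)(Z_t)]/h(\cdot)$ is continuous on $(0,\infty)$ and vanishes at $\infty$, i.e. $P^{\uparrow}_t(f\chi_N)\in C_0$. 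To pass to the limit, I would use the supermartingale property $\mathbb{E}_z[h(Z_t)]\leq h(z)$ from Theorem~\ref{thm1}(ii): with $\varepsilon_N:=\sup_{a\geq N}|f(a)|\to 0$,
\[
\big|P^{\uparrow}_tf(z)-P^{\uparrow}_t(f\chi_N)(z)\big|\leq \frac{1}{h(z)}\,\mathbb{E}_z\!\big[h(Z_t)\,|f|(Z_t)\,\mathbbm{1}_{\{Z_t>N\}}\big]\leq \varepsilon_N,\qquad z>0,
\]
so $P^{\uparrow}_t(f\chi_N)\to P^{\uparrow}_tf$ uniformly on $(0,\infty)$, whence $P^{\uparrow}_tf\in C_0$ since $C_0$ is uniformly closed.

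\emph{Behaviour at the lifetime.} Here I would simply match the present $h$-transform with the trichotomy of Section~\ref{sec:doobtransform}. The assumptions of Theorem~\ref{doobtransform1} hold (under \asp, $Z$ is strong Markov on $[0,\infty]$ with infinite lifetime, and $h(0)=0$, $h(\infty)=\infty$, $0<h<\infty$ on $(0,\infty)$ by Theorem~\ref{thm1}(i)), so $\mathbb{P}^{\uparrow}_z$ of \eqref{defhtransform} is the Doob transform considered there and Lemma~\ref{lem:strictlocalmartingale} applies. By Theorem~\ref{thm1}(ii) (together with \eqref{vartheta}), $(h(Z_t))_{t\geq0}$ is a strict local martingale when $\int^{\infty}\log y\,\pi(\ddr y)=\infty$ and a strict supermartingale when $\int^{\infty}\log y\,\pi(\ddr y)<\infty$. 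In the first case, the proof of Lemma~\ref{lem:strictlocalmartingale} yields $Z_{\zeta-}=\infty$ on $\{\zeta<\infty\}$, $\mathbb{P}^{\uparrow}_z$-a.s., and since $\mathbb{P}^{\uparrow}_z(\zeta<\infty)=1$ by Lemma~\ref{lem:finitelifetime}, $Z_{\zeta-}=\infty$ $\mathbb{P}^{\uparrow}_z$-a.s. In the second case, Lemma~\ref{lem:strictlocalmartingale} furnishes some $n\geq1$ with $\mathbb{P}^{\uparrow}_z(\zeta_n^{+}\geq\zeta)>0$, where $\zeta_n^{+}=\inf\{t>0:Z_t>n\}$; on this event (which lies in $\{\zeta<\infty\}$ up to a null set, again by Lemma~\ref{lem:finitelifetime}) one has $\sup_{0\leq s<\zeta}Z_s\leq n$, while Proposition~\ref{prop:infimum} gives $\inf_{0\leq s<\zeta}Z_s>0$ $\mathbb{P}^{\uparrow}_z$-a.s. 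Thus, on a set of positive $\mathbb{P}^{\uparrow}_z$-measure, the path $(Z_s)_{0\leq s<\zeta}$ — càdlàg by the Feller property just shown — stays in a compact subset of $(0,\infty)$, so $Z_{\zeta-}$ exists and is finite, giving $\mathbb{P}^{\uparrow}_z(Z_{\zeta-}<\infty)>0$.

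The step I expect to be the main obstacle is the Feller property, precisely because $h$ is unbounded: one cannot simply invoke the Feller property of $Z$ applied to $hf$, and the uniform truncation genuinely uses the supermartingale bound $\mathbb{E}_z[h(Z_t)]\le h(z)$. A secondary technical point is the existence of a finite left limit of $Z$ at $\zeta$ in the killing regime, which is what forces the use of Proposition~\ref{prop:infimum} (to confine the path to a compact subset of $(0,\infty)$) rather than a direct argument.
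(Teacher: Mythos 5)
Your proof is correct. The second half (behaviour at the lifetime) follows exactly the paper's route: càdlàg paths from the Feller property give a left limit at $\zeta$, and then Lemma~\ref{lem:strictlocalmartingale} combined with Theorem~\ref{thm1}(ii) and Lemma~\ref{lem:finitelifetime} yields the dichotomy; your extra appeal to Proposition~\ref{prop:infimum} for a lower confinement is harmless but not needed, since $Z_{\zeta-}\leq n$ on $\{\zeta_n^+\geq\zeta\}$ already gives finiteness.

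For the Feller property your argument is genuinely different from the paper's. The paper works only with the exponential test functions $e_x$: continuity on $(0,\infty)$ is read off the explicit representation of $P_t^{\uparrow}e_x$ via the bidual diffusion $V$ (Lemma~\ref{semigroupofPuparrow}), the decay at $\infty$ uses the bound $h(z)\leq h'(0)z$ together with the entrance property of $\infty$ for $Z$ (so that $\mathbb{E}_\infty[h(Z_t)e_x(Z_t)]<\infty$ while $h(z)\to\infty$), and the general case is then obtained by Stone--Weierstrass. You instead truncate an arbitrary $f\in C_0$, invoke the Feller property of the LCB process on $[0,\infty]$ to handle $h(f\chi_N)\in C_c$, and recover $P_t^{\uparrow}f$ as a uniform limit using the supermartingale bound $\mathbb{E}_z[h(Z_t)]\leq h(z)$ and the closedness of $C_0$ under uniform convergence. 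Your route avoids both the dual representation and Stone--Weierstrass and is arguably more transparent, at the cost of importing the Feller property of $Z$ on the compactified state space from \cite{MR3940763} (the paper also implicitly uses continuity of $z\mapsto\mathbb{E}_z[h(Z_t)e_x(Z_t)]$ at $z=\infty$, so this is a legitimate input in either argument). Both proofs are valid.
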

\begin{proof}
We first show the Feller property. Let $e_x(z):=e^{-xz}$ for any $x,z\geq 0$. The continuity of the map $P^{\uparrow}_te_x(\cdot)$ on $(0,\infty)$ is a direct application of the representation of the semigroup $P^{\uparrow}_t$ given in Lemma \ref{semigroupofPuparrow}, \eqref{semigroupPtuparrow1}. For the limit at $\infty$, note that by definition $h(z)P^{\uparrow}_te_x(z)=\mathbb{E}_z\big(e_x(Z_t)h(Z_t)\big)$. Recall that $h(z)\leq h'(0)z$, hence $z\mapsto h(z)e_x(z)$ is continuous vanishing at $\infty$. 
Moreover, the boundary point $\infty$ is an entrance for the LCB process $Z$, see Table \ref{correspondanceVZ}, therefore 
\[\underset{z\rightarrow \infty}{\lim}\, h(z)P^{\uparrow}_te_x(z)=\mathbb{E}_\infty[h(Z_t)e_x(Z_t)]<\infty.\] 
Finally since $h(z)\underset{z\rightarrow \infty}{\longrightarrow} \infty$, we have that $z\mapsto P^{\uparrow}_te_x(z)$ is vanishing at $\infty$. 
By the Stone-Weierstrass theorem, we see that $P_t^{\uparrow}C_0\subset C_0$ for any $t>0$.  Lastly, since the process is Feller, it admits a version with c\`adl\`ag sample paths, see e.g. \cite[Theorem 6, Chapter 2, page 54]{{zbMATH02208909}}, hence with a left limit at the lifetime $\zeta$. Lemma \ref{lem:hexcessive} entails then that when $\ell>0$, $(h(Z_t),t\geq 0)$ is a strict supermartingale (it is not a local martingale), Lemma \ref{lem:strictlocalmartingale} in Section \ref{sec:doobtransform} ensures then that the $h$-transformed process is killed with positive probability. At the contrary, when $\ell=0$, the process explodes continuously a.s. since  $(h(Z_t),t\geq 0)$ is a strict local martingale. \qed
\end{proof}
%
We now relate the excessive function $h$ to the conditioning along the total progeny. Recall $J=\int_0^\infty Z_s\ddr s$ and that under \asp, $J<\infty$, $\mathbb{P}_z$-almost surely for all $z\geq 0$. 
\begin{lemma}\label{lem:convergenceL1} Assume \asp . Let $\theta>0$. Set $$f_\theta(z):=\int_{0}^{\infty}x^{2\theta/c}e^{-xz}\frac{1}{x}e^{-\int_{x_0}^{x}\frac{2\Psi(u)}{cu}\ddr u}\ddr x=\int_{0}^{\infty}x^{2\theta/c}e^{-xz}(-S)'(x)\ddr x.$$
The function $f_\theta$ is well-defined decreasing and bounded (so that $f_\theta(0)<\infty$). Moreover 
\begin{equation}\label{totalprogeny}
\mathbb{E}_z[e^{-\theta J}]=\frac{f_\theta(z)}{f_\theta(0)},
\end{equation}
and for any $(\mathcal{F}_t)$-stopping time $T\geq 0$ and $z\geq 0$,
\begin{equation}\label{cvfthetatohZ_t}
\mathbb{E}_z\big(f_\theta(0)-f_\theta(Z_T)\big)\underset{\theta \rightarrow 0}{\longrightarrow} \mathbb{E}_z\big(h(Z_T)\big).
\end{equation}
\end{lemma}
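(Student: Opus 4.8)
The plan is to handle the three claims of Lemma~\ref{lem:convergenceL1} in order. For the first, I would split the integral defining $f_\theta$ at $x_0$. Near $0$, writing $(-S)'(x)=\frac{1}{x}e^{-\int_{x_0}^{x}\frac{2\Psi(u)}{cu}\ddr u}=\frac{1}{x\,m(x)}$ and recalling from the proof of Lemma~\ref{lem:estimates} that $1/m$ is slowly varying at $0$ (because $\Psi(0)=0$), the integrand $x^{2\theta/c}(-S)'(x)=x^{2\theta/c-1}/m(x)$ has a regularly varying singularity of index $2\theta/c-1>-1$, hence is integrable near $0$. Near $\infty$, the assumption $\Psi(\infty)=\infty$ gives $\Psi(u)/u\to\infty$, so $(-S)'$ decays exponentially (as in the proof of Lemma~\ref{lem:estimates}\,(iii)), which dominates the polynomial factor; thus $f_\theta(0)<\infty$. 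That $f_\theta$ is non-increasing, and $f_\theta(z)\le f_\theta(0)$, is immediate from positivity of the integrand and monotonicity of $z\mapsto e^{-xz}$.

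For the Laplace transform identity, set $w:=f_\theta/f_\theta(0)$; the estimates above show $w\in C^2([0,\infty))$ (all moments $\int_0^\infty x^k x^{2\theta/c}(-S)'(x)\ddr x$ are finite), $w(0)=1$ and $0<w\le 1$. I would first verify $\mathscr{L}w(z)=\theta z\,w(z)$ on $(0,\infty)$. Writing $w(z)=\frac{1}{f_\theta(0)}\int_0^\infty e^{-xz}g(x)\ddr x$ with $g(x):=x^{2\theta/c}(-S)'(x)$, using the Laplace-duality identity $\mathscr{L}e_x(z)=\mathscr{A}e_z(x)$ of Lemma~\ref{lemmadualityLA} and Fubini, one gets $f_\theta(0)\,\mathscr{L}w(z)=\int_0^\infty\big(\tfrac{c}{2}xz^2+\Psi(x)z\big)e^{-xz}g(x)\ddr x$; dividing by $z$ and integrating $\tfrac{c}{2}xze^{-xz}=-\tfrac{c}{2}x\,\partial_x e^{-xz}$ by parts, the boundary term $xg(x)e^{-xz}$ vanishes at $\infty$ (exponential decay) and at $0$ (since $xg(x)=x^{2\theta/c}/m(x)\to 0$, using $2\theta/c>0$ and slow variation of $1/m$), leaving the requirement $\tfrac{c}{2}\big(g(x)+xg'(x)\big)+\Psi(x)g(x)=\theta g(x)$, i.e. $\tfrac{c}{2}xg'(x)=\big(\theta-\tfrac{c}{2}-\Psi(x)\big)g(x)$ — a first-order ODE whose solution is exactly $g(x)\propto x^{2\theta/c}e^{-\int_{x_0}^{x}\frac{2\Psi(u)}{cu}\ddr u}=x^{2\theta/c}(-S)'(x)$. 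Hence $\mathscr{L}w=\theta z\,w$, and by Itô's formula applied to $Z$ solving \eqref{SDELCB} (Lemma~\ref{lem:MPZ}) the process $M_t:=e^{-\theta\int_0^tZ_s\ddr s}w(Z_t)$ is a local martingale; since $0\le M_t\le 1$ it is a bounded true martingale, so $w(z)=\mathbb{E}_z[M_t]$ for every $t$. Letting $t\to\infty$ and using that under \asp\ one has $Z_t\to 0$ and $J<\infty$ $\mathbb{P}_z$-a.s. (Proposition~\ref{prop1}), together with continuity of $w$ at $0$ and $w(0)=1$, bounded convergence yields $w(z)=\mathbb{E}_z[e^{-\theta J}]$, which is \eqref{totalprogeny}.

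For \eqref{cvfthetatohZ_t} I would first record the deterministic limit. Since $f_\theta(0)-f_\theta(z)=\int_0^\infty(1-e^{-xz})x^{2\theta/c}(-S)'(x)\ddr x$ and $h(z)=\int_0^\infty(1-e^{-xz})(-S)'(x)\ddr x$, the elementary inequality $x^{2\theta/c}\le 1+x^{2\theta_0/c}$, valid for all $x>0$ and $0<\theta\le\theta_0$, gives both the $\theta$-uniform envelope $0\le f_\theta(0)-f_\theta(z)\le h(z)+f_{\theta_0}(0)$ and, by dominated convergence as $\theta\downarrow 0$ (integrand dominated by $(1-e^{-xz})(1+x^{2\theta_0/c})(-S)'(x)$, integrable since $h(z)<\infty$ and $f_{\theta_0}(0)<\infty$), the pointwise limit $f_\theta(0)-f_\theta(z)\to h(z)$ for every $z\ge 0$. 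Applying this with $z=Z_T(\omega)$ gives $f_\theta(0)-f_\theta(Z_T)\to h(Z_T)$ $\mathbb{P}_z$-a.s., dominated by the fixed random variable $h(Z_T)+f_{\theta_0}(0)$. The latter is $\mathbb{P}_z$-integrable: by Lemma~\ref{lem:hexcessive} the process $(h(Z_t),t\ge 0)$ is a non-negative $\mathbb{P}_z$-supermartingale, so optional stopping in Fatou form (using $h(Z_t)\to h(0)=0$ as $t\to\infty$) gives $\mathbb{E}_z[h(Z_T)]\le h(z)<\infty$. A final dominated convergence over $(\Omega,\mathbb{P}_z)$ then yields $\mathbb{E}_z[f_\theta(0)-f_\theta(Z_T)]\to\mathbb{E}_z[h(Z_T)]$.

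The main obstacle is the middle step: pinning down the ODE for $g$ from the Laplace-duality identity and, above all, checking that the integration-by-parts boundary terms genuinely vanish — at $\infty$ this uses $\Psi(\infty)=\infty$, while at $0$ it crucially uses $2\theta/c>0$, so the factor $x^{2\theta/c}$ kills the otherwise merely slowly varying $1/m(x)$. This is precisely why one cannot set $\theta=0$ directly (for $\theta=0$, $g=(-S)'$ and $xg(x)=1/m(x)$ need not vanish at $0$) and must instead pass to the limit as in \eqref{cvfthetatohZ_t}. Some care is also needed to justify that $w$ lies in a class to which Itô's formula applies, and to handle the possible events $\{Z$ hits $0\}$ and $\{T=\infty\}$, on which $f_\theta(0)-f_\theta(Z_T)=0=h(Z_T)$ consistently.
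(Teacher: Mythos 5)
Your proposal is correct. For the well-definedness of $f_\theta$ and for the convergence \eqref{cvfthetatohZ_t} you follow essentially the paper's route: integrability comes from the boundedness of $x(-S)'(x)=1/m(x)$ near $0$ and its super-polynomial decay at $\infty$ (consequences of $\Psi(0)=0$ and $\Psi(\infty)=\infty$), and the limit is obtained by dominating $x^{2\theta/c}$ with $1+x^{2\theta_0/c}$ and using the positive-supermartingale property of $(h(Z_t))$ to get $\mathbb{E}_z[h(Z_T)]\le h(z)<\infty$; the paper performs a single Fubini plus dominated convergence in the $x$-variable rather than your two-stage (first pointwise in $z$, then over $\Omega$) argument, but this is cosmetic. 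The genuine difference is the identity \eqref{totalprogeny}: the paper simply cites \cite[Lemma 4.4]{MR3940763}, whereas you reprove it from scratch by checking that $g(x)=x^{2\theta/c}(-S)'(x)$ solves the first-order ODE $\tfrac{c}{2}xg'=(\theta-\tfrac{c}{2}-\Psi)g$, deducing $\mathscr{L}w=\theta z\,w$ for $w=f_\theta/f_\theta(0)$ via the Laplace-duality identity of Lemma \ref{lemmadualityLA} and an integration by parts whose boundary term at $0$ vanishes precisely because the factor $x^{2\theta/c}$ tames the merely slowly varying $1/m$, and then running a Feynman--Kac argument with the bounded martingale $e^{-\theta\int_0^tZ_s\ddr s}w(Z_t)$ and letting $t\to\infty$ (using $Z_t\to0$ and $J<\infty$ a.s. under \asp). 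This is a valid, self-contained derivation of the cited result, and your closing remark correctly identifies why the argument degenerates at $\theta=0$; the only routine points left implicit are the Fubini exchange in computing $\mathscr{L}w$ (covered by $|\Psi(x)|\le C_0+C_1x^2$ together with the finiteness of all moments of $g$) and the membership $w\in C^2([0,\infty))$, both of which you flag.
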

\begin{proof} 
The functions $f_\theta,\theta>0$ have been studied in \cite{MR3940763}, see therein Lemma 4.4 and Equation (4.5). We recheck briefly that they are well-defined. Recall \eqref{eq:xS'x}, \[x(-S'(x))=e^{-\int_{x_0}^{x}\frac{2}{c}\frac{\Psi(u)}{u}\ddr u}.\] 
As seen in Lemma \ref{lem:estimates}-ii) , the latter is bounded when $x$ is close to $0$, and decays towards $0$ when $x$ goes to $\infty$ faster than $x^{-2b/c}$ for any $b>0$. By choosing $b>\theta$, we have that
$$f_\theta(z)\leq f_\theta(0)=\int_{0}^{\infty}x^{\frac{2\theta}{c}-1}x(-S'(x))\ddr x<\infty.$$
The identity \eqref{totalprogeny} can be found in \cite[Lemma 4.4]{MR3940763}. We now establish
\eqref{cvfthetatohZ_t}. By definition of $f_\theta$ and Fubini-Tonelli,
\[\mathbb{E}_z\big(f_\theta(0)-f_\theta(Z_T)\big)=\mathbb{E}_z\left[\int_0^\infty x^{2\theta/c}\left(1-e^{-xZ_T}\right)(-S'(x))\ddr x\right]=\int_0^\infty x^{2\theta/c}\mathbb{E}_z\left(1-e^{-xZ_T}\right)(-S'(x))\ddr x.\]
Plainly, the last integrand goes to $\mathbb{E}_z\left(1-e^{-xZ_T}\right)(-S'(x))$ as $\theta$ goes to $0$. One has furthermore the following bound, for any fixed $\theta_0>0$ and $\theta<\theta_0$,
\[x^{2\theta/c}\mathbb{E}_z\left(1-e^{-xZ_T}\right)(-S'(x))\leq \left(1+x^{2\theta_0/c}\right)\mathbb{E}_z\left(1-e^{-xZ_T}\right)(-S'(x)).\]
The integral with respect to $\ddr x$ of the upper bound equals $\mathbb{E}_z\big(h(Z_T)+f_{\theta_0}(Z_T)\big)$ which is finite since $f_{\theta_0}$ is bounded and $(h(Z_t),t\geq 0)$ is a positive supermartingale. We get by applying Lebesgue's theorem,

\[\int_0^\infty x^{2\theta/c}\mathbb{E}_z\left(1-e^{-xZ_T}\right)(-S'(x))\ddr x \underset{\theta \rightarrow 0}{\longrightarrow}\int_0^\infty \mathbb{E}_z\left(1-e^{-xZ_T}\right)(-S'(x))\ddr x=\mathbb{E}_z(h(Z_T)),\]
which yields \eqref{cvfthetatohZ_t}. \qed
\end{proof}
\begin{lemma}\label{lem:conditioningalongprogeny} Assume \asp . Let $\mathbbm{e}$ be a standard exponential random variable, independent of $Z$. One has, for any $(\mathcal{F}_t)$-stopping time $T$,  if $\Lambda\in \mathcal{F}_{T}$,
\[\mathbb{P}^{\uparrow}_z(\Lambda)=\underset{\theta \rightarrow 0}{\lim}\,\mathbb{P}_z(\Lambda, T\leq \mathbbm{e}/\theta|J\geq \mathbbm{e}/\theta).
\]
\end{lemma}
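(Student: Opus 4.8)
The plan is to write the conditional probability as a ratio and take the limits of numerator and denominator separately, the two workhorses being the identity $\mathbb{E}_z(e^{-\theta J})=f_\theta(z)/f_\theta(0)$ of \eqref{totalprogeny} and the convergence \eqref{cvfthetatohZ_t}. Since $\mathbbm{e}$ is an independent standard exponential, the threshold $\mathbbm{e}/\theta$ is naturally a \emph{clock} level (recall that $J=C_\infty=\int_0^\infty Z_s\,\ddr s$ is the total clock time in Lamperti's representation, Section~\ref{sec:martingaleproblemLCB}), so I read the event ``$T\leq \mathbbm{e}/\theta$'' as $\{C_T\leq \mathbbm{e}/\theta\}$ with $C_T:=\int_0^T Z_s\,\ddr s$; note $C_T\leq J<\infty$ $\mathbb{P}_z$-a.s.\ under \asp. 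Then
\[
\mathbb{P}_z\big(\Lambda,\ T\leq \mathbbm{e}/\theta\ \mid\ J\geq \mathbbm{e}/\theta\big)=\frac{\mathbb{P}_z\big(\Lambda,\ C_T\leq \mathbbm{e}/\theta\leq J\big)}{\mathbb{P}_z\big(\mathbbm{e}/\theta\leq J\big)} .
\]
Integrating out $\mathbbm{e}$, the denominator is $\mathbb{E}_z(1-e^{-\theta J})=\big(f_\theta(0)-f_\theta(z)\big)/f_\theta(0)$ by \eqref{totalprogeny}, and \eqref{cvfthetatohZ_t} taken at $T=0$ gives $f_\theta(0)-f_\theta(z)\to h(z)$ as $\theta\to 0$.

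For the numerator I would condition on $\mathcal{F}_T$ and split the progeny at $T$: $J=C_T+\int_T^\infty Z_s\,\ddr s$, where by the strong Markov property $\int_T^\infty Z_s\,\ddr s$ is, conditionally on $\mathcal{F}_T$, distributed as $J$ issued from $Z_T$. As $C_T\leq J$, the event reads $\{\theta C_T\leq \mathbbm{e}\leq \theta J\}$; integrating out $\mathbbm{e}$ and then applying \eqref{totalprogeny} at the point $Z_T$,
\[
\mathbb{P}_z\big(\Lambda,\ C_T\leq \mathbbm{e}/\theta\leq J\big)=\mathbb{E}_z\!\Big(\mathbbm{1}_\Lambda\big(e^{-\theta C_T}-e^{-\theta J}\big)\Big)=\frac{1}{f_\theta(0)}\,\mathbb{E}_z\!\Big(\mathbbm{1}_\Lambda\, e^{-\theta C_T}\big(f_\theta(0)-f_\theta(Z_T)\big)\Big),
\]
so that the ratio above equals $\mathbb{E}_z\big(\mathbbm{1}_\Lambda e^{-\theta C_T}(f_\theta(0)-f_\theta(Z_T))\big)\big/\big(f_\theta(0)-f_\theta(z)\big)$.

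It then remains to pass to the limit $\theta\to 0$ in the numerator. The integrand converges pointwise to $\mathbbm{1}_\Lambda h(Z_T)$, since $e^{-\theta C_T}\to 1$ (because $C_T<\infty$) and $f_\theta(0)-f_\theta(Z_T)\to h(Z_T)$. To justify the interchange I would re-run the dominated-convergence argument from the proof of Lemma~\ref{lem:convergenceL1}: writing $f_\theta(0)-f_\theta(Z_T)=\int_0^\infty x^{2\theta/c}(1-e^{-xZ_T})(-S'(x))\,\ddr x$ and using $\mathbbm{1}_\Lambda e^{-\theta C_T}\leq 1$, for $\theta\leq\theta_0$ everything is dominated by $h(Z_T)+f_{\theta_0}(Z_T)$, whose $\mathbb{P}_z$-expectation is finite because $f_{\theta_0}$ is bounded and, $(h(Z_t))_{t\ge 0}$ being a positive supermartingale (Theorem~\ref{thm1}), $\mathbb{E}_z(h(Z_T))\leq h(z)$ by optional stopping (on $\{T=\infty\}$ one has $Z_T=0$, $h(0)=0$ and $C_T=J$, so this event contributes nothing). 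Hence the numerator tends to $\mathbb{E}_z(\mathbbm{1}_\Lambda h(Z_T))$, the ratio to $\mathbb{E}_z(\mathbbm{1}_\Lambda h(Z_T))/h(z)$, and the latter equals $\mathbb{P}^{\uparrow}_z(\Lambda,T<\zeta)=\mathbb{P}^{\uparrow}_z(\Lambda)$ by the Doob-transform formula \eqref{equationdoob1}.

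The algebra — splitting $J$ at $T$, integrating out $\mathbbm{e}$, Fubini — is routine once the Markov property is applied carefully; the only delicate point is this last passage to the limit, i.e.\ making the convergence $f_\theta(0)-f_\theta(Z_T)\to h(Z_T)$ survive multiplication by the bounded random factors $\mathbbm{1}_\Lambda e^{-\theta C_T}$ and integration, which is precisely where the supermartingale property of $(h(Z_t))$ and an optional stopping argument at a possibly unbounded stopping time $T$ are needed.
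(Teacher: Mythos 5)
Your route is essentially the paper's: split the total progeny at $T$ via the strong Markov property, express numerator and denominator through $f_\theta$ using \eqref{totalprogeny}, and pass to the limit with the domination behind \eqref{cvfthetatohZ_t}. Your algebra is correct and in places more careful than the paper's own proof: you integrate out $\mathbbm{e}$ directly via $\mathbb{P}(a\leq \mathbbm{e}/\theta\leq b)=e^{-\theta a}-e^{-\theta b}$ instead of invoking the memoryless property, you justify the $L^1$ passage by re-running the domination of Lemma~\ref{lem:convergenceL1} with the bounded factor $\mathbbm{1}_\Lambda e^{-\theta C_T}$ inside, and you dispose of the event $\{T=\infty\}$ explicitly.

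The one genuine point of divergence is your opening reinterpretation of the event $\{T\leq \mathbbm{e}/\theta\}$ as $\{C_T\leq \mathbbm{e}/\theta\}$ with $C_T=\int_0^T Z_s\,\ddr s$. The lemma as stated, and the paper's proof, use the literal event comparing the stopping time $T$ itself to $\mathbbm{e}/\theta$: the proof carries the indicator $\mathbbm{1}_{\{T<\mathbbm{e}_\theta\}}$ through the computation and lets it tend to $1$ at the end. The two events are not interchangeable before the limit: the numerators differ by terms such as $\mathbb{P}_z(\Lambda,\,T<\mathbbm{e}_\theta\leq C_T)=\mathbb{E}_z\big[\mathbbm{1}_\Lambda(e^{-\theta T}-e^{-\theta C_T})^+\big]$, which is of order $\theta\,\mathbb{E}_z\big[\mathbbm{1}_\Lambda(C_T-T)^+\big]$. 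When $\int^{\infty}\log y\,\pi(\ddr y)<\infty$ the denominator $\mathbb{E}_z(1-e^{-\theta J})=\big(f_\theta(0)-f_\theta(z)\big)/f_\theta(0)$ is itself of order $\theta$ (since $f_\theta(0)\sim \tfrac{c\ell}{2\theta}$ with $\ell>0$), so this discrepancy is of the same order as the denominator and is not automatically negligible; it only washes out in the infinite log-moment regime, where $\theta f_\theta(0)\to 0$. So, strictly speaking, you have proved a clean variant of the lemma (with the accumulated progeny $C_T$ in the restriction event) rather than the statement as written; to recover the literal statement you would still need to show that the boundary contributions $\mathbb{P}_z(\Lambda,T<\mathbbm{e}_\theta\leq C_T)$ and the symmetric correction on $\{C_T<T\}$ are asymptotically negligible relative to $\mathbb{P}_z(\mathbbm{e}_\theta\leq J)$, a point your argument (deliberately) sidesteps and which the paper's proof also does not address explicitly.
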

\begin{proof}
Set $\mathbbm{e}_\theta:=\mathbbm{e}/\theta$ for any $\theta>0$. Notice that $\mathbbm{e}_\theta$ is an exponential random variable with parameter $\theta$, to wit with mean $1/\theta$. For any stopping time $T$, set $J_T:=\int_{0}^{T}Z_t\ddr t$, one has by the strong Markov property, for any $A\in \mathcal{F}_T$
\begin{align}\label{calculation1}
\mathbb{P}_z(A,T<\mathbbm{e}_\theta| \mathbbm{e}_\theta<J)&=\frac{\mathbb{P}_z(A,T<\mathbbm{e}_\theta,\mathbbm{e}_\theta<J)}{\mathbb{P}_z(\mathbbm{e}_\theta<J)}  \nonumber \\
&=\frac{\mathbb{E}_z\big(\mathbbm{1}_{\{A,T<\mathbbm{e}_\theta\}}\mathbb{E}[\mathbbm{1}_{\{\mathbbm{e}_\theta<J\}}|\mathcal{F}_T]\big)}{\mathbb{P}_z(\mathbbm{e}_\theta<J)} \nonumber \\
&=\frac{\mathbb{E}_z\big(\mathbbm{1}_{\{A,T<\mathbbm{e}_\theta\}}\mathbb{E}[\mathbbm{1}_{\{\mathbbm{e}_\theta<J\circ \theta_T+J_T\}}|\mathcal{F}_T]\big)}{\mathbb{P}_z(\mathbbm{e}_\theta<J)}  \nonumber \\
&=\frac{\mathbb{E}_z\big(\mathbbm{1}_{\{A, T<\mathbbm{e}_\theta \,J_T<\mathbbm{e}_\theta\}}\mathbb{E}_{Z_T}[\mathbbm{1}_{\{\mathbbm{e}_\theta-J_T<J\}}]\big)}{\mathbb{P}_z(\mathbbm{e}_\theta<J)}.
\end{align} 
Conditionally on $T$ and on $\mathbbm{e}_\theta>J_T$,  by the lack of memory of the exponential distribution the random variable $\mathbbm{e}'_\theta:=\mathbbm{e}_\theta-J_T$ is also exponentially distributed with parameter $\theta$, therefore
\begin{align}\label{calculation2}
\frac{\mathbb{E}_z\big(\mathbbm{1}_{\{A,J_T<\mathbbm{e}_\theta, T<\mathbbm{e}_\theta\}}\mathbb{E}_{Z_T}[\mathbbm{1}_{\{\mathbbm{e}_\theta-J_T<J\}}]\big)}{\mathbb{P}_z(\mathbbm{e}_\theta<J)}&=\frac{\mathbb{E}_z\big(\mathbbm{1}_{\{A,T<\mathbbm{e}_\theta\}}e^{-\theta J_T}\mathbb{E}_{Z_T}[\mathbbm{1}_{\{\mathbbm{e}_\theta-J_T<J\}}|\mathbbm{e}_\theta>J_T]\big)}{\mathbb{P}_z(\mathbbm{e}_\theta<J)}\nonumber \\
&=\frac{\mathbb{E}_z\big(\mathbbm{1}_{\{A,T<\mathbbm{e}_\theta\}}e^{-\theta J_T}\mathbb{E}_{Z_T}[\mathbbm{1}_{\{\mathbbm{e}'_\theta<J\}}]\big)}{\mathbb{P}_z(\mathbbm{e}_\theta<J)} \nonumber \\
&=\mathbb{E}_z\Big(\mathbbm{1}_{\{A,T<\mathbbm{e}_\theta\}}e^{-\theta J_T}\frac{\mathbb{E}_{Z_T}(1-e^{-\theta J})}{\mathbb{E}_z(1-e^{-\theta J})}\Big).
\end{align}
Recall from Lemma \ref{lem:convergenceL1} that
$\mathbb{E}_z[e^{-\theta J}]:=\frac{f_\theta(z)}{f_\theta(0)}$.
By \eqref{cvfthetatohZ_t}, we have in $L^1(\mathbb{P}_z)$, 
\[\frac{\mathbb{E}_{Z_{T}}(1-e^{-\theta J})}{\mathbb{E}_z(1-e^{-\theta J})}=\frac{f_\theta(0)-f_\theta(Z_T)}{f_\theta(0)-f_\theta(z)}\underset{\theta \rightarrow 0}{\longrightarrow} \frac{h(Z_{T})}{h(z)}.\] 
Moreover, $e^{-\theta J_{T}} \underset{\theta \rightarrow 0}{\longrightarrow} 1$ since $J_{T}\leq J<\infty$ $\mathbb{P}_z$-a.s. Therefore, for any $\Lambda\in \mathcal{F}_{T}$,
\[\underset{\theta\downarrow 0}{\lim}\,\mathbb{P}_z(\Lambda, T<\mathbbm{e}_\theta| \mathbbm{e}_\theta<J)=\mathbb{E}_z\left(\frac{h(Z_{T})}{h(z)}\mathbbm{1}_{\Lambda}\right)=\mathbb{P}_z^{\uparrow}(\Lambda,T<\zeta).\]
\qed
\end{proof}

\noindent \textbf{Proof of Theorem \ref{mainthm}}. The first statement is given by Lemma \ref{lem:finitelifetime} and Lemma \ref{lem:Feller}. The second one is provided by Lemma \ref{lem:conditioningalongprogeny}. \qed
\section{Proof of Theorem \ref{thm:SDEZup}}\label{sec:prooftheorem34}
We now investigate the dynamics of the process $(Z,\mathbb{P}_z^{\uparrow})$ with $z\in (0,\infty)$. We start by establishing that it solves a certain martingale problem. 
\medskip


Recall $\ell=e^{\int_{0}^{x_0}\frac{2\Psi(u)}{cu}\ddr u}\in [0,\infty)$ and the functions $b,q,k$, defined in \eqref{eq:bphik}, such that for all $z>0$ and $y>0$,
\begin{equation*}
b(z)=z\frac{h'(z)}{h(z)},\quad q(z,y)=
\frac{z}{h(z)}\big(h(z+y)-h(z)\big) \text{ and } k(z)=\frac{c\ell}{2}\frac{z}{h(z)}.
\end{equation*}
Recall $\frac{\sigma^2}{2}$ the coefficient of the Feller part in the branching mechanism $\Psi$, see \eqref{eq:branchingmechanism}, and $\pi$ the branching Lévy measure. We have seen in Lemma \ref{lem:defh} that $h'(0)<\infty$ and $\int^{\infty}h(y)\pi(\ddr y)<\infty$. Recall the definition of $\mathcal{D}_Z$ in \eqref{DZ}. 
\begin{lemma}\label{lemma:generatorZup}\ 
For any $f$ such that $fh\in \mathcal{D}_Z$, one has for any $z>0$,
\begin{equation}\label{L+B}
\mathscr{L}^{\uparrow}f(z)=\mathscr{L}f(z)+\mathscr{B}f(z),
\end{equation}
with
\begin{align}\label{B}
\mathscr{B}f(z)&:=\sigma^2b(z) f'(z)+\int_0^{\infty}\left(f(z+y)-f(z)\right)q(z,y)\pi(\ddr y)-k(z)f(z).
\end{align}
Furthermore, the following limit exists and equals
\begin{align}\label{B(0)}
\mathscr{L}^{\uparrow}f(0):=\underset{z\rightarrow 0}{\lim}\, \mathscr{L}^{\uparrow}f(z)=\underset{z\rightarrow 0}{\lim}\, \mathscr{B}f(z)=\sigma^2f'(0)+\int_0^{\infty}\big(f(y)-f(0)\big)\frac{h(y)}{h'(0)}\pi(\ddr y)-\frac{c\ell}{2h'(0)}f(0).
\end{align}
In particular, $z\mapsto \mathscr{L}^{\uparrow}f(z)$ is bounded near $0$. 
\end{lemma}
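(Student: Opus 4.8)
The idea is to compute $\mathscr{L}^{\uparrow}f$ directly from the definition of the $h$-transform. Recall that the semigroup of $(Z,\mathbb{P}_z^{\uparrow})$ is $P_t^{\uparrow}g(z)=\frac{1}{h(z)}P_t(hg)(z)$. Hence, at least formally, for a function $f$ with $hf\in\mathcal{D}_Z$, one expects
$$\mathscr{L}^{\uparrow}f(z)=\frac{1}{h(z)}\mathscr{L}(hf)(z),$$
and the whole computation reduces to expanding $\mathscr{L}(hf)$ using the explicit form of $\mathscr{L}$ in \eqref{genLCB}--together with the product/Leibniz rules for the diffusive and drift parts and the identity $(hf)(z+y)-(hf)(z) = h(z)\big(f(z+y)-f(z)\big) + f(z+y)\big(h(z+y)-h(z)\big)$ for the jump part--and then invoking $\mathscr{L}h(z)=-\frac{c\ell}{2}z$ from Lemma \ref{lem:Lh}. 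The terms that survive after dividing by $h(z)$ and subtracting $\mathscr{L}f(z)$ will reassemble precisely into $\mathscr{B}f(z)$ as in \eqref{B}, with $b(z)=z h'(z)/h(z)$ coming from the cross-term $\sigma^2 z h'(z)f'(z)/h(z)$ of the Feller part (note $\mathscr{L}$ has $z\cdot\frac{\sigma^2}{2}f''$, so the cross term is $z\sigma^2 h' f'$), with $q(z,y)=\frac{z}{h(z)}(h(z+y)-h(z))$ coming from the jump cross-term, and with $-k(z)f(z)=-\frac{c\ell}{2}\frac{z}{h(z)}f(z)$ coming from $\frac{1}{h(z)}f(z)\mathscr{L}h(z)$.

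First I would make the formal identity $\mathscr{L}^{\uparrow}f=\frac{1}{h}\mathscr{L}(hf)$ rigorous: since $hf\in\mathcal{D}_Z$, Lemma \ref{lem:MPZ} gives that $\big((hf)(Z_t)-\int_0^t\mathscr{L}(hf)(Z_s)\,\mathrm{d}s,\ t\ge 0\big)$ is a true $\mathbb{P}_z$-martingale; combining this with the local martingale $\big(h(Z_t)+\int_0^t\frac{c\ell}{2}Z_s\,\mathrm{d}s\big)$ from Lemma \ref{lem:hexcessive} and the change of measure $\mathbbm{1}_{\{t<\zeta\}}\mathrm{d}\mathbb{P}_z^{\uparrow}=\frac{h(Z_t)}{h(z)}\mathrm{d}\mathbb{P}_z$, one deduces that $\big(f(Z_t)-\int_0^t \frac{1}{h(Z_s)}\mathscr{L}(hf)(Z_s)\,\mathrm{d}s,\ t<\zeta\big)$ is a $\mathbb{P}_z^{\uparrow}$-martingale, which identifies $\mathscr{L}^{\uparrow}f=\frac{1}{h}\mathscr{L}(hf)$. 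Then I would carry out the algebraic expansion described above; this is routine bookkeeping once one checks that each resulting integral converges. The integrability of the jump term is the place to be careful: near $y=0$ one uses $q(z,y)\le \frac{z}{h(z)}h'(z)y\le y$ (mean value theorem and concavity of $h$, as in the remark following Theorem \ref{thm:SDEZup}) so $\int_0^1 (f(z+y)-f(z))q(z,y)\pi(\mathrm{d}y)$ converges because $f$ is $C^2$; near $y=\infty$ one uses $h(z+y)\le h(z)+h(y)$ (subadditivity of the Bernstein function $h$) together with boundedness of $f$ and $\int^{\infty}h(y)\pi(\mathrm{d}y)<\infty$ from Lemma \ref{lem:defh}.

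For the behaviour at $z=0$, I would take $z\to 0$ in \eqref{L+B}--\eqref{B}. Since $\mathscr{L}f(z)=z\mathrm{L}^{\Psi}f(z)-\frac{c}{2}z^2f'(z)\to 0$ (the bracket is bounded near $0$ because $f$ is $C^2$ and $\int_0^1 y^2\pi(\mathrm{d}y)<\infty$), only $\mathscr{B}f(z)$ contributes. Using $h(0)=0$, $h(z)\sim h'(0)z$ and $h'(z)\to h'(0)$ as $z\to 0$ (Lemma \ref{lem:defh} and concavity), one gets $b(z)\to 1$, $q(z,y)\to h(y)/h'(0)$ and $k(z)\to \frac{c\ell}{2h'(0)}$ pointwise; passing these limits inside the integral in \eqref{B} is justified by dominated convergence, using the uniform (in $z$ near $0$) bounds $q(z,y)\le y$ on $(0,1]$ and $q(z,y)\le \frac{z}{h(z)}h(y)\le C\,h(y)$ on $(1,\infty)$ (here $z/h(z)$ is bounded near $0$ since it tends to $1/h'(0)$), against the integrable majorant $C_f(y\wedge 1)^2\mathbbm{1}_{\{y\le 1\}}+C'h(y)\mathbbm{1}_{\{y>1\}}$. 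This yields \eqref{B(0)}, and the final sentence--boundedness of $z\mapsto\mathscr{L}^{\uparrow}f(z)$ near $0$--follows because $\mathscr{L}^{\uparrow}f$ is continuous on $(0,\infty)$ with a finite limit at $0$. The main obstacle, and the only non-mechanical point, is the careful justification of the dominated-convergence arguments for the jump integrals, both for the finiteness of $\mathscr{B}f(z)$ for $z>0$ and for the limit at $0$; the algebraic identification $\mathscr{L}^{\uparrow}=\frac{1}{h}\mathscr{L}(h\,\cdot\,)$ and the ensuing reassembly into $\mathscr{B}$ are then straightforward.
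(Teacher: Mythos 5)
Your proposal is correct and follows essentially the same route as the paper: the paper takes $\mathscr{L}^{\uparrow}f=\frac{1}{h}\mathscr{L}(hf)$ as the working definition, performs exactly your Leibniz/cross-term expansion in a general appendix lemma (Lemma \ref{lem:generalLfh}), identifies the killing term via $\mathscr{L}h(z)=-\frac{c\ell}{2}z$, and obtains \eqref{B(0)} by the same pointwise limits $b(z)\to 1$, $q(z,y)\to h(y)/h'(0)$, $k(z)\to c\ell/(2h'(0))$. Your extra touches (the martingale-problem justification of $\mathscr{L}^{\uparrow}=\frac{1}{h}\mathscr{L}(h\,\cdot)$ and the explicit dominated-convergence majorants for the jump integrals) are sound refinements of steps the paper leaves implicit, not a different argument.
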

\begin{proof}
The identities \eqref{L+B} and \eqref{B} follow from a direct application of Lemma \ref{lem:generalLfh} deferred to the Appendix. For the limit \eqref{B(0)}, recall $\mathscr{L}$, see  \eqref{genLCB}, and observe first that $\underset{z\rightarrow 0}{\lim}\, \mathscr{L}f(z)=\mathscr{L}f(0)=0$. From the definition of $\mathscr{B}f(z)$, with the functions $b$, $q$ and $k$ given in \eqref{eq:bphik} and the fact that $h'(0)<\infty$, we see that \[b(z)\underset{z\rightarrow 0}{\longrightarrow} 1,\ q(z,y)\underset{z \rightarrow 0}{\longrightarrow} \frac{h(y)}{h'(0)} \text{ and } k(z)\underset{z\rightarrow 0}{\longrightarrow}\frac{c\ell}{2h'(0)}.\] 
The limits denoted by $\mathscr{L}^{\uparrow}f(0)$ and $\mathscr{B}f(0)$ thus exist and coincide. \qed
\end{proof}
\begin{lemma}\label{lem:MPforZuparrow} For any $z\in (0,\infty)$, the process $(Z,\mathbb{P}_z^{\uparrow})$ satisfies the following martingale problem. For any $f$ such that $fh\in C^{2}([0,\infty))$, 
\begin{equation}\label{martingaleMPup} (M_t,t\geq 0):=\left(f(Z_t)-\int_{0}^{t}\mathscr{L}^{\uparrow}f(Z_s)\ddr s,t\geq 0\right)
\end{equation}
is a local martingale  under $\mathbb{P}_z^{\uparrow}$. The latter is a true martingale when $fh\in\mathcal{D}_Z$. 
\end{lemma}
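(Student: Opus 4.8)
The plan is to transfer the martingale problem $(\mathrm{MP})_Z$ satisfied by $Z$ under $\mathbb{P}_z$, see Lemma \ref{lem:MPZ}, to the $h$-transformed process via the change of measure $\mathbbm{1}_{\{t<\zeta\}}\ddr \mathbb{P}^{\uparrow}_z=\frac{h(Z_t)}{h(z)}\ddr \mathbb{P}_z$ on $\mathcal{F}_t$. The key algebraic fact is the intertwining relation $\mathscr{L}^{\uparrow}f=\frac{1}{h}\mathscr{L}(hf)$ for $z>0$, which is exactly the content of Lemma \ref{lemma:generatorZup} (established there through the auxiliary Lemma \ref{lem:generalLfh} in the Appendix): if we write $F:=hf$, then $\mathscr{L}^{\uparrow}f(z)=\frac{1}{h(z)}\mathscr{L}F(z)$ whenever $F\in C^2([0,\infty))$. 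So the object we must show is a $\mathbb{P}^{\uparrow}_z$-local martingale, namely $M_t=f(Z_t)-\int_0^t\mathscr{L}^{\uparrow}f(Z_s)\ddr s$, is related to the known $\mathbb{P}_z$-local martingale $N_t:=F(Z_t)-\int_0^t\mathscr{L}F(Z_s)\ddr s$ (Lemma \ref{lem:MPZ}, valid for any $F\in C^2([0,\infty))$) simply by $h(Z_t)M_t=N_t$ on $\{t<\zeta\}$, using $\mathscr{L}^{\uparrow}f(Z_s)h(Z_s)=\mathscr{L}F(Z_s)$.

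The main step is then a general lemma on Doob $h$-transforms: if $(h(Z_t),t\geq 0)$ is a positive supermartingale under $\mathbb{P}_z$ and $(N_t,t\geq 0)$ is a $\mathbb{P}_z$-local martingale, then the process $(N_t/h(Z_t),t\geq 0)$ is a local martingale under $\mathbb{P}^{\uparrow}_z$ up to the lifetime $\zeta$. Concretely, I would localize: let $\tau_n:=\inf\{t>0:Z_t\notin[1/n,n]\}$, which increases to $\zeta$ $\mathbb{P}^{\uparrow}_z$-a.s. (recall from Proposition \ref{prop:infimum} that $\inf_{0\leq t<\zeta}Z_t>0$ $\mathbb{P}^{\uparrow}_z$-a.s., and from Lemma \ref{lem:strictlocalmartingale} that $\zeta=\lim_n\zeta_n^+$ in the strict-local-martingale case, while in the strict-supermartingale case the process is sent to $\infty$ by a jump, so again $\tau_n\uparrow\zeta$). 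On $[0,\tau_n]$ the factor $h(Z_\cdot)$ is bounded away from $0$ and $\infty$, and $F=hf$ being $C^2$ the stopped process $N_{\cdot\wedge\tau_n}$ is a genuine martingale (by the last assertion of Lemma \ref{lem:MPZ}, since a $C^2$ function stopped at exit from a compact subinterval of $(0,\infty)$ behaves like an element of $\mathcal{D}_Z$). Then for $A\in\mathcal{F}_s$, $s\leq t$, the defining identity \eqref{equationdoob1} of the $h$-transform applied at the stopping times $s\wedge\tau_n$ and $t\wedge\tau_n$ gives
\[
\mathbb{E}^{\uparrow}_z\!\big(\mathbbm{1}_A M_{t\wedge\tau_n}\big)=\frac{1}{h(z)}\mathbb{E}_z\!\big(\mathbbm{1}_A h(Z_{t\wedge\tau_n})M_{t\wedge\tau_n}\big)=\frac{1}{h(z)}\mathbb{E}_z\!\big(\mathbbm{1}_A N_{t\wedge\tau_n}\big)=\frac{1}{h(z)}\mathbb{E}_z\!\big(\mathbbm{1}_A N_{s\wedge\tau_n}\big)=\mathbb{E}^{\uparrow}_z\!\big(\mathbbm{1}_A M_{s\wedge\tau_n}\big),
\]
which is the local-martingale property of $M$ under $\mathbb{P}^{\uparrow}_z$ along $(\tau_n)$.

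For the final assertion — that $M$ is a true martingale when $fh\in\mathcal{D}_Z$ — I would use Lemma \ref{lemma:generatorZup}: when $F=hf\in\mathcal{D}_Z$ we have $\sup_{z\geq 0}|\mathscr{L}F(z)|<\infty$, hence $\sup_{z>0}|\mathscr{L}^{\uparrow}f(z)|=\sup_{z>0}|\mathscr{L}F(z)/h(z)|$ is bounded (note $F\in\mathcal{D}_Z$ forces $F(z)=O(1/z)$, so $f=F/h$ stays bounded since $h(z)/z\le h'(0)$, and the boundedness near $0$ is exactly the last line of Lemma \ref{lemma:generatorZup}); moreover $f=F/h$ is itself bounded on $(0,\infty]$. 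Thus $|M_{t\wedge\tau_n}|\leq \|f\|_\infty + t\,\|\mathscr{L}^{\uparrow}f\|_\infty$, a deterministic bound independent of $n$, so dominated convergence upgrades the localized martingale property to the statement that $(M_t,t\geq 0)$ is a true $\mathbb{P}^{\uparrow}_z$-martingale. I expect the only delicate point to be the book-keeping of the localization at the lifetime $\zeta$ — making sure $\tau_n\uparrow\zeta$ $\mathbb{P}^{\uparrow}_z$-a.s. in both the killing and the continuous-explosion regimes, and that $M$ is well-defined (finite) on $[0,\zeta)$ — but this is handled by Proposition \ref{prop:infimum} together with Lemma \ref{lem:strictlocalmartingale}, both already proved above.
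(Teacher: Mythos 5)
There is a genuine error at the heart of your argument: the identity $h(Z_t)M_t=N_t$ is false. Writing $F=hf$ and $A_t:=\int_0^t\mathscr{L}^{\uparrow}f(Z_s)\ddr s$, you have
\[
h(Z_t)M_t=h(Z_t)f(Z_t)-h(Z_t)\int_0^t\mathscr{L}^{\uparrow}f(Z_s)\ddr s,
\qquad
N_t=h(Z_t)f(Z_t)-\int_0^t h(Z_s)\mathscr{L}^{\uparrow}f(Z_s)\ddr s,
\]
and the pointwise relation $h(z)\mathscr{L}^{\uparrow}f(z)=\mathscr{L}F(z)$ only lets you rewrite the integrand at time $s$; it does not let you pull the factor $h(Z_t)$, evaluated at the terminal time, inside the time integral. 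Consequently the second equality in your displayed chain, $\frac{1}{h(z)}\mathbb{E}_z(\mathbbm{1}_A h(Z_{t\wedge\tau_n})M_{t\wedge\tau_n})=\frac{1}{h(z)}\mathbb{E}_z(\mathbbm{1}_A N_{t\wedge\tau_n})$, does not hold. The discrepancy is not cosmetic: by integration by parts, $h(Z_t)M_t=N_t-\int_0^t A_s\,\ddr h(Z_s)$, and since $(h(Z_t))$ is only a supermartingale (strictly so when $\ell>0$, by Lemma \ref{lem:hexcessive}), the correction term carries a genuine drift. Your appeal to the general fact ``$N$ a $\mathbb{P}$-local martingale implies $N/h(Z)$ a $\mathbb{P}^{\uparrow}$-local martingale'' is also of no direct use here, because $N_t/h(Z_t)=f(Z_t)-\frac{1}{h(Z_t)}\int_0^t\mathscr{L}F(Z_s)\ddr s$ is again not $M_t$.

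The repair is to apply the change of measure \emph{separately at each time inside the integral}, which is what the paper does: starting from the $\mathbb{P}_z$-martingale $\big(F(Z_{t\wedge\tau_n})-\int_0^{t\wedge\tau_n}\mathscr{L}F(Z_u)\ddr u\big)/h(z)$, one uses Fubini to write $\mathbb{E}_z\big(\int_0^{t\wedge\tau_n}\frac{\mathscr{L}F(Z_u)}{h(z)}\ddr u\big)=\int_0^t\mathbb{E}_z\big(\mathbbm{1}_{\{u\leq\tau_n\}}\frac{h(Z_u)}{h(z)}\,\frac{\mathscr{L}F(Z_u)}{h(Z_u)}\big)\ddr u$, and then converts each fixed-$u$ expectation via the density $h(Z_{u\wedge\tau_n})/h(z)$ on $\mathcal{F}_{u\wedge\tau_n}$ (not $h(Z_{t\wedge\tau_n})/h(z)$), yielding $\int_0^t\mathbb{E}^{\uparrow}_z\big(\mathbbm{1}_{\{u\leq\tau_n\}}\mathscr{L}^{\uparrow}f(Z_u)\big)\ddr u$; the killing encoded in $\mathscr{L}^{\uparrow}$ through $k$ is then automatically accounted for. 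Your treatment of the final assertion (boundedness of $f$ and $\mathscr{L}^{\uparrow}f$ when $fh\in\mathcal{D}_Z$, upgrading the local martingale to a true one) is in line with the paper, but it rests on the flawed first step and so does not stand on its own.
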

\begin{proof}
Recall $\tau_n=n\wedge \zeta_{n}^{+}$. By applying Lemma \ref{lem:MPZ}, we see that for any $f$ such that $fh\in C^{2}([0,\infty))$,  \[\left(\frac{1}{h(z)}h(Z_{t\wedge \tau_n})f(Z_{t\wedge \tau_n})-\int_{0}^{t\wedge \tau_n}\frac{1}{h(z)}\mathscr{L}(fh)(Z_u)\ddr u,t\geq 0\right)\]
is a $\mathbb{P}_z$-martingale for all $z\in (0,\infty)$. By Fubini-Lebesgue's theorem, the definition of $\mathbb{P}^{\uparrow}_z$ and the fact that $\mathbb{P}^{\uparrow}_z(\zeta_0=\infty)=1$ (see Proposition \ref{prop:infimum}), one has
\begin{align*}
f(z)&=\mathbb{E}_z\left(\frac{1}{h(z)}h(Z_{t\wedge \tau_n})f(Z_{t\wedge \tau_n})-\int_{0}^{t\wedge \tau_n}\frac{\mathscr{L}(fh)(Z_u)}{h(z)}\ddr u\right)\\
&=\mathbb{E}_z\left(\frac{1}{h(z)}h(Z_{t\wedge \tau_n})f(Z_{t\wedge \tau_n})\right)-\int_{0}^{t}\mathbb{E}_z\left(\mathbbm{1}_{\{u\leq\tau_n\}}\frac{1}{h(z)}\mathscr{L}(fh)(Z_u)\right)\ddr u\\
&=\mathbb{E}^{\uparrow}_z\left(f(Z_{t\wedge \tau_n})\right)-\int_{0}^{t}\mathbb{E}_z\left(\mathbbm{1}_{\{u\leq\tau_n\}}\frac{1}{h(z)}\mathscr{L}(fh)(Z_u)\right)\ddr u\\
&=\mathbb{E}^{\uparrow}_z\left(f(Z_{t\wedge \tau_n})\right)-\int_{0}^{t}\mathbb{E}_z\left(\mathbbm{1}_{\{u\leq\tau_n\}}\frac{h(Z_u)}{h(z)}\frac{\mathscr{L}(fh)(Z_u)}{h(Z_u)}, u<\zeta_0\right)\ddr u\\
&=\mathbb{E}^{\uparrow}_z\left(f(Z_{t\wedge \tau_n})-\int_{0}^{t}\mathbbm{1}_{\{u\leq\tau_n\}}\mathscr{L}^{\uparrow}f(Z_u)\ddr u\right).
\end{align*}
Hence, we see, by using the Markov property under $\mathbb{P}^{\uparrow}_z$, that the process $M$ is a local martingale under $\mathbb{P}^{\uparrow}_z$ and the first statement is established. The fact that the local martingale $M$ is a martingale when $fh\in \mathcal{D}_Z$ stands from the fact that in this case, the functions $f$ and  $\mathscr{L}^{\uparrow}f$ are bounded (recall that the latter is always bounded near $0$). \qed
\end{proof}
\begin{lemma}\label{lem:uniquenessMPZup} Set
\begin{equation}\label{Duparrow} \mathcal{D}_{Z^{\uparrow}}:=\{f\in C^2: fh\in \mathcal{D}_Z\}.\end{equation}
There exists a unique, in law, Markov process, stopped at its first explosion time, solution to the martingale problem associated to $(\mathscr{L}^{\uparrow},\mathcal{D}_{Z^{\uparrow}})$.  
\end{lemma}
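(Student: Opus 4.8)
\emph{Existence} is already at hand. By Lemma~\ref{lem:MPforZuparrow}, for every $z\in(0,\infty)$ the $h$-transformed process $(Z,\mathbb{P}^{\uparrow}_z)$ solves the martingale problem associated with $(\mathscr{L}^{\uparrow},\mathcal{D}_{Z^{\uparrow}})$ (the companion process $M$ being a genuine martingale exactly when the test function lies in $\mathcal{D}_{Z^{\uparrow}}$), and by Lemma~\ref{lem:finitelifetime} its lifetime, which by the choice of the cemetery state $\infty$ is the first explosion time, is finite $\mathbb{P}^{\uparrow}_z$-a.s.. Hence $(Z,\mathbb{P}^{\uparrow}_z)$ is a Markov process, stopped at its first explosion time, solving the martingale problem.

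\emph{Uniqueness: the idea.} I would reverse the $h$-transform. The identity underpinning the whole construction, isolated in the proof of Lemma~\ref{lemma:generatorZup}, is $h(z)\,\mathscr{L}^{\uparrow}f(z)=\mathscr{L}(fh)(z)$ for $z>0$; read with $f=g/h$ it says that the reciprocal transform carries $\mathscr{L}^{\uparrow}$ back into $\mathscr{L}$, and in particular that $1/h$ is, away from $0$, $\mathscr{L}^{\uparrow}$-harmonic, $\mathscr{L}^{\uparrow}(1/h)=\tfrac1h\mathscr{L}(1)=0$. Accordingly, given \emph{any} solution $\tilde{\mathbb{P}}_z$ of the martingale problem for $(\mathscr{L}^{\uparrow},\mathcal{D}_{Z^{\uparrow}})$ stopped at its first explosion time $\zeta$, I would Doob-transform $\tilde{\mathbb{P}}_z$ by $1/h$ and show that the resulting measure is the law of the LCB process absorbed at $0$ — the absorption at $0$ being forced by $h(0)=0$, which makes the reverse density $h(z)/h(\tilde Z_t)$ vanish when the path reaches $0$. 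That law is the unique solution of $\mathrm{(MP)}_Z$ restricted to $(0,\infty)$, well-posed under \asp\ by (a localized version of) Lemma~\ref{lem:MPZ}; transforming back by $h$ — which is transparent to the absorption at $0$, since the $h$-transformed process started from $z>0$ never reaches $0$ — then identifies $\tilde{\mathbb{P}}_z$ with $\mathbb{P}^{\uparrow}_z$.

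\emph{Uniqueness: the steps.} Fix $z\in(0,\infty)$, set $\tau_n:=\inf\{t>0:\tilde Z_t\notin(1/n,n)\}$, and pick for each $n$ a function $f_n\in\mathcal{D}_{Z^{\uparrow}}$ agreeing with $1/h$ on $(1/n,n)$; such $f_n$ exist because $h$ is $C^2$ and strictly positive on $(0,\infty)$ with $h'(0)<\infty$, see Lemma~\ref{lem:defh} (the behaviour of $f_n$ near $0$ and $\infty$ is immaterial, $\tilde Z_{\cdot\wedge\tau_n}$ staying in $[1/n,n]$). Applying the martingale problem to $f_n$, stopping at $\tau_n$, and using $h\mathscr{L}^{\uparrow}f_n=\mathscr{L}(f_nh)$ together with the sign of $\mathscr{L}(f_nh)$ on $(1/n,n)$ (where $f_nh$ equals a function bounded by $1$), one obtains that $\big((1/h)(\tilde Z_{t})\big)_{t\geq0}$ is a nonnegative $\tilde{\mathbb{P}}_z$-local martingale, hence a supermartingale and so an $\mathscr{L}^{\uparrow}$-excessive function. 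One may then perform the Doob transform by $1/h$, $\ddr\tilde{\mathbb{P}}_z^{\flat}=\frac{h(z)}{h(\tilde Z_t)}\,\ddr\tilde{\mathbb{P}}_z$ on $\mathcal{F}_t$, along the lines of Section~\ref{sec:doobtransform}; since the transferred generator is $\mathscr{L}$, the stopped process $\tilde Z_{\cdot\wedge\tau_n}$ solves $\mathrm{(MP)}_Z$ on $(0,\infty)$ up to $\tau_n$, whence, by well-posedness, $\tilde{\mathbb{P}}_z^{\flat}$ is the law of the LCB process absorbed at $0$. Finally, because $\tilde Z$ has no negative jumps (Remark~\ref{rem:nonegativejumps}) it reaches $1/n$ continuously and $h(1/n)\to h(0)=0$, from which one checks $\tau_n\uparrow\zeta$ $\tilde{\mathbb{P}}_z$-a.s.; transforming back by $h$ identifies $\tilde{\mathbb{P}}_z$ with the $h$-transform of the absorbed LCB, that is, with $\mathbb{P}^{\uparrow}_z$.

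\emph{Main obstacle.} The delicate point throughout is the boundary $0$: because $h(0)=0$, the natural candidate $1/h$ for the inverse transform lies outside $\mathcal{D}_{Z^{\uparrow}}$, so the reverse change of measure has to be set up by localization and is only \emph{sub}-Markovian, its mass defect accounting precisely for the LCB reaching $0$; keeping this bookkeeping consistent, and verifying that the localizing sequence $(\tau_n)$ exhausts the lifetime $\zeta$ — which is where the absence of negative jumps and $h(1/n)\to0$ enter — is the core of the argument. The algebra $h\mathscr{L}^{\uparrow}f=\mathscr{L}(fh)$ and the appeal to the well-posedness of $\mathrm{(MP)}_Z$ are then routine.
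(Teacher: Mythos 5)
Your proposal is correct and follows essentially the same route as the paper: uniqueness is obtained by reversing the Doob transform with the $\mathscr{L}^{\uparrow}$-harmonic function $g=1/h$ (via the identity $h\,\mathscr{L}^{\uparrow}f=\mathscr{L}(fh)$), reducing any solution to the well-posed martingale problem $\mathrm{(MP)}_Z$, and transforming back. You merely make explicit the localization near the boundary $0$ (and the exhaustion $\tau_n\uparrow\zeta$) that the paper compresses into "reversing the arguments of Lemma~\ref{lem:MPforZuparrow}", before it concludes by identifying the semigroups on the exponentials $e_x$ via a density argument.
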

\begin{proof}
Existence has been established in Lemma \ref{lem:MPforZuparrow}. For the uniqueness, take two Markovian solutions $\mathbb{P}^1_z,\mathbb{P}^2_z$ of the martingale problem associated to $(\mathscr{L}^{\uparrow},\mathcal{D}_{Z^{\uparrow}})$. Let $(P^i_t), i\in \{1,2\}$ be their associated semigroups. Denote by $(P_t)$ the semigroup of the LCB process. Notice that $g=1/h$ is invariant for $\mathscr{L}^{\uparrow}$, indeed, $\mathscr{L}^{\uparrow}g=g\mathscr{L}hg=g\mathscr{L}1=0$. By reversing the arguments
of Lemma \ref{lem:MPforZuparrow} and examining the Doob's transform of $P_t^{i}$ with the function $g$,  we observe that either of the two solutions (stopped at their first explosion time) would satisfy the martingale problem $(\mathrm{MP})_Z$, see Section \ref{sec:martingaleproblemLCB}. The latter being well posed, we have for $i\in \{1,2\}$ and any $f\in \mathcal{D}_Z$, for any $z\in (0,\infty)$,
\begin{equation}\label{Pi}
\frac{1}{g(z)}P^i_t(gf)(z):=\frac{1}{g(z)}\mathbb{E}_z^i(g(Z_t)f(Z_t))=P_tf(z).
\end{equation} 
For any $x>0$, recall the notation $e_x:z\mapsto e^{-xz}$. Since $z\mapsto h(z)e^{-xz}$ is continuous vanishing at $\infty$ and
$\mathcal{D}_Z$ is dense in $C_0([0,\infty))$, one can find a sequence $(f_n)$ such that $f_n(z)\underset{n\rightarrow \infty}\longrightarrow h(z)e_x(z)$ uniformly in $z$. By replacing $f$ by $(f_n)$ in \eqref{Pi} and let $n$ go to $\infty$, we obtain $P^1_te_x(z)=P^2_te_x(z)$ for all $x>0,z\geq 0$ and therefore $\mathbb{P}^1_z=\mathbb{P}^2_z$.
\qed 
\end{proof}

\medskip
\textbf{Proof of Theorem \ref{thm:SDEZup}}. By Lemma \ref{lem:uniquenessMPZup}, the martingale problem associated to $(\mathscr{L}^{\uparrow},\mathcal{D}_{Z^{\uparrow}})$ has a unique Markovian solution. Theorem \ref{thm:SDEZup} follows by applying \cite[Theorem 2.3]{zbMATH05919793} which ensures that this solution is a weak solution to stochastic equation \eqref{SDEPuparrow}. The killing is seen here as a jump to $\infty$ occurring at rate $k$. The expression of $\zeta$ in terms of a standard exponential random variable $\mathbbm{e}$ readily follows.  The required condition in \cite[Theorem 2.3]{zbMATH05919793}, called (14), stems to the boundedness of the coefficients $b,q,k$  on compact sets, which is plainly checked. 

Uniqueness (up to explosion) of the weak solution of \eqref{SDEPuparrow} is verified as follows. Assume to the contrary that we have two different weak solutions. Then they are Markovian and solve the martingale problem associated to $(\mathscr{L}^{\uparrow},\mathcal{D}_{Z^{\uparrow}})$, which gives a contradiction to Lemma \ref{lem:uniquenessMPZup}. \qed 
\section{Proof of Theorem \ref{thm:entrance}}\label{sec:proofentrance}
We start by explaining the scheme of the proof. We recall that the process $Z$ under $\mathbb{P}_z^{\uparrow}$ is a $(0,\infty]$-valued Feller process with an almost surely  finite lifetime at which the process is absorbed at $\infty$. 
We will first establish that under \asp, the quantity $\mathbb{E}_z^{\uparrow}(e^{-xZ_t})$ admits a limit as $z\rightarrow 0$, denoted by $\mathbb{E}_{0}^{\uparrow}(e^{-xZ_t})$, for all $t>0$ and all $x>0$. Moreover, we shall see that $\mathbb{E}_{0}^{\uparrow}(e^{-xZ_t})<1$ for large enough $x$ and small enough $t>0$. This is the aim of Lemma \ref{lem:cvLT}. Auxiliary results on the Laplace dual diffusion $U$ will be needed and are gathered in Lemma \ref{lem:comparisonU}.  
\smallskip

In Lemma \ref{lem:entrancelaw}, we establish that the family of laws $\big(\mathbb{P}_z^{\uparrow}(Z_t\in \cdot),z>0\big)$ converges weakly towards a probability measure $\eta_t$ on $[0,\infty]$, as $z\rightarrow 0$. 
Next, by using $\eta_t$, we extend the semigroup $P_t^{\uparrow}$ at $0$, in a Fellerian way, so that for any bounded continuous function $f$,  \[P_t^{\uparrow}f(0):=P^{\uparrow}_tf(0+):=\underset{z\rightarrow 0+}{\lim} P^{\uparrow}_tf(z),\] and argue the existence of a probability law $\mathbb{P}^{\uparrow}_0$ on $\mathcal{D}$ such that \[\mathbb{P}^{\uparrow}_0(Z_0=0,\ \exists t>0: \forall s\geq t,\ Z_s>0)=1.\]

The framework of Feller processes will enable us also to appeal to some general results claiming that the family $(\mathbb{P}^{\uparrow}_z,z>0)$ converges towards $\mathbb{P}^{\uparrow}_0$, as $z$ goes to $0$, in the Skorokhod sense. 

Last, we will see that the process $(Z,\mathbb{P}^{\uparrow}_0)$ satisfies the martingale problem associated to the stochastic equation \eqref{SDEPuparrow}.
\smallskip

Recall the Laplace and Siegmund duality relationships, for any $x,z,y>0$ and $t\geq 0$: 
$$\mathbb{E}_z(e^{-xZ_t})=\mathbb{E}_x(e^{-zU_t}) \text{ and } \mathbb{P}_x(U_t<y)=\mathbb{P}_y(x<V_t).$$
We shall use the representation of the semigroup of $(Z,\mathbb{P}^{\uparrow}_z)$ given in \eqref{semigroupPtuparrow1}, in terms of the bidual process $V$ of the unconditioned LCB process $Z$. 
Recall that under Assumption \asp, the process  $U$ gets absorbed at $0$ in finite time almost surely.  
\medskip

We start with a preparatory lemma. The Laplace dual process $U$ satisfies \eqref{sdeU}, namely \begin{equation*} U_t=x+\int_{0}^{t}\sqrt{cU_s}\ddr B_s-\int_0^{t}\Psi(U_s)\ddr s,\end{equation*} with $B$ a Brownian motion. Since the function $\Psi$ is locally Lipschitz on $(0,\infty)$ and $u\mapsto \sqrt{cu}$ is 1/2-H\"older, the equation admits a unique strong solution $(U_s(x),s\geq 0)$, starting from $x>0$, up to the first hitting time of the boundaries, see e.g. \cite[Chapter IX, Section 3]{MR1725357}.
\begin{lemma}\label{lem:comparisonU} Assume \asp . 
\begin{enumerate}[label=\roman*)]
\item For any $x>0$ and $t\geq 0$, 
$\mathbb{E}_x(U_t)+\int_{0}^{t}\mathbb{E}_x(\Psi(U_s))\ddr s=x.$
\item For any $s\geq 0$ and any $w'\geq w$, almost surely,  $U_s(w')\geq U_s(w)$.
\item For sufficiently large $x$ and small enough $t$, one has:
$$\int_{0}^{t}\mathbb{E}\big(\Psi(U_s(x+v))-\Psi(U_s(x))\big)\ddr s>0, \text{ for all } v>0.$$
\end{enumerate} 
\end{lemma}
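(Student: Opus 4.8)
\emph{Proof plan.} The three parts are treated separately.

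\emph{Part \textup{(i)}.} This is a localisation argument. Since $\Psi$ is convex with $\Psi(0)=0$ and $\Psi(\infty)=\infty$, the constant $m:=\inf_{[0,\infty)}\Psi$ is finite and $m\le 0$. From \eqref{sdeU}, $M_t:=\int_0^t\sqrt{cU_s}\,\ddr B_s=U_t-x+\int_0^t\Psi(U_s)\,\ddr s$ is a continuous local martingale with $\langle M\rangle_t=c\int_0^t U_s\,\ddr s$. Along $\tau_n:=\inf\{s:U_s\ge n\}$ (with $\tau_n\uparrow\infty$ a.s., as $U$ does not explode) $\langle M\rangle_{\cdot\wedge\tau_n}$ is bounded on $[0,t]$, so $M^{\tau_n}$ is a true martingale and $\mathbb{E}_x(U_{t\wedge\tau_n})=x-\mathbb{E}_x\big(\int_0^{t\wedge\tau_n}\Psi(U_s)\,\ddr s\big)\le x-mt$. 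Fatou gives $\mathbb{E}_x(U_t)\le x-mt<\infty$ for all $t$, whence $\mathbb{E}_x\langle M\rangle_t=c\int_0^t\mathbb{E}_x(U_s)\,\ddr s<\infty$, so $M$ is a genuine $L^2$-martingale on $[0,t]$ and $\mathbb{E}_x(M_t)=0$. Using that $\Psi^-\le -m$ is bounded and compactly supported, one deduces $\mathbb{E}_x\big(\int_0^t|\Psi(U_s)|\,\ddr s\big)<\infty$ from $\mathbb{E}_x(M_t)=0$ and $\mathbb{E}_x(U_t)<\infty$, and then $\mathbb{E}_x(M_t)=0$ together with Fubini yields $\mathbb{E}_x(U_t)+\int_0^t\mathbb{E}_x(\Psi(U_s))\,\ddr s=x$.

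\emph{Part \textup{(ii)}.} This is the one-dimensional comparison theorem applied to \eqref{sdeU}: the diffusion coefficient $u\mapsto\sqrt{cu}$ is $\tfrac12$-H\"older, hence satisfies Yamada's condition $\int_{0+}\ddr u/(cu)=\infty$, and the drift $u\mapsto-\Psi(u)$ is locally Lipschitz on $(0,\infty)$. Since $U(w')$ and $U(w)$ are driven by the same Brownian motion and $U_0(w')=w'\ge w=U_0(w)$, the comparison theorem (e.g. \cite[Chapter IX]{MR1725357}) gives $U_s(w')\ge U_s(w)$ up to the first exit from any $[1/k,k]$; letting $k\to\infty$ and using that $0$ is absorbing for $U$ (so $U_s(w)=0\le U_s(w')$ once $U(w)$ is absorbed) extends this to all $s\ge 0$.

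\emph{Part \textup{(iii)}.} Put $\phi(w):=\mathbb{E}_w(U_t)$. By part (i), for every $v>0$,
\[\int_0^t\mathbb{E}\big(\Psi(U_s(x+v))-\Psi(U_s(x))\big)\,\ddr s=v-\big(\phi(x+v)-\phi(x)\big),\]
so the assertion is equivalent to $\phi(x+v)<\phi(x)+v$ for all $v>0$. Now $\phi$ is nondecreasing by part (ii), and $\phi(0)=\mathbb{E}_0(U_t)=0$ because $0$ is an exit boundary of $U$. The crucial point is that $\phi$ is concave: the function $\phi(t,w)=\mathbb{E}_w(U_t)$ solves $\partial_t\phi=\mathscr{A}\phi=\tfrac c2 w\,\partial^2_{ww}\phi-\Psi(w)\,\partial_w\phi$ with $\phi(0,w)=w$, and differentiating this relation twice in $w$ shows that $\chi:=\partial^2_{ww}\phi$ solves a linear parabolic equation with vanishing initial datum and nonpositive source term $-\Psi''(w)\,\partial_w\phi(w)$ (using $\Psi$ convex and $\partial_w\phi\ge 0$ from part (ii)); the parabolic maximum principle then forces $\chi\le 0$, i.e.\ $\phi(t,\cdot)$ concave. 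Granting this, concavity together with $\phi(0)=0$ gives $\phi'(x^+)\le\phi(x)/x$ (the chord slope $\phi(x)/x$ is the average of the nonincreasing $\phi'$ over $[0,x]$), hence $\phi(x+v)-\phi(x)\le v\,\phi'(x^+)\le v\,\phi(x)/x$ for all $v>0$. Finally, $\phi(x)<x$ for $x$ large and $t$ small: by part (i) this means $\int_0^t\mathbb{E}_x(\Psi(U_s))\,\ddr s>0$. Fix $a$ so large that $\Psi$ is nondecreasing on $[a,\infty)$ and $\Psi(a)>0$, and take $x>a$. Since $s\mapsto\inf_{r\le s}U_r(x)$ increases to $U_0(x)=x>a$ as $s\downarrow 0$, the quantity $\rho(t):=\mathbb{P}_x\big(\inf_{r\le t}U_r\le a\big)$ tends to $0$; therefore, for $s\le t$, $\mathbb{E}_x(\Psi(U_s))\ge\Psi(a)\,\mathbb{P}_x(U_s\ge a)+m\,\mathbb{P}_x(U_s<a)\ge\Psi(a)-(\Psi(a)-m)\,\rho(t)>0$ once $t$ is small. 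Hence $\int_0^t\mathbb{E}_x(\Psi(U_s))\,\ddr s>0$, so $\phi(x)<x$ and $\phi(x+v)-\phi(x)<v$ for all $v>0$, as required.

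\emph{Main obstacle.} The delicate ingredient is the concavity of $w\mapsto\mathbb{E}_w(U_t)$. Because $0$ is an exit (absorbing) boundary for $U$ and $\Psi$ need not be Lipschitz there (indeed $\Psi'(0+)$ may be $-\infty$, as for Neveu's mechanism), the equation for $\phi$ is singular at $0$, and both the maximum-principle argument and any pathwise coupling substitute must be run on $(0,\infty)$ with care near $0$, using $\phi(t,0)=0$ and the monotonicity from part (ii) to control the boundary contributions. The remaining steps — the localisation in (i), the comparison in (ii), and the soft estimate $\mathbb{P}_x(\inf_{r\le t}U_r\le a)\to 0$ — are routine.
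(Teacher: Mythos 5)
Parts (i) and (ii) of your proposal are correct. Your (ii) is the same localized comparison argument as the paper's. For (i) you take a genuinely different route: the paper controls $\sup_{s\le t\wedge T^-_\epsilon}U_s^2$ by comparing $U$ with a supercritical CB process and applying Doob's inequality, whereas you use only the one-sided bound $\Psi\ge m:=\inf_{[0,\infty)}\Psi>-\infty$ together with Fatou's lemma to get $\mathbb{E}_x(U_t)<\infty$, and then $\mathbb{E}_x\langle M\rangle_t<\infty$ to upgrade the local martingale to a true one. This is valid and arguably lighter than the paper's second-moment argument.

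The gap is in part (iii), and it sits exactly where you flag it: the concavity of $\phi_t:w\mapsto\mathbb{E}_w(U_t)$. Everything else in your (iii) is sound — the reduction via (i) to showing $\phi_t(x+v)-\phi_t(x)<v$, the chord inequality $\phi_t'(x^+)\le\phi_t(x)/x$, and the estimate $\phi_t(x)<x$ for $x$ above the last zero of $\Psi$ and $t$ small. But the maximum-principle sketch is not a proof: you would first have to show that $(t,w)\mapsto\mathbb{E}_w(U_t)$ is a classical solution, twice continuously differentiable in $w$, of the degenerate equation $\partial_t\phi=\tfrac{c}{2}w\,\partial^2_{ww}\phi-\Psi(w)\partial_w\phi$ up to the exit boundary $w=0$ (where $\Psi'$ and $\Psi''$ may blow up, e.g. for Neveu's mechanism), justify differentiating the equation twice in $w$, and then run a maximum principle for $\partial^2_{ww}\phi$ on the unbounded domain $(0,\infty)$ with no a priori control of that quantity at either end. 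None of this is routine for this degenerate diffusion, so as written the argument does not close. The concavity is nevertheless true and follows in one line from the paper's own Laplace duality: $x\mapsto\mathbb{E}_x(e^{-zU_t})=\mathbb{E}_z(e^{-xZ_t})$ is completely monotone in $x$ (it is the Laplace transform of the law of $Z_t$ under $\mathbb{P}_z$), hence $x\mapsto\tfrac{1}{z}\mathbb{E}_x(1-e^{-zU_t})$ is concave for every $z>0$, and letting $z\downarrow 0$ (monotone convergence, since $(1-e^{-zu})/z\uparrow u$) exhibits $\phi_t$ as a pointwise limit of concave functions. With that substitution your (iii) is complete, and it is in one respect sharper than the paper's argument (which applies Fatou's lemma to $\liminf_{s\to0}\mathbb{E}[\Psi(U_s(x+v))-\Psi(U_s(x))]$ with a case analysis around the largest zero of $\Psi$): your bound $\phi_t(x+v)-\phi_t(x)\le v\,\phi_t(x)/x$ is uniform in $v>0$, whereas the paper's choice of small $t$ a priori depends on $v$.
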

\begin{proof}
\begin{enumerate}
\item[i)] 
Recall that the process  $U$ starting from $x$ satisfies the stochastic differential equation  \eqref{sdeU}. 
The process $$(M_t,t\geq 0):=\left(U_t+\int_{0}^{t}\Psi(U_s)\ddr s, t\geq 0\right),$$ is thus a local martingale. 
We now verify that it is a true martingale. Recall that for any Lévy-Khintchine function $\Psi$, there are some constants $C_0,C_1>0$ such that $|\Psi(u)|\leq C_0+C_1u^2$ for all $u\geq 0$.  Let $\epsilon>0$. Set $T_\epsilon^{-}:=\inf\{t>0: U_t\leq \epsilon\}$. Assume for now established that the random variable $\sup_{s<t\wedge T_{\epsilon}^{-}}U_s^2$ is integrable, this ensures that the stopped process $(M_{t\wedge T_\epsilon^{-}},t\geq 0)$ is a martingale and therefore
\[\mathbb{E}_x(U_{t\wedge T_\epsilon^{-}})+\mathbb{E}_x\left(\int_{0}^{t\wedge T_\epsilon^{-}}\Psi(U_s)\ddr s\right)=x.\]
By letting $\epsilon$ go to $0$, we see that
\[\mathbb{E}_x(U_{t\wedge T_0})+\mathbb{E}_x\left(\int_{0}^{t\wedge T_0}\Psi(U_s)\ddr s\right)=x.\]
According to Lemma \ref{lem:Uexit}, under \asp, $0$ is an exit boundary, hence $U_{t\wedge T_0}=U_t$ for all $t\geq 0$ and recalling that $\Psi(0)=0$, we finally get the targeted equality
\[\mathbb{E}_x(U_{t})+\int_{0}^{t}\mathbb{E}_x(\Psi(U_s)\ddr s=x.\]
It remains to argue for the integrability of $\sup_{s<t\wedge T_{\epsilon}^{-}}U_s^2$. This can be checked as follows: for any $u\geq \epsilon$, $\Psi(u)\geq \frac{\Psi(\epsilon)}{\epsilon}u\geq -\gamma_\epsilon u$ for some $\gamma_\epsilon>0$. By the comparison theorem, see e.g. \cite[Theorem IX.3.7]{MR1725357},  $\mathbb{P}_x$-almost surely, for all $s\leq  T_\epsilon^{-}$, $U_{s}\leq \tilde{U}_{s}$ with \[\tilde{U}_{t}=x+\int_{0}^{t}\sqrt{c\tilde{U}_{s}}\ddr B_t+\gamma_\epsilon \int_{0}^{t}\tilde{U}_{s}\ddr s.\]
The process $(\tilde{U}_t,t\geq 0)$ is a supercritical CB process with branching mechanism $\tilde{\Psi}(q):=\frac{c}{2}q^2-\gamma_\epsilon q$. In particular this is a submartingale, with a second moment, and

\begin{equation}\label{supintegrable}\mathbb{E}_x\left(\sup_{s\leq t}U^2_{s\wedge T_\epsilon^{-}}\right)\leq \mathbb{E}_x\left(\sup_{s\leq t}\tilde{U}^2_{s}\right)\leq 4\mathbb{E}_x(\tilde{U}^2_{t})<\infty,
\end{equation}
where the second inequality is Doob's inequality applied to the submartingale $\tilde{U}$.

\item[ii)] For any $\epsilon>0$. Let $w'>w$ and recall $T^{-}_\epsilon(w)=\inf\{t>0: U_{t}(w)\leq \epsilon\}$. Recall that $\Psi$ is locally Lipschitz on $[\epsilon,\infty)$, by the comparison theorem, we have almost surely, for any $s\leq T^{-}_\epsilon(w)$, 
\[U_{s\wedge T^{-}_\epsilon(w)}(w')\geq U_{s\wedge T^{-}_\epsilon(w)}(w).\]
By letting $\epsilon$ goes to $0$, we see that
\[U_{s\wedge T_0(w)}(w')\geq U_{s\wedge T_0(w)}(w).\]
Since $0$ is an exit boundary for the diffusion $U$, we have $U_{s\wedge T_0(w)}(w)=U_{s}(w)$, and we get \[U_{s}(w')\geq U_{s}(w) \text{ a.s.} \]

\item[iii)] We are going to show that 
\begin{equation}\label{liminf0}\underset{s\rightarrow 0}{\liminf}\, \mathbb{E}\big[\Psi(U_s(x+v))-\Psi(U_s(x))\big]>0.
\end{equation}
Recall that $U_s(w)\rightarrow w$ a.s., as $s\rightarrow 0$, for all $w$. In the (sub)critical case, $\Psi$ is increasing on $[0,\infty)$. Let $x>x_0$ and $v>0$, by Fatou's lemma
\begin{align*}\underset{s\rightarrow 0}{\liminf}\, \mathbb{E}\big[\Psi(U_s(x+v))-\Psi(U_s(x))\big]&\geq \mathbb{E}\big[\underset{s\rightarrow 0}{\liminf}\left( \Psi(U_s(x+v))-\Psi(U_s(x)) \right) \big]\\
&=\Psi(x+v)-\Psi(x)>0.
\end{align*}
In the supercritical case, the assumption \asp, specifically $\Psi(\infty)=\infty$, ensures that $\Psi$ is strictly increasing on the interval $[x_0,\infty)$ with $x_0$ the largest zero of the function $\Psi$, thereby for all $u\in [0,x_0)$, $\Psi(u)\leq 0$. Let $x>x_0$ and $v>0$. One has
\begin{align}
&\underset{s\rightarrow 0}{\liminf}\, \mathbb{E}\big[\Psi(U_s(x+v))-\Psi(U_s(x))\big] \nonumber \\
&=\underset{s\rightarrow 0}{\liminf}\, \mathbb{E}\big[\big(\Psi(U_s(x+v))-\Psi(U_s(x))\big)\mathbbm{1}_{\{U_s(x+v)\leq x_0\}}\big]\label{liminf1}\\
&\qquad \qquad +\underset{s\rightarrow 0}{\liminf}\, \mathbb{E}\big[\big(\Psi(U_s(x+v))-\Psi(U_s(x))\big)\mathbbm{1}_{\{U_s(x+v)>x_0\}}\big]\label{liminf2}
\end{align}
The expression \eqref{liminf1} is bounded above by 
\[\mathbb{E}\big[|\Psi(U_s(x+v))-\Psi(U_s(x))|\mathbbm{1}_{\{U_s(x+v)\leq x_0\}}\big]\leq  2C_3\mathbb{P}(U_s(x+v)\leq x_0)\]
with $C_3:=\underset{u\in [0,x_0]}\sup|\Psi(u)|$. Since $x+v>x_0$, the upper bound tends to $0$ as $s$ goes to $0$. 

For the term \eqref{liminf2}, we first work on the event $\{U_s(x)> x_0\}$. By Fatou's lemma, everything being positive, the following limit is strictly positive:
\begin{align*}
&\underset{s\rightarrow 0}{\liminf}\, \mathbb{E}\big[\big(\Psi(U_s(x+v))-\Psi(U_s(x))\big)\mathbbm{1}_{\{U_s(x)>x_0\}}\big]\\
&\geq \mathbb{E}\big[\underset{s\rightarrow 0}{\liminf} \big(\Psi\left(U_s(x+v)\right)-\Psi\left(U_s(x)\right)\big)\mathbbm{1}_{\{U_s(x)>x_0\}}\big]\\
&= \Psi(x+v)-\Psi(x)>0.
\end{align*}
We now deal with the event $\{U_s(x)\leq x_0, U_s(x+v)>x_0\}$. Since $\Psi(U_s(x))\leq 0$, we have that
\[\Psi(U_s(x+v))-\Psi(U_s(x))\geq 0,\] and recalling that $x>x_0$, we get again by Fatou's lemma
\begin{align*}
&\underset{s\rightarrow 0}{\liminf}\, \mathbb{E}\big[\big(\Psi(U_s(x+v))-\Psi(U_s(x))\big)\mathbbm{1}_{\{U_s(x)\leq x_0,\, U_s(x+v)>x_0\}}\big]\geq 0.
\end{align*}
We have established  \eqref{liminf0}. 
%
Thus, for $x>x_0$ and $t$ small enough, one has for all $s\leq t$,  $\mathbb{E}\big(\Psi(U_s(x+v))-\Psi(U_s(x))\big)>0$  and 
$$\int_{0}^{t}\left(\mathbb{E}\big(\Psi(U_s(x+v))-\Psi(U_s(x))\big)\right)\ddr s>0.
$$
\qed
\end{enumerate}
\end{proof}
In the next Lemma, we study the convergence of the Laplace transform of the process under $\mathbb{P}_z^{\uparrow}$ as $z$ goes to $0$.
\begin{lemma}\label{lem:cvLT} Assume \asp .
\begin{enumerate}
\item[i)] For any $x>0$, $t>0$, the following limit exists:
\begin{center}
$\mathbb{E}^{\uparrow}_{0}(e^{-xZ_t}):=\underset{z\rightarrow 0}{\lim} \mathbb{E}^{\uparrow}_z(e^{-xZ_t})$.\end{center}
\item[ii)] For any $x>0$,
$\mathbb{E}^{\uparrow}_{0}(e^{-xZ_t})\underset{t\rightarrow 0}{\longrightarrow} 1$.
\item[iii)] Moreover, there exist $x_1>0$ and $t_1>0$ such that
\begin{center}  $\forall x>x_1$, $\forall t\in (0,t_1)$, $\mathbb{E}^{\uparrow}_{0}(e^{-xZ_t})<1$.
\end{center}
\end{enumerate}

\end{lemma}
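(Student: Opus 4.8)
The plan is to pass to the limit $z\to 0$ in the two integral representations of $\mathbb{E}_z^{\uparrow}(e^{-xZ_t})$ already available: the representation \eqref{semigroupPtuparrow1} through the bidual diffusion $V$ for i) and ii), and a Laplace‑dual representation through $U$ for iii). The single fact making the normalizations match throughout is the identity $\int_0^{\infty}S(y)\,\ddr y=\int_0^{\infty}y\,s(\ddr y)=h'(0)<\infty$, which follows from Fubini (the tail of $s$ being $S$), the expression $h'(0)=\int_0^{\infty}x\,s(\ddr x)$ from Lemma \ref{lem:defh}, and the integrability \eqref{eq2}.

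For i) and ii) I would use that, combining \eqref{semigroupPtuparrow1} with \eqref{hwithS} and cancelling a factor $z$,
\[
\mathbb{E}_z^{\uparrow}(e^{-xZ_t})=\frac{\int_0^{\infty}e^{-zy}\,G_t(x,y)\,\ddr y}{\int_0^{\infty}e^{-zy}\,S(y)\,\ddr y},\qquad G_t(x,y):=\mathbb{E}_y\big[\mathbbm{1}_{\{x\le V_t\}}S(V_t-x)\big]\ge 0.
\]
As $z\downarrow 0$ the denominator increases by monotone convergence to $h'(0)\in(0,\infty)$; since the ratio is $\le 1$, the numerator stays below the denominator, so monotone convergence also gives $\int_0^{\infty}G_t(x,y)\,\ddr y\le h'(0)<\infty$ and the ratio converges, which proves i) with $\mathbb{E}_0^{\uparrow}(e^{-xZ_t})=h'(0)^{-1}\int_0^{\infty}G_t(x,y)\,\ddr y$. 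For ii) I keep the bound $\int_0^{\infty}G_t(x,y)\,\ddr y\le h'(0)$ (valid for every $t>0$) and let $t\downarrow 0$: since $V$ has continuous paths, $V_t(y)\to y$ a.s., so $\mathbbm{1}_{\{V_t>x\}}S(V_t-x)\to\mathbbm{1}_{\{y>x\}}S(y-x)$ a.s. for $y\ne x$, and two applications of Fatou's lemma give $\liminf_{t\to0}\int_0^{\infty}G_t(x,y)\,\ddr y\ge\int_x^{\infty}S(y-x)\,\ddr y=h'(0)$; together with the upper bound this forces $\int_0^{\infty}G_t(x,y)\,\ddr y\to h'(0)$, i.e. $\mathbb{E}_0^{\uparrow}(e^{-xZ_t})\to 1$.

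For iii) I would switch to the Laplace dual. Using the definition \eqref{defhtransform} of $\mathbb{P}_z^{\uparrow}$, the Bernstein form $h(w)=\int_0^{\infty}(1-e^{-vw})s(\ddr v)$, and Laplace duality $\mathbb{E}_z(e^{-wZ_t})=\mathbb{E}_w(e^{-zU_t})$ (Lemma \ref{lemmadualityLA}), one gets, with $U_t(\cdot)$ the strong‑solution flow of \eqref{sdeU},
\[
h(z)\,\mathbb{E}_z^{\uparrow}(e^{-xZ_t})=\int_0^{\infty}\mathbb{E}\big[e^{-zU_t(x)}-e^{-zU_t(x+v)}\big]\,s(\ddr v),
\]
the integrand being nonnegative because $U_t(x)\le U_t(x+v)$ by Lemma \ref{lem:comparisonU} ii). Dividing by $z$ and using that $r\mapsto r^{-1}(e^{-ra}-e^{-rb})$ increases to $b-a$ as $r\downarrow 0$ for $0\le a\le b$, monotone convergence (first inside the expectation, then in $v$) gives $z^{-1}h(z)\,\mathbb{E}_z^{\uparrow}(e^{-xZ_t})\uparrow\int_0^{\infty}\mathbb{E}[U_t(x+v)-U_t(x)]\,s(\ddr v)$, while $z^{-1}h(z)\uparrow h'(0)$; hence, invoking Lemma \ref{lem:comparisonU} i) in the form $\mathbb{E}[U_t(w)]=w-\int_0^t\mathbb{E}[\Psi(U_s(w))]\,\ddr s$,
\[
\mathbb{E}_0^{\uparrow}(e^{-xZ_t})=1-\frac{1}{h'(0)}\int_0^{\infty}\Big(\int_0^t\mathbb{E}\big[\Psi(U_s(x+v))-\Psi(U_s(x))\big]\,\ddr s\Big)\,s(\ddr v).
\]
By Lemma \ref{lem:comparisonU} iii) there are $x_1,t_1>0$ such that for $x>x_1$ and $t\in(0,t_1)$ the inner integral is $>0$ for every $v>0$; since $s$ charges $(0,\infty)$ and $\int_0^{\infty}v\,s(\ddr v)=h'(0)<\infty$, the subtracted term is then strictly positive, giving $\mathbb{E}_0^{\uparrow}(e^{-xZ_t})<1$.

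The analytic interchanges are harmless, each limit having been set up as a monotone one; the substance lies entirely in iii), and there the essential input is Lemma \ref{lem:comparisonU} iii). It quantifies that in the (possibly supercritical) regime under study the Laplace‑dual diffusion $U$ issued from a large level genuinely separates the levels $x$ and $x+v$, making the ``immigration defect'' $v-\mathbb{E}[U_t(x+v)-U_t(x)]$ strictly positive and hence forcing $\mathbb{E}_0^{\uparrow}(e^{-xZ_t})$ strictly below $1$; everything else reduces to bookkeeping around $\int_0^{\infty}S=h'(0)$.
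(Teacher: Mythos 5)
Your proof is correct and follows essentially the same route as the paper: parts i) and ii) use the representation \eqref{semigroupPtuparrow1}, the identity $\int_0^\infty S(y)\,\ddr y=h'(0)$, monotone convergence and Fatou exactly as in the paper's proof, and part iii) arrives at the very same identity $\mathbb{E}_0^{\uparrow}(e^{-xZ_t})=1-\frac{1}{h'(0)}\int_0^{\infty}\big(\int_0^t\mathbb{E}[\Psi(U_s(x+v))-\Psi(U_s(x))]\,\ddr s\big)\,s(\ddr v)$ and concludes from Lemma \ref{lem:comparisonU}-iii) in the same way. The only (harmless) difference is that for iii) you reach this identity directly from the Bernstein form of $h$ and Laplace duality with $U$, whereas the paper passes through \eqref{semigroupPtuparrow1} and undoes the Siegmund duality via Fubini; your monotone-convergence justification of the interchange, via $r^{-1}(e^{-ra}-e^{-rb})=\int_a^b e^{-ru}\,\ddr u\uparrow b-a$, is valid.
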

\begin{proof}
Recall from \eqref{semigroupPtuparrow1} that for any  $z,x,t>0$,
\begin{align}\label{eq:represPtupproof}
\mathbb{E}_z^{\uparrow}[e^{-xZ_t}]&=\frac{z}{h(z)}\int_{0}^{\infty}e^{-zy}\mathbb{E}_y\left[\mathbbm{1}_{\{x\leq V_t\}}S(V_t-x)\right]\ddr y. 
\end{align}
\begin{enumerate}
\item[i)] Recall that $h(0)=0$ and that $h$ admits a finite derivative at $0$, $h'(0)=\int_{0}^{\infty}S(v)\ddr v<\infty$. Thus, $z/h(z)\underset{z\rightarrow \infty}{\longrightarrow} 1/h'(0)<\infty$. On the other hand, for any fixed $t>0$, the following limit, as $z$ goes to $0$, exists by monotonicity,
\[\underset{z\rightarrow 0}{\lim}\int_{0}^{\infty}e^{-zy}\mathbb{E}_{y}\left(\mathbbm{1}_{\{x\leq V_t\}}S(V_t-x)\right)\ddr y=\int_{0}^{\infty}\mathbb{E}_{y}\left(\mathbbm{1}_{\{x\leq V_t\}}S(V_t-x)\right)\ddr y.\]
This ensures that \eqref{eq:represPtupproof} also converges as $z$ goes to $0$.
\item[ii)]  Since the expression \eqref{eq:represPtupproof} is bounded by $1$, we have
\begin{equation}\label{ineq}
\int_{0}^{\infty}\mathbb{E}_{y}\left(\mathbbm{1}_{\{x\leq V_t\}}S(V_t-x)\right)\ddr y\leq \int_{0}^{\infty}S(v)\ddr v=h'(0).\end{equation}
By Fatou's Lemma, we see that 
\begin{align}\label{eq:threeineq} 
\int_{0}^{\infty}\underset{t\rightarrow 0}{\liminf}\, \mathbb{E}_{y}\left(\mathbbm{1}_{\{x\leq V_t\}}S(V_t-x)\right)\ddr y&\leq \underset{t\rightarrow 0}{\liminf}\int_{0}^{\infty}\mathbb{E}_{y}\left(\mathbbm{1}_{\{x\leq V_t\}}S(V_t-x)\right)\ddr y\leq h'(0).
\end{align}
Note that for any $y\in (0,\infty)$, $\mathbb{E}_{y}\big(\mathbbm{1}_{\{x\leq V_t\}}S(V_t-x)\big)$ converges towards $\mathbbm{1}_{\{x\leq y\}}S(y-x)$ pointwise as $t$ goes to $0$. Moreover, 
$$\int_{0}^{\infty}\mathbbm{1}_{\{x\leq y\}}S(y-x)\ddr y=\int_{0}^{\infty}S(v)\ddr v=h'(0).$$ 
We deduce that the three terms in the inequality \eqref{eq:threeineq} are equal. Then, by applying \eqref{ineq}, we get
\begin{align*}h'(0)&=\underset{t\rightarrow 0}{\liminf} \int_{0}^{\infty}\mathbb{E}_{y}\left(\mathbbm{1}_{\{x\leq V_t\}}S(V_t-x)\right)\ddr y\\
&\leq \underset{t\rightarrow 0}{\limsup} \int_{0}^{\infty}\mathbb{E}_{y}\left(\mathbbm{1}_{\{x\leq V_t\}}S(V_t-x)\right)\ddr y\leq h'(0),
\end{align*}
and finally, for any $x>0$, one has
\[\mathbb{E}^{\uparrow}_{0}(e^{-xZ_t})=\frac{1}{h'(0)}\int_{0}^{\infty}\mathbb{E}_{y}\left(\mathbbm{1}_{\{x\leq V_t\}}S(V_t-x)\right)\ddr y\underset{t\rightarrow 0}{\longrightarrow} 1.\]
\item[iii)]
We now establish that $\mathbb{E}^{\uparrow}_{0}(e^{-xZ_t}):=\underset{z\rightarrow 0}{\lim}\mathbb{E}_z^{\uparrow}(e^{-xZ_t})<1$ for some $x,t>0$. Plainly, the following equivalence holds:
\begin{center}
$\mathbb{E}_{0}^{\uparrow}(e^{-xZ_t}) <1$
if and only if the inequality \eqref{ineq} is strict. 
\end{center}
The goal is reached if and only if
\[C_t(x):=h'(0)-\int_{0}^{\infty}\mathbb{E}_{y}\left(\mathbbm{1}_{\{x\leq V_t\}}S(V_t-x)\right)\ddr y>0, \text{ for some } t>0.\] 
By Fubini-Tonelli's theorem and Siegmund duality, 
\begin{align*}
\mathbb{E}_y\big(\mathbbm{1}_{\{x\leq V_t\}}S(V_t-x)\big)&=\mathbb{E}_y\left(\int_{0}^{\infty}(-S'(v))\mathbbm{1}_{\{x\leq V_t\leq v+x\}}\ddr v\right)\\
&=\int_{0}^{\infty}(-S'(v))\big(\mathbb{P}_{x}(U_t\leq y)-\mathbb{P}_{x+v}(U_t\leq y)\big)\ddr v.
\end{align*}
Hence the left-hand side of \eqref{ineq} can be rewritten as follows
\begin{align}
\int_{0}^{\infty}\ddr y \mathbb{E}_y\big(\mathbbm{1}_{\{x\leq V_t\}}S(V_t-x)\big)&=\int_{0}^{\infty}(-S'(v))\ddr v\, \mathbb{E}\big(U_t(x+v)-U_t(x)\big) \label{line1}\\
&=\int_{0}^{\infty}(-S'(v))\ddr v \left(v-\int_{0}^{t}\mathbb{E}\big(\Psi(U_s(v+x))-\Psi(U_s(x))\big)\ddr s\right) \label{line2} \\
&=h'(0)-\int_{0}^{\infty}(-S'(v))\ddr v \int_{0}^{t}\mathbb{E}\big(\Psi(U_s(v+x))-\Psi(U_s(x))\big)\ddr s, \label{line3}
\end{align}
where we used in \eqref{line2}, the identity in Lemma \ref{lem:comparisonU}-i),
\[\mathbb{E}\big(U_t(x+v)\big)=x+v-\int_{0}^{t}\mathbb{E}\big(\Psi(U_s(x+v))\big)\ddr s,\]
and in \eqref{line3}, $h'(0)=\int_{0}^{\infty}\big(-S'(v)\big)v\ddr v$.
%
By Lemma \ref{lem:comparisonU}-iii),  the integrand in \eqref{line3} is strictly positive for $x$ large enough and $t$ small enough, therefore $C_t(x)>0$ and the inequality \eqref{ineq} is strict for those $x$ and $t$. This achieves the proof of item iii).
\qed
\end{enumerate}
\end{proof}
\begin{lemma}\label{lem:entrancelaw}
For all $t>0$, there exists a probability law $\eta_t$ on $[0,\infty]$ such that $\eta_t\neq \delta_0$, and the following weak convergence holds
\begin{equation}\label{cvtoetat}P^{\uparrow}_t(z,\ddr y):=\mathbb{P}^{\uparrow}_z(Z_t\in \ddr y)\underset{z\rightarrow 0}{\longrightarrow} \eta_t(\ddr y).
\end{equation} 
Moreover, $\eta_t\underset{t\rightarrow 0}{\longrightarrow} \delta_0$, and the following extension of $P_t^{\uparrow}$ on $[0,\infty]$ defines a $C_0$-Feller semigroup: 
\begin{equation}\label{eq:extensionsemigroup}
\forall f\in C([0,\infty]),\quad 
P_t^{\uparrow}f(z):= \begin{cases} P_t^{\uparrow}f(z), & \text{ if } z\in (0,\infty],\\ 
\int_{[0,\infty]}f(y)\eta_t(\ddr y), & \text{ if } z=0.
\end{cases}
\end{equation}
\end{lemma}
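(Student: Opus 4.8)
The plan is to extract everything from Lemma \ref{lem:cvLT} by a soft compactness argument on the one-point compactification $[0,\infty]$. First I would fix $t>0$ and prove the weak convergence \eqref{cvtoetat}. The family $\big(\mathbb{P}^{\uparrow}_z(Z_t\in\cdot)\big)_{z>0}$ consists of probability measures on the compact space $[0,\infty]$, hence is tight, so along any sequence $z_n\to0$ one can extract, by Prokhorov, a weakly convergent subsequence with limit $\nu$. Since $y\mapsto e^{-xy}$ lies in $C([0,\infty])$ for every $x>0$ (with value $0$ at $\infty$), Lemma \ref{lem:cvLT}-i) forces $\int_{[0,\infty]}e^{-xy}\,\nu(\ddr y)=\mathbb{E}^{\uparrow}_0(e^{-xZ_t})$ for all $x>0$. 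Because $\nu|_{[0,\infty)}$ is determined by the Laplace transform $x\mapsto\int_{[0,\infty)}e^{-xy}\nu(\ddr y)$ and $\nu(\{\infty\})=1-\nu([0,\infty))$, the limit $\nu$ is independent of the subsequence; this yields a probability measure $\eta_t$ on $[0,\infty]$, characterized by $\int e^{-xy}\eta_t(\ddr y)=\mathbb{E}^{\uparrow}_0(e^{-xZ_t})$ for $x>0$, with $\mathbb{P}^{\uparrow}_z(Z_t\in\cdot)\Rightarrow\eta_t$ as $z\to0$. The convergence $\eta_t\to\delta_0$ as $t\to0$ then follows from Lemma \ref{lem:cvLT}-ii): $\int e^{-xy}\eta_t(\ddr y)\to1$ for each $x>0$ forces $\eta_t([0,\infty))\to1$ (no mass escapes to $\infty$) and $\eta_t|_{[0,\infty)}\Rightarrow\delta_0$ by the continuity theorem for Laplace transforms.

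Next I would prove $\eta_t\neq\delta_0$ for every $t>0$. Since $\eta_t=\delta_0$ is equivalent to $\int e^{-xy}\eta_t(\ddr y)=1$ for all $x>0$, and since a sub-probability measure on $[0,\infty)$ whose Laplace transform attains the value $1$ must be $\delta_0$, the claim $\eta_t\neq\delta_0$ amounts to $\mathbb{E}^{\uparrow}_0(e^{-xZ_t})<1$ for every $x>0$. Lemma \ref{lem:cvLT}-iii) gives this for $t<t_1$. For general $t>0$, I would fix $s\in(0,t)$ with $s<t_1$, so that $\eta_s\neq\delta_0$, and use the Markov property under $\mathbb{P}^{\uparrow}_z$: with $\psi(y):=P^{\uparrow}_{t-s}e_x(y)$, continuous on $(0,\infty]$ by Lemma \ref{lem:Feller} and extending continuously to $0$ with $\psi(0)=\mathbb{E}^{\uparrow}_0(e^{-xZ_{t-s}})$ by Lemma \ref{lem:cvLT}-i), the first step gives $\mathbb{E}^{\uparrow}_0(e^{-xZ_t})=\lim_{z\to0}\mathbb{E}^{\uparrow}_z\big(\psi(Z_s)\big)=\int_{[0,\infty]}\psi\,\ddr\eta_s$. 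Since $Z$ under $\mathbb{P}^{\uparrow}_y$, $y>0$, never visits $0$ (Proposition \ref{prop:infimum}) and $\psi(\infty)=0$, one has $\psi(y)<1$ for all $y\in(0,\infty]$, and because $\eta_s((0,\infty])>0$ this yields $\int\psi\,\ddr\eta_s<1$, i.e. $\eta_t\neq\delta_0$.

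Finally I would check that \eqref{eq:extensionsemigroup} defines a Feller semigroup on $C([0,\infty])$. It is manifestly a positive contraction family, Markovian on $[0,\infty]$ with $\infty$ absorbing. For $f\in C([0,\infty])$, write $g:=f-f(\infty)\in C_0$; then $P^{\uparrow}_tf=P^{\uparrow}_tg+f(\infty)$ on $(0,\infty]$, so $P^{\uparrow}_tf$ is continuous on $(0,\infty]$ with limit $f(\infty)$ at $\infty$ by Lemma \ref{lem:Feller}, while $\lim_{z\to0}P^{\uparrow}_tf(z)=\int_{[0,\infty]}g\,\ddr\eta_t+f(\infty)=\int f\,\ddr\eta_t=P^{\uparrow}_tf(0)$ by the weak convergence of the first step; hence $P^{\uparrow}_tf\in C([0,\infty])$. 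The semigroup identity at $0$ follows by the Markov property and the same weak convergence: $\int f\,\ddr\eta_{t+s}=\lim_{z\to0}\mathbb{E}^{\uparrow}_z\big((P^{\uparrow}_sf)(Z_t)\big)=\int(P^{\uparrow}_sf)\,\ddr\eta_t$, with $P^{\uparrow}_sf\in C([0,\infty])$ by the previous sentence. Strong continuity at $t=0$ reduces to the pointwise facts $P^{\uparrow}_tf(z)\to f(z)$ for $z\in(0,\infty]$ (Feller property of $(Z,\mathbb{P}^{\uparrow}_\cdot)$ on $(0,\infty]$) and $P^{\uparrow}_tf(0)=\int f\,\ddr\eta_t\to f(0)$ (from $\eta_t\to\delta_0$), upgraded to uniform convergence by the standard argument for positive contraction semigroups on $C(K)$, $K$ compact.

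The genuinely delicate points are the bookkeeping around the compactification: proving $\eta_t\neq\delta_0$ for all $t$, not merely for small $t$, which needs the weak convergence of the first step fed back into a Chapman--Kolmogorov identity together with the fact that the conditioned process started from a positive state never returns to $0$; and checking the Feller property of the extension (continuity of $P^{\uparrow}_tf$ at $0$), which again relies on that same weak convergence. Each of these is elementary once Lemma \ref{lem:cvLT} is in hand, but the steps must be sequenced so that \eqref{cvtoetat} is available before it is reused.
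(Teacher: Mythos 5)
Your proposal is correct and follows essentially the same route as the paper: Laplace-transform convergence from Lemma \ref{lem:cvLT} yields $\eta_t$ and the weak convergence (you via Prokhorov plus uniqueness of Laplace transforms, the paper via Lévy's continuity theorem), the extension at $0$ is checked to be Feller and to satisfy Chapman--Kolmogorov using that convergence, and $\eta_t\neq\delta_0$ is propagated from small $t$ to all $t$ through the semigroup identity together with Proposition \ref{prop:infimum}. Your write-up of that last propagation step (feeding $\psi=P^{\uparrow}_{t-s}e_x$, continuously extended at $0$, into $\int\psi\,\ddr\eta_s<1$) is in fact a more explicit version of the paper's one-line assertion $P^{\uparrow}_{s+t}f(0)\neq f(0)$.
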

\begin{proof}
We have established in Lemma \ref{lem:cvLT}, the pointwise convergence of the Laplace transforms, $\mathbb{E}_z^{\uparrow}(e^{-xZ_t})$ as $z$ goes to $0$ towards the function $x\mapsto \mathbb{E}_{0}^{\uparrow}(e^{-xZ_t})$. 
By Lévy's continuity theorem, there exists a probability measure $\eta_t$ on $[0,\infty]$,  such that \begin{equation}\label{eq:LTweakcv}
\mathbb{E}_{0}^{\uparrow}(e^{-xZ_t})=\int_{[0,\infty]}e^{-xz}\eta_t(\ddr z) \text{ and } 
\mathbb{P}^{\uparrow}_z(Z_t\in \cdot)\underset{z\rightarrow 0}{\longrightarrow} \eta_t(\cdot).
\end{equation}
The fact that $\eta_t\rightarrow \delta_0$, as $t \rightarrow 0$, is a direct consequence of the convergence of the Laplace transform of $\eta_t$ towards the constant function $1$ obtained in Lemma \ref{lem:cvLT}-ii). Let $f\in C_0([0,\infty))$, recall that the semigroup $P^{\uparrow}_t$ is Feller on $(0,\infty]$, so that $P^{\uparrow}_tf\lvert_{(0,\infty]}\in C_0$ for any $f\in C_0([0,\infty))$. The continuity at $0$ holds by definition of $P_t^{\uparrow}f(0)$. Plainly, \begin{equation}\label{eq:Fellerextension}
P_t^{\uparrow}C_0([0,\infty))\subset C_0([0,\infty)), \text{ and for all } z\geq 0, P^{\uparrow}_tf(z)\underset{t\rightarrow 0}{\longrightarrow} f(z).\end{equation}
It remains to verify that $P^{\uparrow}_t$, extended at $0$, is a semigroup. The latter will then be Fellerian, by \eqref{eq:Fellerextension}. By the weak convergence in \eqref{eq:LTweakcv}, the fact that $P_t^{\uparrow}(z,\{0\})=0$ for all $z>0$, see Proposition~\ref{prop:infimum}, together with the semigroup property, on $(0,\infty]$, i.e., for all $s>0$, $t\geq 0$, $y\in (0,\infty]$, $P^{\uparrow}_{s+t}f(y)=P^{\uparrow}_{s}\big(P^{\uparrow}_{t}f\big)(y)$, we have that
\begin{align*}
P_s^{\uparrow}(P^{\uparrow}_t)f(0)&=\int_{[0,\infty]}\eta_s(\ddr y)P^{\uparrow}_tf(y)\\
&=\underset{z\rightarrow 0}{\lim} \int_{(0,\infty]}P^{\uparrow}_s(z,\ddr y)P^{\uparrow}_tf(y)\\
&=\underset{z\rightarrow 0}{\lim}\, P^{\uparrow}_{s+t}f(z)=P^{\uparrow}_{s+t}f(0).
\end{align*}
This ensures that the extension of $P_t^{\uparrow}$ defined in \eqref{eq:extensionsemigroup} is a semigroup. Furthermore, Lemma \ref{lem:cvLT}-iii) ensures that there exist a small time $t_0>0$ and a large enough $x$ such that $\int_{[0,\infty]}e^{-xy}\eta_t(\ddr y)<1$ for all $t\in (0,t_0]$. This entails that  $\eta_t(\ddr y)\neq \delta_0$ for $t\in (0,t_0]$. By the semigroup property, we see that for all $s\geq 0$ and $t\in (0,t_0]$, $$P^{\uparrow}_{s+t}f(0)\neq P^{\uparrow}_tf(0)\neq f(0).$$
Thus, $\eta_t\neq \delta_0$ holds true for all $t>0$. 
\qed
\end{proof}
\begin{lemma}\label{lem:finalcv} There exists a unique probability measure $\mathbb{P}^{\uparrow}_0$ on $\mathcal{D}$ such that $\mathbb{P}^{\uparrow}_0(Z_0=0)=1$ under which the canonical process $Z$ is Feller with semigroup $(P^{\uparrow}_t)$.
Moreover, as $z$ goes to $0$, $\mathbb{P}^{\uparrow}_z \Longrightarrow \mathbb{P}^{\uparrow}_0$, in Skorokhod sense.
\end{lemma}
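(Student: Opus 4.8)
The plan is to reduce this statement to standard Feller‑process theory, all the genuinely technical input having been established in Lemmas~\ref{lem:cvLT} and~\ref{lem:entrancelaw}. First I would record that, by Lemma~\ref{lem:entrancelaw}, the operators $(P^{\uparrow}_t)_{t\geq 0}$ extended to $[0,\infty]$ via \eqref{eq:extensionsemigroup} form a Feller semigroup on $C([0,\infty])$, and that this semigroup is conservative: since $\infty$ is absorbing one has $P^{\uparrow}_t\mathbbm{1}=\mathbbm{1}$, the defect of mass of the sub‑Markovian semigroup on $[0,\infty)$ being carried by the cemetery point $\infty$. Passing to the compact space $[0,\infty]$ in this way removes the only apparent difficulty, namely the almost surely finite lifetime of the conditioned process.

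Next I would invoke the classical construction of a c\`adl\`ag strong Markov process from a Feller semigroup on a locally compact separable metric space, see e.g. \cite[Chapter~4, Section~2]{ethier} or \cite[Chapter~2]{zbMATH02208909}: for each initial distribution there is a probability measure on the Skorokhod space $\mathcal{D}$, unique in law, under which the canonical process has transition semigroup $(P^{\uparrow}_t)$ and the prescribed initial law. Taking the initial law $\delta_0$ defines $\mathbb{P}^{\uparrow}_0$; uniqueness is immediate since $(P^{\uparrow}_t)$ together with $\delta_0$ determines the finite‑dimensional marginals of $Z$, hence its law on $\mathcal{D}$. For consistency one should also note that, for $z>0$, the restriction of $(P^{\uparrow}_t)$ to $(0,\infty]$ coincides by construction with the $h$‑transformed semigroup of Section~\ref{sec:prooftheorem2}, so that $\mathbb{P}^{\uparrow}_z$ is exactly the law of this Feller process started at $z$, for every $z\geq 0$.

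Finally, for the Skorokhod convergence, the key point is simply that $\delta_z\Longrightarrow\delta_0$ weakly on $[0,\infty]$ as $z\to 0$. The continuity of a Feller process with respect to its starting point — equivalently, continuity of the map $z\mapsto\mathbb{P}^{\uparrow}_z$ from $[0,\infty]$ into the space of probability measures on $\mathcal{D}$ with the weak topology, see e.g. \cite[Chapter~4]{ethier} — then yields $\mathbb{P}^{\uparrow}_z\Longrightarrow\mathbb{P}^{\uparrow}_0$ in $\mathcal{D}$ endowed with the Skorokhod $J_1$ topology. I do not anticipate any real obstacle: the hard analytic facts (pointwise convergence of the Laplace transforms $\mathbb{E}^{\uparrow}_z(e^{-xZ_t})$, identification and non‑triviality of the limiting laws $\eta_t$, and the Feller property of the extension) are already supplied by the preceding two lemmas, so what remains is to quote the appropriate general statements; the only thing to be careful about is to phrase everything on the compactification $[0,\infty]$ with $\infty$ absorbing, so that those statements apply verbatim.
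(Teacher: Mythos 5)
Your proposal is correct, and the first half (existence and uniqueness of $\mathbb{P}^{\uparrow}_0$ from the Feller semigroup of Lemma~\ref{lem:entrancelaw}, via the standard construction of a c\`adl\`ag Feller process on the compactification $[0,\infty]$ with $\infty$ absorbing) is exactly what the paper does. For the convergence $\mathbb{P}^{\uparrow}_z\Longrightarrow\mathbb{P}^{\uparrow}_0$ the two arguments diverge slightly in packaging: you invoke in one stroke the general theorem that for a Feller semigroup on a locally compact separable space, weak convergence of the initial laws $\delta_z\Rightarrow\delta_0$ implies weak convergence of the path laws in the Skorokhod topology (Ethier--Kurtz, Chapter~4, Theorem~2.5), whereas the paper unpacks this into two explicit steps, namely convergence of the finite-dimensional distributions (computed directly from \eqref{cvtoetat} and the Feller property of the transition kernels) followed by tightness via Aldous' criterion, with a citation for the latter. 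Both routes are legitimate; yours is shorter because the hard content is already encapsulated in the quoted general theorem, while the paper's version makes the mechanism visible and is self-contained modulo the Aldous reference. The one point worth stating explicitly in your write-up, which you do flag, is that the semigroup must be taken as the \emph{conservative} extension to the compact space $[0,\infty]$ (so that $P^{\uparrow}_t\mathbbm{1}=\mathbbm{1}$ and the finite lifetime causes no loss of mass); with that in place the general statements apply verbatim and there is no gap.
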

\begin{proof}
A standard theorem, see e.g. \cite[Theorem III-(2.7), p.91]{MR1725357}, guarantees that there is a family of probability measures on $\mathcal{D}$, $(\mathbb{P}^{\uparrow}_z ,z\in [0,\infty])$, which are the laws of a càdlàg Feller process, with semigroup $(P^{\uparrow}_t)$, starting from $z\in [0,\infty]$.
We first check that the law $\mathbb{P}^{\uparrow}_z$ converges towards $\mathbb{P}^{\uparrow}_0$ as $z$ goes to $0$ in the finite-dimensional sense. This is a consequence of the convergence \eqref{cvtoetat}, together with the Feller semigroup property. Indeed, let $n\geq 1$, and $f_1,\cdots,f_n$ be continuous functions on $[0,\infty]$, then
\begin{align}\label{fdd}
\mathbb{E}^{\uparrow}_z\big[f_1(Z_{t_1})&f_2(Z_{t_2})\cdots f_n(Z_{t_n})\big]\\ \nonumber 
&=\int_{[0,\infty]}P_{t_1}^{\uparrow}(z,\ddr z_1)f_1(z_1)\int_{[0,\infty]^{n-1}}P_{t_2-t_1}^{\uparrow}(z_1,\ddr z_2)f_2(z_2)\cdots P_{t_n-t_{n-1}}^{\uparrow}(z_{n-1},\ddr z_n)f_n(z_n).
\end{align}
Since the integrand above is continuous in $z_1$ by the Feller property and by continuity under the integral, we see from \eqref{cvtoetat}, that the left hand side in \eqref{fdd} converges towards $\mathbb{E}^{\uparrow}_0\big[f_1(Z_{t_1})f_2(Z_{t_2})\cdots f_n(Z_{t_n})\big]$ as $z$ goes to $0$. Once the convergence of finite-dimensional laws observed, tightness of the latter can be established using Aldous' criterion, we refer for instance to Foucart et al. \cite[Theorem 2.5, Lemma 3.1]{zbMATH07242423}, see in particular the proof of Lemma 3.1.
\qed
\end{proof}
\begin{lemma} The process $(Z,\mathbb{P}_0^{\uparrow})$ leaves the boundary $0$ and never returns almost surely.
\end{lemma}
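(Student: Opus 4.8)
The goal is to show $\mathbb{P}^{\uparrow}_0\big(\exists\, t>0:\ Z_s\in(0,\infty]\text{ for all }s\ge t\big)=1$, which is the statement of the lemma (recall that the cemetery point $\infty$ satisfies $\infty>0$, and that $Z_s=\infty$ for $s\ge\zeta$). Put $\rho:=\inf\{t\ge 0:\ Z_t\ne 0\}$. I would split the proof in two steps: (i) $\rho<\infty$ $\mathbb{P}^{\uparrow}_0$-a.s.; (ii) once the process has reached a strictly positive value it never comes back to $0$, so that the random set $\{u\ge 0:\ Z_u=0\}$ is $\mathbb{P}^{\uparrow}_0$-a.s. bounded by $\rho$. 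Throughout I use that, by Lemma~\ref{lem:finalcv}, $(Z,\mathbb{P}^{\uparrow}_z)_{z\in[0,\infty]}$ is a c\`adl\`ag Feller family, hence enjoys the strong Markov property.

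Step (i). Let $a(s):=\mathbb{P}^{\uparrow}_0(\rho>s)=\mathbb{P}^{\uparrow}_0(Z_u=0\text{ for all }u\le s)$. Conditioning at time $s$ on $\mathcal{F}_s$ and using that $Z_s=0$ on the event $\{Z_u=0\ \forall u\le s\}$, the Markov property gives $a(s+u)=a(s)\,a(u)$ for all $s,u\ge 0$. Since $a$ is non-increasing, this forces $a(s)=e^{-\lambda s}$ for some $\lambda\in[0,\infty]$. On the other hand, under $\mathbb{P}^{\uparrow}_0$ the variable $Z_s$ has law $\eta_s$, so $a(s)\le\mathbb{P}^{\uparrow}_0(Z_s=0)=\eta_s(\{0\})$, and $\eta_s\ne\delta_0$ for every $s>0$ by Lemma~\ref{lem:entrancelaw}, whence $\eta_s(\{0\})<1$. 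Therefore $\lambda>0$, so $a(s)\to 0$ as $s\to\infty$, i.e. $\rho<\infty$ $\mathbb{P}^{\uparrow}_0$-a.s. (When $0$ is instantaneous this merely says $\rho=0$ a.s., which is consistent.)

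Step (ii). For $\epsilon>0$ set $\rho_\epsilon:=\inf\{t\ge 0:\ Z_t\ge\epsilon\}$, a stopping time since $[\epsilon,\infty]$ is closed in the compact metric space $[0,\infty]$; note $\rho_\epsilon>0$ because $Z_0=0$, and $\rho_\epsilon\downarrow\rho$ as $\epsilon\downarrow 0$. On $\{\rho_\epsilon<\infty\}$, right-continuity of the paths gives $Z_{\rho_\epsilon}\in[\epsilon,\infty]$; applying the strong Markov property at $\rho_\epsilon$ together with Proposition~\ref{prop:infimum} at level $a=0$ (the bound being $h(0)/h(Z_{\rho_\epsilon})=0$ since $h(0)=0$, the case $Z_{\rho_\epsilon}=\infty$ being trivial as $\infty$ is absorbing) yields $Z_{\rho_\epsilon+s}\in(0,\infty]$ for all $s\ge 0$, hence $\{u\ge 0:\ Z_u=0\}\subset[0,\rho_\epsilon)$ on $\{\rho_\epsilon<\infty\}$. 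Finally, $\bigcup_{n\ge 1}\{\rho_{1/n}<\infty\}=\{\rho<\infty\}$, which has full $\mathbb{P}^{\uparrow}_0$-measure by step (i); hence $\mathbb{P}^{\uparrow}_0$-a.s. there exists $n$ with $\rho_{1/n}<\infty$, and taking $t:=\rho_{1/n}>0$ gives $Z_s\in(0,\infty]$ for all $s\ge t$, which is the claim.

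There is no genuinely hard estimate here: the substantive input — that $\eta_s\neq\delta_0$, coming from the comparison arguments for the Laplace dual diffusion $U$ in Lemma~\ref{lem:cvLT} — has already been established, and Proposition~\ref{prop:infimum} does the rest. The only points deserving attention are the degenerate instantaneous regime of step (i), where $\lambda=\infty$ and the multiplicativity argument is vacuous but $\rho<\infty$ is trivial, and the routine measurability facts ($\rho_\epsilon$ is an $(\mathcal{F}_t)$-stopping time, $a$ is non-increasing so that the multiplicative Cauchy equation pins it down, and $\{0\}$ is Borel so that $\mathbb{P}^{\uparrow}_0(Z_s=0)=\eta_s(\{0\})$).
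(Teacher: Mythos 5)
Your proof is correct and follows essentially the same route as the paper: the key input in both is that $\eta_t\neq\delta_0$ (you via the multiplicative equation $a(s+u)=a(s)a(u)$ forcing $\mathbb{P}^{\uparrow}_0(\rho>s)=e^{-\lambda s}$ with $\lambda>0$, the paper via the grid iteration $\mathbb{P}^{\uparrow}_0(Z_t=0,\dots,Z_{nt}=0)=\eta_t(0)^n\to 0$), followed by the strong Markov property at a first passage time above a positive level combined with Proposition~\ref{prop:infimum} to rule out any return to $0$.
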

\begin{proof}
We start by showing that $0$ is not absorbing. Let $t>0$ be fixed. By iterating the Markov property, we get for all $n\geq 2$,
\begin{align*}
\mathbb{P}^{\uparrow}_0(Z_t=0,Z_{2t}=0,\cdots, Z_{nt}=0)&=\mathbb{P}_0(Z_t=0,Z_{2t}=0,\cdots, Z_{(n-1)t}=0)\mathbb{P}^{\uparrow}(Z_{nt}=0|Z_{(n-1)t}=0)\\
&=\mathbb{P}^{\uparrow}_0(Z_t=0,Z_{2t}=0,\cdots, Z_{(n-1)t}=0)\mathbb{P}^{\uparrow}_{0}(Z_t=0)=\eta_t(0)^{n}.
\end{align*}
Since $\eta_t\neq \delta_0$, see Lemma \ref{lem:entrancelaw},  $\eta_t(0)<1$ and we see that
\[\underset{n\rightarrow \infty}{\lim} \mathbb{P}^{\uparrow}_0(Z_t=0,Z_{2t}=0,\cdots, Z_{nt}=0)=\mathbb{P}^{\uparrow}_0(\forall m\geq 1, Z_{mt}=0)=0.\]
Thus, $Z_{mt}>0$ for some $m\geq 1$, $\mathbb{P}^{\uparrow}_0$-almost surely. For all $r>0$, denote the first passage time above level $r$ by $\zeta^+_r:=\inf\{t>0: Z_t>r\}$, we just have established that the event $\bigcup_{r>0}\{\zeta^+_r<\infty\}$ has probability one. On $\{\zeta^+_r<\infty\}$, by the strong markov property at $\zeta^+_r$, together with the fact that $\mathbb{P}^{\uparrow}_{z}(\zeta_0=\infty)=1$ for any $z>0$, see Proposition \ref{prop:infimum},  we see that $\mathbb{P}^{\uparrow}_0$-a.s., $(Z_{t+\zeta^+_r},t\geq 0)$ never hits $0$. This concludes the proof.
\end{proof}
\begin{lemma}\label{lem:sdefrom0} The process $(Z,\mathbb{P}^{\uparrow}_0)$ is a weak solution to the stochastic equation \eqref{SDEPuparrow} starting from~$0$.
\end{lemma}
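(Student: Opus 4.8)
The plan is to identify $(Z,\mathbb{P}^{\uparrow}_0)$ as a Markovian solution, started from $0$, of the martingale problem associated with $(\mathscr{L}^{\uparrow},\mathcal{D}_{Z^{\uparrow}})$, where $\mathscr{L}^{\uparrow}$ is extended at $0$ by the limit \eqref{B(0)} of Lemma~\ref{lemma:generatorZup} (and at the cemetery point $\infty$ by $0$, which is consistent since every $f\in\mathcal{D}_{Z^{\uparrow}}$ satisfies $fh\in\mathcal{D}_Z$, hence extends continuously to $[0,\infty]$ with $f(\infty)=0$ because $h(z)\to\infty$ as $z\to\infty$). Once this is done, one concludes verbatim as in the proof of Theorem~\ref{thm:SDEZup}: \cite[Theorem 2.3]{zbMATH05919793} turns such a solution into a weak solution of \eqref{SDEPuparrow}, the killing appearing as a jump to $\infty$ at rate $k$, hence at time $\zeta=\inf\{t>0:\int_0^t k(Z_s)\,\ddr s\geq\mathbbm{e}\}$; the boundedness near $0$ of the coefficients required for condition (14) of that theorem is exactly the content of the extensions $b(0)=1$, $q(0,y)=h(y)/h'(0)$, $k(0)=c\ell/(2h'(0))$ appearing in the statement of Theorem~\ref{thm:entrance}, which are finite thanks to $h'(0)<\infty$ and $\int^{\infty}h(y)\pi(\ddr y)<\infty$ from Lemma~\ref{lem:defh}.

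First I would record, for $f\in\mathcal{D}_{Z^{\uparrow}}$, that $f$ and $\mathscr{L}^{\uparrow}f$ both extend to bounded continuous functions on $[0,\infty]$ vanishing at $\infty$: indeed $f=(fh)/h$ with $fh$ bounded and $h\to\infty$, while on $(0,\infty)$ one has $\mathscr{L}^{\uparrow}f=\tfrac1h\mathscr{L}(fh)$ with $\mathscr{L}(fh)$ bounded and vanishing at $\infty$ (as $fh\in\mathcal{D}_Z$), and $\mathscr{L}^{\uparrow}f$ is bounded near $0$ with limit \eqref{B(0)}, by Lemma~\ref{lemma:generatorZup}. By Lemma~\ref{lem:MPforZuparrow}, for every $z>0$ the process $M^{f}:=\big(f(Z_t)-\int_0^t\mathscr{L}^{\uparrow}f(Z_s)\,\ddr s,\ t\geq0\big)$ is a true $\mathbb{P}^{\uparrow}_z$-martingale (nothing special occurs at or after $\zeta$, since $f$ and $\mathscr{L}^{\uparrow}f$ vanish at $\infty$).

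The core step is to transfer this to $\mathbb{P}^{\uparrow}_0$ via the Skorokhod convergence $\mathbb{P}^{\uparrow}_z\Longrightarrow\mathbb{P}^{\uparrow}_0$ of Lemma~\ref{lem:finalcv}. Fixing deterministic times $0\le s_1<\dots<s_m\le s<t$ and bounded continuous $g_1,\dots,g_m$ on $[0,\infty]$, the martingale property gives, for $z>0$,
\[
\mathbb{E}^{\uparrow}_z\Big[\big(M^{f}_t-M^{f}_s\big)\prod_{i=1}^m g_i(Z_{s_i})\Big]=0 .
\]
The functional $\omega\mapsto\big(f(\omega_t)-f(\omega_s)-\int_s^t\mathscr{L}^{\uparrow}f(\omega_r)\,\ddr r\big)\prod_i g_i(\omega_{s_i})$ is bounded on Skorokhod space; its drift-integral factor is continuous everywhere (Skorokhod convergence forces pointwise convergence at every continuity point of the limiting path, i.e.\ Lebesgue-a.e., and $\mathscr{L}^{\uparrow}f$ is bounded continuous, so dominated convergence applies), and the remaining evaluation factors are continuous at every path continuous at $s_1,\dots,s_m,s,t$. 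Since $(Z,\mathbb{P}^{\uparrow}_0)$ is Feller it has no fixed time of discontinuity (see e.g.\ \cite{zbMATH02208909}), so the functional is $\mathbb{P}^{\uparrow}_0$-a.s.\ continuous, and the continuous mapping theorem yields $\mathbb{E}^{\uparrow}_0\big[(M^{f}_t-M^{f}_s)\prod_i g_i(Z_{s_i})\big]=0$. As $f$ ranges over $\mathcal{D}_{Z^{\uparrow}}$, this says that $M^{f}$ is a $\mathbb{P}^{\uparrow}_0$-martingale, i.e.\ $(Z,\mathbb{P}^{\uparrow}_0)$ solves the martingale problem for $(\mathscr{L}^{\uparrow},\mathcal{D}_{Z^{\uparrow}})$; together with $\mathbb{P}^{\uparrow}_0(Z_0=0)=1$ this closes the argument by the quoted version of \cite[Theorem 2.3]{zbMATH05919793}. (The a.s.\ finiteness of $\zeta$ under $\mathbb{P}^{\uparrow}_0$, if wanted, follows by letting $z\to0$ and then $t\to\infty$ in the formula of Lemma~\ref{lem:finitelifetime}.)

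I expect the main obstacle to be precisely this last passage to the limit --- verifying the $\mathbb{P}^{\uparrow}_0$-a.s.\ continuity of the functionals entering the martingale identity. It rests on two inputs already available: the Feller property of $(Z,\mathbb{P}^{\uparrow}_0)$ (Lemma~\ref{lem:finalcv}), which rules out fixed discontinuities; and the good control of $\mathscr{L}^{\uparrow}f$ up to both boundaries --- harmlessness of $0$ because $(Z,\mathbb{P}^{\uparrow}_0)$ leaves $0$ and never returns, so $\{s:Z_s=0\}$ is Lebesgue-negligible, and control at $\infty$ because $f$ and $\mathscr{L}^{\uparrow}f$ vanish there. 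All the rest is a transcription of the $z>0$ argument from the proof of Theorem~\ref{thm:SDEZup}.
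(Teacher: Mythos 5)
Your proposal follows essentially the same route as the paper: start from the martingale identity of Lemma~\ref{lem:MPforZuparrow} under $\mathbb{P}^{\uparrow}_z$ for $z>0$, pass to the limit $z\to 0$ using the Skorokhod convergence of Lemma~\ref{lem:finalcv} together with the continuity and boundedness of $\mathscr{L}^{\uparrow}f$ up to $0$ from \eqref{B(0)}, and conclude via the martingale problem that $(Z,\mathbb{P}^{\uparrow}_0)$ solves \eqref{SDEPuparrow}. Your write-up merely supplies more detail than the paper on why the limiting functionals are $\mathbb{P}^{\uparrow}_0$-a.s.\ continuous (absence of fixed discontinuities for the Feller limit), which is correct and consistent with the paper's terser argument.
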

\begin{proof} 
Let $f\in \mathcal{D}_{Z^{\uparrow}}$, see \eqref{Duparrow}. Recall $\mathscr{L}^{\uparrow}f$ in \eqref{L+B} and the martingale $M$ defined in \eqref{martingaleMPup} as:
\[M_t:=f(Z_t)-\int_{0}^{t}\mathscr{L}^{\uparrow}f(Z_s)\ddr s.\]
Plainly for any bounded continuous function $G$ and all $s\leq t$,
\begin{equation}\label{eqmtgG}
\mathbb{E}_z^{\uparrow}\big((M_t-M_s)G(Z_u,u\leq s)\big)=0.
\end{equation}
Recall that $\mathscr{L}^{\uparrow}f(z)\rightarrow \mathscr{L}^{\uparrow}f(0)$, as $z$ goes to $0$, see \eqref{B(0)}. Since $\mathbb{P}_z^{\uparrow}$ converges towards  $\mathbb{P}_0^{\uparrow}$ in Skorokhod sense, we get by letting $z$ go to $0$ in \eqref{eqmtgG}, that
\[\mathbb{E}_0^{\uparrow}\big((M_t-M_s)G(Z_u,u\leq s)\big)=0.\]
Thus $(M_t,t\geq 0)$ is a $\mathbb{P}_0^{\uparrow}$-martingale. Therefore $(Z,\mathbb{P}_0^{\uparrow})$ satisfies the martingale problem associated to the stochastic equation \eqref{SDEPuparrow} with initial value $0$. This achieves the proof. \qed \end{proof}
\smallskip
\noindent \textbf{Proof of Theorem \ref{thm:entrance}}. It follows by combining Lemma \ref{lem:finalcv} and Lemma \ref{lem:sdefrom0}. 
\qed
\section{CB case: proof of Theorem \ref{thmCSBP} and Proposition \ref{propcbIfrom0}}\label{sec:prooftheoremCSBP}
We treat here the  setting of (sub)-critical CB processes, $\varrho=\Psi'(0+)\geq 0$. Arguments are similar but simpler and we omit details.
\subsection{Proof of Theorem \ref{thmCSBP}}
Recall $J=\int_{0}^{\infty}Z_s\ddr s$. By  \cite[Proposition 2.3]{BINGHAM1976217},
\[\mathbb{E}_z(e^{-\theta J})=e^{-z\Phi(\theta)},\]
with $\Phi(\theta)=\Psi^{-1}(\theta)$. Moreover, since $\Phi(\theta)$ goes to $0$ as $\theta$ goes to $0$, one has
\[\frac{1}{\Phi(\theta)}\mathbb{E}_z(1-e^{-\theta J})=\frac{1}{\Phi(\theta)}(1-e^{-z\Phi(\theta)})\underset{\theta \rightarrow 0}{\longrightarrow} z.\]
Let $\mathbbm{e}$ be an independent standard exponential random variable and set $\mathbbm{e}_\theta:=\mathbbm{e}/\theta$ for any $\theta>0$. The arguments designed for establishing Lemma \ref{lem:conditioningalongprogeny}  hold verbatim. We only sketch the arguments. 

Set $J_t=\int_{0}^{t}Z_s\ddr s$. By \eqref{calculation1} and \eqref{calculation2}, we have for any $\Lambda\in \mathcal{F}_t$,
\begin{align*}
\mathbb{P}_z(\Lambda,t<\mathbbm{e}_\theta|J\geq \mathbbm{e}_\theta)&=\mathbbm{E}_z\left(\mathbbm{1}_{\{\Lambda,t<\mathbbm{e}_\theta\}}e^{-\theta J_t}\frac{\mathbb{E}_{Z_t}(1-e^{-\theta J})}{\mathbb{E}_{z}(1-e^{-\theta J})}\right)\underset{\theta \rightarrow 0}{\longrightarrow} \mathbbm{E}_z\left(\mathbbm{1}_{\Lambda}\frac{Z_t}{z}\right).
\end{align*}
Therefore
$$\mathbb{P}^{\uparrow}_z(\Lambda,t<\zeta)=\underset{\theta \rightarrow 0}{\lim}\, \mathbb{P}_z(\Lambda,t<\mathbbm{e}_\theta|J\geq \mathbbm{e}_\theta)=\mathbb{E}_z\left(\frac{Z_t}{z}\mathbbm{1}_{\Lambda}\right).$$
Denote by $\mathscr{L}$ the generator of the CB process, i.e. $\mathscr{L}f(z)=z\mathrm{L}^{\Psi}f(z)$, see Section \ref{sec:martingaleproblemLCB}. In the (sub)-critical case, $\mathscr{L}$ takes the following form
\[\mathscr{L}f(z)=z\left(\frac{\sigma^2}{2}f''(z)-\rho f'(z)+\int_{0}^{\infty}\left(f(z+y)-f(z)-yf'(z)\right)\pi(\ddr y)\right).\]
In particular, by setting $h(z):=z$, we see that $\mathscr{L}h(z)=-\rho h(z)$ and by Lemma \ref{lem:generalLfh}, the process $(Z,\mathbb{P}^{\uparrow}_z)$ satisfies the martingale problem, associated to $\big(\mathscr{L}^{\uparrow},\mathcal{C}_c^2(0,\infty)\big)$ with $\mathscr{L}^{\uparrow}=\mathscr{L}+\mathscr{B}$, where
\[\mathscr{B}f(z)=\sigma^2f'(z)+\int_{0}^{\infty}(f(z+y)-f(z)y\pi(\ddr y)-\rho f(z).\]
It is a weak solution to the stochastic equation \eqref{conditioned partcsbp} and its law is that of a CBI$(\Psi,\Psi')$,  see  Kawazu and Watanabe \cite{KAW} and Li's book \cite[Chapters 9, 10]{zbMATH07687769} for a general study of CBI processes\footnote{the setting with a killing term in the subordinator is not treated there but causes no problem}. 
\qed
\subsection{Proof of Proposition \ref{propcbIfrom0}}
For any $z>0$, the process $(Z,\mathbb{P}^{\uparrow}_z)$ being a CBI$(\Psi,\Psi')$ process, its semigroup satisfies: for any $x>0,t\geq 0$,
\[\mathbb{E}_z^{\uparrow}(e^{-xZ_t})=e^{-zu_t(x)-\int_{0}^{t}\Psi'(u_s(x))\ddr s},\]
with $\frac{\ddr}{\ddr t} u_t(x)=-\Psi\big(u_t(x)\big), u_0(x)=x$, see e.g. \cite[Equation 12.25, page 354]{MR3155252}. Hence, by letting $z$ go to $0$, we see that
\[\mathbb{E}_z^{\uparrow}(e^{-xZ_t})\underset{z\rightarrow 0+}{\longrightarrow} \mathbb{E}_{0}^{\uparrow}(e^{-xZ_t}):=e^{-\int_{0}^{t}\Psi'(u_s(x))\ddr s}.\]
Plainly, since $\Psi'>0$ on $(0,\infty)$ and $u_t(x)\in (0,\infty)$, $\mathbb{E}_{0}^{\uparrow}(e^{-xZ_t})<1$. Similarly as in Section~\ref{sec:proofentrance}, see Lemma \ref{lem:entrancelaw} and Lemma \ref{lem:finalcv}, one can define $\mathbb{P}_{0}^{\uparrow}$ and we have that $\mathbb{P}_z^{\uparrow}\Longrightarrow \mathbb{P}_{0}^{\uparrow}$. The probability measure $\mathbb{P}_{0}^{\uparrow}$ is nothing but the law of the CBI process started from $0$. \qed
\begin{remark} 
By using the fact that $(e^{\rho t}Z_t,t\geq 0)$ is a $\mathbb{P}_z$-martingale,  
one plainly sees that $(Z,\mathbb{P}^{\uparrow}_z)$ has the same law as a CBI$(\Psi,\Psi'-\rho)$, killed at an exponential time with parameter $\rho$. 
Last, since $\Psi$ is (sub)-critical, the CBI$(\Psi,\Psi'-\rho)$ does not explode, see e.g. \cite[Theorem 1.2]{KAW}, and the process $(Z,\mathbb{P}^{\uparrow}_z)$, when $\rho>0$, is killed almost surely, i.e. $\mathbb{P}^{\uparrow}_z(Z_{\zeta-}<\infty)=1$. By Remark \ref{rem:killing}, the process $(Z_{\zeta_n^+\wedge n},n\geq 0)$, under $\mathbb{P}_z$, is then  uniformly integrable. 
\end{remark}
\appendix
\section{Proofs of Proposition \ref{lem:joiningduals1} and Lemma \ref{lemma:generatorZup}}
\subsection{Generator's of $h$-transformed processes}\label{sec:proofgenhprocess}
We establish here Lemma \ref{lemma:generatorZup} in which an explicit expression of the operator $\mathscr{L}^{\uparrow}f=\frac{1}{h}\mathscr{L}(fh)$ is found.  The result is of general flavour as it holds for any function $h$ and any generator $\mathscr{L}$. We state it therefore in the general framework of an operator $\mathscr{L}$  of Courrège-Von Waldenfels form, see e.g. B\"ottcher et al. \cite[Theorem 2.21]{levy-matters-III}, namely:
$$\mathscr{L}f(z)=\mathscr{A}f(z)+\mathscr{J}f(z),$$
with $\mathscr{A}$ the local part and $\mathscr{J}$ the jump part, that is to say $$\mathscr{A}f(z):=a(z)f''(z)+b(z)f'(z)
\text{ and } \mathscr{J}f(z):=\int_{\mathbb{R}}\left(f(z+y)-f(z)-yf'(z)\mathbbm{1}_{\{y<1\}}\right)\nu(z,\ddr y),$$
for $a,b$ two functions ($a$ is nonnegative) and some Lévy kernel $(\nu(z,\ddr y),z\in \mathbb{R})$ such that for any $z\in \mathbb{R}$,  $\nu(z,\{0\})=0$  and $\int_{\mathbb{R}}(1\wedge y^2)\nu(z,\ddr y)<\infty$. 

The statement below is not new, see for instance Kunita \cite{zbMATH03297712}. We provide a quick proof.

\begin{lemma}\label{lem:generalLfh} Let $h$ and $f$ be such that $hf\in C^{2}([0,\infty))$ and $\int^{\infty}|f(y)h(y)|\pi(\ddr y)<\infty$. For any $z>0$,
$$\mathscr{L}^{\uparrow}f(z)=\mathscr{L}f(z)+\mathscr{B}f(z),$$
with 
\begin{align}\label{Bappendix}
\mathscr{B}f(z)&:=\frac{\mathscr{L}(hf)(z)-h(z)\mathscr{L}f(z)}{h(z)} \nonumber \\
&=a(z)\frac{h'(z)}{h(z)}f'(z)+\int_{\mathbb{R}}\big(f(z+y)-f(z)\big)\frac{h(z+y)-h(z)}{h(z)}\nu(z,\ddr y)+\frac{\mathscr{L}h(z)}{h(z)}f(z).
\end{align}
\end{lemma}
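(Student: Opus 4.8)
The plan is to compute $\mathscr{L}(hf)$ directly from the Courrège--Von Waldenfels form of $\mathscr{L}$ and rearrange terms so that $\mathscr{L}f$ factors out. First I would treat the local part $\mathscr{A}$. Writing $g = hf$, the Leibniz rule gives $g'' = h''f + 2h'f' + hf''$ and $g' = h'f + hf'$, hence
\[
\mathscr{A}(hf)(z) = a(z)\big(h''f + 2h'f' + hf''\big)(z) + b(z)\big(h'f + hf'\big)(z).
\]
Collecting the terms that reconstruct $h(z)\mathscr{A}f(z) = a(z)h f'' + b(z) h f'$, the remainder is $a(z)h''(z)f(z) + 2a(z)h'(z)f'(z) + b(z)h'(z)f(z)$, which I would regroup as $\big(a h'' + b h'\big)(z) f(z) + 2 a(z) h'(z) f'(z) = \mathscr{A}h(z)\, f(z) + 2a(z)h'(z)f'(z)$. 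So far the ``extra'' operator contributes $2 a(z)\tfrac{h'(z)}{h(z)} f'(z)$ beyond what combines with $\mathscr{A}h(z)f(z)/h(z)$; I will reconcile the factor $2$ versus the $1$ in \eqref{Bappendix} once the jump part is in hand, since the compensator of the jump kernel also produces a first-derivative contribution.

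Next I would handle the jump part. By definition,
\[
\mathscr{J}(hf)(z) = \int_{\mathbb{R}}\Big( h(z+y)f(z+y) - h(z)f(z) - y\big(h'f + hf'\big)(z)\mathbbm{1}_{\{y<1\}}\Big)\nu(z,\ddr y).
\]
The idea is to add and subtract $h(z)f(z+y)$ (and its compensating term) inside the integrand. One piece becomes $h(z)\int\big(f(z+y)-f(z)-yf'(z)\mathbbm{1}_{\{y<1\}}\big)\nu(z,\ddr y) = h(z)\mathscr{J}f(z)$. The complementary piece is $\int\big(h(z+y)-h(z)\big)f(z+y)\,\nu(z,\ddr y) - f(z)\int y h'(z)\mathbbm{1}_{\{y<1\}}\nu(z,\ddr y)$; I would further split $f(z+y) = f(z) + \big(f(z+y)-f(z)\big)$, so that one part contributes $\int\big(h(z+y)-h(z)\big)\nu(z,\ddr y)\cdot f(z)$ (minus the compensator), which together with the local $\mathscr{A}h(z)f(z)$ term assembles $\mathscr{L}h(z)f(z)$, and the genuinely new term is $\int\big(f(z+y)-f(z)\big)\tfrac{h(z+y)-h(z)}{h(z)}\nu(z,\ddr y)$. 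The leftover compensator pieces (the $y h'$ terms from both $g'$ and from expanding $h(z+y)-h(z)$) must cancel against the spurious factor-$2$ first-derivative term from the local part, leaving exactly the single $a(z)\tfrac{h'(z)}{h(z)}f'(z)$ of \eqref{Bappendix}. The integrability hypothesis $\int^{\infty}|f(y)h(y)|\pi(\ddr y)<\infty$ is what legitimizes splitting the integral into the three separately convergent pieces near $y=\infty$, while the local behaviour near $y=0$ is controlled by $\int (1\wedge y^2)\nu(z,\ddr y)<\infty$ together with $hf\in C^2$.

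The main obstacle, and the only place requiring genuine care, is the bookkeeping of the compensating $\mathbbm{1}_{\{y<1\}}$ terms: both the Leibniz expansion of $(hf)'$ and the Taylor-type expansion of $h(z+y)-h(z)$ generate first-order correction terms, and one must verify that regrouping them does not leave an uncompensated linear-in-$y$ piece (which would be finite only under an extra moment assumption). I expect these to cancel cleanly because the truncation function is the same throughout, but writing it out is the delicate step; everything else is a mechanical application of the product rule and linearity of the integral.
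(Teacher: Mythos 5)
Your overall strategy (Leibniz rule on the local part, three-term decomposition of the jump part) is the same as the paper's, and the jump-part computation you sketch is correct: adding and subtracting $h(z)f(z+y)$ and then splitting $f(z+y)=f(z)+\big(f(z+y)-f(z)\big)$ yields exactly
\[
\mathscr{J}(hf)(z)=h(z)\mathscr{J}f(z)+\int_{\mathbb{R}}\big(h(z+y)-h(z)\big)\big(f(z+y)-f(z)\big)\nu(z,\ddr y)+f(z)\mathscr{J}h(z).
\]
However, your plan hinges on a cancellation that does not exist. The compensator $-y(hf)'(z)\mathbbm{1}_{\{y<1\}}=-y\big(h(z)f'(z)+f(z)h'(z)\big)\mathbbm{1}_{\{y<1\}}$ splits \emph{exactly} into the compensator of $h(z)\mathscr{J}f(z)$ and that of $f(z)\mathscr{J}h(z)$, while the cross term $\big(h(z+y)-h(z)\big)\big(f(z+y)-f(z)\big)=O(y^2)$ near $y=0$ needs no compensation at all. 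There are therefore \emph{no} leftover linear-in-$y$ pieces from the jump part, and nothing there can cancel the ``spurious'' factor $2$ you flag in the local cross term $2a(z)h'(z)f'(z)$. If you carry out your plan you will be stuck with that factor.

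The actual resolution is purely a normalization convention for the local part, not a jump/local interplay: the paper's own proof writes $\mathscr{A}g=\tfrac{a}{2}g''+bg'$ (despite the definition stating $a g''+bg'$ just above), for which the cross term is $a(z)h'(z)f'(z)$ as in \eqref{Bappendix}; with the literal definition $\mathscr{A}g=ag''+bg'$ the cross term is $2a(z)h'(z)f'(z)$, and this factor of $2$ is indeed what is used downstream in Lemma \ref{lemma:generatorZup}, where $a(z)=\tfrac{\sigma^2}{2}z$ produces the drift $\sigma^2 b(z)f'(z)=2a(z)\tfrac{h'(z)}{h(z)}f'(z)$. So you should simply fix the convention for $a$ at the outset and record the cross term as $2a h'f'$ (or $ah'f'$ under the half-convention); do not look to the jump part to repair it. Your remark on integrability is acceptable in spirit, though for the $f(z)\mathscr{J}h(z)$ piece one also needs $\int^{\infty}h(y)\pi(\ddr y)<\infty$, which the paper supplies separately in \eqref{hmoment}.
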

\begin{proof}
We compute $\mathscr{L}fh(z)$. We first consider the local part. One has 
\begin{align}\label{localpart}
\mathscr{A}fh(z)&=b(z)\big(h'(z)f(z)+h(z)f'(z)\big)+\frac{a(z)}{2}\big(h''(z)f(z)+2h'(z)f'(z)+h(z)f''(z)\big) \nonumber \\
&=h(z)\mathscr{A}f(z)+f(z)\mathscr{A}h(z)+a(z)h'(z)f'(z).
\end{align}
For the jump part, one can plainly check the following identity:
\begin{align*}
f(z+y)h(z+y)-f(z)h(z)&-y(fh)'(y)\mathbbm{1}_{\{y<1\}}\\
&=h(z)\big(f(z+y)-f(z)-yf'(z)\mathbbm{1}_{\{y<1\}}\big)\\
&\qquad \qquad +\big(h(z+y)-h(z)\big)\big(f(z+y)-f(z)\big)\\
&\qquad \qquad \qquad +f(z)\big(h(z+y)-h(z)-yh'(z)\mathbbm{1}_{\{y<1\}}\big).
\end{align*}
By integrating  with respect to $\nu(z,\ddr y)$ both sides of the equality above, we get 
\begin{equation}\label{jumppart}
\mathscr{J}fh(z)=h(z)\mathscr{J}f(z)+\int_{0}^{\infty}\big(h(z+y)-h(z)\big)\big(f(z+y)-f(z)\big)\nu(z,\ddr y)+f(z)\mathscr{J}h(z).
\end{equation}
Finally, we obtain by adding \eqref{localpart} and \eqref{jumppart},
\[\frac{\mathscr{L}fh(z)}{h(z)}=\frac{1}{h(z)}\left(\mathscr{A}fh(z)+\mathscr{J}fh(z)\right)=\mathscr{A}f(z)+\mathscr{J}f(z)+\mathscr{B}f(z)=\mathscr{L}f(z)+\mathscr{B}f(z).\]
\qed
\end{proof}

\textbf{Proof of Lemma \ref{lemma:generatorZup}}. This is a direct consequence of Lemma \ref{lem:generalLfh}. Recall the generator of the LCB process in \eqref{genLCB}, in this setting we have for all $z\geq 0$,
\begin{center}
$a(z)=\frac{\sigma^2}{2}z$, $b(z)=-\gamma z-\frac{c}{2}z^2$ and $\nu(z,\ddr y)=z\pi(\ddr y)$.
\end{center}
\smallskip
The form of $\mathscr{B}$ in Lemma \ref{lemma:generatorZup} is easily deduced by replacing each term in \eqref{Bappendix} by the ones above. By Lemma \ref{lem:Lh}, we see that the killing term $\frac{\mathcal{L}h(z)}{h(z)}$ is given by $k(z)=-\frac{\mathscr{L}h(z)}{h(z)}=\frac{c\ell}{2}\frac{z}{h(z)}$. \qed
\subsection{The bidual process $V$: proof of Proposition \ref{lem:joiningduals1}}\label{sec:proofpropbidual}
The identity \eqref{joiningduals} is shown as follows. Denote by $\mathbbm{e}_z$ an exponential random variable with parameter $z$, independent from process $U$. One has by the Laplace duality, see Lemma \ref{lemmadualityLA}, and the Siegmund duality, see Lemma~\ref{lem:siegmund},
\[\mathbb{E}_z(e^{-xZ_t})=\mathbb{E}_x(e^{-zU_t})=\mathbb{P}_x(\mathbbm{e}_z>U_t)=\int_{0}^{\infty}ze^{-zy}\mathbb{P}_x(y>U_t)\ddr y=\int_{0}^{\infty}ze^{-zy}\mathbb{P}_y(V_t>x)\ddr y.\]
For any $t,y,v\geq 0$, set $F_{t}(v,y):=\mathbb{P}_v(V_t\leq y)$ and
$y\mapsto f_t(v,y):=\frac{\ddr }{\ddr y}F_{t}(v,y)$. By derivation under expectation, one has for all $y,z>0$  and any $t\geq 0$,\begin{align*}
\mathbb{E}_z(e^{-xZ_t})=\mathbb{E}_x(e^{-zU_t})=\mathbb{P}_x(\mathbbm{e}_z>U_t)&=\int_{0}^{\infty}ze^{-zy}\mathbb{P}_x(y>U_t)\ddr y\\
&=\int_{0}^{\infty}ze^{-zy}\mathbb{P}_y(V_t>x)\ddr y.
\end{align*}
For any $t,y,v\geq 0$, set $F_{t}(v,y):=\mathbb{P}_v(V_t\leq y)$ and
$y\mapsto f_t(v,y):=\frac{\ddr }{\ddr y}F_{t}(v,y)$. By derivation under expectation, one has for all $y,z>0$  and any $t\geq 0$, \begin{equation}\label{derivative}
\mathbb{E}_z[Z_te^{-yZ_t}]=\int_{0}^{\infty}ze^{-zv}f_t(v,y)\ddr v.
\end{equation}
By using the Markov property repeatedly
\begin{align*}
&\mathbb{E}_z\left[e^{-x_1Z_{t_1}}\cdots e^{-x_{n-1}Z_{t_{n-1}}}(1-e^{-x_nZ_{t_n}})\right]\\
&=\mathbb{E}_z\left[e^{-x_1Z_{t_1}}\cdots e^{-x_{n-1}Z_{t_{n-1}}}\mathbb{E}_{Z_{t_{n-1}}}(1-e^{-x_nZ_{t_n-t_{n-1}}})\right]\\
&=\int_{\mathbb{R}_+}\mathbb{E}_z[e^{-x_1Z_{t_1}}\cdots Z_{t_{n-1}}e^{-(x_{n-1}+y_{n-1})Z_{t_{n-1}}}]\mathbb{P}_{y_{n-1}}(V_{t_{n}-t_{n-1}}\leq x_n)\ddr y_{n-1}\\
&=\int_{\mathbb{R}_+}\mathbb{E}_z\left[e^{-x_1Z_{t_1}}\cdots \mathbb{E}_{Z_{t_{n-2}}}[Z_{t_{n-1}-t_{n-2}}e^{-(x_{n-1}+y_{n-1})Z_{t_{n-1}-t_{n-2}}}]\right]F_{t_n-t_{n-1}}(y_{n-1},x_n)\ddr y_{n-1}\\
&=\int_{\mathbb{R}^2_+}\!\!\mathbb{E}_z \! \left[e^{-x_1Z_{t_1}}\cdots Z_{t_{n-2}}e^{-Z_{t_{n-2}}y_{n-2}}\right]f_{t_{n-1}-t_{n-2}}(y_{n-2},x_{n-1}+y_{n-1})F_{t_n-t_{n-1}}(y_{n-1},x_n)\ddr y_{n-2}\ddr y_{n-1}\\
&=\int_{\mathbb{R}_+^{n}}\!\!ze^{-zy_0}f_{t_1}(y_0,y_1+x_1)\cdots f_{t_{n-1}-t_{n-2}}(y_{n-2},y_{n-1}+x_{n-1})F_{t_n-t_{n-1}}(y_{n-1},x_n)\ddr y_0 \ddr y_{1}\cdots \ddr y_{n-1}\\
&=\int_{\mathbb{R}_+}\!\! ze^{-zy_0}\mathbb{P}_{y_0}\big(V_{t_1}\geq x_1,\cdots, V_{t_{n-1}}\geq x_{n-1}, V_{t_n}\leq x_n\big),
\end{align*}
where in the fourth equality we use \eqref{derivative}, in the fifth we iterate the argument and in the last line, we use the Markov property of $V$. \qed
\\

\textbf{Acknowledgements} This paper was concluded while V.R was visiting the Department of Statistics at the University of Warwick, United Kingdom; he would like to thank his hosts for partial financial support as well as for their kindness and hospitality. In addition, V.R is grateful for additional financial support from CONAHCyT-Mexico, grant nr. 852367. C.F and A.W are supported by the European Union (ERC, SINGER, 101054787). Views and opinions expressed are however those of the authors only and do not necessarily reflect those of the European Union or the European Research Council. Neither the European Union nor the granting authority can be held responsible for them.

\bibliographystyle{alpha}
\bibliography{sample}

\end{document}